\newcommand{\cA}{\mathcal{A}}\newcommand{\cB}{\mathcal{B}}
\newcommand{\cD}{\mathcal{D}}
\newcommand{\cG}{\mathcal{G}}\newcommand{\cH}{\mathcal{H}}
\newcommand{\cL}{\mathcal{L}}
\newcommand{\cO}{\mathcal{O}}
\newcommand{\cR}{\mathcal{R}}
\newcommand{\cT}{\mathcal{T}}
\newcommand{\cV}{\mathcal{V}}
\newcommand{\cX}{\mathcal{X}}
\newcommand{\cY}{\mathcal{Y}}
\newcommand{\bC}{\mathbb{C}}
\newcommand{\bQ}{\mathbb{Q}}\newcommand{\bR}{\mathbb{R}}
\newcommand{\bZ}{\mathbb{Z}}
\newcommand{\ft}{\mathfrak{t}}
\newcommand{\ff}{\mathfrak{f}}
\tikzset{->-/.style={decoration={
			markings,
			mark=at position #1 with {\arrow{>}}},postaction={decorate}}}
\tikzset{-<-/.style={decoration={
					markings,
					mark=at position #1 with {\arrow{<}}},postaction={decorate}}}
\newtheorem{theorem}{Theorem}[section]
\newtheorem{lemma}[theorem]{Lemma}
\newtheorem{proposition}[theorem]{Proposition}
\newtheorem{corollary}[theorem]{Corollary}
\newtheorem*{corollary*}{Corollary}
\newtheorem{atheorem}{Theorem}
\newtheorem{acorollary}[atheorem]{Corollary}
\theoremstyle{definition}
\newtheorem{definition}[theorem]{Definition}
\newtheorem{notation}[theorem]{Notation}
\newtheorem{convention}[theorem]{Convention}
\theoremstyle{remark}
\newtheorem{example}[theorem]{Example}
\newtheorem{remark}[theorem]{Remark}
\newtheorem{computation}[theorem]{Computation}
\newcommand{\half}{\nicefrac{1}{2}}
\newcommand{\cat}[1]{\mathsf{#1}}
\newcommand{\mr}[1]{{\rm #1}}
\newcommand{\ul}[1]{\underline{#1}}
\newcommand{\fS}{\mathfrak{S}}
\newcommand{\lra}{\longrightarrow}
\newcommand{\Sp}{\mathbf{Sp}}
\newcommand{\OO}{\mathbf{O}}
\newcommand{\SO}{\mathbf{SO}}
\newcommand{\GG}{\mathbf{G}}
\newcommand{\oq}{_\bQ}
\newcommand{\longtwoheadrightarrow}{\relbar\joinrel\twoheadrightarrow}
\newcommand{\Emb}{\mr{Emb}}
\newcommand{\Diff}{\mr{Diff}}
\newcommand{\Homeo}{\mr{Homeo}}
\newcommand{\fr}{\mr{fr}}
\DeclareMathOperator*{\hofib}{hofib}
\DeclareMathOperator*{\holim}{holim}
\DeclareMathOperator*{\hocolim}{hocolim}
\DeclareMathOperator*{\colim}{colim}
\DeclareMathOperator*{\tohofib}{tohofib}
\DeclareMathOperator*{\boxtensor}{\scalerel*{\boxtimes}{\sum}}
\newcommand{\circled}[1]{\raisebox{.5pt}{\textcircled{\raisebox{-.9pt} {#1}}}}
\title[Diffeomorphisms of even-dimensional discs]{On diffeomorphisms of even-dimensional discs}
\author{Alexander Kupers}
\address{Department of Computer and Mathematical Sciences \\
 	University of Toronto Scarborough \\
 	1265 Military Trail \\
 	Toronto, ON M1C 1A4 \\ Canada}
\email{a.kupers@utoronto.ca}
\author{Oscar Randal-Williams}
\address{Centre for Mathematical Sciences\\
	Wilberforce Road\\
	Cambridge CB3 0WB\\
	UK}
\email{o.randal-williams@dpmms.cam.ac.uk}
\begin{document}

\begin{abstract}
We determine $\pi_*(B\Diff_\partial(D^{2n})) \otimes \bQ$ for $2n \geq 6$ completely in degrees $* \leq 4n-10$, far beyond the pseudoisotopy stable range. Furthermore, above these degrees we discover a systematic structure in these homotopy groups: we determine them outside of certain ``bands" of degrees.\vspace{-.5cm}
\end{abstract}

\vspace*{-2\baselineskip}

\maketitle

\tableofcontents
\newpage
\addtocontents{toc}{\protect\setcounter{tocdepth}{1}}

\section{Introduction} 

\subsection{Context}

The topological group $\Diff_\partial(D^{d})$ of diffeomorphisms of a disc $D^{d}$ fixing pointwise a neighbourhood of the boundary is of fundamental interest in geometric topology. As the corresponding group $\Homeo_\partial(D^{d})$ of homeomorphisms is contractible, by the Alexander trick, $\Diff_\partial(D^{d})$ measures the difference between topological and smooth manifolds.

Before stating our results let us explain this more precisely. We write $\smash{\tfrac{\mr{Top}(d)}{\mr{O}(d)}}$ for the homotopy orbits of $\mr{O}(d)$ acting on $\mr{Top}(d)$ on the right, or equivalently for the homotopy fibre of $B\mr{O}(d) \to B\mr{Top}(d)$. For a smooth manifold $M$ of dimension $d \neq 4$, perhaps with boundary, smoothing theory gives a map
\[
\frac{\Homeo_\partial(M)}{\Diff_\partial(M)} \lra \Gamma_\partial \left(\mr{Fr}(M) \times_{\mr{O}(d)} \tfrac{\mr{Top}(d)}{\mr{O}(d)} \to M\right)
\]
to the space of sections of the $\smash{\tfrac{\mr{Top}(d)}{\mr{O}(d)}}$-bundle over $M$ associated to its tangent bundle, trivial over the boundary $\partial M$, and says that this map is an equivalence onto those path-components which it hits \cite[Essay IV \& V]{kirbysiebenmann}. For $M = D^d$ the bundle is trivial, and, along with the Alexander trick, this gives an identification 
\begin{equation}
\label{eq:Morlet}
B\Diff_\partial(D^{d}) \simeq \Omega^d_0 \left(\tfrac{\mr{Top}(d)}{\mr{O}(d)}\right)
\end{equation}
known as Morlet's theorem \cite[Theorem V.3.4]{kirbysiebenmann}. Thus $B\Diff_\partial(D^{d})$ tells us about the smoothing theory of all $d$-manifolds, and as $\mr{O}(d)$ is well-understood it also tells us about the group $\mr{Top}(d) = \Homeo(\bR^d)$.

In this paper we provide detailed information about $\pi_*(B\Diff_\partial(D^{2n})) \otimes \bQ$ for $2n \geq 6$; from now on, we will shorten the notation for rational homotopy groups to $\pi_*(-)\oq$. These were studied by Farrell and Hsiang \cite{farrellhsiang} by combining surgery theory and pseudoisotopy theory, giving
\[\pi_d(B\Diff_\partial(D^{2n})) \oq=0 \text{ for } 1 \leq d \leq \min\{\tfrac{2n-5}{2}, \tfrac{2n-1}{3}\} \text{ and } 2n \geq 6.\]
The range comes from the pseudoisotopy stable range, and the formula we gave is the estimate for this range due to Igusa \cite[p.\,7]{igusastab}. More recently, the second-named author has shown \cite[Theorem 4.1]{oscarconcordance} that the same result holds in degrees $1 \leq d \leq 2n-5$.

Most recently, Weiss \cite{weissdalian} has made a remarkable discovery concerning unstable topological Pontrjagin classes. The map $B\mr{O} \to B\mr{Top}$ induces an isomorphism on rational cohomology, so there are classes $p_i \in H^{4i}(B\mr{Top};\bQ)$ which pull back to the classical Pontrjagin classes on $B\mr{O}$. By definition, $p_{n+k} \in H^{4n+4k}(B\mr{O}(2n);\bQ)$ vanishes for all $k > 0$, and $p_n$ may be written as the square of the (twisted) Euler class $e \in H^{2n}(B\mr{O}(2n);\bQ^{w_1})$. The classes $p_i$ and $e$ are defined on $B\mr{Top}(2n)$, respectively by pullback from $B\mr{Top}$ or as the self-intersection of the zero-section, but it does not follow that the classes $p_n-e^2$ and $p_{n+k}$ for $k > 0$ vanish here. Weiss' discovery is that for many $n$ and $k$ they do not vanish, and that furthermore these non-trivial classes transgress to non-trivial cohomology classes on $\tfrac{\mr{Top}(2n)}{\mr{O}(2n)}$ which are non-trivial on the image of the Hurewicz map \cite[Section 6]{weissdalian} (see \cite{GRWPontryagin} for an alternative proof of the first part). From the point of view of diffeomorphism groups of discs, the map \eqref{eq:Morlet} combined with evaluation against such transgressed classes defines a homomorphism
\[\hat{p}_{n+k} \colon \pi_{2n+4k-1}(B\Diff_\partial(D^{2n}))_\bQ \cong \pi_{4n+4k-1}(\tfrac{\mr{Top}(2n)}{\mr{O}(2n)})_\bQ \lra \bQ\]
and Weiss shows that there is a constant $c$ so that for all large enough $n$ and all $k$ satisfying $0 \leq k \leq \tfrac{5n}{4}-c$ this map is non-zero. We say that homotopy classes which are non-zero under such maps are \emph{detected by Pontrjagin classes}.

\subsection{Statement of results}

Our first result is that Pontrjagin classes completely detect the rational homotopy of $B\Diff_\partial(D^{2n})$ in degrees $d \leq 4n-10$.

\begin{atheorem}\label{thm:main-BDiff}
Let $2n \geq 6$. Then in degrees $d \leq 4n-10$ we have
	\[\pi_d(B\Diff_\partial(D^{2n})) \oq = \begin{cases} \bQ & \text{if $d \geq 2n-1$ and $d \equiv 2n-1\;\text{mod}\; 4$,} \\
	0 & \text{otherwise,}
	\end{cases}\]
	and the $\bQ$'s are detected by Pontrjagin classes.
\end{atheorem}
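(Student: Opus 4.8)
The plan is to combine Morlet's equivalence \eqref{eq:Morlet} with a tower resolving $\mr{Top}(2n)/\mr{O}(2n)$ rationally. The guiding principle is that $B\mr{Diff}_\partial(D^{2n})$ has, rationally and in a range, the homotopy type produced by the first two layers of a Weiss-type/embedding-calculus tower: the first (``linear'') layer is the one exhibited by Weiss and accounts for exactly the Pontrjagin classes, while the second (``quadratic'') layer is doubly connected and is shown to contribute nothing below degree $4n-10$; higher layers are more connected still. Thus Theorem~A amounts to the assertion that the linear layer is all there is up to degree $4n-10$.

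First I would record the elementary reductions. Since $B\mr{O}\to B\mr{Top}$ is a rational homology isomorphism, $\mr{Top}/\mr{O}$ is rationally contractible; feeding this into the standard smoothing-theory fibrations relating $\mr{O}(2n)$, $\mr{Top}(2n)$, $\mr{O}$ and $\mr{Top}$ identifies $\mr{Top}(2n)/\mr{O}(2n)$, rationally, with the homotopy fibre of $B\mr{O}(2n)\to B\mr{Top}(2n)$ relative to its stabilisation. Via \eqref{eq:Morlet} this converts Theorem~A into a statement about the rational cohomology (equivalently rational homotopy) of $B\mr{Top}(2n)$ in degrees $\le 6n-10$, beyond the subalgebra $\bQ[p_1,\dots,p_{n-1},e]$ pulled back from $B\mr{O}(2n)$.

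Next comes the linear layer. Weiss's orthogonal-calculus analysis of $V\mapsto B\mr{Top}(V)$ (respectively $B\mr{O}(V)$) gives a fibre sequence whose linear term is $\Omega^\infty$ of a shift of the Thom spectrum of (minus) the tautological bundle over $B\mr{SO}(2n)$, with rational homotopy groups $\bQ$ concentrated in degrees $2n-1+4k$ for $k\ge 0$. By Weiss's transgression computation \cite[Section 6]{weissdalian} these classes map to $p_n-e^2$ and to $p_{n+k}$ for $k\ge 1$ and are nonzero on the Hurewicz image; so, through \eqref{eq:Morlet}, the linear layer contributes precisely one copy of $\bQ$ in each degree $d\ge 2n-1$ with $d\equiv 2n-1\bmod 4$, detected by Pontrjagin classes. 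It remains to show that in degrees $\le 4n-10$ there is nothing else.

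The crux is the second layer, which is $\Omega^\infty$ of the ($\Sigma_2$-homotopy orbits of the) second Weiss derivative. Here I would reinterpret this derivative, via Morlet's Lemma of Disjunction and the self-embedding calculus of the manifolds $W_{g,1}=\#^g(S^n\times S^n)\setminus\mathrm{int}(D^{2n})$, so that it becomes accessible to the rational inputs we have available: the stable rational cohomology of $B\mr{Diff}_\partial(W_{g,1})$ computed by Galatius and the second-named author \cite{grwstab1}, the rational homotopy-theoretic results of Berglund--Madsen \cite{berglundmadsen2}, and the vanishing of $\pi_d(B\mr{Diff}_\partial(D^{2n}))\oq$ for $d\le 2n-5$ from \cite{oscarconcordance}. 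An iterated-disjunction estimate roughly doubles the connectivity of the relevant spectrum, and together with these rational computations one shows that the second layer has vanishing rational homotopy in degrees $\le 4n-10$ — this is where the numerology $2(2n-5)=4n-10$ enters — with possible nonvanishing only in a controlled family of higher degrees, the ``bands'' of the abstract. Granting this, the tower collapses rationally onto its linear layer in the asserted range and Theorem~A follows. The main obstacle is exactly this connectivity-and-vanishing estimate for the quadratic layer: one must combine an abstract connectivity bound on the second derivative with a rational computation of the associated graph-complex/$\mr{GL}$-representation contributions degree by degree, and it is precisely the failure of vanishing inside the bands that both forces the restriction $d\le 4n-10$ and gives rise to the systematic band structure above it.
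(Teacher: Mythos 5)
Your reduction steps are fine and match the paper's framing: Morlet's equivalence, the rational contractibility of $\mr{Top}/\mr{O}$, and the identification of the ``linear'' contribution as $\Omega^{2n}\tfrac{\mr{Top}}{\mr{O}(2n)}$ (carrying exactly the Pontrjagin classes, detected via Weiss's transgression computation) are all correct. Indeed, the paper's proof of Theorem~A is a two-liner once one has Theorem~C in hand: write the fibration $\Omega^{2n+1}\tfrac{\mr{Top}}{\mr{Top}(2n)}\to\Omega^{2n}\tfrac{\mr{Top}(2n)}{\mr{O}(2n)}\to\Omega^{2n}\tfrac{\mr{Top}}{\mr{O}(2n)}$ and quote the vanishing of the fibre's rational homotopy below degree $4n-9$.

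The gap is in your account of why that fibre term vanishes up to degree $4n-10$. You propose to treat $\Omega^{2n+1}\tfrac{\mr{Top}}{\mr{Top}(2n)}$ as the ``quadratic layer'' of a Weiss orthogonal-calculus tower for $V\mapsto B\mr{Top}(V)$, identify this with $\Omega^\infty$ of the homotopy orbits of the second derivative, ``reinterpret'' that second derivative via self-embedding calculus of $W_{g,1}$, and then invoke an ``iterated-disjunction estimate'' that ``roughly doubles the connectivity,'' giving $2(2n-5)=4n-10$. None of these steps exists in the literature, and the last one is a numerical coincidence rather than an argument: the paper's $4n-10$ is the complement of the \emph{fourth band} $[2\cdot 2(n-2)-1, 2\cdot 2n-2]=[4n-9,4n-2]$ of Theorem~C, and this threshold $4(n-2)-1$ arises from the fourth layer of the \emph{embedding calculus tower for self-embeddings of $W_{g,1}$} (not from any orthogonal calculus derivative) combined with the corresponding band in the framed Torelli cohomology. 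Concretely, Theorem~C is proved by passing to $\smash{\olG}$-invariants in the long exact sequence of the framed Weiss fibre sequence: on one side, the cohomology of the framed Torelli space $X_1(g)$ is computed from \cite{grwstab2}, Borel vanishing, and the twisted MMM-class machinery of \cite{KR-WTorelli}, yielding the Torelli bands $[2r(n-1)+1,2rn-2]$; on the other side, the Federer/Bousfield--Kan spectral sequence of the embedding calculus tower for $\mr{Emb}_{\half\partial}(W_{g,1})$, driven by the extended Drinfel'd--Kohno Lie algebra computation of Section~5, yields the embedding bands $[2r(n-2),2r(n-1)+1]$; and one must further show (Corollary~\ref{cor:2nMinus1Injects} and Corollary~\ref{cor:SecondBandDifferential}) that the degree-$(2n-1)$ classes on both sides match up precisely so they cancel in the homotopy fibre. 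There is also no established connectivity bound for the second orthogonal calculus derivative of $B\mr{Top}(V)$, and no known mechanism relating it to the embedding calculus of $W_{g,1}$; even if one were established, a bare connectivity estimate would not give the full vanishing result, since one must also rule out intermediate contributions from the Torelli side (the ``miraculous cancellation'' of Remark~\ref{rem:miracle}). In short, you have the right cast of characters but not the actual argument.
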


Combined with Theorem A of \cite{kupersdisk}, this implies that $\pi_d(B\Diff_\partial(D^{2n}))$ is finite for $d<2n-1$ and thus gives an affirmative answer to Question 1(c) of Burghelea in \cite{KuiperProblems} in even dimensions. \cref{thm:main-BDiff} represents the contiguous range of degrees in which we obtain complete information, but we also obtain complete information outside certain ``bands" of degrees beyond this. The meaning of this may be most easily appreciated by looking at Figure \ref{fig:rat}; the formal statement is as follows.

\begin{figure}[p]
	\centering
	\begin{tikzpicture}
	\begin{scope}[scale=.65]
	
	\def\HH{28} 
	\def\WW{18} 
	\def\HHhalf{14} 
	\def\WWhalf{10} 
	
	\def\AA{-1}
	\def\BB{1}
	\def\CC{-1}
	
	\clip (-1,-1) rectangle ({\WW+0.5},{\HH+0.75});
	\draw (-.5,0)--({\WW+.5},0);
	\draw (0,-1) -- (0,{\HH+1.5});
	\begin{scope}
	\foreach \s[evaluate={\si=int(2*\s)}] in {0,...,\HH}
	{
		\draw [dotted] (-.5,\s)--(.25,\s);
		\draw [dotted] (.75,\s) -- ({\WW+.5},\s);
		\node [fill=white] at (-.25,\s) [left] {\tiny $\si$};
	}

	\foreach \s[evaluate={\si=int(2*(\s+2))}] in {1,...,\WW}
	{
		\draw [dotted] (\s,-0.5)--(\s,{\HH+.5});
		\node [fill=white] at (\s,-.5) {\tiny $\si$};
	}
	\end{scope}

	\foreach \s in {2,4,6,8}
	{
		\fill[black!10!white] (1,{\s*(3-1)+.5*\BB}) -- (\WW,{\s*(\WW+1)+.5*\BB}) -- (\WW,{\s*(\WW+2)+.5*\CC}) -- (1,{\s*3+.5*\CC});
		\fill[black!20!white] (1,{\s*(3-2)+.5*\AA}) -- (\WW,{\s*\WW+.5*\AA}) -- (\WW,{\s*(\WW+1)+.5*\BB}) -- (1,{\s*(3-1)+.5*\BB});
	}
	
	\foreach \l[evaluate={\s=int(2*\l+1)}] in {2,...,\WWhalf}
	{
		\foreach \t in {2,...,\HHhalf}
		{
			\ifthenelse{\equal{\s}{7} \AND \equal{\t}{9}}
			{}
			{\node [black] at ({\s-2},{\s-.5+2*(\t-2)}) {$\bullet$};}
		}
	}
	
	\foreach \l[evaluate={\s=int(2*\l)}] in {2,...,\WWhalf}
	{
		\foreach \t in {2,...,\HHhalf}
		{
			\ifthenelse{\equal{\s}{6} \AND \equal{\t}{8}}
			{}
			{\node [black] at ({\s-2},{\s-.5+2*(\t-2)}) {$\bullet$};}
		}
	}
	  
	\node [fill=white] at (-.5,-.75) {$\nicefrac{*}{2n}$};

	\draw [thick,Mahogany,dotted] (1,{0.25}) -- (4.5,4/2);
	\draw [thick,Mahogany,dotted] (4.5,4/2) -- (\WW, {(2*\WW+3)/6});

	\foreach \s in {3,5,7}
	{
		\fill[black!10!white] (1,{\s*(3-1)+.5*\BB}) -- (\WW,{\s*(\WW+1)+.5*\BB}) -- (\WW,{\s*(\WW+2)+.5*\CC}) -- (1,{\s*3+.5*\CC});
		\fill[black!20!white] (1,{\s*(3-2)+.5*\AA}) -- (\WW,{\s*\WW+.5*\AA}) -- (\WW,{\s*(\WW+1)+.5*\BB}) -- (1,{\s*(3-1)+.5*\BB});
	}

	\foreach \s in {2,4,6,8}
	{
		\fill[black!20!white,blend mode = darken] (1,{\s*(3-2)+.5*\AA}) -- (\WW,{\s*\WW+.5*\AA}) -- (\WW,{\s*(\WW+1)+.5*\BB}) -- (1,{\s*(3-1)+.5*\BB});
	}
	
	\foreach \s in {2,4,6,8}
	{
		\fill[Mahogany,blend mode = color] (1,{\s*(3-1)+.5*\BB}) -- (\WW,{\s*(\WW+1)+.5*\BB}) -- (\WW,{\s*(\WW+2)+.5*\CC}) -- (1,{\s*3+.5*\CC});
		\fill[Mahogany,blend mode = color] (1,{\s*(3-2)+.5*\AA}) -- (\WW,{\s*\WW+.5*\AA}) -- (\WW,{\s*(\WW+1)+.5*\BB}) -- (1,{\s*(3-1)+.5*\BB});
	}
	\foreach \s in {3,5,7}
	{
		\fill[Periwinkle,blend mode = color] (1,{\s*(3-1)+.5*\BB}) -- (\WW,{\s*(\WW+1)+.5*\BB}) -- (\WW,{\s*(\WW+2)+.5*\CC}) -- (1,{\s*3+.5*\CC});
		\fill[Periwinkle,blend mode = color] (1,{\s*(3-2)+.5*\AA}) -- (\WW,{\s*\WW+.5*\AA}) -- (\WW,{\s*(\WW+1)+.5*\BB}) -- (1,{\s*(3-1)+.5*\BB});
	}

	\fill [black!20!white] (1,{8*(3-2)}) -- (\WW,{8*\WW-10}) -- (1,{\HH+1});

	\fill [black,blend mode=saturation] (1,{3*(3-2)+.5*\AA}) -- ({4+.5*\CC-.5*\AA},{12+1.5*\CC-\AA})--({2+.25*\CC-.25*\AA},{8+\CC-0.5*\AA}) -- ({6+.5*\CC-.5*\AA},{24+2*\CC-1.5*\AA}) -- ({3+.25*\CC-.25*\AA},{15+1.25*\CC-0.75*\AA}) -- (7.8,38.5) -- (3.75,22) -- (6.75,40) -- (1,40) -- (1,2.5);
	\fill [black,opacity=.1] (1,{3*(3-2)+.5*\AA}) -- ({4+.5*\CC-.5*\AA},{12+1.5*\CC-\AA})--({2+.25*\CC-.25*\AA},{8+\CC-0.5*\AA}) -- ({6+.5*\CC-.5*\AA},{24+2*\CC-1.5*\AA}) -- ({3+.25*\CC-.25*\AA},{15+1.25*\CC-0.75*\AA}) -- (7.8,38.5) -- (3.75,22) -- (6.75,40) -- (1,40) -- (1,2.5);

	
	
	\foreach \s in {1,...,\WW}
	{
			\node [black] at ({\s},{3*(\s)*(3-2)+.5*\AA}) {$\bullet$};
	}

	\foreach \s in {4,...,\WW}
	{
		\node [black] at ({\s},{5*(\s)*(3-2)+.5*\AA}) {$\bullet$};
	}

	
	\foreach \s in {0,...,3}
	{
		\node [black] at ({4+\s},{19.5-\s}) {$\bullet$};
	}

	\foreach \s in {0,...,3}
	{
		\node [black] at ({5+\s},{22.5-\s}) {$\bullet$};
		\node [black] at ({5+\s},{24.5-\s}) {$\bullet$};
	}

	\foreach \s in {0,...,3}
	{
		\node [black] at ({6+\s},{25.5-\s}) {$\bullet$};
		\node [black] at ({6+\s},{27.5-\s}) {$\bullet$};
	}

	\foreach \s in {0,...,3}
	{
		\node [black] at ({7+\s},{28.5-\s}) {$\bullet$};
		\node [black] at ({7+\s},{30.5-\s}) {$\bullet$};
	}
		

	\end{scope}
	\end{tikzpicture}
	\caption{The rational homotopy groups $\pi_*(B\Diff_\partial(D^{2n})) \oq$ for $2n \geq 6$. A dot represents a 1-dimensional vector space which is detected by Pontrjagin classes, an empty and unshaded entry represents $0$, and the shaded area remains unknown. The dotted line shows the Igusa stable range. The colours indicate the eigenspaces of the reflection involution: red is $(+1)$, blue is $(-1)$, grey may contain either, and the dots are in the $(-1)$-eigenspace.}
	\label{fig:rat}
\end{figure}
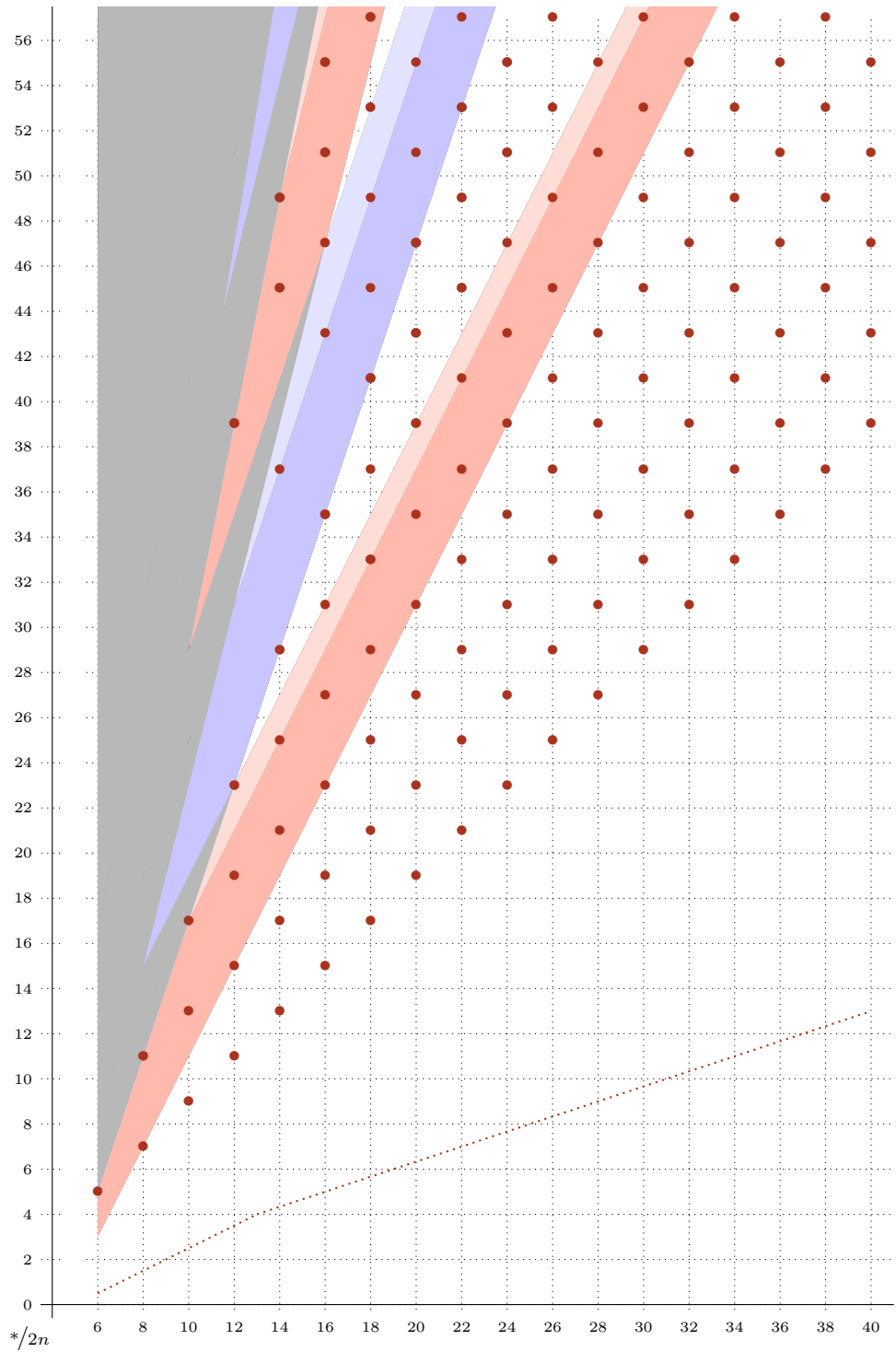

\begin{atheorem}\label{thm:main-bands} Let $2n \geq 6$. Then in degrees $d \geq 4n-9$ we have
\[\pi_d(B\Diff_\partial(D^{2n})) \oq = \begin{cases} \bQ & \text{if $d \equiv 2n{-}1 \;\text{mod}\;4 $ and $d \notin \underset{r \geq 2}\bigcup [2r(n{-}2)-1,2rn-1]$,} \\
	0 & \text{if $d \not\equiv 2n{-}1 \;\text{mod}\;4 $ and $d \notin \underset{r \geq 2}\bigcup  [2r(n{-}2)-1,2rn-1]$,} \\
	\text{?} & \text{otherwise},
	\end{cases}\]
and the $\bQ$'s are detected by Pontrjagin classes.
\end{atheorem}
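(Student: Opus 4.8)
The plan is to push the analysis underlying Theorem~\ref{thm:main-BDiff} into all degrees, keeping careful track of which degrees can be affected by the ``higher'' phenomena that were invisible below $4n-10$. By the Morlet equivalence \eqref{eq:Morlet} we must understand $\pi_{*+2n}(\mr{Top}(2n)/\mr{O}(2n))\oq$, and the machinery of the earlier sections --- the Weiss fibre sequences relating $B\mr{Diff}_\partial(D^{2n})$ to $B\mr{Diff}_\partial(W_{g,1})$ for $W_{g,1}=\#^g(S^n\times S^n)\setminus\mr{int}(D^{2n})$, the Galatius--Randal-Williams description of the stable cohomology of $B\mr{Diff}_\partial(W_{g,1})$ by $\kappa$-classes, Goodwillie--Weiss embedding calculus for the self-embedding spaces of the $W_{g,1}$ together with its graph-complex description, and Borel's computation of the stable cohomology of $\mr{Sp}_{2g}(\bZ)$ (resp.\ $\mr{O}_{g,g}(\bZ)$) --- assembles $\pi_*(B\mr{Diff}_\partial(D^{2n}))\oq$ from a complex carrying, beyond its homological grading, an auxiliary ``weight'' grading $r\ge 1$ which is preserved by the differential and compatible with the reflection involution. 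The first step is to pin down the weight-$1$ part: it is the indecomposable part, and one identifies it completely with a single copy of $\bQ$ in each degree $\equiv 2n{-}1\bmod 4$ that is $\ge 2n-1$ --- the transgressions of $p_n-e^2,\ p_{n+1},\ p_{n+2},\dots$ on $\mr{Top}(2n)/\mr{O}(2n)$ --- each of which is non-zero and detected by Pontrjagin classes, so that this recovers Weiss' non-vanishing \cite[Section~6]{weissdalian} and extends it to all degrees.

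The heart of the argument is then a sharp support estimate: for $r\ge 2$ the weight-$r$ part is concentrated in the band of degrees $[2r(n{-}2)-1,\, 2rn-1]$. A degree count --- weighing the cohomological degrees of the Euler class ($2n$) and of the lowest relevant characteristic classes ($2(n{-}2)$, essentially $\kappa_{p_{n-1}}$ together with the connectivity of the relevant configuration-space layer) against the suspension shift of \eqref{eq:Morlet} --- explains the two endpoints: a weight-$r$ class is an $r$-fold ``secondary product'' of weight-$1$ contributions and so lands between $2r(n{-}2)-1$ and $2rn-1$. Showing that nothing escapes this naive range is where the arithmetic-group representation theory is used: one must rule out contributions from $p_i$ with $i\le n{-}2$ and from the low-degree homology of the embedding-calculus layers, which survive neither the passage to $\mr{Sp}_{2g}(\bZ)$- (resp.\ $\mr{O}_{g,g}(\bZ)$-) coinvariants nor the vanishing lines already used for Theorem~\ref{thm:main-BDiff}. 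Granting the support estimate, the theorem follows: for $d\ge 4n-9$ outside $\bigcup_{r\ge 2}[2r(n{-}2)-1,2rn-1]$ only the weight-$1$ part contributes in degree $d$, giving $\bQ$ if $d\equiv 2n{-}1\bmod 4$ and $0$ otherwise, while inside a band we can only record that the weight-$\ge 2$ part is uncontrolled; Theorem~\ref{thm:main-BDiff} is the case in which the whole range $d\le 4n-10$ lies below the first band $[4n{-}9,4n{-}1]$.

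The principal obstacle is to make the support estimate hold on \emph{every} page of the spectral sequence assembling this complex, not merely on its first: a priori a higher differential or a hidden extension could transport a class out of the predicted band. One therefore needs either a degeneration statement for the complex in the relevant range --- obtained, as in the earlier sections, from a Koszul-duality argument that forces the higher differentials to vanish --- or an a priori vanishing line stable under all differentials, supplemented by the finiteness results of \cite{kupersdisk} to control torsion. A secondary, more bookkeeping-type difficulty is to check that the weight-$1$ tower is not truncated from below by weight-$\ge 2$ classes outside the bands (again a consequence of the support estimate, which keeps such classes away), and to determine the sign with which the reflection involution acts on $e$ and on each $p_{n+k}$, which is what produces the $(\pm 1)$-eigenspace pattern displayed in Figure~\ref{fig:rat}.
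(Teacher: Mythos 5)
Your sketch assembles the right ingredients in roughly the right order (Morlet, Weiss' fibre sequence, Galatius--Randal-Williams, embedding calculus, Borel, arithmetic representation theory), but as written it describes an analogy rather than a proof, and two essential mechanisms are missing or misattributed.

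First, there is no intrinsic ``weight grading on a complex computing $\pi_*(B\mr{Diff}_\partial(D^{2n}))\oq$.'' The paper reduces Theorem~\ref{thm:main-bands} to Theorem~\ref{thm:HtyF} via the fibration sequence \eqref{eq:IntroFibSeq}, and the bands in Theorem~\ref{thm:HtyF} come from a specific mechanism that your sketch never isolates: the space $(\Omega^{2n+1}_0\tfrac{\mr{Top}}{\mr{Top}(2n)})_\bQ$ is identified (Proposition~\ref{prop:FisTopModTop2n}) with the fibre $F_n$ of the map $X_1(g) \to B\mr{TorEmb}^{\mr{fr},\cong}_{\half\partial}(W_{g,1})_\ell$ appearing in the cross diagram \eqref{eq:CrossDiagram}, and since its homotopy groups are \emph{trivial} $\olG$-representations one may take $\olG$-invariants of the long exact sequence of this fibration, which is exact because algebraic $\olG$-representations are semi-simple. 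The bands are then exactly the union of the support of $[\pi_*(X_1(g))\oq]^{\olG}$ (computed in Proposition~\ref{prop:X1HtyEstimate} from the Harrison homology of $H^*(X_1(g);\bQ)$, whose stable-range generators are the twisted $\kappa$-classes in degree $n$) and the support, shifted by one, of $[\pi_*(B\mr{TorEmb}^{\mr{fr},\cong})\oq]^{\olG}$ (Corollary~\ref{cor:EmbEstimate}, from the Bousfield--Kan spectral sequence for the embedding calculus tower). Your degree heuristic via $e$ and $\kappa_{p_{n-1}}$ is not what is happening: the argument is carried out with \emph{framed} diffeomorphism groups, where $\kappa_{p_{n-1}}$ vanishes, and the $2(n{-}2)$ in the lower endpoint comes from the connectivity of the embedding-calculus layers (Proposition~\ref{prop:higher-layers-qualitative}, Proposition~\ref{prop:tohofib-degrees}), not from any characteristic class.

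Second, you correctly flag the possibility of higher differentials as the principal obstacle, but misattribute the resolution to ``a Koszul-duality argument.'' Koszul duality (Section~\ref{sec:explicit-computations}) is used to \emph{compute} the layers, not to force degeneration. The paper handles the differential issue by noting that, in the relevant range, the bands contributing to the Bousfield--Kan $E^1$-page do not overlap, so only $d^1$ can be nonzero; and the one genuine overlap in degree $2n-1$ is disposed of by explicit injectivity statements (Corollary~\ref{cor:2nMinus1Injects} and Corollary~\ref{cor:SecondBandDifferential}). That degree-$2n-1$ cancellation is essential --- it is what removes the single extra copy of $\bQ$ from $\pi_*(F_n)$ below the second band --- and it has no analogue in your sketch.
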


There is a fibre sequence
\begin{equation}
	\label{eq:IntroFibSeq}
\Omega^{2n+1} (\tfrac{\mr{Top}}{\mr{Top}(2n)}) \lra \Omega^{2n} (\tfrac{\mr{Top}(2n)}{\mr{O}(2n)})  \lra \Omega^{2n} (\tfrac{\mr{Top}}{\mr{O}(2n)}),
\end{equation}
where the basepoint component of the middle term is identified with $B\Diff_\partial(D^{2n})$ by Morlet's theorem, and where the right-hand term has rational homotopy groups $\bQ$ in degrees $d \geq 2n-1$ such that $d \equiv 2n-1 \mod 4$, and these are (tautologically) detected by Pontrjagin classes. A strengthening of Theorems \ref{thm:main-BDiff} and \ref{thm:main-bands} is then as follows.

\begin{atheorem}\label{thm:HtyF} 
The rational homotopy groups of $\Omega^{2n+1}_0 (\tfrac{\mr{Top}}{\mr{Top}(2n)})$ are supported in degrees $* \in \bigcup_{r \geq 2} [2r(n-2)-1, 2rn-2]$.
\end{atheorem}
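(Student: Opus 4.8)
The plan is to reduce Theorem~\ref{thm:HtyF} to a calculation of $\pi_*(B\mr{Diff}_\partial(D^{2n}))\oq$ together with its \emph{Pontrjagin class map} to $\pi_*(\Omega^{2n}_0(\tfrac{\mr{Top}}{\mr{O}(2n)}))\oq$, using the fibration sequence~\eqref{eq:IntroFibSeq}, and then to carry out that calculation by the stable moduli space method. Since $\mr{Top}/\mr{O}$ is rationally trivial, the right-hand term of~\eqref{eq:IntroFibSeq} is rationally $\Omega^{2n}_0(\mr{O}/\mr{O}(2n)) \simeq_\bQ \prod_{j\geq 0}K(\bQ, 2n{-}1{+}4j)$, so the problem becomes entirely one about $B\mr{Diff}_\partial(D^{2n})$ and which of its rational homotopy is detected by Pontrjagin classes. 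It should be stressed that Theorem~\ref{thm:HtyF} is genuinely stronger than Theorems~\ref{thm:main-BDiff} and~\ref{thm:main-bands}: the long exact sequence of~\eqref{eq:IntroFibSeq} controls $\pi_d(\Omega^{2n+1}_0(\tfrac{\mr{Top}}{\mr{Top}(2n)}))\oq$ only through the kernel and cokernel of the Pontrjagin class map in degrees $d$ and $d{+}1$, and at the edge degrees $d = 2r(n{-}2){-}2$ and $d = 2rn{-}1$ of the bands this forces one to know the \emph{non-Pontrjagin} part of the rational homotopy of $B\mr{Diff}_\partial(D^{2n})$, which Theorems~\ref{thm:main-BDiff}--\ref{thm:main-bands} leave undetermined.

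To compute $\pi_*(B\mr{Diff}_\partial(D^{2n}))\oq$ I would embed $D^{2n} \hookrightarrow W_{g,1} := \#^g(S^n\times S^n)\setminus\mathrm{int}(D^{2n})$, use a Weiss fibre sequence to relate $B\mr{Diff}_\partial(D^{2n})$ to $B\mr{Diff}_\partial(W_{g,1})$ and the Torelli space $B\mr{Tor}_\partial(W_{g,1})$, and let $g\to\infty$. The external inputs are the homological stability theorems of Galatius and the second-named author; their identification of the stable rational cohomology of $B\mr{Diff}_\partial(W_{g,1})$ as a free graded-commutative algebra on Miller--Morita--Mumford classes; the computation, due to the authors, of the stable rational cohomology of $B\mr{Tor}_\partial(W_{g,1})$ in terms of a graph complex (the Chevalley--Eilenberg complex of a Torelli Lie algebra); and the vanishing $\pi_d(B\mr{Diff}_\partial(D^{2n}))\oq = 0$ for $1 \leq d \leq 2n-5$ of the second-named author, which anchors the low-degree end and forces every contribution below the first band to lie in the Pontrjagin series of degrees $\equiv 2n{-}1 \bmod 4$.

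The crucial step is then a sharp two-sided estimate on this graph complex. Besides its homological degree it carries a second grading --- a ``weight'', coming from the weight filtration on the cohomology of the moduli space (equivalently, from the combinatorial size of the graphs) --- whose $r$-th graded piece is what produces the $r$-th band, and one must show that this piece is concentrated in homological degrees forming an interval which, after the $(2n{+}1)$-fold loop shift of~\eqref{eq:IntroFibSeq} and after discarding the Pontrjagin part (exactly the classes of degree $\equiv 2n{-}1\bmod 4$ that are split off by the right-hand term of~\eqref{eq:IntroFibSeq}), equals $[2r(n{-}2){-}1,\, 2rn{-}2]$. The lower bound $2r(n{-}2){-}1$ should follow from a connectivity estimate for graphs assembled from the $(n{-}1)$-connected, $2n$-dimensional piece $W_{g,1}$, and the upper bound $2rn{-}2$ from a valence/Euler-characteristic bound for such graphs together with the removal of a single top Pontrjagin class.

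The main obstacle I expect is establishing this window of width $4r-1$ for \emph{every} weight $r\geq 2$, as opposed to obtaining only the first band from a crude connectivity argument: this requires precise control of how the intersection form and handle structure of $W_{g,1}$ interact with the weight grading, and a careful bookkeeping of the $\mathrm{mod}\ 4$ parity that separates Pontrjagin classes from everything else. Two secondary points are to check that the low-weight part contributes nothing beyond the Pontrjagin classes, and that the Pontrjagin class map in~\eqref{eq:IntroFibSeq} is injective on the surviving homotopy groups --- the assertion that the copies of $\bQ$ one finds really are detected by Pontrjagin classes --- which is exactly what makes the kernel/cokernel bookkeeping collapse to the stated support.
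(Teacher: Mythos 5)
Your plan has a fundamental gap: it omits the base of the Weiss fibre sequence entirely. The fibre sequence you would use is of the form
\[
B\mr{Diff}_\partial(D^{2n}) \lra B\mr{Diff}_\partial(W_{g,1}) \lra B\mr{Emb}^{\cong}_{\half\partial}(W_{g,1}),
\]
not one with the Torelli space as a term --- the Torelli space is a covering space of the middle term, not a base. To extract $\pi_*(B\mr{Diff}_\partial(D^{2n}))\oq$ from the long exact sequence you must control \emph{both} the middle term (via Torelli/graph-complex cohomology, as you propose) \emph{and} the base $B\mr{Emb}^{\cong}_{\half\partial}(W_{g,1})$, whose rational homotopy is an entirely separate and equally large computation carried out by embedding calculus: the Goodwillie--Weiss tower, its layers described by section spaces, a Federer spectral sequence, cohomology of configuration spaces, and the extended Drinfel'd--Kohno Lie algebras $\ft_g(\ul{k})$. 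None of this appears in your plan. This omission is fatal for the precise form of the bands: in the paper's proof the \emph{lower} edge $2r(n-2)-1$ of the $2r$-th band comes from the self-embedding side (the handle-dimension-versus-ambient-dimension codimension estimate in embedding calculus, Proposition \ref{prop:tohofib-degrees} and Corollary \ref{cor:EmbEstimate}), whereas the \emph{upper} edge $2rn-2$ comes from the Torelli side (the presentation of $H^*(X_1(g);\bQ)$ and the unstable Adams spectral sequence in Proposition \ref{prop:X1HtyEstimate}). A sharp two-sided estimate cannot be squeezed out of the Torelli graph complex alone; one side of the window is genuinely produced by the other half of the argument.

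Two further points where your route diverges in ways that matter. First, the paper works with the \emph{framed} Weiss fibre sequence rather than the unframed one: because $\widetilde{H}^*(B\mr{Diff}^{\mr{fr}}_\partial(W_{g,1});\bQ)=0$ in a stable range, the Torelli graph complex in the framed setting has no Pontrjagin or Euler labels, its generators are concentrated in degrees divisible by $n$ with a short quadratic presentation, and the Pontrjagin classes are reinstated afterwards by comparison with the loop space $X_0$ via the Family Signature Theorem (Theorem \ref{thm:ApplSignThm}); attempting this in the unframed setting, as you suggest, reintroduces the MMM classes everywhere and obscures exactly the degree bookkeeping you need. Second, your plan does not address the cancellation between the two sides: both $\pi_*(X_1(g))\oq^{\olG}$ and the base contribute a $\bQ$ in degree $2n-1$, and without showing the map between them is an isomorphism (Corollary \ref{cor:2nMinus1Injects}) these would leak into $\pi_*(\Omega^{2n+1}_0 (\tfrac{\mr{Top}}{\mr{Top}(2n)}))\oq$ outside the claimed bands. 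This is the ``miraculous cancellation'' of Remark \ref{rem:miracle}, and it is a nontrivial ingredient, not a bookkeeping step.
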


We refer to the range of degrees $[2r(n{-}2)-1,2rn-2]$ as the \emph{$2r$th band}. As long as $2r \leq n-2$ the $2r$th band is disjoint from the others, but beyond this the bands overlap. There are in principle also odd bands as well as a second band, but part of \cref{thm:HtyF} is that these contribute trivially.

\begin{remark}\label{rem:improvement-koszul}
In Corollary 7.3 of \cite{KR-WKoszul} we improve this to $* \in \bigcup_{r \geq 2} [2r(n-2)-1, 2r(n-1)+1]$. The argument there depends on the results in this paper.
\end{remark}

To explain the consequences of this calculation for $B\mr{Top}(2n)$ it is convenient, and stronger, to discuss the oriented version $B\mr{STop}(2n)$. This space carries an Euler class $e \in H^{2n}(B\mr{STop}(2n);\bZ)$, and has a stabilisation map $s \colon B\mr{STop}(2n) \to B\mr{STop}$.

\begin{acorollary}\label{cor:main-BTop}
For $2n \geq 6$, the map
\[s \times e \colon B\mr{STop}(2n) \lra B\mr{STop} \times K(\bZ,2n)\]
is rationally $(6n-8)$-connected.
\end{acorollary}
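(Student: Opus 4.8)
The plan is to compare $B\mr{STop}(2n)$ with the homotopy fibre of its stabilisation map and then apply Theorem~\ref{thm:HtyF}.

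First I would identify the relevant homotopy fibre. The homotopy fibre of $s$ is $\mr{STop}/\mr{STop}(2n)$, which equals $\mr{Top}/\mr{Top}(2n)$ because $\mr{STop}(2n)\subset\mr{Top}(2n)$ and $\mr{STop}\subset\mr{Top}$ are compatible orientation-kernel subgroups. As $s\times e$ lies over $s$ along the projection $B\mr{STop}\times K(\bZ,2n)\to B\mr{STop}$, whose homotopy fibre is $K(\bZ,2n)$, the map it induces on homotopy fibres is the restriction $\bar e\colon \mr{Top}/\mr{Top}(2n)\to K(\bZ,2n)$ of the Euler class. Hence $\hofib(s\times e)\simeq\hofib(\bar e)$, and it suffices to prove that $\hofib(\bar e)$ is rationally $(6n-9)$-connected.

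Next I would handle the classical range $*\le 2n+1$ directly. The space $\mr{Top}/\mr{Top}(2n)$ is $(2n-1)$-connected by microbundle stability \cite{kirbysiebenmann}; $\pi_{2n}(\mr{Top}/\mr{Top}(2n))\oq\cong\bQ$, detected by the (transgressed) Euler class, so $\bar e$ is a rational isomorphism on $\pi_{2n}$; and $\pi_{2n+1}(\mr{Top}/\mr{Top}(2n))\oq = 0$, which one extracts from the fibration~\eqref{eq:IntroFibSeq} using the rational triviality of $\mr{Top}/\mr{O}$ and that $\pi_{2n+1}(\mr{O}/\mr{O}(2n))\oq = 0$. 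The long exact sequence of $\hofib(\bar e)\to\mr{Top}/\mr{Top}(2n)\xrightarrow{\bar e}K(\bZ,2n)$ then gives $\pi_*(\hofib(\bar e))\oq = 0$ for $*\le 2n+1$, together with isomorphisms $\pi_j(\hofib(\bar e))\oq\cong\pi_j(\mr{Top}/\mr{Top}(2n))\oq$ for $j\ge 2n+2$.

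Finally I would feed in Theorem~\ref{thm:HtyF}. Since $\Omega^{2n+1}K(\bZ,2n)\simeq *$, applying $\Omega^{2n+1}$ to the fibration above yields a weak equivalence $\Omega^{2n+1}_0\hofib(\bar e)\simeq\Omega^{2n+1}_0(\tfrac{\mr{Top}}{\mr{Top}(2n)})$, so $\pi_j(\hofib(\bar e))\cong\pi_{j-2n-1}(\Omega^{2n+1}_0(\tfrac{\mr{Top}}{\mr{Top}(2n)}))$ for $j\ge 2n+2$. By Theorem~\ref{thm:HtyF} this group vanishes rationally whenever $j-2n-1$ lies below the first band, i.e.\ $j-2n-1 < 2\cdot 2(n-2)-1 = 4n-9$, which is precisely $j\le 6n-9$. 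Combined with the previous paragraph, $\hofib(s\times e)$ is rationally $(6n-9)$-connected, hence $s\times e$ is rationally $(6n-8)$-connected. This bound is optimal for the method, since $\pi_{6n-8}(\hofib(s\times e))\oq\cong\pi_{4n-9}(\Omega^{2n+1}_0(\tfrac{\mr{Top}}{\mr{Top}(2n)}))\oq$ sits at the very bottom of the first band.

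The only substantial ingredient is Theorem~\ref{thm:HtyF}; the remaining steps are routine manipulations of fibration sequences. The point deserving most care is the classical range --- pinning down the rational homotopy of $\mr{Top}/\mr{Top}(2n)$ through degree $2n+1$, including the non-triviality of $\bar e$ on $\pi_{2n}$ --- but this lies well below the degrees in which Weiss's unstable Pontrjagin classes appear, and it follows from smoothing theory together with the previously known vanishing results recalled in the Introduction.
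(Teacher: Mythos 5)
Your proof is correct and takes essentially the same approach as the paper's: both combine Theorem~\ref{thm:HtyF} in degrees above $2n+1$ with classical facts (the Kirby--Siebenmann connectivity of $\mr{Top}(2n)/\mr{O}(2n)\to\mr{Top}/\mr{O}$, the rational triviality of $\mr{Top}/\mr{O}$, and the rational homotopy of $\mr{O}/\mr{O}(2n)$) in the low-degree range. The paper argues directly in terms of the map $s\times e$ on rational homotopy groups rather than passing to the homotopy fibre $\hofib(\bar{e})$, but this is only a cosmetic difference.
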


We can exert some control on how the terms in the fibre sequence \eqref{eq:IntroFibSeq} interact with each other, by considering the involution on 
\[B\Diff_\partial(D^{2n}) \simeq \Omega^{2n}_0 (\tfrac{\mr{Top}(2n)}{\mr{O}(2n)})\]
given by conjugating by a reflection of $D^{2n}$. In \cref{sec:Reflection} we will describe compatible involutions on the other terms in \eqref{eq:IntroFibSeq}, which on $\pi_*(\Omega^{2n}_0 (\smash{\tfrac{\mr{Top}}{\mr{O}(2n)}}))\oq$ act by $-1$, and on $\pi_*(\Omega^{2n+1}_0 (\smash{\tfrac{\mr{Top}}{\mr{Top}(2n)}})) \oq$ by $(-1)^r$ in the $2r$th band (in degrees where bands overlap one should interpret this as being inconclusive). This is reflected in Figure \ref{fig:rat} by colouring the bands: the $(+1)$-eigenspaces are red, the $(-1)$-eigenspaces are blue, their overlap grey. This allows us to conclude that elements detected by Pontrjagin classes also exist outside of those bands corresponding to odd $r$. This in particular implies that $e^2 \neq p_n \in H^{4n}(B\mr{Top}(2n);\bQ)$ for all $n \geq 3$, showing that $\tfrac{\mr{Top}(2n)}{\mr{O}(2n)}$ is not rationally $(4n-1)$-connected also for $2n=6,8$, when \cref{thm:main-BDiff} is not conclusive.  

\begin{remark}
Since $p_i \in H^*(B\mr{Top}(2n+2);\bQ)$ pulls back to $p_i \in H^*(B\mr{Top}(2n);\bQ)$ under the stabilisation map $B\mr{Top}(2n) \to B\mr{Top}(2n+2)$, if $\hat{p}_{n+k} : \pi_{2n+4k-1}(B\Diff_\partial(D^{2n})) \to \bQ$ is non-zero then so is $\hat{p}_{(n+1)+(k-1)} \colon \pi_{2(n+1)+4(k-1)-1}(B\Diff_\partial(D^{2n+2})) \to \bQ$. This was used to complete the pattern of dots in Figure \ref{fig:rat} inside the shaded bands.
\end{remark}

\cref{thm:HtyF} shows that the rational homotopy groups of $\smash{\Omega^{2n+1}_0\tfrac{\mr{Top}}{\mr{Top}(2n)}}$ are concentrated in bands, and that the groups in the odd and the second bands vanish. One might wonder whether the groups in \emph{all} bands vanish. Our last main theorem will show this is \emph{not} the case, by showing that some group in the fourth band is nonzero. In the range of degrees $[2r(n-2)-1, 2r(n-1)]$ inside the $2r$th band (corresponding to the darkly shaded regions in Figure \ref{fig:rat}) for $n$ large enough in comparison to $r$ we will show that the rational homotopy groups of $\Omega^{2n+1}_0\tfrac{\mr{Top}}{\mr{Top}(2n)}$ may be described as the homology of a certain chain complex. The terms in this chain complex are independent of $n$, though we do not know whether the same is true for the differential (though we believe it is). We determined this chain complex for the fourth band:

\begin{atheorem}\label{thm:fourth-band} For $n$ sufficiently large, the rational homotopy groups of $\Omega^{2n+1}_0\tfrac{\mr{Top}}{\mr{Top}(2n)}$ in the fourth band are given by the homology of a chain complex of the form
\begin{equation*}
\begin{tikzcd}
\bQ^2 &\arrow[l] \bQ^4 &\arrow[l] \bQ^{10} &\arrow[l] \bQ^{21} &\arrow[l] \bQ^{15} &\arrow[l, hook'] \bQ^3,
\end{tikzcd}
\end{equation*}
supported in degrees $[4n-9,4n-4]$. This complex has Euler characteristic 1, so there is at least one degree $d \in [4n-9,4n-5]$ such that \[\pi_d\left(\Omega^{2n+1}_0\tfrac{\mr{Top}}{\mr{Top}(2n)}\right)\oq \neq 0.\]
\end{atheorem}

In Corollary \ref{cor:pos-eigenspace-disc}, we show that the map $\pi_d(\Omega^{2n+1}_0\tfrac{\mr{Top}}{\mr{Top}(2n)})\oq \to \pi_d(\Omega^{2n}_0\tfrac{\mr{Top}(2n)}{\mr{O}(2n)})\oq \cong \pi_d(B\Diff_\partial(D^{2n}))\oq$ is injective in the range of degrees $[4n-9,4n-4]$. This gives an instance of a nonzero rational homotopy group of $B\Diff_\partial(D^{2n})$ which is not detected by Pontrjagin classes.

\begin{remark}\label{rem:watanabe}
Watanabe \cite{WatanabeS4} showed that $\pi_k(B\Diff_\partial(D^4))\oq$ contains a subspace $\cA_k$ of certain trivalent graphs with $2k$ vertices, by constructing for each such graph a framed 4-disc bundle such that a corresponding configuration space integral is nonzero. Based on this and his work on diffeomorphism groups of odd-dimensional discs \cite{watanabe1,watanabe2}, one might expect his results generalise to show that
\[\cA_k \subset \pi_{(2n-3)k}(B\Diff_\partial(D^{2n}))\oq.\]
(Since our paper became available, Watanabe has proved \cite{watanabeAddendum} that this is indeed the case.) As $\cA_1 = 0$ and $\cA_2 = \bQ$, one first sees such a contribution in degree $4n-6$ and this is compatible with our chain complex above.
\end{remark}

\subsection{Outline of the proof}\label{sec:Outline}

\subsubsection{Step \circled{1}: adding framings} It is more convenient both for formulating and proving results to work with a variant of $B\Diff_\partial(D^{2n})$ which includes framings. The disc has a standard framing $\ell_0$, inherited from $\bR^{2n}$, and we are interested in those framings which are equal to $\ell_0$ near the boundary. By comparison with the standard framing, such a framing is described by a map $D^{2n} \to \mr{GL}_{2n}(\bR)$ whose value near the boundary is the identity element. A diffeomorphism acts on a framing through its derivative, giving an action of $\Diff_\partial(D^{2n})$ on $\Omega^{2n} \mr{GL}_{2n}(\bR)$. The \emph{moduli space of framed discs} is defined as the homotopy quotient
\[B\Diff^\fr_\partial(D^{2n}) \coloneqq \Omega^{2n}\mr{GL}_{2n}(\bR) \sslash \Diff_\partial(D^{2n}).\]
This fits into a fibre sequence 
\[\Omega^{2n}\mr{GL}_{2n}(\bR) \lra B\Diff^\fr_\partial(D^{2n}) \lra B\Diff_\partial(D^{2n}),\]
and we let $B\Diff^\fr_\partial(D^{2n})_{\ell_0}$ denote the path-component containing the standard framing $\ell_0$. The analogue of Morlet's equivalence \eqref{eq:Morlet} with framings takes the form $B\Diff^\fr_\partial(D^{2n})_{\ell_0} \simeq \Omega_0^{2n} \mr{Top}(2n)$, so one already sees that adding framings gives a more direct relation between diffeomorphism groups and $\mr{Top}(2n)$. It has two further advantages: the computations become simpler, and the answer becomes cleaner. 

To prove \cref{thm:HtyF} (which implies \cref{thm:main-BDiff}, \cref{thm:main-bands}, and \cref{cor:main-BTop}) and \cref{thm:fourth-band}, we must then study the homotopy fibre of the map
\[B\Diff^\fr_\partial(D^{2n})_{\ell_0} \simeq \Omega_0^{2n} \mr{Top}(2n) \lra \Omega_0^{2n} \mr{Top} \simeq_\bQ \prod_{\mathclap{\substack{d >0\\ d \equiv 2n-1 \!\!\!\! \mod 4}}} K(\bQ, d).\] 

\subsubsection{Step \circled{2}: framed Weiss fibre sequence}
To understand $B\Diff^\fr_\partial(D^{2n})_{\ell_0}$, we will use a framed version
\begin{equation}
\label{eq:WeissFibSeq}
B\Diff^\fr_\partial(D^{2n}) \lra B\Diff^\fr_\partial(W_{g,1}) \lra B\Emb^{\fr,\cong}_{\half \partial}(W_{g,1})
\end{equation}
of a fibre sequence of Weiss \cite[Remark 2.1.3]{weissdalian}, which we develop in \cref{sec:diff-emb}. In this sequence $W_{g,1}$ denotes the manifold $D^{2n} \#  (S^n \times S^n)^{\# g}$, with boundary $\partial W_{g,1} = S^{2n-1}$, the middle term denotes a framed version of the classifying space $B\Diff_\partial(W_{g,1})$, and the rightmost term is a framed version of the classifying space of the monoid of self-embeddings of the manifold $W_{g,1}$ which are required to be the identity only on a disc $\half \partial W_{g,1} \coloneqq D^{2n-1} \subset \partial W_{g,1} = S^{2n-1}$.

Broadly speaking our strategy is to calculate or estimate the rational homotopy groups of path components of the middle and rightmost space of \eqref{eq:WeissFibSeq}, and hence deduce something about those of the leftmost space. In pursuing this strategy we are free to take $g$ to be arbitrarily large, as this has no effect on the leftmost space. This strategy is greatly aided by taking into account the actions of fundamental groups. The fundamental groups of the path components $\smash{B\Diff^\fr_\partial(W_{g,1})_\ell}$ and $\smash{B\Emb^{\fr,\cong}_{\half \partial}(W_{g,1})_\ell}$ surject onto certain  arithmetic groups of finite index in $\mr{O}_{g,g}(\bZ)$ when $n$ is even, or in $\mr{Sp}_{2g}(\bZ)$ when $n$ is odd. Along the way we will show that the actions of the fundamental groups of both these spaces on their higher rational homotopy groups factor over these arithmetic groups, and yield algebraic representations. The rational homotopy groups of $\smash{B\Diff^\fr_\partial(D^{2n})_{\ell_0}}$ are trivial representations for geometric reasons, so to determine them we only need to understand the invariant part of the rational homotopy groups of the total space and base space of \eqref{eq:WeissFibSeq}.

\subsubsection{Step \circled{3}: homotopy of diffeomorphism groups}
We first consider a path component 
\[B\Diff^\fr_\partial(W_{g,1})_\ell\]
of the total space of \eqref{eq:WeissFibSeq}. Results of Galatius and the second-named author \cite{grwcob, grwstab1} give a complete description of the \emph{cohomology} of this space in a range of degrees tending to infinity with $g$: because of the framing, its rational cohomology is trivial in such a range. However, as mentioned above this space has a complicated fundamental group, surjecting onto an arithmetic group. In particular this space is not nilpotent, and one cannot deduce much about its rational homotopy groups from information about its rational cohomology. To get around this we analyse the cohomology of the associated \emph{Torelli space}, the regular covering space of $B\Diff^\fr_\partial(W_{g,1})_\ell$ given by the surjection of its fundamental group to this arithmetic group. In preparation for this, in \cite{KR-WTorelli,KR-WAlg} we have studied the analogous question for the unframed version $B\Diff_\partial(W_{g,1})$. In \cref{sec:HtyDiffeo} we explain how to adapt these methods to the framed case, and estimate the rational homotopy groups of this framed Torelli space in terms of its rational cohomology. The excluded ranges of degrees $[2r(n-1)+1, 2rn-2]$ in \cref{thm:HtyF} are those in which this calculation is not conclusive: they correspond to the lightly shaded regions in Figure \ref{fig:rat}.

\subsubsection{Step \circled{4}: homotopy of self-embeddings}
We next consider a path component 
\[B\Emb^{\fr,\cong}_{\half \partial}(W_{g,1})_\ell\] of the base space of \eqref{eq:WeissFibSeq}. To do so, we adapt the qualitative application of embedding calculus to the unframed version of this space from \cite{KR-WAlg}, to yield quantitative results in the framed setting. Embedding calculus is a homotopy-theoretic tool which allows one to understand spaces of embeddings of handle codimension $\geq 3$. This is the reason we consider self-embeddings of $W_{g,1}$ that are the identity \emph{only} on a disc $\half \partial W_{g,1} \subset \partial W_{g,1}$: because of this relaxed boundary condition, these are embeddings of a manifold of handle dimension $n$ into a manifold of geometric dimension $2n$, which have handle codimension $n$. As long as $n \geq 3$, we can therefore use embedding calculus to obtain a description of $B\Emb^{\fr,\cong}_{\half \partial}(W_{g,1})_\ell$. Sections \ref{sec:HtyEmb} and \ref{sec:explicit-computations} are concerned with the calculation or estimation of the rational homotopy groups of this space, using a spectral sequence provided by the embedding calculus tower \cite{weissembeddings,weissembeddingserratum,goodwillieweiss}, whose first stage admits a description as homotopy automorphisms and whose higher layers admit a description as section spaces. The excluded ranges of degrees $[2r(n-2)-1, 2r(n-1)]$ in \cref{thm:HtyF} are those in which this calculation is not conclusive: they correspond to the darkly shaded regions in Figure \ref{fig:rat}. 

\subsubsection{Step \circled{5}: combining Steps \circled{3} and \circled{4}} The computations for the total space and base space of the framed Weiss fibre sequence, performed in Steps \circled{3} and \circled{4}, are independent of each other. When combined in \cref{sec:proofs} using the long exact sequence of homotopy groups for the framed Weiss fibre sequence, they yield conclusive information outside of the excluded ranges of degrees $[2r(n-2)-1, 2rn-1]$. Playing the methods of Steps \circled{3} and \circled{4} off against each other, we are able to completely resolve what happens in the second band (the band with $r=1$): this is why the union in \cref{thm:HtyF} starts with $r =2$. Using more refined calculational techniques, we are able to describe the entries in chain complex which comprises the fourth band (the one with $r=2$): this is the content of \cref{thm:fourth-band}.

\begin{remark}
Through this method we also obtain information about the rational homotopy groups of $B\Diff_\partial(W_{g,1})$, see \cref{thm:HtyBDiff}.
\end{remark}

\subsection*{Acknowledgements}
We would like to thank the anonymous referee for their comments and suggestions. AK was supported by NSF grant DMS-1803766. ORW was supported by the ERC under the European Union's Horizon 2020 research and innovation programme (grant agreement No.\ 756444), and by a Philip Leverhulme Prize from the Leverhulme Trust.

\addtocontents{toc}{\protect\setcounter{tocdepth}{2}}

\section{Background on representation theory} This section provides an overview of some well-known material on representation theory used throughout in this paper. It may be skipped at first reading, and referred back to when necessary.

\subsection{Gradings} Let $\cat{A}$ be a $\bQ$-linear symmetric monoidal category, such as the category $\bQ\text{-}\cat{mod}$ of $\bQ$-vector spaces $V$, or its subcategory $\bQ\text{-}\cat{mod}^f$ of finite-dimensional $\bQ$-vector spaces, both endowed with the symmetric monoidal structure given by tensor product. We will use $\cat{Gr}(\cat{A})$ to denote bounded below $\bZ$-graded objects $V_\ast$ in such a category $\cat{A}$, which has a symmetric monoidal structure with monoidal product given by the graded tensor product and symmetry given by the Koszul sign rule. When we define algebraic objects such as graded-commutative algebras or graded Lie algebras in $\cat{Gr}(\cat{A})$, we use this symmetric monoidal structure. In particular, unless stated otherwise we \emph{always} use the Koszul sign rule.

\begin{notation}For a graded object $V$, we will denote an $n$-fold grading shift by $V[n]_* \coloneqq V_{*-n}$. That is, $V[n]$ is $V$ ``shifted up by $n$.''\end{notation}

\begin{notation}We will occasionally use cohomological gradings: $V^* \coloneqq V_{-*}$.\end{notation}

\subsection{Representation theory of symmetric groups} We write $\fS_k$ for the group of permutations of the finite set $\ul{k} \coloneqq \{1,\ldots,k\}$. Thinking of $\fS_k$ as a category with a single object, a finite-dimensional rational $\fS_k$-representation is a functor $\fS_k \to \bQ\text{-}\cat{mod}^f$. The category of such representations is denoted $\cat{Rep}(\fS_k)$. It is semi-simple because $\fS_k$ is finite, and the irreducible representations are given by the Specht modules $S^\lambda$. There is one such irreducible representation for each partition $\lambda$ of $k$, and they are all distinct. We will often denote $S^\lambda$ by $(\lambda)$. For example, $S^{k} = (k)$ is the trivial representation and $\smash{S^{1^k}} = (1^k)$ is the sign representation.

Many $\fS_k$-representations come in families indexed by a non-negative integer $k$. Let $\cat{FB}$ denote the category whose objects are finite sets and whose morphisms are bijections. Then such families of representations can be assembled to functors $\cat{FB} \to \bQ\text{-}\cat{mod}^f$, the category of which we shall denote by $\cat{Rep}(\cat{FB})$. These are the same as symmetric sequences in finite-dimensional vector spaces.

Replacing $\bQ\text{-}\cat{mod}^f$ by $\cat{Gr}(\bQ\text{-}\cat{mod}^f)$, we can define graded representations of $\fS_k$ and the category $\cat{FB}$. We often shorten the notation $\cat{Gr}(\cat{Rep}(-))$ to $\cat{GrRep}(-)$.

\subsubsection{Characters and symmetric functions}\label{sec:characters-sym-functions}  A reference for the facts in this section is \cite{MacdonaldBook}. The representation theory of symmetric groups is captured by the ring of symmetric functions, given by the inverse limit
\[\Lambda \coloneqq \lim_{r} \bZ[x_1,\ldots,x_r]^{\fS_r},\]
taken in the category of graded rings, where each variable $x_i$ is given weight $1$, and where the bonding maps are given by $x_{r+1} \mapsto 0$. We write $\Lambda_k$ for the piece of weight $k$. Let $e_k$ denote the $k$th elementary symmetric function, $h_k$ the $k$th complete symmetric function, and $p_k$ the $k$th power sum function. Both the $e_k$ or the $h_k$ provide a polynomial generating set of $\Lambda$, and the $p_k$ provide one of $\Lambda \otimes_\bZ \bQ$. The Schur polynomials $s_\lambda$, for $\lambda$ a partition of $k$, provide a $\bZ$-module basis for $\Lambda_k$. There is an involution $\omega \colon \Lambda \to \Lambda$, uniquely determined by $\omega(e_k) = h_k$ and being a ring homomorphism, which has the property that $\omega(s_\lambda) = s_{\lambda'}$ with $\lambda'$ the transpose of the partition $\lambda$.

We let $R(\fS_k)$ denote the Grothendieck group of the category $\cat{Rep}(\fS_k)$ of finite-dimensional rational $\fS_k$-representations, and define the \emph{Frobenius character} by
\begin{align*} \mr{ch}_k \colon R(\fS_k) & \lra \Lambda_k \\
V &\longmapsto \mr{ch}_k(V) \coloneqq \sum_{|\lambda|=k} \chi_V(\cO_\lambda) \frac{p_{\lambda_1} \cdots p_{\lambda_\ell}}{1^{\lambda_1} \lambda_1! 2^{\lambda_2} \lambda_2! \cdots \ell^{\lambda_\ell} \lambda_\ell!},\end{align*}
where $\chi_V(\cO_\lambda)$ is the value that the character of $V$ takes on the conjugacy class $\cO_\lambda$ of cycle type $\lambda$. This is an isomorphism of abelian groups. The Specht module $S^\lambda$ is sent to the Schur polynomial $s_\lambda$; for example, $s_{k} = h_k$ and $s_{1^k} = e_k$. Under this isomorphism the involution $\omega \colon \Lambda_k \to \Lambda_k$ corresponds to tensoring with the sign representation.

Let $R(\cat{FB})$ denote the Grothendieck group of the category $\cat{Rep}(\cat{FB})$ of functors $V \colon \cat{FB} \to \bQ\text{-}\cat{mod}^f$ and let $\widehat{\Lambda} = \prod_k \Lambda_k$ be the completion of $\Lambda$ with respect to the filtration given by weight. Then the Frobenius character homomorphisms assemble to an isomorphism
\begin{align*}\mr{ch} \colon R(\cat{FB}) &\lra \widehat{\Lambda} \\
V & \longmapsto \prod_k \mr{ch}_k(V(\ul{k})).\end{align*}

Both the source and target come equipped with additional algebraic structure.  The category $\cat{Rep}(\cat{FB}) = (\bQ\text{-}\cat{mod}^f)^\cat{FB}$ of symmetric sequences in finite-dimensional $\bQ$-vector spaces has a symmetric monoidal structure given by Day convolution \cite[p.~157]{MacdonaldBook} \cite[Section 2.1]{FresseOperads}, which makes $R(\cat{FB})$ into a commutative ring. Recall that for $F, G \in (\bQ\text{-}\cat{mod}^f)^\cat{FB}$ the Day convolution $F \otimes G$ is given by the left Kan extension of
\[\cat{FB} \times \cat{FB} \xrightarrow{F \times G} \bQ\text{-}\cat{mod}^f \times \bQ\text{-}\cat{mod}^f \overset{\otimes}\lra \bQ\text{-}\cat{mod}^f\]
along $\sqcup \colon \cat{FB} \times \cat{FB} \to \cat{FB}$. In terms of the skeleton of $\cat{FB}$ given by the finite sets $\ul{k}$, we have
\[(F \otimes G)(\ul{k}) \cong \bigoplus_{a+b = k}\mr{Ind}^{\fS_k}_{\fS_a \times \fS_b} F(\ul{a}) \boxtimes G(\ul{b}).\]
The composition product $\circ$ of symmetric sequences \cite[p.~158]{MacdonaldBook}\cite[Section 2.2]{FresseOperads} provides an additional, non-commutative, binary operation on $R(\cat{FB})$, given in terms of Day convolution by
\[F \circ G \cong \bigoplus_{k \geq 0} F(\ul{k}) \otimes_{\fS_k} G^{\otimes \ul{k}}.\]
Under $\mr{ch}$, Day convolution corresponds to product of symmetric functions and composition product corresponds to \emph{plethysm} of symmetric functions. 

In particular, the operation $\lambda^k(-) \coloneqq (1^k) \circ -$ is the $k$th exterior power with respect to Day convolution, and these operations for $k \geq 0$ make $R(\cat{FB})$ into a $\lambda$-ring \cite{Yau}. That is, they satisfy:
\begin{enumerate}[(i)]
	\item $\lambda^0 = 1$, $\lambda^1 = \mr{id}$, $\lambda^k(1)$ if $k \geq 2$,
	\item $\lambda^k(x+y) = \sum_{i+j=k} \lambda^i(x)\lambda^j(y)$,
	\item\label{it:LambdaRing3} $\lambda^k(xy) = P_k(\lambda^1(x),\cdots,\lambda^k(x),\lambda^1(y),\cdots,\lambda^k(y))$,
	\item $\lambda^k(\lambda^l(x)) = P_{k,l}(\lambda^1(x),\cdots,\lambda^{kl}(x)$,
\end{enumerate}
with $P_k$ and $P_{k,l}$ universal polynomials with $\bZ$-coefficients. Under $\mr{ch}$ these operations are sent to $e_k \circ -$ and endow $\widehat{\Lambda}$ with a $\lambda$-ring structure, making the Frobenius character into an isomorphism of $\lambda$-rings. Since the $e_k$ are polynomial generators of $\widehat{\Lambda}$, we can recover the plethysm operation from the $\lambda$-ring structure.

To add a homological grading, we let $gR(\cat{FB})$ denote the Grothendieck group of functors $V_* \colon \cat{FB} \to \cat{Gr}(\bQ\text{-}\cat{mod}^f)$. The Frobenius character gives an isomorphism  
\begin{align*}\mr{ch} \colon gR(\cat{FB}) &\lra \widehat{\Lambda}[[t]][t^{-1}] \\
V &\longmapsto \sum_{n=-\infty}^\infty \mr{ch}(V_n)t^n
\end{align*}
of $\lambda$-rings, if we use Day convolution and exterior powers formed with respect to Day convolution on the left-hand side, and transfer this $\lambda$-ring structure to the right. Some care is required for explicit formulas on the right-hand side, to keep track of the grading as well as the Koszul sign rule. We shall refer to $\mr{ch}(V)$, and variations thereof for graded representations of other groups, as the \emph{Hilbert--Poincar\'e series} of $V$.

In a different direction, we can use the usual (internal, or Kronecker) tensor product and exterior powers to make $R(\fS_k)$ into a $\lambda$-ring, and transfer this along the isomorphism $\mr{ch}_k \colon R(\fS_k) \to \Lambda_k$ to a $\lambda$-ring structure on $\Lambda_k$ (not related to the $\lambda$-ring structure on $\smash{\widehat{\Lambda}}$). Thus the $\lambda$-ring structure on $\Lambda_k$ defines a notion of plethysm, which is called inner plethysm. There is a similar isomorphism $gR(\fS_k) \to \Lambda_k[[t]][t^{-1}]$ of $\lambda$-rings for graded $\fS_k$-representations.

\subsubsection{Pairs of commuting group actions} 
Writing $R(\fS_s \times \fS_k)$ for the Grothendieck group of finite-dimensional rational $\fS_s \times \fS_k$-representations, external product gives an isomorphism $- \boxtimes - \colon R(\fS_s) \otimes R(\fS_k) \to R(\fS_s \times \fS_k)$. Thus there is a Frobenius character map 
\[\mr{ch}_{s,k} \colon R(\fS_s \times \fS_k) \lra \Lambda_s \otimes \Lambda_k,\]
which is an isomorphism. The category of finite-dimensional rational $\fS_s \times \fS_k$-representations is also semi-simple, and the irreducibles are given by $S^\lambda \boxtimes S^\mu$ for $|\lambda| =s$ and $|\mu|=k$; the Frobenius character sends these to $s_\lambda \otimes s_\mu$.

The usual (internal, or Kronecker) tensor product and exterior powers make $R(\fS_s \times \fS_k)$ into a $\lambda$-ring. We can make $\Lambda_s \otimes \Lambda_k$ into a $\lambda$-ring by taking the tensor product of $\lambda$-rings: the product is the usual one on a tensor product, and the $\lambda$-operations are given by setting $\lambda^k(a \otimes 1) = \lambda^k(a) \otimes 1$ and $\lambda^k(1 \otimes b) = 1 \otimes \lambda^k(b)$ (then $\lambda^k(a \otimes b)$ is given by writing $a \otimes b$ as the product $(a \otimes 1)(1 \otimes b)$ and applying property \ref{it:LambdaRing3} of $\lambda$-rings). With these $\lambda$-ring structures, $\mr{ch}_{s,k}$ is an isomorphism of $\lambda$-rings.

Similarly, for $R(\cat{FB} \times \cat{FB})$, the Grothendieck group of the category $\cat{Rep}(\cat{FB} \times \cat{FB})$ of functors $V \colon \cat{FB} \times \cat{FB} \to \bQ\text{-}\cat{mod}^f$, the character gives an isomorphism
\begin{equation}\label{eqn:ch-fb-fb} \begin{aligned}\mr{ch} \colon R(\cat{FB} \times \cat{FB}) &\lra \widehat{\Lambda} \otimes \widehat{\Lambda} \\
V &\longmapsto \prod_{s,k \geq 0} \mr{ch}_{s,k}(V(\ul{s},\ul{k}))\end{aligned}\end{equation}
of $\lambda$-rings. The $\lambda$-ring structure on the left-hand side is given by Day convolution and exterior powers formed with respect to Day convolution, and that on the right-hand side given by the completed tensor product of $\lambda$-rings. There is a similar isomorphism for functors valued in graded vector spaces.

A final variant concerns $R(\cat{FB} \times \fS_k)$, the Grothendieck group of the category $\cat{Rep}(\cat{FB} \times \fS_k)$ of functors $V \colon \cat{FB} \times \fS_k \to \bQ\text{-}\cat{mod}^f$. In this case, the character
\begin{equation}\label{eqn:ch-fb-sk}\begin{aligned}\mr{ch} \colon R(\cat{FB} \times \fS_k) &\lra \widehat{\Lambda} \otimes \Lambda_k \\
V &\longmapsto \prod_{s} \mr{ch}_{s,k}(V(\ul{s},\ul{k}))\end{aligned}\end{equation}
is an isomorphism of $\lambda$-rings. The $\lambda$-ring structure on the left term is by interpreting $\cat{FB} \times \fS_k \to \bQ\text{-}\cat{mod}^f$ as a symmetric sequence in $\fS_k$-representations, and using Day convolution and composition product of these. On the right-hand term it is the completed tensor product of $\lambda$-rings, using the standard $\lambda$-ring structure on $\widehat{\Lambda}$ and the internal one on $\Lambda_k$. There is a similar isomorphism for functors valued in graded vector spaces.

\subsection{Representation theory of arithmetic groups} \label{sec:alg-representations} 
Let $\epsilon \in \{-1,1\}$ and consider $\bZ^{2g}$ equipped with the nonsingular $\epsilon$-symmetric pairing $\lambda \colon \bZ^{2g} \otimes \bZ^{2g} \to \bZ$ represented by the block-diagonal matrix
\[
\mr{diag}\left(\begin{bmatrix}0 & 1 \\
\epsilon  & 0 \end{bmatrix}, \begin{bmatrix}0 & 1 \\
\epsilon  & 0 \end{bmatrix}, \ldots, \begin{bmatrix}0 & 1 \\
\epsilon  & 0 \end{bmatrix}\right)
\]
with respect to the standard basis. We write $e_1, f_1, e_2, f_2, \ldots, e_g, f_g$ for this ordered basis, called a \emph{hyperbolic basis}, satisfying
\[\lambda(e_i, e_j) = 0, \quad\quad \lambda(e_i, f_j) = \delta_{ij}, \quad\quad \lambda(f_i, f_j)=0.\]
The group of automorphisms of $(\bZ^{2g},\lambda)$ is denoted by $\mr{O}_{g,g}(\bZ)$ when $\epsilon=1$ and by $\mr{Sp}_{2g}(\bZ)$ when $\epsilon = -1$. We will write
\[H \coloneqq \bQ^{2g} = \bZ^{2g} \otimes \bQ\]
when it is considered as a representation of these groups, leaving the $g$ implicit; it is equipped with the induced $\epsilon$-symmetric pairing $\lambda \colon H \otimes H \to \bQ$, and dually with a form $\omega \in H^{\otimes 2}$ characterised by the composition $H \xrightarrow{\mr{id}_H \otimes \omega} H \otimes H \otimes H \xrightarrow{\lambda \otimes \mr{id}_H} H$ being the identity map; in terms of the hyperbolic basis it is given by
\[\omega = \sum_{i=1}^g f_i \otimes e_i + \epsilon \cdot e_i \otimes f_i.\]

The groups of automorphisms of $(H,\lambda)$ are the $\bQ$-points of semisimple algebraic groups $\mathbf{O}_{g,g}$ and $\mathbf{Sp}_{2g}$ defined over $\bQ$. We also let $\mathbf{SO}_{g,g}$ denote the identity component of $\mathbf{O}_{g,g}$. By construction, $\mathbf{G} \in \{\mathbf{O}_{g,g},\mathbf{SO}_{g,g},\mathbf{Sp}_{2g}\}$ comes with a preferred embedding into the algebraic group $\mathbf{GL}_{2g}$ and we can form the group $\mathbf{G}_\bZ \coloneqq \mathbf{G}(\bQ) \cap \mr{GL}_{2g}(\bZ)$; this is given by $\{\mr{O}_{g,g}(\bZ),\mr{SO}_{g,g}(\bZ),\mr{Sp}_{2g}(\bZ)\}$ where $\mr{SO}_{g,g}(\bZ) \leq \mr{O}_{g,g}(\bZ)$ is the index 2 subgroup of elements with determinant $1$. To guarantee that the hypotheses for various results about algebraic groups are satisfied, we adopt the following convention:

\begin{convention}From now on, we will always assume that $g \geq 2$.\end{convention}

In this paper, an \emph{arithmetic subgroup} $G$ of $\mathbf{G} \in \{\mathbf{O}_{g,g},\mathbf{SO}_{g,g},\mathbf{Sp}_{2g}\}$ is a finite index subgroup $G \leq \mathbf{G}_\bZ$ which is Zariski dense in $\mathbf{G}(\bQ)$. In particular, in our definition an arithmetic group $G$ comes with a choice of ambient algebraic group $\mathbf{G}$. This is more restrictive than the usual definition, but will be convenient for us. When $\mathbf{G} \in \{\mathbf{SO}_{g,g},\mathbf{Sp}_{2g}\}$ any finite index subgroup $G \leq \mathbf{G}_\bZ$ is Zariski dense (recall that we always assume $g \geq 2$); when $\mathbf{G} = \mathbf{O}_{g,g}$ a finite index subgroup $G \leq \mr{O}_{g,g}(\bZ)$ is Zariski dense if and only if it is not contained in $\mr{SO}_{g,g}(\bZ)$ \cite[Section 2.1.1]{KR-WTorelli}.

\begin{definition}
A representation $\phi \colon G \to \mr{GL}(V)$ of an arithmetic group on a finite-dimensional rational vector space $\bQ$ is \emph{algebraic} if there is a morphism of algebraic groups $\varphi \colon \mathbf{G} \to \mathbf{GL}(V)$ which yields $\phi$ upon taking $\bQ$-points and restricting to $G \subset \mathbf{G}(\bQ)$. We let $\cat{Rep}(G)$ denote the category whose objects are algebraic $G$-representations and whose morphisms are linear maps intertwining the $G$-actions.
\end{definition}

When $\mathbf{G} \in \{\mathbf{O}_{g,g},\mathbf{SO}_{g,g},\mathbf{Sp}_{2g}\}$,  a subquotient of an algebraic $G$-representation is again algebraic, and every extension of algebraic representations is split \cite[Section 2.1.2]{KR-WTorelli}. In particular the category $\cat{Rep}(G)$ of algebraic representations of $G$ is semi-simple. It is also closed under the formation of tensor products and duals. By our Zariski density assumption, if $G' \leq G$ are arithmetic subgroups of the same $\mathbf{G}$, then an algebraic $G'$-representation extends uniquely to an algebraic $G$-representation: thus the restriction functor $\cat{Rep}(G) \to \cat{Rep}(G')$ is an isomorphism of categories.

\subsubsection{Borel's work on the cohomology of arithmetic groups}\label{sec:Borel}

The work of Borel \cite{borelstable, borelstable2} (see also \cite{tshishikuBorel}) gives a description of the cohomology of arithmetic groups with coefficients in an algebraic representation, in a stable range of degrees. In fact it suffices for the representation to be \emph{almost algebraic}, meaning that it agrees with an algebraic representation after restriction to some finite index subgroup. The following is the form of his results which we will use: a slight extension (to include the case of $\mathbf{SO}_{g,g}$) of \cite[Theorem 2.3]{KR-WTorelli}, which follows from its proof.

\begin{theorem}\label{thm:borel}
Let $G$ be an arithmetic subgroup of $\GG \in \{\mathbf{O}_{g,g},\mathbf{SO}_{g,g},\mathbf{Sp}_{2g}\}$, and set $e=0$ if $\GG = \Sp_{2g}$ and $e=1$ if $\GG = \SO_{g,g}$ or $\OO_{g,g}$. Then for $g \geq 3+e$ and $V$ an almost algebraic representation of $G$, the natural maps
\[H^*(\GG_\infty;\bQ) \otimes V^G \lra H^*(G;\bQ) \otimes V^{G} \xrightarrow{-\cdot-} H^*(G ; V)\]
are both isomorphisms for $* < g-e$, where
\[H^*(\GG_\infty;\bQ) \coloneqq \begin{cases} \bQ[\sigma_2,\sigma_6,\ldots] & \text{if $\GG = \Sp_{2g}$,} \\
\bQ[\sigma_4,\sigma_8,\ldots] & \text{if $\GG = \SO_{g,g}$ or $\OO_{g,g}$}.\end{cases}\]
\end{theorem}

Here $H^*(\GG_\infty;\bQ)$ is simply notation for the graded algebra indicated in the statement.  Specific classes $\sigma_i \in H^i(\mathbf{G}_\bZ;\bQ)$ will be chosen in \cref{lem:CohBG}.

\subsubsection{Parity of algebraic representations}\label{sec:Parity}
Every algebraic $G$-representation extends uniquely to an algebraic $\mathbf{G}_\bZ$-representation. The element $-\mr{id} \in \mathbf{G}_\bZ$ lies in the centre, so every $\mathbf{G}_\bZ$-representation $V$ canonically decomposes into a $+1$-eigenspace $V_\mr{even}$ and $-1$-eigenspace $V_\mr{odd}$ for the action by this involution; thus every algebraic $G$-representation has such a canonical decomposition too (even if $-\mr{id}$ does not lie in $G$). We say that an algebraic $G$-representation $V$ is \emph{even} if $V = V_\mr{even}$, and say it is \emph{odd} if $V = V_\mr{odd}$. Parity is preserved by taking linear duals, and is additive under tensor product.

\subsubsection{Irreducible algebraic representations and invariants}\label{sec:irreps}

For the algebraic groups $\mathbf{G} \in \{\mathbf{O}_{g,g}, \mathbf{SO}_{g,g}, \mathbf{Sp}_{2g}\}$ the irreducible algebraic representations of an arithmetic subgroup $G$ agree with those of the algebraic group $\mathbf{G}$, which in turn are classified in terms of highest weights \cite[II.2]{Jantzen} (to apply the general theory, we note that $\mathbf{G}$ is split reductive over $\bQ$).

For $\mathbf{G} \in \{\mathbf{O}_{g,g}, \mathbf{Sp}_{2g}\}$ we have described this in \cite[Theorem 2.5]{KR-WTorelli}: for each partition $\lambda$ of an integer $k \geq 0$ there is a representation $V_\lambda$ constructed as $[S^\lambda \otimes H^{[k]}]^{\fS_k}$, where $S^\lambda$ is the Specht module associated to $\lambda$ and $H^{[k]} = \ker( H^{\otimes k} \to \bigoplus_{i,j} H^{\otimes k-2})$ is the common kernel of all contractions using the pairing $\lambda$. The representation $V_\lambda$ is always either zero or irreducible: if $|\lambda| \leq g$ it is irreducible, and all such irreducibles are distinct. Every algebraic representation is a direct sum of $V_\lambda$'s.

For $\mathbf{G} = \mathbf{SO}_{g,g}$ the description of the irreducible representations is more complicated. However, for us only $G$-representations which are sums of restrictions of the $V_\lambda$ from $\mr{O}_{g,g}(\bZ)$ will arise, and it will not be important for us to understand whether these remain irreducible or split: when they do split, their summands will never appear separately. In fact for many purposes we will work ``for all large enough $g$'': one may argue as in \cite[8.5, 11.6.6]{Procesi} (the results are stated over $\bC$ but their proofs work over $\bQ$) that the $V_\lambda$ with strictly fewer than $g$ rows are still distinct irreducible $\mr{SO}_{g,g}(\bZ)$-representations, which will suffice.

As a consequence of this discussion, if $\lambda$ is a non-trivial partition then for all large enough $g$ we have $V_\lambda^G=0$ for any arithmetic subgroup $G$ of $\mathbf{G} \in \{\mathbf{O}_{g,g}, \mathbf{SO}_{g,g}, \mathbf{Sp}_{2g}\}$. Also, by Zariski density, on invariants we have
\[V_\mr{odd}^G = V_\mr{odd}^{\mathbf{G}_{\bZ}} = 0, \qquad \text{and} \qquad V^G = V^G_\mr{even}.\]

\subsubsection{Schur functors} \label{sec:schur-functors}
For each partition $\lambda$ of an integer $k \geq 0$ we have a \emph{Schur functor} \begin{align*}S_\lambda \colon \bQ\text{-}\cat{mod}^f &\lra \bQ\text{-}\cat{mod}^f \\
V &\longmapsto  [S^\lambda \otimes V^{\otimes k}]^{\fS_k}.\end{align*}
More generally, this formula defines Schur functors for graded vector spaces. By functoriality, $\mr{GL}(V)$ acts on $S_\lambda(V)$ yielding an algebraic representation of the algebraic group $\mathbf{GL}(V)$. For $V=H$ and $\mathbf{G} \in \{\mathbf{O}_{g,g},\mathbf{SO}_{g,g},\mathbf{Sp}_{2g}\}$, the canonical morphism $\mathbf{G} \to \mathbf{GL}_{2g}$ of algebraic groups makes $S_\lambda(H)$ an algebraic representation of any arithmetic group $G$ in $\mathbf{G}$. It is usually not irreducible as a $G$-representation, and decomposes into a direct sum of $V_\mu$'s. For this branching rule see \cite[Proposition 2.5.1]{KoikeTerada}.

\subsubsection{$gr$-algebraic representations}
Many of the representations we study are not \emph{a priori} representations of an arithmetic group $G$ as above, but rather of a larger group $\Gamma$ which features in an extension
\[1 \lra J \lra \Gamma \lra G \lra 1.\]

\begin{definition}Fix an extension as above. A $\Gamma$-representation is said to be \emph{$gr$-algebraic} if it admits a finite filtration such that the induced action of $\Gamma$ on the associated graded factors over $G$, and is algebraic as a $G$-representation.
\end{definition}

The notion of $gr$-algebraicity depends on an implicit choice of extension. The class of $gr$-algebraic $\Gamma$-representations is closed under subquotients and extensions, as well as tensor products and linear duals \cite[Lemma 2.5]{KR-WAlg}. Under certain conditions, $gr$-algebraic representations are automatically algebraic:

\begin{lemma}\label{lem:gr-alg-finite} If $V$ is a $gr$-algebraic $\Gamma$-representation on which the subgroup $J$ acts via automorphisms of finite order, then the action of $\Gamma$ on $V$ factors over $G$ and as a $G$-representation $V$ is algebraic.
\end{lemma}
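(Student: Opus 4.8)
The plan is to argue in two stages. First I would show that the normal subgroup $J$ must act trivially on $V$, which immediately forces the $\Gamma$-action to factor through $G$; then I would upgrade the resulting $G$-representation from $gr$-algebraic to algebraic, using that algebraic $G$-representations are closed under extensions.

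For the first stage, fix a finite filtration $0 = F_0 V \subseteq F_1 V \subseteq \cdots \subseteq F_n V = V$ of $\Gamma$-subrepresentations realising $gr$-algebraicity, so that $\Gamma$ acts on each subquotient $\mr{gr}_i V \coloneqq F_i V / F_{i-1} V$ through $G$ and algebraically there. Since $J$ lies in the kernel of $\Gamma \to G$, it acts trivially on every $\mr{gr}_i V$; hence for each $j \in J$ the endomorphism $\rho(j) - \mr{id}$ of $V$ carries $F_i V$ into $F_{i-1} V$, so that $(\rho(j) - \mr{id})^n = 0$, i.e.\ $\rho(j)$ is unipotent. On the other hand $\rho(j)$ has finite order by hypothesis, say $\rho(j)^m = \mr{id}$, so its minimal polynomial divides both $(x-1)^n$ and $x^m - 1$; as $x^m - 1$ is separable over $\bQ$, the greatest common divisor of these polynomials is $x - 1$, and therefore $\rho(j) = \mr{id}$. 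Thus $J$ acts trivially on $V$, the $\Gamma$-action factors over $G$, and each $F_i V$ is a $G$-subrepresentation of $V$.

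For the second stage I would induct on the filtration length $n$. When $n \leq 1$ we have $V \cong \mr{gr}_1 V$, which is algebraic. When $n > 1$, the inductive hypothesis applied to the length-$(n{-}1)$ filtration of $F_{n-1} V$ shows that $F_{n-1} V$ is algebraic, and $\mr{gr}_n V = V / F_{n-1} V$ is algebraic by assumption; the short exact sequence of $G$-representations $0 \to F_{n-1} V \to V \to \mr{gr}_n V \to 0$ then exhibits $V$ as an extension of algebraic $G$-representations, which is again algebraic — indeed it splits — by the structural properties of $\cat{Rep}(G)$ recorded earlier. Hence $V \in \cat{Rep}(G)$.

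The only real content is the elementary observation in the first stage that over $\bQ$ an operator which is simultaneously unipotent and of finite order is the identity; everything else is bookkeeping with filtrations together with the already-established closure of algebraic $G$-representations under subquotients and extensions. The one point deserving mild care is that this closure is invoked for extensions formed in the ambient category of all $\Gamma$-representations (which, after the first stage, are genuine $G$-representations), rather than merely within $\cat{Rep}(G)$; this is precisely the splitting statement cited from \cite[Section 2.1.2]{KR-WTorelli}, so I do not expect it to be an obstacle.
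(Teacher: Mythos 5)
Your proof is correct and follows essentially the same route as the paper's: first show that each $j \in J$ acts unipotently via the filtration, deduce triviality from finite order plus unipotence, and then pass from $gr$-algebraic to algebraic over $G$ using that extensions of algebraic $G$-representations split (equivalently, semi-simplicity of $\cat{Rep}(G)$). The only difference is that you spell out the second stage as an induction on filtration length, whereas the paper states it in one line; the content is the same.
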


\begin{proof}
Since $V$ is $gr$-algebraic, elements of $J$ must act unipotently. But a unipotent endomorphism of finite order is trivial, so $J$ acts trivially and hence the $\Gamma$-action on $V$ factors through a $G$-action. As algebraic $G$-representations are semi-simple (by our standing assumption that $g \geq 2$), a $gr$-algebraic representation which factors over $G$ is algebraic.
\end{proof}

Informally, we describe the conclusion of \cref{lem:gr-alg-finite} as $V$ ``descends to an algebraic $G$-representation.''

\subsubsection{Commuting symmetric group actions} We will also encounter $\bQ$-vector spaces with commuting actions of an arithmetic group $G$ as above, and a symmetric group.

\begin{definition}Let $\cat{Rep}(G \times \fS_k)$ denote the category of functors $\fS_k \to \cat{Rep}(G)$ whose values are algebraic representations. We will refer to these as \emph{algebraic} $G \times \fS_k$-representations.\end{definition}

The properties of this category can be understood as a consequence of the previous results. We can write $W \in \cat{Rep}(G \times \fS_k)$ canonically as a direct sum of isotypical $\fS_k$-representations $W = \bigoplus W_\lambda$ with $\lambda$ ranging over the partitions of $k$: $W_\lambda$ is the (internal) sum of all $\fS_k$-subrepresentations of $W$ isomorphic to the Specht module $S^\lambda$ (see \cref{sec:characters-sym-functions}). Each $W_\lambda$ is still a $G$-representation because the $\fS_k$- and $G$-actions commute. Since subrepresentations of algebraic $G$-representations are again algebraic, $W_\lambda$ is an algebraic $G$-representation. We can now invoke the properties of algebraic $G$-representations to deduce:

\begin{lemma}Suppose that $G$ is an arithmetic group with ambient algebraic group $\mathbf{G} \in \{\mathbf{O}_{g,g},\mathbf{SO}_{g,g},\mathbf{Sp}_{2g}\}$ and $g \geq 2$, then $\cat{Rep}(G \times \fS_k)$ is closed under subquotients, extensions, tensor products, and linear duals.\end{lemma}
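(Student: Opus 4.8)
The plan is to reduce each of the four closure statements to the corresponding property of $\cat{Rep}(G)$ recorded in Section~\ref{sec:alg-representations}: subquotients and extensions of algebraic $G$-representations are algebraic, and the class is closed under tensor products and linear duals. The bridge is the isotypical decomposition discussed just above the statement, which I would first make functorial.

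Since $\cat{Rep}(\fS_k)$ is semi-simple, sending $W \in \cat{Rep}(G \times \fS_k)$ to its multiplicity space $M_\lambda(W) \coloneqq \mathrm{Hom}_{\fS_k}(S^\lambda, W)$ defines, for each partition $\lambda$ of $k$, an exact $\bQ$-linear functor $\cat{Rep}(G \times \fS_k) \to \bQ\text{-}\cat{mod}^f$; because the $\fS_k$- and $G$-actions commute, $M_\lambda(W)$ carries a residual $G$-action, and the natural isomorphism $W \cong \bigoplus_{|\lambda|=k} S^\lambda \otimes M_\lambda(W)$ realises $W_\lambda \cong S^\lambda \otimes M_\lambda(W)$ as a $G$-subrepresentation of $W$. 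Hence $M_\lambda(W)$ is an algebraic $G$-representation, being a $G$-subquotient of one, and $W \mapsto (M_\lambda(W))_{|\lambda|=k}$ identifies $\cat{Rep}(G \times \fS_k)$ with the product $\prod_{|\lambda|=k} \cat{Rep}(G)$ as $\bQ$-linear categories.

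With this in hand, closure under subquotients and extensions holds componentwise in $\cat{Rep}(G)$ and is immediate. For tensor products and duals one has to track that the monoidal structure on $\cat{Rep}(G \times \fS_k)$ uses the \emph{internal} (Kronecker) $\fS_k$-action: applying $M_\lambda(-)$ to $W \otimes W'$ and to $W^\ast$ expresses the resulting multiplicity spaces as finite direct sums of tensor products $M_\mu(W) \otimes M_\nu(W')$, respectively duals $M_\mu(W)^\ast$, of the original multiplicity spaces, with coefficients the Kronecker numbers $\dim \mathrm{Hom}_{\fS_k}(S^\lambda, S^\mu \otimes S^\nu)$ and the self-duality $(S^\mu)^\ast \cong S^\mu$ of Specht modules; since $\cat{Rep}(G)$ is closed under finite direct sums, tensor products and duals, so is $\cat{Rep}(G \times \fS_k)$.

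I do not expect a genuine obstacle here: the statement is bookkeeping layered on top of the structural facts about $\cat{Rep}(G)$ from Section~\ref{sec:alg-representations}. The only point deserving care is checking at each step that the residual $G$-action on the multiplicity spaces is the expected one — diagonal on a tensor product, contragredient on a dual — which is exactly what the commuting-actions hypothesis provides. One can even sidestep the multiplicity-space computation altogether: an object of $\cat{Rep}(G \times \fS_k)$ is by definition a $G \times \fS_k$-representation whose restriction along $G \hookrightarrow G \times \fS_k$ is algebraic, each of the four operations commutes with this restriction, and so the result follows at once from the corresponding properties of $\cat{Rep}(G)$.
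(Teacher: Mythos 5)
Your proof is correct and follows essentially the same route as the paper, which also reduces the lemma to the corresponding properties of $\cat{Rep}(G)$ via the isotypical decomposition of $W$ under $\fS_k$. The shortcut you note at the end --- that membership in $\cat{Rep}(G \times \fS_k)$ is by definition a condition on the restriction to $G$, and all four operations commute with this restriction --- is in fact the cleanest version of the argument and makes the multiplicity-space bookkeeping unnecessary for this particular lemma (it is really needed for the classification of irreducibles that follows).
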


We can classify the irreducibles of $\cat{Rep}(G \times \fS_k)$: there is a canonical isomorphism
\[\mr{Hom}_{\fS_k}(S^\lambda, W) \boxtimes S^\lambda \lra W_\lambda\]
of $G \times \fS_k$-representations, given by evaluation. By the above considerations, the $G$-representation $\mr{Hom}_{\fS_k}(S^\lambda,W)$ is algebraic, so decomposes into a finite direct sum of irreducibles. When $\mathbf{G} \in \{\mathbf{O}_{g,g},\mathbf{Sp}_{2g}\}$, we conclude $W$ is a direct sum of $G \times \fS_k$-representations of the form $V_\mu \boxtimes S^\lambda$. These are either irreducible or zero, and irreducible if and only if $V_\mu$ is. When $\mathbf{G} = \mathbf{SO}_{g,g}$ the situation is more complicated, but as we explained before, for us only $G$-representations that are direct sums of the restrictions of $V_\lambda$ will arise; these also have such a direct sum decomposition.

\subsubsection{Characters and symmetric functions} \label{sec:char-arithmetic} A reference for the facts in this section is \cite{KoikeTerada}, see also \cite[Section 6.1.3]{KR-WTorelli}. Let $G$ be an arithmetic group with ambient algebraic group $\mathbf{G} \in \{\mathbf{O}_{g,g},\mathbf{SO}_{g,g},\mathbf{Sp}_{2g}\}$, and $R(G)$ denote the Grothendieck group of algebraic $G$-representations, which has the structure of a $\lambda$-ring using exterior powers. 

There is another basis of the ring $\Lambda$ of symmetric functions (see \cref{sec:characters-sym-functions}), given by orthogonal, resp.~ symplectic, Schur polynomials
\[s_{\langle \lambda \rangle} \coloneqq \begin{cases} o_\lambda & \text{if $\mathbf{G} = \mathbf{O}_{g,g}$,} \\
sp_\lambda & \text{if $\mathbf{G} = \mathbf{Sp}_{2g}$.}
\end{cases}\]
As these are an alternative basis of $\Lambda$ there is an automorphism $D \colon \Lambda \to \Lambda$, uniquely determined by $D(s_\lambda) = s_{\langle \lambda \rangle}$. The involution $\omega$ described in \cref{sec:characters-sym-functions} satisfies $\omega(sp_\lambda) = o_{\lambda'}$ with $\lambda'$ the transposition of $\lambda$, \cite[Theorem 2.3.2]{KoikeTerada}.

The $s_{\langle \lambda \rangle}$ are defined so that (for all $g$) the map of $\lambda$-rings $\Lambda \to R(G)$, uniquely determined by $s_1 \mapsto H$, sends $s_{\langle \lambda \rangle}$ to the representation $V_\lambda$ (and so is surjective). This map is isomorphism ``for large enough $g$'': the components $\Lambda_k$ for $k < g$ are mapped isomorphically onto the subgroup spanned by $V_\lambda$ with $|\lambda| < g$.

As for symmetric groups, we can add a homological grading. The result is a surjection of $\lambda$-rings $\Lambda[[t]][t^{-1}] \to gR(G)$, with $gR(G)$ the Grothendieck group of graded algebraic $G$-representations. In a stable range this has an inverse, which is the Hilbert--Poincar\'e series of a graded algebraic $G$-representation.

\section{Framed diffeomorphisms and self-embeddings} \label{sec:diff-emb} 

In this section we will complete Steps \circled{1} and \circled{2} from \cref{sec:Outline}. We will describe our main organisational tool, the \emph{delooped framed Weiss fibre sequence} 
	\[B\Diff^\fr_\partial(W_{g,1}) \lra B\Emb^{\fr,\cong}_{\half \partial}(W_{g,1}) \lra B(B\Diff^\fr_\partial(D^{2n})),\]
in \cref{thm:weiss-framed} below, introduce some definitions and notation relevant to it, discuss the path-components and fundamental groups of the three spaces involved, and prove a technical result concerning the action of the fundamental group of $B\Emb^{\fr,\cong}_{\half \partial}(W_{g,1})$ on the rational homotopy groups of all spaces in this fibre sequence. \cref{sec:HtyDiffeo,sec:HtyEmb} will then be concerned with computing rational homotopy groups of the left and middle spaces, with the goal of understanding those of the right-hand space. The definitions and notation follow our previous paper \cite{KR-WAlg}, particularly Section 8 applied in the special case of framings, and we will make use of several results from that paper. 

Throughout this section we make the standing assumptions that $n \geq 3$ and $g \geq 2$.

\subsection{The framed Weiss fibre sequence}

We will work with the smooth manifold 
\[W_{g,1} \coloneqq D^{2n} \# (S^n \times S^n)^{\# g},\]
with boundary given by $\partial W_{g,1} = S^{2n-1}$. We fix a disc $D^{2n-1} \subset S^{2n-1}$ and denote it by $\half \partial W_{g,1}$. 

\subsubsection{Diffeomorphisms and self-embeddings} We will work with the following spaces of diffeomorphisms and self-embeddings of $W_{g,1}$.

\begin{definition}\
\begin{enumerate}[(i)]
\item Let $\Diff_\partial(W_{g,1})$ denote the topological group of diffeomorphisms of $W_{g,1}$ fixing a neighbourhood of the boundary pointwise, with the $C^\infty$-topology.

\item Let $\Emb_{\half \partial}(W_{g,1})$ denote the topological monoid of embeddings $W_{g,1} \hookrightarrow W_{g,1}$ which are the identity on a neighbourhood of $\half \partial W_{g,1}$, with the $C^\infty$-topology.

\item There is an inclusion $\Diff_\partial(W_{g,1}) \subset \Emb_{\half \partial}(W_{g,1})$, and we let $\smash{\Emb_{\half \partial}^{\cong}}(W_{g,1}) \subset \Emb_{\half \partial}(W_{g,1})$ denote the path components which intersect $\Diff_\partial(W_{g,1})$ (i.e.\ those consisting of embeddings which are isotopic to diffeomorphisms).
\end{enumerate}
\end{definition}

\begin{remark}
In fact the map $\pi_0(\Diff_\partial(W_{g,1})) \to \pi_0(\Emb_{\half \partial}(W_{g,1}))$ is surjective when $2n \geq 6$, as its cokernel can be identified with the inertia group $I(W_g) \subset \Theta_{2n}$ of $W_{g}$. This vanishes when $n \neq 7$ by \cite[Corollary 3.2]{Kosinski} and when $n \equiv 3,5,6,7 \pmod 8$ by the main result of \cite{WallAction}. We shall not use these facts.
\end{remark}

The Weiss fibre sequence refers to the maps
\[B\Diff_\partial(D^{2n}) \lra B\Diff_\partial(W_{g,1}) \lra B\Emb^\mr{\cong}_{\half \partial}(W_{g,1})\]
relating the classifying spaces of these topological groups and group-like monoids, where we identify $D^{2n} = W_{0,1}$ and the left-hand map is induced by an inclusion of $D^{2n}$ into $W_{g,1}$; this fibre sequence originates in \cite[Remark 2.1.3]{weissdalian}, and in \cite[Section 4]{kupersdisk} it was shown to deloop with respect to the $E_{2n}$-structure on $B\Diff_\partial(D^{2n})$.

\subsubsection{Framings}
We will make use of a variant of the above with framings: rather than the space $B\Diff_\partial(W_{g,1})$ which classifies smooth manifold bundles with fibres $W_{g,1}$ and trivialised boundary, we will consider the variant which classifies such bundles equipped with the tangential structure of a framing of the vertical tangent bundle which is standard near the boundary. Rather than using fibrations over $B\mr{O}(2n)$ to describe tangential structures, we will use the equivalent model in terms of topological spaces with continuous $\mr{GL}_{2n}(\bR)$-actions (see \cite[Section 4.5]{grwsurvey} for a comparison). 

Let $\Theta_\fr$ denote the $\mr{GL}_{2n}(\bR)$-space given by $\mr{GL}_{2n}(\bR)$ with action given by right multiplication. Let $\mr{Fr}(TW_{g,1})$ denote the frame bundle associated to the tangent bundle $TW_{g,1}$, which is a principal $\mr{GL}_{2n}(\bR)$-bundle. A \emph{framing} on $W_{g,1}$ is a $\mr{GL}_{2n}(\bR)$-equivariant map $\ell \colon \mr{Fr}(TW_{g,1}) \to \Theta_\fr$. There is up to homotopy a unique boundary condition $\ell_\partial \colon \mr{Fr}(TW_{g,1})|_{\partial W_{g,1}} \to \Theta_\fr$ which extends to a framing of $W_{g,1}$, as a consequence of \cite[Lemma 8.5]{KR-WAlg}. We fix once and for all such a boundary condition, and we obtain by restriction a boundary condition $\ell_{\half \partial} \coloneqq \ell_\partial|_{\half \partial W_{g,1}}$ near $\half \partial W_{g,1}$.

\begin{definition}\
\begin{enumerate}[(i)]
\item Let $\mr{Bun}_\partial(\mr{Fr}(TW_{g,1}),\Theta_\fr;\ell_\partial)$ denote the space of $\mr{GL}_{2n}(\bR)$-equivariant maps $\ell \colon\mr{Fr}(TW_{g,1}) \to \Theta_\fr$ extending $\ell_\partial$.

\item Let $\mr{Bun}_{\half \partial}(\mr{Fr}(TW_{g,1}),\Theta_\fr;\ell_{\half \partial})$ denote the space of $\mr{GL}_{2n}(\bR)$-equivariant maps $\ell \colon \mr{Fr}(TW_{g,1}) \to \Theta_\fr$ extending $\ell_{\half \partial}$.
\end{enumerate}
\end{definition}

The derivative of a diffeomorphism of $W_{g,1}$ gives a $\mr{GL}_{2n}(\bR)$-equivariant map $\mr{Fr}(TW_{g,1}) \to \mr{Fr}(TW_{g,1})$ which is the identity near $\partial W_{g,1}$, and precomposing with this defines an action of $\Diff_\partial(W_{g,1})$ on $\mr{Bun}_\partial(\mr{Fr}(TW_{g,1}),\Theta_\fr;\ell_\partial)$. Similarly, the derivative of a self-embedding $W_{g,1} \hookrightarrow W_{g,1}$ gives a $\mr{GL}_{2n}(\bR)$-equivariant map $\mr{Fr}(TW_{g,1}) \to \mr{Fr}(TW_{g,1})$ which is the identity near $\half \partial W_{g,1}$, which defines an action of the group-like monoid $\Emb_{\half \partial}^{\cong}(W_{g,1})$ on $\mr{Bun}_{\half \partial}(\mr{Fr}(TW_{g,1}),\Theta_\fr;\ell_{\half \partial})$.

\begin{definition}\label{def:FramedDiffAndEmb}
We write
\begin{align*}
B\Diff^\fr_\partial(W_{g,1}) &\coloneqq \mr{Bun}_\partial(\mr{Fr}(TW_{g,1}),\Theta_\fr;\ell_\partial) \sslash \Diff_\partial(W_{g,1})\\
B\Emb^{\fr,\cong}_{\half \partial}(W_{g,1}) &\coloneqq \mr{Bun}_{\half \partial}(\mr{Fr}(TW_{g,1}),\Theta_\fr;\ell_{\half \partial}) \sslash \Emb^{\cong}_{\half \partial}(W_{g,1})
\end{align*}
for the homotopy quotients of these actions.
\end{definition}

\begin{remark}
This differs from the notation $B\Diff^\fr_\partial(W_{g,1};\ell_\partial)$ and $B\Emb^{\fr,\cong}_{\half \partial}(W_{g,1};\ell_{\half \partial})$ used in \cite{KR-WAlg,KR-WTorelli,KR-WFram}. As we will never need to vary the boundary conditions, and in later sections further decorations need to be added, we have opted to drop it. Any derived notation (such as for path components or Torelli spaces) is shortened accordingly in comparison with \cite{KR-WAlg,KR-WTorelli,KR-WFram}.
\end{remark}

The following is the delooped framed Weiss fibre sequence, which is \cite[Proposition 8.8]{KR-WAlg} specialised to framings. As a boundary condition for framings on $D^{2n}$ we use the restriction of the standard framing inherited from $\bR^{2n}$.

\begin{theorem}\label{thm:weiss-framed} 
There is a fibre sequence
	\begin{equation}\label{eqn:weiss-framed} B\Diff^\fr_\partial(W_{g,1}) \lra B\Emb^{\fr,\cong}_{\half \partial}(W_{g,1}) \lra B(B\Diff^\fr_\partial(D^{2n})),\end{equation}
	such that the induced map $B\Diff^\fr_\partial(D^{2n}) \to B\Diff^\fr_\partial(W_{g,1})$ is induced by an inclusion of $D^{2n}$ into $W_{g,1}$.\qed
\end{theorem}

\subsection{Path-components, fundamental groups, and Torelli spaces} \label{sec:path-components-etc} It will be important to be careful with the path-components and fundamental groups of the spaces $B\Diff^\fr_\partial(W_{g,1})$ and $B\Emb^{\fr,\cong}_{\half \partial}(W_{g,1})$, and in this section we recall some details and notation from \cite{KR-WAlg} concerning these.

\subsubsection{Diffeomorphisms and self-embeddings}\label{subsubsec:DiffAndEmb} As $B\Diff_\partial(W_{g,1})$ is the classifying space of a group it is path-connected, and its fundamental group is the \emph{mapping class group} 
\[\Gamma_g \coloneqq \pi_0(\Diff_\partial(W_{g,1})).\]
The structure of this group is well understood by work of Kreck \cite{kreckisotopy}, and we shall have to recall a little of its structure. 

Each diffeomorphism of $W_{g,1}$ fixing a neighbourhood of the boundary induces an automorphism of the middle-dimensional homology group $H_n(W_{g,1};\bZ)$. Only those automorphisms preserving the $(-1)^n$-symmetric intersection pairing $\lambda \colon H_n(W_{g,1};\bZ) \otimes H_n(W_{g,1};\bZ) \to \bZ$ can possibly be realised by diffeomorphisms. The spheres $S^n \times \{*\}$ and $\{*\} \times S^n$ inside $W_{g,1}$ give a hyperbolic basis $e_1, f_1, \ldots, e_g, f_g$ of $H_n(W_{g,1};\bZ)$ with respect to which $(H_n(W_{g,1};\bZ), \lambda)$ is identified with the hyperbolic form from \cref{sec:alg-representations}, and hence the action on homology provides a homomorphism
\[\alpha_g \colon \Diff_\partial(W_{g,1}) \lra G_g \coloneqq \begin{cases}
\mr{O}_{g,g}(\bZ) & \text{if $n$ is even}, \\
\mr{Sp}_{2g}(\bZ) & \text{if $n$ is odd.}\end{cases}\]
We let $G'_g$ denote the image of $\alpha_g$. It follows from the work of Kreck \cite{kreckisotopy} that $G'_g=G_g$ if $n$ is even or $n=1,3,7$, and that for other odd $n$ the group $G'_g$ is the finite index subgroup $\mr{Sp}^q_{2g}(\bZ) \leq \mr{Sp}_{2g}(\bZ)$ of automorphisms preserving the standard quadratic refinement.

The kernel of $\alpha_g$ is called the \emph{Torelli group} of $W_{g,1}$, and is denoted $\mr{Tor}_\partial(W_{g,1})$. It comes with an outer action of $G'_g$, and hence its classifying space $B\mr{Tor}_\partial(W_{g,1})$ comes with an action of $G'_g$ in the homotopy category. This can also be seen from an alternative definition of $B\mr{Tor}_\partial(W_{g,1})$ as 
\[B\mr{Tor}_\partial(W_{g,1}) = \mr{hofib}\left[B\Diff_\partial(W_{g,1}) \to BG'_g \right].\]

Similarly $B\Emb^\mr{\cong}_{\half \partial}(W_{g,1})$ is the classifying space of a group-like monoid to is path-connected, and its fundamental group is denoted
\[\Lambda_g \coloneqq \pi_0(\Emb^{\cong}_{\half \partial}(W_{g,1})).\]
By definition of the decoration $\cong$ this is a quotient of $\Gamma_g$. Self-embeddings of $W_{g,1}$ still act on $H_n(W_{g,1};\bZ)$ preserving the intersection form and the quadratic refinement when it is defined, giving a surjection $\Emb^\mr{\cong}_{\half \partial}(W_{g,1}) \to G'_g$ whose kernel one might call $\mr{TorEmb}^\mr{\cong}_{\half \partial}(W_{g,1})$, though we shall not need this.

\subsubsection{Framings}\label{subsubsec:Framings} 
The space $B\Diff^\fr_\partial(W_{g,1})$ need not be path-connected. Indeed, its set of path-components is given by the orbits of the $\Gamma_g$-action on $\pi_0(\mr{Bun}_\partial(TW_{g,1},\Theta_\fr;\ell_\partial))$. We denote the path component of $B\Diff^\fr_\partial(W_{g,1})$ containing a framing $\ell$ by $B\Diff^\fr_\partial(W_{g,1})_\ell$, with $\ell$ considered as the basepoint, and write
\[\check{\Gamma}^{\fr,\ell}_g \coloneqq \pi_1(B\Diff^\fr_\partial(W_{g,1})_\ell)\]
for its fundamental group. The composition $\check{\Gamma}^{\fr,\ell}_g \to \Gamma_g \to G'_g$ is no longer generally surjective, but by \cite[Corollary 8.7]{KR-WAlg} its image 
\[G^{\fr,[\ell]}_{g} \coloneqq \mathrm{im}\left[\check{\Gamma}^{\fr,\ell}_g \to G'_g\right]\]
is a finite index subgroup of $G'_g$. We define the framed Torelli space as 
\[B\mr{Tor}^\fr_\partial(W_{g,1})_\ell \coloneqq \mr{hofib}\left[B\Diff^\fr_\partial(W_{g,1})_\ell  \to BG^{\fr,[\ell]}_g\right].\]
This space is nilpotent by \cite[Theorem 8.4]{KR-WAlg}, and comes equipped with an action of $G^{\fr,[\ell]}_g$ in the unbased homotopy category.

In the case of self-embeddings, the map $B\Diff^\fr_\partial(W_{g,1}) \to B\Emb^{\fr,\cong}_{\half\partial}(W_{g,1})$ is a surjection on path components (by the exact sequence described in \cite[Section 8.2.2]{KR-WAlg}) and for a framing $\ell$ satisfying the fixed boundary condition $\ell_\partial$ we denote the path component of $B\Emb^{\fr,\cong}_{\half\partial}(W_{g,1})$ containing the framing $\ell$ by $B\Emb^{\fr,\cong}_{\half\partial}(W_{g,1})_\ell$.  As in \cite[Section 8.1]{KR-WAlg}, we write
\[
\check{\Lambda}^{\fr,\ell}_g \coloneqq \pi_1(B\Emb^{\fr,\cong}_\partial(W_{g,1})_\ell) \quad \text{ and } \quad G^{\fr,[[\ell]]}_g \coloneqq \mr{im}\left[\check{\Lambda}^{\fr,\ell}_g \to G'_g\right].
\]
The latter group contains $G^{\fr,[\ell]}_g$ so also has finite index in $G'_g$. We define the variant of the framed Torelli space for embeddings as
\[
B\mr{TorEmb}^{\fr,\cong}_{\half \partial}(W_{g,1})_\ell \coloneqq \mr{hofib}\left[B\Emb^{\fr,\cong}_{\half \partial}(W_{g,1})_\ell \to BG^{\fr,[[\ell]]}_g\right].
\]
This is nilpotent by \cite[Proposition 8.19]{KR-WAlg}, and comes with an action of $G^{\fr,[[\ell]]}_g$ in the unbased homotopy category.

\subsubsection{The framed Weiss fibre sequence on path components}\label{sec:framed-weiss-fibre-sequence} Restricted to the path-component of a framing $\ell$, the delooped framed Weiss fibre sequence gives a fibre sequence of path-connected spaces
\begin{equation}\label{eqn:weiss-framed-path} 
B\Diff^\fr_\partial(W_{g,1})_\ell \lra B\Emb^{\fr,\cong}_{\half \partial}(W_{g,1})_\ell \lra B(B\Diff^\fr_\partial(D^{2n})_B)
\end{equation}
for some finite subgroup $B \subset \pi_0(B\Diff^\fr_\partial(D^{2n}))$ \cite[Section 8.5.3]{KR-WAlg}. By taking suitable covering spaces we obtain a fibre sequence
\begin{equation}\label{eqn:weiss-framed-torelli} 
B\mr{Tor}^\fr_\partial(W_{g,1})_\ell \lra \overline{B\mr{TorEmb}}^{\fr,\cong}_{\half \partial}(W_{g,1})_\ell \lra B(B\Diff^\fr_\partial(D^{2n})_{\ell_0}),
\end{equation}
where $\ell_0$ is the standard framing of $D^{2n}$, and $\overline{B\mr{TorEmb}}^{\fr,\cong}_{\half \partial}(W_{g,1})_\ell$ denotes the finite covering space associated to the kernel of
\[\pi_1(B\mr{TorEmb}^{\fr,\cong}_{\half \partial}(W_{g,1})_\ell) \lra B \subset \pi_1(B(B\Diff^\fr_\partial(D^{2n}))).\]

\subsection{Algebraicity}\label{sec:Algebraicity}
If $F \to E \to B$ is a fibre sequence of based spaces, then the long exact sequence of homotopy groups is one of abelian groups (resp.~groups or based sets) with $\pi_1(E,e_0)$-action \cite[Proposition $8^\mr{bis}$.2]{mccleary} . Thus for the fibre sequence \eqref{eqn:weiss-framed-path} the group $\check{\Lambda}^{\fr,\ell}_g$ acts on the rational homotopy groups for $i \geq 2$, all based at $\ell$,
\begin{equation}\label{eqn:rat-hom-grp-of-interest}
\begin{gathered}
\pi_i(B\Diff^\fr_\partial(W_{g,1})_\ell) \oq, \qquad  \pi_i(B\Emb^{\fr,\cong}_{\half \partial}(W_{g,1})_\ell) \oq, \\ \pi_i(B(B\Diff^\fr_\partial(D^{2n})_B)) \oq.\end{gathered}
\end{equation}

Writing $L_g^{\fr,\ell} \coloneqq \ker\left[\check{\Lambda}^{\fr,\ell}_g \to G^{\fr,[[\ell]]}_g\right]$, there is an extension
\begin{equation}\label{eqn:ses-gralg-self-emb} 
1 \lra L_g^{\fr,\ell}\lra \check{\Lambda}^{\fr,\ell}_g \lra G^{\fr,[[\ell]]}_g \lra 1.
\end{equation}
It is a special case of \cite[Proposition 8.10]{KR-WAlg} that for $g \geq 2$ and $i \geq 2$ the action of $\smash{\check{\Lambda}^{\fr,\ell}_g}$ on the rational homotopy group $\pi_i(B\Emb^{\fr,\cong}_{\half \partial}(W_{g,1})_\ell) \oq$ is a $gr$-algebraic representation with respect to this extension. We will use the following technical improvement of this result.

\begin{proposition}\label{prop:group-action-weiss-framed} 
For $i \geq 2$, the $\check{\Lambda}^{\fr,\ell}_g$-action on the groups \eqref{eqn:rat-hom-grp-of-interest} factors over $G^{\fr,[[\ell]]}_g$ and has the following properties:
	\begin{enumerate}[(i)]
		\item \label{enum:group-action-weiss-framed-i} $\pi_i(B\Diff^\fr_\partial(W_{g,1})_\ell) \oq$ is an algebraic $G^{\fr,[[\ell]]}_g$-representation.

		\item \label{enum:group-action-weiss-framed-ii} $\pi_i(B\Emb^{\fr,\cong}_{\half \partial}(W_{g,1})_\ell) \oq$  is an algebraic $G^{\fr,[[\ell]]}_g$-representation,

		\item \label{enum:group-action-weiss-framed-iii} $\pi_i(B(B\Diff^\fr_\partial(D^{2n})_{B})) \oq$ is a trivial representation.
	\end{enumerate}
\end{proposition}

The key ingredient is the following, an improvement of \cite[Lemma 8.12]{KR-WAlg} in the special case of framings.

\begin{lemma}\label{lem:lfr-finite} 
The group $L^{\fr,\ell}_g$ is finite.
\end{lemma}

\begin{proof}Following Section 8 of \cite{KR-WAlg}, let 
\[\Lambda^{\fr,[[\ell]]}_g \leq \Lambda_g = \pi_0(\Emb_{\half \partial}^{\cong}(W_{g,1}))\]
denote the stabiliser of $[[\ell]] \in \pi_0(\mr{Bun}_{\half \partial}(\mr{Fr}(TW_{g,1}),\Theta_\fr;\ell_{\half \partial}))$ (this set is denoted by $\mr{Str}^\fr_\ast(W_{g,1})$ in \cite{KR-WAlg}). There is a factorisation $\check{\Lambda}^{\fr,\ell}_g \to \Lambda^{\fr,[[\ell]]}_g \to  G^{\fr,[[\ell]]}_g$ and we write 
\[J^{\fr,[[\ell]]}_g \coloneqq \mathrm{ker}\left[\Lambda^{\fr,[[\ell]]}_g \to G^{\fr,[[\ell]]}_g \right].\]
There is a homotopy fibre sequence
\[\mr{Bun}_{\half \partial}(\mr{Fr}(TW_{g,1}),\Theta_\fr;\ell_{\half \partial})_{[[\ell]]} \lra B\Emb^{\fr,\cong}_{\half \partial}(W_{g,1})_\ell \lra  B\Emb^{\cong}_{\half \partial}(W_{g,1}),\]
where the subscript $[[\ell]]$ denotes the union of path-components in the $\Lambda_g$-orbit of $\ell$, which gives a long exact sequence of groups
\[\begin{tikzcd}
 &[-10pt] \cdots \rar \ar[draw=none]{d}[name=X, anchor=center]{} &[-10pt] \pi_1(\Emb^{\cong}_{\half \partial}(W_{g,1}), \mr{id}) \ar[rounded corners,
to path={ -- ([xshift=2ex]\tikztostart.east)
	|- (X.center) \tikztonodes
	-| ([xshift=-2ex]\tikztotarget.west)
	-- (\tikztotarget)}]{dll}[at end,swap]{\partial} &[-10pt]
\\
\pi_1(\mr{Bun}_{\half \partial}(\mr{Fr}(TW_{g,1}),\Theta_\fr;\ell_{\half \partial}), \ell) \rar & L^{\fr,\ell}_g \rar & J^{\fr,[[\ell]]}_g.  & 
\end{tikzcd}\]
Reframing gives an action of $\mr{map}_{\half\partial}(W_{g,1}, \mr{SO}(2n))$, considered as a group under pointwise multiplication, on $\mr{Bun}_{\half \partial}(\mr{Fr}(TW_{g,1}),\Theta_\fr;\ell_{\half \partial})$, for which it is a torsor. In particular acting on the framing $\ell$ gives an equivalence $- \cdot\ell : \mr{map}_{\half\partial}(W_{g,1}, \mr{SO}(2n)) \overset{\sim}\to \mr{Bun}_{\half \partial}(\mr{Fr}(TW_{g,1}),\Theta_\fr;\ell_{\half \partial})$, and so an identification
\begin{equation}\label{eq:Pi1Bun}
\mathrm{Hom}(H_n(W_{g,1};\bZ), \pi_{n+1}(\mr{SO}(2n))) \cong \pi_1(\mr{Bun}_{\half \partial}(\mr{Fr}(TW_{g,1}),\Theta_\fr;\ell_{\half \partial}), \ell).
\end{equation}
	The group $J_g^{\fr,[[\ell]]}$ is finite by Lemmas 4.4 and 8.11 of \cite{KR-WAlg}. It therefore suffices to prove that the connecting homomorphism $\partial$ is rationally surjective, as by \eqref{eq:Pi1Bun} its target is a finitely generated abelian group. 
	
	To see this, we construct a homomorphism
	\[\tau \colon \mr{Hom}(H_n(W_{g,1};\bZ), \pi_{n+1}(\mr{SO}(n))) \lra \pi_1(\Emb^{\cong}_{\half \partial}(W_{g,1}), \mr{id})\]
	as follows. Consider $W_{g,1}$ as a disc $D^{2n}$ with $2g$ $n$-handles attached along maps $\phi_i\colon D^n \times \partial D^n \hookrightarrow \partial D^{2n}$, the cores of these handles representing homology classes $x_i \in H_n(W_{g,1};\bZ)$. For $\varphi \in \mr{Hom}(H_n(W_{g,1};\bZ), \pi_{n+1}(\mr{SO}(n))$ the 1-parameter family of diffeomorphisms $\tau(\varphi)$ is given on the $i$th handle by choosing smooth maps $\overline{\varphi(x_i)}\colon (I \times D^n, \partial(I \times D^n)) \to \mr{SO}(2n)$ representing the homotopy classes $\varphi(x_i)$ and considering the maps
	\begin{align*}
	I \times D^n \times D^n &\lra D^n \times D^n\\
	(t, a,b) &\longmapsto (\overline{\varphi(x_i)}(t,b) \cdot a, b).
	\end{align*}
	These 1-parameter families of diffeomorphisms are the identity on $D^n \times \partial D^n$, so we extend them by the identity over $D^{2n}$ to a 1-parameter families of diffeomorphism of $W_{g,1}$. They do not fix the boundary, but do fix a small ball in the boundary (disjoint from the handles) and hence indeed represent an element of $\pi_1(\Emb^{\cong}_{\half \partial}(W_{g,1}),\mr{id})$.
	
	The map $\partial$ is induced by the map $\Emb^{\cong}_{\half \partial}(W_{g,1}) \to \mr{Bun}_{\half \partial}(\mr{Fr}(TW_{g,1}),\Theta_\fr;\ell_{\half \partial})_{[[\ell]]}$ given by sending an embedding $e$ to $\ell \circ De$, so under the identification \eqref{eq:Pi1Bun} the composition $\partial \circ \tau$ is the map 
	\[\mr{Hom}(H_n(W_{g,1};\bZ), \pi_{n+1}(\mr{SO}(n))) \lra \mr{Hom}(H_n(W_{g,1};\bZ), \pi_{n+1}(\mr{SO}(2n)))\]
	induced by postcomposition with the stabilisation map $\pi_{n+1}(\mr{SO}(n)) \to \pi_{n+1}(\mr{SO}(2n))$. This may be seen to be rationally surjective for all $n \geq 3$, using the well-known rational models for these spaces.
\end{proof}

Combining \cref{lem:lfr-finite} with \cref{lem:gr-alg-finite} we get:

\begin{corollary}\label{cor:gr-alg-lambda-alg} 
If $V$ is a $\smash{\check{\Lambda}^{\fr,\ell}_g}$-representation which is $gr$-algebraic with respect to \eqref{eqn:ses-gralg-self-emb}, then it descends to an algebraic $G^{\fr,[[\ell]]}_g$-representation.\qed
\end{corollary}

\begin{proof}[Proof of \cref{prop:group-action-weiss-framed}]
For \ref{enum:group-action-weiss-framed-iii}, we observe that for a fibre sequence $F \to E \to B$ of based spaces, the $\pi_1(E,e_0)$-action on the higher homotopy groups of $B$ is through the homomorphism $\pi_1(E,e_0) \to \pi_1(B,b_0)$. Since $B(B\Diff^\fr_\partial(D^{2n})_{B})$ is a $(2n-1)$-fold loop space it is simple, and so this action is trivial.

For \ref{enum:group-action-weiss-framed-i} and \ref{enum:group-action-weiss-framed-ii}, we observe that by Corollary \ref{cor:gr-alg-lambda-alg} it suffices to show that the $\check{\Lambda}^{\fr,\ell}_g$-actions on the groups $\pi_i(B\Diff^\fr_\partial(W_{g,1})_\ell) \oq$ and $\pi_i(B\Emb^{\fr,\cong}_{\half \partial}(W_{g,1})_\ell) \oq$ are $gr$-algebraic. That the action on $\pi_i(B\Emb^{\fr,\cong}_{\half \partial}(W_{g,1})_\ell) \oq$ is $gr$-algebraic is a special case of \cite[Proposition 8.11]{KR-WAlg}. Using \ref{enum:group-action-weiss-framed-iii} and the fact that $gr$-algebraic representations are closed under taking subrepresentations, quotients, and extensions, it follows from the long exact sequence for \eqref{eqn:weiss-framed-path} that the action on $\pi_i(B\Diff^\fr_\partial(W_{g,1})_\ell) \oq$ is $gr$-algebraic too.
\end{proof}

\section{The homotopy groups of framed diffeomorphisms}\label{sec:HtyDiffeo}
In \cref{sec:diff-emb} we described the fibre sequence \eqref{eqn:weiss-framed-torelli}, obtained from the framed Weiss fibre sequence \eqref{eqn:weiss-framed},
\[
	B\mr{Tor}^\fr_\partial(W_{g,1})_\ell \lra \overline{B\mr{TorEmb}}^{\fr,\cong}_{\half \partial}(W_{g,1})_\ell \lra B(B\Diff^\fr_\partial(D^{2n})_{\ell_0}),
\]
that we will eventually use to understand the rational homotopy groups of $B\Diff^\fr_\partial(D^{2n})_{\ell_0}$. Our goal in this section is to complete Step \circled{3} and compute the rational homotopy groups of the left term $B\mr{Tor}^\fr_\partial(W_{g,1})_\ell$ in the range $* < 4n-3$ for $g$ sufficiently large, and outside this range excluding certain bands. The answer is given in terms of another fibre sequence 
\[X_1(g) \lra B\mr{Tor}^\fr_\partial(W_{g,1})_\ell \lra X_0\]
constructed in \eqref{eq:BigDiagram}, with a description of the rational homotopy groups of $X_0$ in \cref{prop:BigDiagramFacts} \ref{enum:BigDiagramFacts-vi}
and a description of the rational homotopy groups of $X_1(g)$ for $g$ sufficiently large in \cref{prop:X1HtyEstimate,prop:HtyX1LowDeg}.

The general strategy for computing the rational homotopy groups of $B\mr{Tor}^\fr_\partial(W_{g,1})_\ell$ is to extend our earlier work \cite{KR-WTorelli}, which determined the rational cohomology of $B\mr{Tor}_\partial(W_{g,1})$ in a stable range, to include framings, and then to access the rational homotopy groups of $B\mr{Tor}^\fr_\partial(W_{g,1})_\ell$ from its cohomology using the methods of rational homotopy theory. The introduction of framings makes the execution of this strategy somewhat more involved than \cite{KR-WTorelli}, because of a technical difficulty that it is not immediately possible to appreciate, so we begin this section by giving a review of some details of that paper, before moving on the framed case.

We assume that $n \geq 3$ and $g \geq 2$ throughout this section.

\subsection{Review of the cohomology of diffeomorphism groups}\label{sec:RevCohDiff}

The space we have denoted by $B\Diff_\partial(W_{g,1})$ classifies smooth $W_{g,1}$-bundles equipped with a trivialisation of the associated $\partial W_{g,1}$-bundle: equivalently, it classifies smooth fibre bundles with fibre the closed manifold $W_g \coloneqq \#^g S^n \times S^n$ equipped with a trivialised $D^{2n}$-subbundle. Given such a bundle $\pi \colon E \to B$ with $B \times D^{2n} \hookrightarrow E$, its vertical tangent bundle $T_\pi E \to E$ is a $2n$-dimensional vector bundle, and the trivial disc bundle endows it with an orientation. For any characteristic class $c \in H^*(B\mr{SO}(2n))$ of $2n$-dimensional oriented vector bundles we may then form the \emph{generalised Miller--Morita--Mumford class}
\[\kappa_c(\pi) \coloneqq \int_{{\pi}} c(T_{{\pi}} {E}) \in H^{|c|-2n}(B).\]
Applied to the universal fibre bundle this defines classes $\kappa_c \in H^{|c|-2n}(B\Diff_\partial(W_{g,1}))$, and this construction is linear in the variable $c$.

Writing $\mathcal{B} \subset \bQ[e, p_1, p_2, \ldots, p_{n-1}] = H^*(B\mr{SO}(2n);\bQ)$ for the set of monomials in $e,p_{\lceil \frac{n+1}{4}\rceil},\ldots,p_{n-1}$, the work of Galatius and the second-named author \cite[Corollary 1.8]{grwstab1} shows that as long as $n \geq 3$ the induced map
\[\bQ[\kappa_c \, | \, c \in \mathcal{B}, |c|>2n] \lra H^*(B\Diff_\partial(W_{g,1});\bQ)\]
is an isomorphism in a range of degrees tending to infinity with $g$. (A monomial $c$ containing $p_i$ with $i < \lceil \frac{n+1}{4}\rceil$ has $\kappa_c=0$.) 

This arises as a consequence of a stronger space-level statement (combine \cite[Corollary 1.5]{grwstab1} and \cite[Theorem 1.2]{grwcob}), namely that a certain parameterised Pontrjagin--Thom map
\[B\Diff_\partial(W_{g,1}) \lra \Omega^\infty_0 \mathbf{MT}\theta_{2n}\] 
induces an isomorphism on cohomology in a range of degrees tending to infinity with $g$, and in fact is even acyclic in such a stable range. Here $\mathbf{MT}\theta_{2n}$ denotes the Thom spectrum of the virtual vector bundle $-\theta_{2n}^*\gamma$ with $\theta_{2n} \colon B\mr{O}(2n)\langle n\rangle \to B\mr{O}(2n)$ the $n$-connective cover and $\gamma$ the universal bundle over $B\mr{O}(2n)$. Using a specific point-set model for $B\Diff_\partial(W_{g,1})$ this parameterised Pontrjagin--Thom map is explained in detail in \cite[Remark 1.11]{grwcob}. Most naturally it lands in a component of the space of paths in $\Omega^{\infty-1} \mathbf{MT}\theta_{2n}$ between two fixed points: this is then identified with $\Omega^\infty_0 \mathbf{MT}\theta_{2n}$ by choosing one such path.

\subsection{Review of the cohomology of Torelli groups}\label{sec:torelli-review}

We consider the unframed Torelli group, defined by the fibre sequence
\begin{equation}\label{eq:TorFibn}
B\mr{Tor}_\partial(W_{g,1}) \lra B\Diff_\partial(W_{g,1}) \xrightarrow{B\alpha_g} B G'_g
\end{equation}
and so equipped with an unbased action of $G'_g$. By \cite[Theorem A]{KR-WAlg} each rational cohomology group of $B\mr{Tor}_\partial(W_{g,1})$ is an algebraic $G'_g$-representation, and \cite[Theorem 4.1]{KR-WTorelli} describes $H^*(B\mr{Tor}_\partial(W_{g,1});\bQ)$ as an algebra and a $G'_g$-representation in a stable range of degrees.

\subsubsection{Cohomology of $G'_g$ and the family signature theorem} Before giving this description of $H^*(B\mr{Tor}_\partial(W_{g,1});\bQ)$, we describe the map induced by ${B\alpha_g} \colon B\Diff_\partial(W_{g,1}) \to B G'_g$ on rational cohomology in a stable range. We have described $H^*(B\Diff_\partial(W_{g,1});\bQ)$ in \cref{sec:RevCohDiff}, and the map is then described by:

\begin{lemma}\label{lem:CohBG}
There are classes $\sigma_{4i-2n} \in H^{4i-2n}(BG'_g ; \bQ)$ satisfying
\begin{enumerate}[(i)]
\item \label{enum:CohBG-i} $({B\alpha_g})^* \sigma_{4i-2n} = \kappa_{\cL_i}$, where $\cL_i = \cL_i(p_1, \ldots, p_i)$ are the Hirzebruch $L$-classes,

\item \label{enum:CohBG-ii} the induced map
\[\bQ[\sigma_{4i-2n} \, | \, 4i-2n > 0] \lra H^*(BG'_g;\bQ)\]
is an isomorphism in a stable range of degrees,

\item \label{enum:CohBG-iii}  via $\sigma_{4i-2n} \mapsto \kappa_{\cL_i}$, $\bQ[\kappa_c \, | \, c \in \mathcal{B}, |c|>2n]$ is a free $\bQ[\sigma_{4i-2n} \, | \, 4i-2n > 0]$-module.\qed
\end{enumerate}
\end{lemma}
These statements are justified in the proof of \cite[Theorem 4.1]{KR-WTorelli}. The identity in \ref{enum:CohBG-i} is a case of the Family Signature Theorem, which is discussed in detail in \cite[Section 2.5]{RWFST}: an immediate consequence is that the Miller--Morita--Mumford class $\kappa_{\cL_i}$ vanishes when restricted to $B\mr{Tor}_\partial(W_{g,1})$. This lemma pins down the classes $\sigma_{4i-2n}$ appearing in \cref{thm:borel}.

\subsubsection{The statement}
The description of $H^*(B\mr{Tor}_\partial(W_{g,1});\bQ)$ is given in terms of two functors on the downward (signed) Brauer categories, which are $\bQ$-linear categories defined as follows (see \cite[Section 2.3]{KR-WTorelli} for more details). Recall that a \emph{matching} of a finite set $X$ is a decomposition of $X$ into disjoint ordered pairs.

\begin{definition}\
\begin{enumerate}[(i)]
\item The \emph{downward Brauer category} $\mathsf{dBr}$ has objects the finite sets. Let $\mathsf{dBr}(S,T)'$ be given by the $\bQ$-vector space with basis given by pairs $(f, m_S)$ of an injection $f\colon T \hookrightarrow S$ and a matching $m_S$ of $S \setminus f(T)$, and let the space of morphisms $\mathsf{dBr}(S,T)$ be the quotient of this by the subspace spanned by $(f, m_S) - (f, m'_S)$, where $m_S$ and $m'_S$ differ by reversing the order of some matched pairs.

\item The \emph{downward signed Brauer category} $\mathsf{dsBr}$ has objects the finite sets. Let the space of morphisms $\mathsf{dsBr}(S,T)$ be the quotient of $\mathsf{dBr}(S,T)'$ by the subspace spanned by $(f, m_S) - (-1)^k(f, m'_S)$, where $m_S$ and $m'_S$ differ by reversing the order of $k$ matched pairs.
\end{enumerate}
In both cases the composition of $[f, m_S]\colon S \to T$ and $[g,m_T]\colon T \to U$ is given by the formula $[f \circ g, m_S \sqcup f(m_T)]\colon S \to U$, extended linearly.
\end{definition}

We adopt here and in the following the convention of writing $\mathsf{d(s)Br}$ to indicate $\mathsf{dBr}$ if $n$ is even and $\mathsf{dsBr}$ if $n$ is odd. The vector space $H \cong H_n(W_{g,1};\bQ)$ with its $(-1)^n$-symmetric form $\lambda \colon H \otimes H \to \bQ$ defines a $\bQ$-linear functor from  $\mathsf{d(s)Br}$ to the category $\mathsf{Rep}(G'_g)$ of algebraic $G'_g$-representations, by $S \mapsto H^{\otimes S}$. A morphism $[f, m_S] \colon S \to T$ acts as $H^{\otimes S} \to H^{\otimes f(T)} \overset{\sim}\to H^{\otimes T}$ by first applying $\lambda$ to the matched pairs in $m_S$, and then applying the permutation $f^{-1} \colon f(T) \to T$ on the tensor factors. Taking $\bQ$-linear duals gives the functor called $i^*(K^\vee) \colon \mathsf{d(s)Br}^\mr{op} \to \mathsf{Rep}(G'_g)$ in \cite[Section 2.3]{KR-WTorelli}. It is strong symmetric monoidal.

The other functor which is relevant is as follows. Recall that a \emph{partition} of a finite set $S$ is a finite collection of (possibly empty) subsets $\{S_\alpha\}_{\alpha \in I}$ of $S$ which are pairwise disjoint and whose union is $S$, and as above $\cB$ denotes the set of monomials in the Euler class $e$ and the Pontrjagin classes $p_i$ for $i = \lceil \frac{n+1}{4} \rceil,\cdots,n-2,n-1$, including the trivial monomial $1$.

\begin{definition}\label{def:PBfunctor}
For a finite set $S$, let $\mathcal{P}(S; \mathcal{B})'_{\geq 0}$ denote the following graded $\bQ$-vector space. It has basis the set of partitions $\{S_\alpha\}_{\alpha \in I}$ of $S$ equipped with a labelling of each part $S_\alpha$ by an element $c_\alpha \in \cB$, such that
\begin{enumerate}[\indent (i)]
\item each part of size 0 has label of degree $>2n$,

\item each part of size 1 has label of degree $\geq n$,

\item each part of size 2 has label of degree $> 0$.
\end{enumerate}
Endow this with a grading by declaring a labelled part $(S_\alpha, c_\alpha)$ to have degree $|c_\alpha| + n(|S_\alpha|-2)$, and a labelled partition to have degree the sum of the degrees of its parts.
\end{definition}

We now briefly explain how $S \mapsto \mathcal{P}(S; \mathcal{B})'_{\geq 0} \otimes (\det \bQ^S)^{\otimes n}$ defines a lax symmetric monoidal functor $\cat{d(s)Br} \to \mathsf{Gr}(\bQ\text{-}\mathsf{mod})$; details are given in \cite[Section 3.4, 3.5]{KR-WTorelli}. 

A morphism $[f, \varnothing] \colon S \to T$ in $\cat{d(s)Br}$ induces the linear map $\mathcal{P}(S; \mathcal{B})'_{\geq 0}\otimes (\det \bQ^S)^{\otimes n} \to \mathcal{P}(T; \mathcal{B})'_{\geq 0}\otimes (\det \bQ^T)^{\otimes n}$ given by relabelling partitions, and by the determinant of $\bQ^{f^{-1}} \colon \bQ^S \to \bQ^T$. If $S = \{s_1, s_2, \ldots, s_m\}$ then the morphism $[\mathrm{inc}, (s_1,s_2)] \colon  S \to S \setminus \{s_1,s_2\}$ in $\cat{d(s)Br}$ induces the linear map sending $(\{S_\alpha\}, \{c_\alpha\}) \otimes (s_1 \wedge s_2 \wedge \cdots \wedge s_m)^{\otimes n}$ to $(\{S'_\alpha\}, \{c'_\alpha\}) \otimes (s_3 \wedge \cdots \wedge s_m)^{\otimes n}$, where:
\begin{enumerate}[\indent (i')]

\item if some $S_\alpha$ contains $\{s_1,s_2\}$ (and $|c_\alpha|>0$ if $S_\alpha=\{s_1,s_2\}$) then we set $S_\alpha ' \coloneqq S_\alpha \setminus \{x,y\}$ and $c'_\alpha \coloneqq e \cdot c_\alpha$, and leave the other labelled parts unchanged;

\item if $s_1$ and $s_2$ lie in different parts $S_\alpha$ and $S_\beta$, then we merge these into a new part $S'_\alpha \coloneqq (S_\alpha \setminus \{x\}) \cup (S_\beta \setminus \{y\})$ labelled by $c'_\alpha \coloneqq c_\alpha \cdot c_\beta$, and leave the other labelled parts unchanged.
\end{enumerate}
On a more general morphism in $\cat{d(s)Br}$ the effect of the functor $\mathcal{P}(-; \mathcal{B})'_{\geq 0} \otimes \det^{\otimes n}$ is determined by the above and functoriality. The lax symmetric monoidality is given by disjoint union of labelled partitions.

By considering both graded vector spaces and algebraic $G'_g$-representations as being graded algebraic $G'_g$-representations, we can form the coend
\[i^*(K^\vee) \otimes^{\mathsf{d(s)Br}} (\mathcal{P}(-; \mathcal{B})'_{\geq 0} \otimes {\det}^{\otimes n}) \in \mathsf{Gr}(\mathsf{Rep}(G'_g)),\]
and the symmetric monoidal structures on the two functors make this into into a commutative algebra object. An empty part labelled by a monomial $c \in \mathcal{B}_{>2n}$ defines a $G'_g$-invariant class denoted $\kappa_c$ in this graded vector space, and we hence define $G'_g$-invariant classes $\kappa_{\cL_i}$ by expressing the Hirzebruch $L$-classes $\cL_i$ as a sum of monomials in Pontrjagin classes. Then \cite[Theorem 4.1]{KR-WTorelli} (combined with \cite[Theorem A]{KR-WAlg}) says that there is an induced map
\[\frac{i^*(K^\vee) \otimes^{\mathsf{d(s)Br}} (\mathcal{P}(-; \mathcal{B})'_{\geq 0} \otimes {\det}^{\otimes n})}{(\kappa_{\cL_i} \, | \, 4i-2n > 0)} \lra H^*(B\mr{Tor}_\partial(W_{g,1});\bQ)\]
of $G'_g$-representations and graded commutative algebras, which is an isomorphism in a stable range of degrees.

\subsubsection{Remarks on the proof}
To explain the difficulty to be overcome in the framed case, and later to overcome it, we must explain some details of the proof. There are two principal steps. 

The first step is to consider the cohomology of $B\Diff_\partial(W_{g,1})$ with coefficients in the local system $\cH$ given by the $\Diff_\partial(W_{g,1})$-action on $H \cong H_n(W_{g,1};\bQ)$, as well as the tensor powers of $\cH$. Using in addition the dual $\omega \colon \bQ \to H \otimes H$ to the intersection form $\lambda$, the construction
\[S \longmapsto H^*(B\Diff_\partial(W_{g,1}) ; \cH^{\otimes S})\]
defines a functor on a larger Brauer category $\mathsf{(s)Br}_{2g}$ (see \cite[Section 2.3]{KR-WTorelli}). Writing $i \colon \mathsf{d(s)Br} \to \mathsf{(s)Br}_{2g}$ for the inclusion, in \cite[Section 3.8]{KR-WTorelli} we constructed a natural transformation
\[i_*(\mathcal{P}(-; \mathcal{B})'_{\geq 0} \otimes {\det}^{\otimes n}) \Longrightarrow H^*(B\Diff_\partial(W_{g,1}) ; \cH^{\otimes -})\colon \mathsf{(s)Br}_{2g} \lra \mathsf{Gr}(\bQ\text{-}\mathsf{mod})\]
from the left Kan extension of the functor described above, and in \cite[Theorem 3.15]{KR-WTorelli} it is shown to be an isomorphism in a stable range. In particular, for any finite set $S$ we see that $H^*(B\Diff_\partial(W_{g,1}) ; \cH^{\otimes S})$ is a free module over $\mathcal{P}(\varnothing; \mathcal{B})'_{\geq 0} = \bQ[\kappa_c \mid c \in \cB, |c|>2n]$ in a stable range.

The second step is to consider the Serre spectral sequence for \eqref{eq:TorFibn} with $\cH^{\otimes -}$-coefficients, and use Borel's work on the cohomology or arithmetic groups (\cref{thm:borel}) to express it as
\begin{align*}
E_2^{p,q} &= H^p(BG'_\infty ; \bQ) \otimes \left[ H^q(B\mr{Tor}_\partial(W_{g,1});\bQ) \otimes H^{\otimes -}\right]^{G'_g}\\
&\quad\quad \Longrightarrow H^{p+q}(B\Diff_\partial(W_{g,1}) ; \cH^{\otimes -})
\end{align*}
in a stable range, with $H^*(BG'_\infty ; \bQ) = \bQ[\sigma_{4i-2n} \, | \, 4i-2n > 0]$. The abutment is a free $\bQ[\kappa_c \, | \, c \in \cB, |c|>2n]$-module and hence a free $\bQ[\sigma_{4i-2n} \, | \, 4i-2n > 0]$-module by \cref{lem:CohBG} \ref{enum:CohBG-iii}. In this situation an elementary argument \cite[Lemma 4.3]{KR-WTorelli} shows that the spectral sequence collapses. This allows for the calculation of the invariants $\left[ H^*(B\mr{Tor}_\partial(W_{g,1});\bQ) \otimes H^{\otimes -}\right]^{G'_g}$ and hence, by a categorical form of Schur--Weyl duality, of $H^*(B\mr{Tor}_\partial(W_{g,1});\bQ)$ itself.

\subsection{The problem in the framed case}
In the framed case the work of Galatius and the second-named author still applies (in the form of \cite[Corollary 1.8]{grwstab2}) and shows that there is a map
\[B\Diff^\fr_\partial(W_{g,1})_\ell \lra \Omega^\infty_0 \mathbf{S}^{-2n}\]
which is acyclic in a stable range of degrees. The rational cohomology of $\Omega^\infty_0 \mathbf{S}^{-2n}$ is trivial. Related methods can be used to determine $H^*(B\Diff^\fr_\partial(W_{g,1})_\ell ; \cH^{\otimes S})$ in a stable range of degrees: we do not need the answer here, but shall describe it later. The finite index subgroup $\smash{G^{\fr,[\ell]}_g} \leq G'_g$ will turn out to have the same rational cohomology in a stable range, namely $\bQ[\sigma_{4i-2n} \, | \, 4i-2n > 0]$. But now in the fibre sequence
\begin{equation}\label{eq:FrTorFibration}
B\mr{Tor}^\fr_\partial(W_{g,1})_\ell \lra B\Diff^\fr_\partial(W_{g,1})_\ell  \xrightarrow{B\alpha_g^{\fr, [\ell]}} BG^{\fr,[\ell]}_g
\end{equation}
we have $H^*(B\Diff^\fr_\partial(W_{g,1})_\ell;\bQ) = \bQ$ in a stable range, which is not a free module over $\bQ[\sigma_{4i-2n} \, | \, 4i-2n > 0]$. This means that the Serre spectral sequence (with $\cH^{\otimes -}$-coefficients) does not degenerate in this case.

The point may be phrased as follows. The Family Signature Theorem gives the identity $(B\alpha_g^{\fr, [\ell]})^*\sigma_{4i-2n} = \kappa_{\cL_i}$, so on $B\mr{Tor}^\fr_\partial(W_{g,1})_\ell$ this class vanishes for two reasons: the Torelli-ness means that $(B\alpha_g^{\fr, [\ell]})^*\sigma_{4i-2n}$ vanishes, and the framing means that $\kappa_{\cL_i}$ vanishes on $B\Diff^\fr_\partial(W_{g,1})_\ell$. These two reasons for vanishing are not the same, and they combine to define a secondary characteristic class $\overline{\sigma}_{4i-2n-1} \in H^{4i-2n-1}(B\mr{Tor}^\fr_\partial(W_{g,1})_\ell;\bQ)$, which transgresses to $\sigma_{4i-2n}$ in the Serre spectral sequence for \eqref{eq:FrTorFibration}.

Our strategy will be to construct an auxiliary fibre sequence
\[X_1(g) \lra {B\mr{Tor}}^\fr_\partial(W_{g,1})_\ell \lra X_0,\]
where $H^*(X_0;\bQ) = \bQ[\overline{\sigma}_{4i-2n-1} \, | \, 4i-2n > 0]$ accounts for the secondary characteristic classes that we have just described. We will show that all three spaces are nilpotent, then show how the argument outlined above can be adapted to calculate $H^*(X_1(g);\bQ)$ as an algebra and as a $\smash{G^{\fr,[\ell]}_g}$-representation in a stable range of degrees. As this space is nilpotent, we will then be able to estimate $\pi_*(X_1(g)) \oq$.

\subsection{Decomposing the framed Torelli groups}\label{sec:DecomposingFramedTorelli}

So far we have not chosen a specific framing $\ell$ of $W_{g,1}$, but now we do. Namely, we choose once and for all a framing $\ell_1$ of $W_{1,1}$ (for $n=3$ or 7 choose it so that its associated quadratic refinement \cite[Section 2.5.3]{KR-WFram} has Arf invariant 0), and let $\ell_g$ be the framing of $W_{g,1}$ obtained by writing it as the $g$-fold boundary connect sum of $(W_{1,1},\ell_1)$. We can then stabilise by $(W_{1,1},\ell_1)$, and so form the based spaces
\begin{align*}
B\Diff^\fr_\partial(W_{\infty,1})_{\ell_\infty} &= \hocolim\limits_{h \to \infty} B\Diff^\fr_\partial(W_{h,1})_{\ell_h},\\
BG^{\fr,[\ell_\infty]}_\infty &= \hocolim\limits_{h \to \infty} BG^{\fr,[\ell_h]}_h.
\end{align*}
We may further form the $+$-constructions of these spaces (with respect to the maximal perfect subgroups of their fundamental groups) and hence obtain a square 
\begin{equation}\label{eq:SmallDiagram}
\begin{tikzcd} 
B\Diff^\fr_\partial(W_{g,1})_{\ell_g}  \dar \rar & \left(B\Diff^\fr_\partial(W_{\infty,1})_{\ell_\infty}\right)^+ \arrow[d] \\[-5pt]
BG^{\fr,[\ell_g]}_g \rar & \left(BG^{\fr,[\ell_\infty]}_\infty \right)^+
\end{tikzcd}
\end{equation}
of based spaces which is commutative up to canonical based homotopy. The key features of this square are given in the following proposition.

\begin{proposition}\label{prop:BigDiagram}
The square \eqref{eq:SmallDiagram} enjoys the following properties:
\begin{enumerate}[(i)]
\item \label{enum:BigDiagram-i} $\big( B\Diff^\fr_\partial(W_{\infty,1})_{\ell_\infty}\big)^+ \simeq \Omega^\infty_0 \mathbf{S}^{-2n}$.

\item \label{enum:BigDiagram-ii} $\big( BG^{\fr,[\ell_\infty]}_\infty\big)^+$ has the homotopy type of an infinite loop space. It has finitely-generated homotopy groups, and its rational cohomology is given by the polynomial algebra $\bQ[\sigma_{4i-2n} \, | \, 4i-2n \geq 0]$.

\item \label{enum:BigDiagram-iii} The horizontal maps are acyclic in a range of degrees tending to infinity with $g$.

\item \label{enum:BigDiagram-iv} The commutator subgroups of $G^{\fr,[\ell_g]}_g$ and $\check{\Gamma}^{\fr, \ell_g}_g = \pi_1(B\Diff^\fr_\partial(W_{g,1})_{\ell_g})$ are perfect for all large enough $g$.
\end{enumerate}
\end{proposition}
\begin{proof}
By \cite[Corollary 1.8]{grwstab2} there are maps $B\Diff^\fr_\partial(W_{h,1})_{\ell_h} \to \Omega^\infty_0 \mathbf{S}^{-2n}$ which are acyclic in a range of degrees tending to infinity with $h$. These are compatible up to homotopy with stabilisation, so induce an acyclic map
\[\hocolim\limits_{h \to \infty} B\Diff^\fr_\partial(W_{h,1})_{\ell_h} \lra \Omega^\infty_0 \mathbf{S}^{-2n},\]
which proves \ref{enum:BigDiagram-i} by the universal property of the $+$-construction. In particular the $+$-construction of the homotopy colimit has abelian fundamental group, so all local coefficient systems on it are abelian in the sense used in \cite[Theorem 1.4]{grwstab2}. From this it then follows that the top horizontal map is acyclic in a range of degrees tending to infinity with $g$, proving part of \ref{enum:BigDiagram-iii}.

For the lower part of the diagram we must use the explicit description of the groups $\smash{G^{\fr,[\ell_h]}_h}$ given in \cite{KR-WFram}. To explain this, let $\mr{Sp}_{2h}^q(\bZ) \leq \mr{Sp}_{2h}(\bZ)$ denote the stabiliser of the standard quadratic refinement of Arf invariant 0 (i.e.\ the one give by $\mu(e_i) = 0 = \mu(f_i)$ on the symplectic basis). For $h \geq 3$, by \cite[Table 3]{KR-WFram} we have
\begin{align*}
\mr{O}_{h,h}(\bZ)^\mr{ab}  \cong \bZ/2 \oplus \bZ/2 \quad\text{and}\quad \mr{Sp}_{2h}^q(\bZ)^\mr{ab} \cong \bZ/4.
\end{align*}
Then Proposition 3.5 and Corollary 5.2 of \cite{KR-WFram} combine to give
\[G^{\fr,[\ell_h]}_h \cong \begin{cases}
\mr{Sp}_{2h}^q(\bZ) & \text{if $n=3$,}\\
\ker[\mr{Sp}_{2h}^q(\bZ) \to \mr{Sp}_{2h}^q(\bZ)^\mr{ab}] & \text{if $n$ is odd but not 3,}\\
\ker[\mr{O}_{h,h}(\bZ) \to \mr{O}_{h,h}(\bZ)^\mr{ab}] & \text{if $n$ is even}.
\end{cases}\]

The spaces
\[\coprod_{h \geq 0} B\mr{O}_{h,h}(\bZ) \quad\text{and}\quad \coprod_{h \geq 0} B\mr{Sp}_{2h}^q(\bZ)\]
represent the homotopy types of the classifying spaces of the symmetric monoidal groupoids of
\begin{enumerate}[(i)]
\item finitely-generated abelian groups equipped with a nondegenerate even symmetric bilinear form of signature 0,
\item finitely-generated abelian groups equipped with a nondegenerate symplectic bilinear form and a quadratic refinement of Arf invariant 0,
\end{enumerate}
respectively. As such they have the structure of $E_\infty$-algebras, and their group-completions are infinite loop spaces. (The associated spectra are kinds of Hermitian $K$-theory.) By the group-completion theorem (rather its refinement which deals with local coefficients, proved in \cite{MP,Rw2}) there are equivalences
\begin{align*}
B\mr{O}_{\infty,\infty}(\bZ)^+ \simeq \Omega_0 B\left(\coprod_{h \geq 0} B\mr{O}_{h,h}(\bZ)\right) \quad\text{and}\quad
B\mr{Sp}_{\infty}^q(\bZ)^+ \simeq \Omega_0 B\left(\coprod_{h \geq 0} B\mr{Sp}_{2h}^q(\bZ)\right),
\end{align*}
where the $+$-construction is formed with respect to the commutator subgroup of the fundamental group, and the right-hand sides are path-connected infinite loop spaces. The latter case proves the first part of \ref{enum:BigDiagram-ii} for $n=3$, and for $n>3$ we see that $\big( BG^{\fr,[\ell_\infty]}_\infty\big)^+$ is identified with the universal cover of one of the right-hand sides, proving the first part of \ref{enum:BigDiagram-iii} in these cases too.

For the second part of \ref{enum:BigDiagram-ii}, we combine the fact that $\mr{O}_{h,h}(\bZ)$ and $\mr{Sp}_{2h}^q(\bZ)$ are arithmetic groups so have finitely-generated homology by a classical theorem of Borel--Serre \cite[11.1(c)]{BorelSerre}, with the fact that these groups enjoy homological stability (even with abelian local coefficient systems) by \cite[Theorem 5.16]{R-WW}. This shows that $B\mr{O}_{\infty,\infty}(\bZ)^+$ and $B\mr{Sp}_{\infty}^q(\bZ)^+$ have finitely-generated homology groups, and as they are infinite loop spaces it follows that they have finitely-generated homotopy groups too. As $\big( BG^{\fr,[\ell_\infty]}_\infty\big)^+$ is a covering space of one of these, it too has finitely-generated homotopy groups.

For the third part of \ref{enum:BigDiagram-ii}, we have
\[H^*(\big( BG^{\fr,[\ell_\infty]}_\infty\big)^+;\bQ) = \lim_{h \to \infty} H^*(BG^{\fr,[\ell_h]}_h;\bQ) = \bQ[\sigma_{4i-2n} \, | \, 4i-2n \geq 0],\]
by \cref{thm:borel} applied to the trivial representation.

Let us prove what is left of \ref{enum:BigDiagram-iii}. If $n=3$ then the above shows that all local coefficient systems on $B\mr{Sp}_{\infty}^q(\bZ)^+$ are abelian, so the map $B\mr{Sp}_{2g}^q(\bZ) \to  B\mr{Sp}_{\infty}^q(\bZ)^+$ is acyclic in a stable range of degrees by homological stability of these groups with abelian local coefficient systems. If $n>3$ then instead $\big( BG^{\fr,[\ell_\infty]}_\infty\big)^+$ is simply-connected, so 
\[
BG^{\fr,[\ell_g]}_g \lra \big( BG^{\fr,[\ell_\infty]}_\infty\big)^+
\] 
is acyclic in a stable range of degrees because the $G^{\fr,[\ell_h]}_h$ are the commutator subgroups of $\mr{O}_{h,h}(\bZ)$ or $\mr{Sp}_{2h}^q(\bZ)$ so enjoy homological stability, because the latter enjoy homological stability with abelian local coefficient systems. This finishes the proof of \ref{enum:BigDiagram-iii}.

In both cases \ref{enum:BigDiagram-iv} follows from \ref{enum:BigDiagram-iii}, as a map to a space with abelian fundamental group which is acyclic in homological degrees $\leq 2$ must on fundamental groups have kernel the commutator subgroup, which must be perfect.
\end{proof}

The square \eqref{eq:SmallDiagram} allows us to construct the diagram of based spaces
\begin{equation}\label{eq:BigDiagram}
\begin{tikzcd}
	X_1(g) \rar \dar & {B\mr{Tor}}^\fr_\partial(W_{g,1})_{\ell_g} \rar \dar & X_0 \dar \\
	A_1(g) \rar \dar & {B\Diff}^\fr_\partial(W_{g,1})_{\ell_g} \rar \dar & \left( B\Diff^\fr_\partial(W_{\infty,1})_{\ell_\infty}\right)^+ \dar \\
	A_2(g) \rar  & BG^{\fr,[\ell_g]}_g \rar & \left( BG^{\fr,[\ell_\infty]}_\infty\right)^+
\end{tikzcd}
\end{equation}
where the lower right square is \eqref{eq:SmallDiagram} and commutes up to canonical based homotopy, and the rest of the diagram is developed by taking homotopy fibres, always using the basepoint $\ell_g \in {B\Diff}^\fr_\partial(W_{g,1})_{\ell_g}$. The key features of this diagram are as follows.

\begin{proposition}\label{prop:BigDiagramFacts}\
\begin{enumerate}[(i)]
\item \label{enum:BigDiagramFacts-i} The spaces $A_1(g)$ and $A_2(g)$ are acyclic in a range of degrees tending to infinity with $g$.

\item \label{enum:BigDiagramFacts-ii} $X_0$ is path-connected, and $X_1(g)$ is path-connected for all large enough $g$.

\item \label{enum:BigDiagramFacts-iii} ${B\mr{Tor}}^\fr_\partial(W_{g,1})_{\ell_g}$, $X_1(g)$, and $X_0$ are nilpotent.

\item \label{enum:BigDiagramFacts-iv} The homotopy and cohomology groups of ${B\mr{Tor}}^\fr_\partial(W_{g,1})_{\ell_g}$, $X_1(g)$, and $X_0$ are finitely-generated.

\item \label{enum:BigDiagramFacts-v} The rational cohomology of $X_0$ is given by the exterior algebra $\bQ[\overline{\sigma}_{4i-2n-1} \mid 4i-2n > 0]$ on classes $\overline{\sigma}_{4i-2n-1}$ transgressing to ${\sigma}_{4i-2n}$ in the Serre spectral sequence for the right-hand column of \eqref{eq:BigDiagram}. 

\item \label{enum:BigDiagramFacts-vi} The rational homotopy groups of $X_0$ are $\bQ$ in each degree $4i-2n-1 > 0$ and trivial otherwise.
\end{enumerate}
\end{proposition}
\begin{proof}
Part \ref{enum:BigDiagramFacts-i} is immediate from \cref{prop:BigDiagram} \ref{enum:BigDiagram-iii}.

For the first part of \ref{enum:BigDiagramFacts-ii} note that the map of $+$-constructions is 1-connected, because it is 1-connected before $+$-constructing. For the second part we argue that the map $ {B\mr{Tor}}^\fr_\partial(W_{g,1})_{\ell_g} \to X_0$ is 1-connected for large enough $g$. To see this extend the right-hand part of \eqref{eq:BigDiagram}, by inserting a middle column, to the diagram
\begin{equation*}
\begin{tikzcd}
	{B\mr{Tor}}^\fr_\partial(W_{g,1})_{\ell_g} \rar \dar & X_0(g) \rar \dar& X_0 \dar \\
	{B\Diff}^\fr_\partial(W_{g,1})_{\ell_g} \rar \dar & \left({B\Diff}^\fr_\partial(W_{g,1})_{\ell_g} \right)^+ \rar \dar& \left( B\Diff^\fr_\partial(W_{\infty,1})_{\ell_\infty}\right)^+ \dar \\
	BG^{\fr,[\ell_g]}_g \rar & \left( BG^{\fr,[\ell_g]}_g\right)^+ \rar & \left( BG^{\fr,[\ell_\infty]}_\infty\right)^+,
\end{tikzcd}
\end{equation*}
where the $+$-constructions in the middle column are formed with respect to the commutator subgroups (which are perfect for all large enough $g$ by \cref{prop:BigDiagram}~\ref{enum:BigDiagram-iv}), and $X_0(g)$ is defined to make the middle column a homotopy fibre sequence. The horizontal maps between $+$-constructions are now acyclic in a range of degrees but are also $\pi_1$-isomorphisms: thus they are highly-connected, and so the map $X_0(g) \to X_0$ is also highly-connected, and so the map $X_0(g) \to X_0$ is also highly-connected. It remains to show that $B\mr{Tor}^\fr_\partial(W_{g,1})_{\ell_g} \to X_0(g)$ is 1-connected, but this follows from \cite[Proposition 3.8]{BerrickRadical} because the surjection 
\[
\check{\Gamma}^{\fr, \ell_g}_g = \pi_1(B\mr{Diff}^\fr_\partial(W_{g,1})_{\ell_g}) \lra G^{\fr,[\ell_g]}_g
\]
sends the maximal perfect subgroup of $\check{\Gamma}^{\fr, \ell_g}_g$ onto that of $G^{\fr,[\ell_g]}_g$, as these are both the commutator subgroup by \cref{prop:BigDiagram} \ref{enum:BigDiagramFacts-iv}. 

For \ref{enum:BigDiagramFacts-iii}, $B\mr{Tor}^\fr_\partial(W_{g,1})_\ell$ is nilpotent by \cite[Theorem 8.4]{KR-WAlg}. Then the fact that the fibre of a fibration with nilpotent total space is nilpotent \cite[Proposition 4.4.1 (i)]{MayPonto}, applied to the top row of \eqref{eq:BigDiagram}, shows that $X_1(g)$ is nilpotent. Finally, the same fact applied to the right-hand column of \eqref{eq:BigDiagram} shows that $X_0$ is nilpotent, because $\big( B\Diff^\fr_\partial(W_{\infty,1})_{\ell_\infty}\big)^+$ is an infinite loop space by \cref{prop:BigDiagram} \ref{enum:BigDiagram-i} and so is nilpotent. 

For \ref{enum:BigDiagramFacts-iv}, given that these spaces are nilpotent it suffices to show that their homotopy groups are finitely-generated. The infinite loop space $( B\Diff^\fr_\partial(W_{\infty,1})_{\ell_\infty})^+ \simeq \Omega^\infty_0 \mathbf{S}^{-2n}$ has finitely-generated (in fact finite) homotopy groups. The infinite loop space $( BG^{\fr,[\ell_\infty]}_\infty)^+$ has finitely-generated homotopy groups by \cref{prop:BigDiagram} \ref{enum:BigDiagram-ii}. The right-hand column of \eqref{eq:BigDiagram} then shows that $X_0$ has finitely-generated homotopy groups. The space $B\mr{Tor}^\fr_\partial(W_{g,1})_{\ell_g}$ fits into a fibre sequence
\[\mr{Bun}_{\partial}(\mr{Fr}(TW_{g,1}),\Theta_\fr;\ell_g\vert_{\partial W_{g,1}})_{[\ell_g]} \lra B\mr{Tor}^\fr_\partial(W_{g,1})_{\ell_g} \lra B\mr{Tor}_\partial(W_{g,1})\]
where $(-)_{[\ell_g]}$ denotes those path-components given by the $\pi_0(\mr{Tor}_\partial(W_{g,1}))$-orbit of $\ell_g$. Reframing gives a homotopy equivalence $- \cdot\ell_g \colon \mr{map}_*(W_{g,1}/\partial W_{g,1}, \mr{SO}(2n)) \overset{\sim}\to \mr{Bun}_{\partial}(\mr{Fr}(TW_{g,1}),\Theta_\fr;\ell_g\vert_{\partial W_{g,1}})$, so the fibre has finitely-generated homotopy groups. The base does too by Theorem C (or Corollary 5.5) of \cite{kupersdisk}. Thus $B\mr{Tor}^\fr_\partial(W_{g,1})_{\ell_g}$ has finitely-generated homotopy groups, and by the top row of \eqref{eq:BigDiagram} it follows that $X_1(g)$ does too.

For part \ref{enum:BigDiagramFacts-v}, as $\big( B\Diff^\fr_\partial(W_{\infty,1})_{\ell_\infty}\big)^+$ has trivial rational homology by \cref{prop:BigDiagram} \ref{enum:BigDiagram-i} the Serre spectral sequence for the right-hand column of \eqref{eq:BigDiagram} gives the claim.  For part \ref{enum:BigDiagramFacts-vi}, as the rational cohomology of $X_0$ is a free graded-commutative algebra by part \ref{enum:BigDiagramFacts-v}, it is rationally equivalent to a product of Eilenberg--Mac Lane spaces, so its rational homotopy groups are as indicated.
\end{proof}

\subsection{Fundamental group actions}\label{sec:GpActions}
As we already mentioned in \cref{sec:Algebraicity}, if $F \to E \to B$ is a fibre sequence of based spaces then the long exact sequence of homotopy groups is equipped with an action of $\pi_1(E, e_0)$. More generally, this group acts on each of $F$, $E$, and $B$ in the based homotopy category.

Using this principle in the context of \eqref{eq:BigDiagram}, we find that every space in this diagram is equipped with a based action of the group $\check{\Gamma}_g^{\fr, \ell_g} = \pi_1(B\Diff^\fr_\partial(W_{g,1})_{\ell_g})$. To see the action on $X_1(g)$ one should view it in the homotopy fibre sequence
\begin{equation}\label{eq:X1ToHoFib}
\begin{tikzcd}[column sep=0.7cm]
X_1(g) \rar & B\Diff^\fr_\partial(W_{g,1})_{\ell_g} \rar & BG_g^{\fr, [\ell_g]} \times^h_{(BG_\infty^{\fr, [\ell_\infty]})^+} ({B\Diff}^\fr_\partial(W_{\infty,1})_{\ell_\infty})^+
\end{tikzcd}
\end{equation}
with base the homotopy pullback of the rest of the bottom right square of \eqref{eq:BigDiagram}. For some spaces in the diagram this action is defined to factor over a quotient group: the action on $A_2(g)$ factors over the evident action of $G_g^{\fr, [\ell_g]}$, and the action on $X_0$ factors over the evident action of $\pi_1(( B\Diff^\fr_\partial(W_{\infty,1})_{\ell_\infty})^+)$.

In particular $\check{\Gamma}_g^{\fr, \ell_g}$ acts on the homotopy groups of every space in \eqref{eq:BigDiagram}. With these actions the maps in the long exact sequences for every row and column of \eqref{eq:BigDiagram} are $\check{\Gamma}_g^{\fr, \ell_g}$-equivariant. This follows from the general discussion above. The only tricky steps are the connecting maps for the top row or left-hand column: for these one should use that these fibre sequences map to \eqref{eq:X1ToHoFib} and naturality of the connecting map.

\subsection{The rational cohomology of $X_1(g)$} We will compute the rational cohomology of $X_1(g)$ in a stable range by a variant of the method of \cite{KR-WTorelli}, which we have outlined in \cref{sec:torelli-review}. We begin by defining a new functor on the downward (signed) Brauer category, which replaces the functor $\mathcal{P}(-)'_{\geq 0} \otimes {\det}^{\otimes n}$ of \cref{def:PBfunctor}. It is isomorphic to the quotient of this by the subfunctor consisting of those labelled partitions with some label of degree $>0$, but we spell out the definition. 

\begin{definition}\label{defn:Pprime}
For a finite set $S$, let $\mathcal{P}(S)'_{\geq 0}$ denote the graded vector space with basis the partitions $\{S_\alpha\}_{\alpha \in I}$ of $S$ into parts of size $\geq 3$. Such a labelled partition is given degree $\sum_{\alpha \in I}n(|S_\alpha|-2)$.

Then $S \mapsto \mathcal{P}(S)'_{\geq 0} \otimes (\det \bQ^S)^{\otimes n}$ defines a functor $\cat{d(s)Br} \to \cat{Gr}(\bQ\text{-}\cat{mod})$ as follows. To a bijection $(f, \varnothing) \colon  S \to T$ in $\cat{d(s)Br}$ we assign the linear map given by relabelling partitions, and by the determinant of $\bQ^{f^{-1}} \colon \bQ^S \to \bQ^T$. If $S = \{s_1, s_2, \ldots, s_m\}$ then the morphism $[inc, (s_1, s_2)] \colon S \to S \setminus \{s_1,s_2\}$ in $\cat{d(s)Br}$ induces the linear map which on $(\{S_\alpha\}) \otimes (s_1 \wedge s_2 \wedge \cdots \wedge s_m)^{\otimes n}$ is given as follows: 
\begin{enumerate}[(i')]
\item if some $S_\alpha$ contains $\{s_1,s_2\}$, then it returns zero, 

\item if $s_1$ and $s_2$ lie in different parts $S_\alpha$ and $S_\beta$, then it returns the partition of $S \setminus \{s_1,s_2\}$ given by merging these into a new part $(S_\alpha \setminus \{s_1\}) \cup (S_\beta \setminus \{s_2\})$, and keeping intact all other parts, times $ (s_3 \wedge \cdots \wedge s_m)^{\otimes n}$.
\end{enumerate}
A general morphism in $\cat{d(s)Br}$ is a composition of such morphisms and bijections, so $\mathcal{P}(-)'_{\geq 0} \otimes {\det}^{\otimes n}$ is determined by these properties.
\end{definition}

Our calculation of the $\bQ$-cohomology of $X_1(g)$ in a stable range is then as follows, which is the analogue of Theorem 4.1 of \cite{KR-WTorelli} in the framed situation. To ease notation, from now on we revert to writing $\ell$ for the framing $\ell_g$.

\begin{theorem}\label{thm:X1CohCalc}
Let $2n \geq 6$. Then
\begin{enumerate}[(i)]
\item \label{enum:X1CohCalc-i} the monodromy action of $\pi_1(A_2(g))$ on $H^*(X_1(g);\bQ)$ given by the left-hand column of \eqref{eq:BigDiagram} factors over an algebraic representation of $G_g^{\fr,[\ell]}$,
\item \label{enum:X1CohCalc-ii} there is a map
\[i^*(K^\vee) \otimes^\cat{d(s)Br} \left(\mathcal{P}(-)'_{\geq 0} \otimes {\det}^{\otimes n}\right) \lra H^*(X_1(g);\bQ)\]
of algebras and $G_g^{\fr,[\ell]}$-representations which is an isomorphism in a stable range.
\end{enumerate}
\end{theorem}

The remainder of this subsection is dedicated to the proof of this theorem: we must now assume more familiarity with the details of \cite{KR-WTorelli}.

\subsubsection{Algebraicity}\label{sec:algebraicity}
The monodromy of the left-hand column of \eqref{eq:BigDiagram} gives an action of $\pi_1(A_2(g))$ on $X_1(g)$ in the homotopy category of unbased spaces, and hence an action on the integral or rational cohomology of $X_1(g)$. We will first prove \cref{thm:X1CohCalc} \ref{enum:X1CohCalc-i}, concerning this action. Using the description of the groups $G_g^{\fr,[\ell]}$ and their abelianisations from the proof of \cref{prop:BigDiagram}, the long exact sequence on homotopy groups for the bottom row of \eqref{eq:BigDiagram} gives an extension
\begin{equation}\label{eq:DefnPi1A2}
0 \lra \pi_2(\big( BG^{\fr,[\ell_\infty]}_\infty\big)^+) \overset{\partial} \lra \pi_1(A_2(g)) \lra \ker\left( G_g^{\fr,[\ell]} \overset{\mr{ab}}\to  \begin{cases}
\bZ/4 & n=3\\
0 & \text{else}
\end{cases}\right) \lra 0.
\end{equation}
We must analyse how $\pi_2(\big( BG^{\fr,[\ell_\infty]}_\infty\big)^+)$ acts on the cohomology of $X_1(g)$.

\begin{lemma}\label{lem:TActsNilp}
The subgroup $\pi_2(\big( BG^{\fr,[\ell_\infty]}_\infty\big)^+)$ acts nilpotently on each $H^q(X_1(g);\bZ)$.
\end{lemma}

\begin{proof}
A diagram chase shows that the action of this subgroup is via the connecting map $\partial \colon \pi_2(\big( BG^{\fr,[\ell_\infty]}_\infty\big)^+) \to \pi_1(X_0)$
for the right-hand column of \eqref{eq:BigDiagram}, and the monodromy for the top row of that diagram. This top row has nilpotent total space and base by \cref{prop:BigDiagramFacts} \ref{enum:BigDiagramFacts-iii}, so by \cite[Ch.\ II 4.5]{bousfieldkan} it is a nilpotent fibration, and hence by \cite[Ch.\ II 5.4]{bousfieldkan} $\pi_1(X_0)$ acts nilpotently on each $H^q(X_1(g);\bZ)$.
\end{proof}

\begin{lemma}\label{lem:X1grAlg}
The $\pi_1(A_2(g))$-action on each $H^q(X_1(g);\bQ)$ is $gr$-algebraic with respect to the extension \eqref{eq:DefnPi1A2}.
\end{lemma}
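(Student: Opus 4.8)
The plan is to deduce $gr$-algebraicity of $H^*(X_1(g);\bQ)$ from that of the base and fibre of the fibration sequence
\[ X_1(g) \lra \overline{B\mr{Tor}}^\mr{fr}_\partial(W_{g,1})_\ell \lra X_0 \]
furnished by the top row of \eqref{eq:BigDiagram}. First I would record the equivariance: the diagram \eqref{eq:BigDiagram} equips all three spaces with compatible $\pi_1(A_2(g))$-actions — on $X_1(g)$ via the monodromy of the left column; on $\overline{B\mr{Tor}}^\mr{fr}_\partial(W_{g,1})_\ell$ through the homomorphism $\pi_1(A_2(g)) \to \olG$ induced by the bottom row $A_2(g) \to B\olG$, followed by the monodromy of $\overline{B\mr{Diff}}^\mr{fr}_\partial(W_{g,1})_\ell \to B\olG$; and on $X_0$ through $\pi_1(A_2(g)) \to \pi_1(\overline{\overline{\Omega^\infty_0 \mathbf{KH}}}) = 0$, hence trivially. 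In particular the central subgroup $\pi_2(\overline{\overline{\Omega^\infty_0 \mathbf{KH}}}) = \ker[\pi_1(A_2(g)) \to \olG]$ appearing in \eqref{eq:DefnPi1A2} acts trivially both on $\overline{B\mr{Tor}}^\mr{fr}_\partial(W_{g,1})_\ell$ and on $X_0$ (this is the analogue of Lemma~\ref{lem:TActsNilp}, refined to these two spaces).

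The two inputs are: (a) by Lemma~\ref{lem:TorBarvsTor} together with the tangential-structure version of Theorem~A of \cite{KR-WAlg} (Section~8 of loc.\ cit.) — or, self-containedly, by combining Lemma~\ref{lem:group-action-weiss-framed}(i) with the rational Postnikov tower and the fact (Section~\ref{sec:schur-functors}) that Schur functors of algebraic representations are algebraic and finite-dimensional — the groups $H^p(\overline{B\mr{Tor}}^\mr{fr}_\partial(W_{g,1})_\ell;\bQ)$ are, in a stable range, algebraic $\olG$-representations, hence $gr$-algebraic $\pi_1(A_2(g))$-representations with respect to \eqref{eq:DefnPi1A2}; and (b) by Lemma~\ref{lem:CalcCohomX0}, $X_0$ is rationally a product of odd-dimensional Eilenberg--MacLane spaces, so $H^*(X_0;\bQ)$ is free graded-commutative on a generating set which is a trivial $\pi_1(A_2(g))$-representation and which maps $\olG$-invariantly into $H^*(\overline{B\mr{Tor}}^\mr{fr}_\partial(W_{g,1})_\ell;\bQ)$. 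I would then run the cohomology Eilenberg--Moore spectral sequence of the fibration — which converges, the fibration being nilpotent by Lemma~\ref{lem:X1Nilp} — in the form
\[ \mathrm{Tor}_{H^*(X_0;\bQ)}^{*,*}\bigl(H^*(\overline{B\mr{Tor}}^\mr{fr}_\partial(W_{g,1})_\ell;\bQ),\,\bQ\bigr) \Longrightarrow H^*(X_1(g);\bQ). \]
By naturality this is a spectral sequence of $\pi_1(A_2(g))$-representations, and by (a) and (b) every entry of its $E_2$-page is built from algebraic $\olG$-representations by tensoring with finite-dimensional trivial representations, passing to subquotients, and forming extensions; all of these preserve $gr$-algebraicity by \cite[Lemma~2.5]{KR-WAlg}. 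Hence each $E_2^{p,q}$, each later page, each $E_\infty^{p,q}$, and finally each $H^n(X_1(g);\bQ)$ — a finite iterated extension of the $E_\infty^{p,q}$ with $p+q=n$ along the $\pi_1(A_2(g))$-stable abutment filtration, the filtration being finite by Lemma~\ref{lem:X1FG} — is $gr$-algebraic with respect to \eqref{eq:DefnPi1A2}.

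The step I expect to be most delicate is the bookkeeping that makes the $\pi_1(A_2(g))$-action on the spectral sequence and on its abutment filtration precise, so that ``$gr$-algebraic'' can genuinely be read off the $E_2$-page, and — relatedly — the case of odd $n$, where $H^*(X_0;\bQ)$ acquires a degree-one exterior generator (so that $\mathrm{Tor}$ over it is \emph{a priori} a polynomial rather than a finite-dimensional algebra); this is harmless in each fixed cohomological degree by Lemma~\ref{lem:X1FG}, but establishing enough cancellation requires a little care with convergence and with how that generator restricts to $\overline{B\mr{Tor}}^\mr{fr}_\partial(W_{g,1})_\ell$. The only substantive input is the algebraicity of the rational cohomology of the framed Torelli space, which by Lemma~\ref{lem:TorBarvsTor} also governs $\overline{B\mr{Tor}}^\mr{fr}_\partial(W_{g,1})_\ell$; everything else is formal from the closure properties of $gr$-algebraic representations.
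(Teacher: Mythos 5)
Your proof is correct and takes essentially the same approach as the paper: you run the cohomology Eilenberg--Moore spectral sequence of the fibration $X_1(g) \to \overline{B\mr{Tor}}^\mr{fr}_\partial(W_{g,1})_\ell \to X_0$ and deduce $gr$-algebraicity from the algebraicity of $H^*(\overline{B\mr{Tor}}^\mr{fr}_\partial(W_{g,1})_\ell;\bQ)$ (via Lemma~\ref{lem:TorBarvsTor} and \cite[Theorem 8.3]{KR-WAlg}) together with the closure properties of $gr$-algebraic representations. The only minor difference is that the paper justifies strong convergence via the nilpotent action of $\pi_1(X_0)$ on $H^*(X_1(g);\bZ)$ (established in the proof of Lemma~\ref{lem:TActsNilp} and cited to \cite{dwyerstrong}) rather than via nilpotence of the spaces per Lemma~\ref{lem:X1Nilp}, but these are equivalent here.
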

\begin{proof}
Consider the fibration given by the top row of \eqref{eq:BigDiagram}. First note that all three spaces have rational cohomology of finite type, by \cref{prop:BigDiagramFacts} \ref{enum:BigDiagramFacts-iv}.

By the proof of \cref{lem:TActsNilp} the group $\pi_1(X_0)$ acts nilpotently on $H^*(X_1(g);\bQ)$, so by \cite{dwyerstrong} it follows that the associated homology Eilenberg--Moore spectral sequence converges strongly; as all three spaces have rational homology of finite type, it follows by dualising that the cohomology Eilenberg--Moore spectral sequence 
\[E^2_{s,t} =  \mr{Tor}_{s,t}^{H^*(X_0;\bQ)} (\bQ, H^*({B\mr{Tor}}^\fr_\partial(W_{g,1})_\ell;\bQ)) \Longrightarrow H^{t-s}(X_1(g);\bQ)\]
also converges strongly. This is a spectral sequence of $\pi_1(A_2(g))$-representations by naturality, and the action on the $E^2$-page is via the monodromy action of $G_g^{\fr,[\ell]}$ on $H^*({B\mr{Tor}}^\fr_\partial(W_{g,1})_\ell;\bQ)$. 

As each $H^q(X_1(g);\bQ)$ is finite-dimensional, by strong convergence of the spectral sequence it has a finite filtration whose filtration quotients are each subquotients of some $E^2_{s,t}$. Using the fact that $gr$-algebraic representations are closed under subquotients, extensions, and tensor products, and using the standard bar complex to compute $E^2_{s,t}$, it therefore suffices to see that the $\smash{G_g^{\fr,[\ell]}}$-representations  $H^*({B\mr{Tor}^\fr_\partial(W_{g,1})_\ell};\bQ)$ are algebraic in each degree: this is Theorem 8.3 of \cite{KR-WAlg}.
\end{proof}

\begin{lemma}\label{lem:A2RepsAreAlg}
Let $g \geq 2$. Any $\pi_1(A_2(g))$-representation $V$ which is $gr$-algebraic with respect to \eqref{eq:DefnPi1A2} factors over an algebraic representation of $G_g^{\fr,[\ell]}$.
\end{lemma}

\begin{proof}
We claim that the subgroup $\pi_2(\big( BG^{\fr,[\ell_\infty]}_\infty\big)^+) \leq \pi_1(A_2(g))$ acts on $V$ via automorphisms of finite order, so that \cref{lem:gr-alg-finite} shows it descends to an algebraic representation of $\ker\big( G_g^{\fr,[\ell]} \overset{\mr{ab}}\to \big\{\begin{smallmatrix} \bZ/4 & n=3 \\ 0 & \text{else}\end{smallmatrix}\big)$, which then ascends to an algebraic representation of $G_g^{\fr,[\ell]}$ by Zariski density.

\cref{prop:BigDiagram} \ref{enum:BigDiagram-ii} implies that
\[\pi_2(\big( BG^{\fr,[\ell_\infty]}_\infty\big)^+) \cong \begin{cases}
\text{finite} & \text{if $n$ is even,}\\
\bZ\oplus \text{finite} &\text{if $n$ is odd}.
\end{cases}\]
Thus if $n$ is even there is nothing more to show. If $n$ is odd we must show that some element of $\pi_2(\big( BG^{\fr,[\ell_\infty]}_\infty\big)^+)$ of infinite order acts on $V$ with finite order.

To see this we use that $\coprod_{h \geq 0} B\mr{Sp}_{2h}(\bZ)$ is also an $E_\infty$-algebra, and the group-completion theorem thus shows that $B\mr{Sp}_{\infty}(\bZ)^+$ is an infinite loop space. This allows us to develop a map of fibre sequences
\begin{equation*}
\begin{tikzcd}
A_2(g) \rar \dar& BG^{\fr,[\ell]}_g \rar \dar& \big( BG^{\fr,[\ell_\infty]}_\infty\big)^+ \dar\\[-3pt]
A_3(g) \rar & B\mr{Sp}_{2g}(\bZ) \rar& B\mr{Sp}_{\infty}(\bZ)^+,
\end{tikzcd}
\end{equation*}
where $A_3(g)$ is again acyclic in a stable range.  For $h \geq 4$ the group $\mr{Sp}_{2h}(\bZ)$ is perfect and has $H_2(\mr{Sp}_{2h}(\bZ);\bZ) \cong \bZ$, so the bottom right $+$-construction is simply-connected and has $\pi_2\cong\bZ$. Using the description of the groups $G^{\fr,[\ell_h]}_h$ from the proof of \cref{prop:BigDiagram}, this gives a map of exact sequences
\begin{equation*}
\begin{tikzcd}
0 \rar & \pi_2(\big( BG^{\fr,[\ell_\infty]}_\infty\big)^+) \rar \dar& \pi_1(A_2(g)) \rar \dar& \mr{Sp}_{2g}^q(\bZ) \rar \dar& \bZ/4 \dar\\[-3pt]
0 \rar & \pi_2( B\mr{Sp}_{\infty}(\bZ)^+) \cong \bZ \rar & \pi_1(A_3(g)) \rar & \mr{Sp}_{2g}(\bZ) \rar& 0,
\end{tikzcd}
\end{equation*}
where the left vertical map is a rational isomorphism as a consequence of Theorem~\ref{thm:borel}. As $\pi_2( B\mr{Sp}_{\infty}(\bZ)^+) \cong H_2( B\mr{Sp}_{\infty}(\bZ);\bZ) \cong H_2(B\mr{Sp}_{2g}(\bZ);\bZ)$ for $g \geq 4$, this lower sequence is tautologically the universal central extension of the perfect group $\mr{Sp}_{2g}(\bZ)$ for $g \geq 4$. In other words, it is the pullback to $\mr{Sp}_{2g}(\bZ)$ of the universal cover of $\mr{Sp}_{2g}(\bR)$; by naturality with respect to stabilisation this describes the lower extension for all $g \geq 1$.

At this point we use the theorem of Deligne \cite[p.~206]{DeligneRF} on the residual finiteness of the central extension $\pi_1(A_3(g))$, viz.\ that this group is not residually finite as long as $g \geq 2$. Now $\pi_1(A_2(g))$ contains a subgroup $\Gamma$ which is a central extension of $\ker(\mr{Sp}_{2g}^q(\bZ) \to \bZ/4)$ by $\bZ \leq \pi_2(\big( BG^{\fr,[\ell_\infty]}_\infty\big)^+)$ and injects into $\pi_1(A_3(g))$ with finite index, so this $\Gamma$ is also not residually finite. As $\ker(\mr{Sp}_{2g}^q(\bZ) \to \bZ/4)$ is residually finite, being a subgroup of the residually finite group $\mr{GL}_{2g}(\bZ)$, it follows that there must be a non-trivial element $t \in \bZ \leq \Gamma$ which lies in every finite index subgroup of $\Gamma$, and hence in every finite index subgroup of $\pi_1(A_2(g))$ too. Now $\pi_1(A_2(g))$ is finitely-generated, and by a theorem of Mal'cev \cite{MalcevResFin} any finitely-generated linear group is residually finite. Thus the image of the representation
\[\rho \colon \pi_1(A_2(g)) \lra \mr{GL}(V)\]
is residually finite, and hence the element $t$ lies in the kernel of $\rho$ and so acts trivially on $V$. But then the subgroup $\pi_2(\big( BG^{\fr,[\ell_\infty]}_\infty\big)^+) \leq \pi_1(A_2(g))$ acts on $V$ via the finite group $\pi_2(\big( BG^{\fr,[\ell_\infty]}_\infty\big)^+) /\langle t \rangle$, so acts by automorphisms of finite order as claimed.
\end{proof}

Combining these lemmas we obtain the following, proving \cref{thm:X1CohCalc} \ref{enum:X1CohCalc-i}:

\begin{corollary}\label{cor:TActsTriv}
For $g \geq 2$, the subgroup $\pi_2(\big( BG^{\fr,[\ell_\infty]}_\infty\big)^+) \leq \pi_1(A_2(g))$ acts trivially on the groups $H^q(X_1(g);\bQ)$, and the induced $G^{\fr,[\ell]}_g$-action is algebraic.
\end{corollary}

\subsubsection{From invariants to twisted cohomology}\label{sec:AlgInvTwi}

In order to determine the algebraic $G^{\fr,[\ell]}_g$-representations $H^q(X_1(g);\bQ)$, and hence prove \cref{thm:X1CohCalc} \ref{enum:X1CohCalc-ii}, we proceed as in the proof of the analogous Theorem 4.1 of \cite{KR-WTorelli}, using \cite[Proposition 2.16]{KR-WTorelli}. Using the notation $H$ of \cref{sec:alg-representations} for the standard representation of $G^{\fr,[\ell]}_g$, we need to produce a natural transformation
\begin{equation}\label{eq:RequiredNatTrans}
i_*\mathcal{P}(-)'_{\geq 0} \otimes {\det}^{\otimes n} \Longrightarrow [H^*(X_1(g);\bQ) \otimes H^{\otimes -}]^{G^{\fr,[\ell]}_g} \colon \cat{(s)Br}_{2g} \lra \cat{Gr}(\bQ\text{-}\cat{mod})
\end{equation}
and show that it is an isomorphism in a stable range: \cref{thm:X1CohCalc} \ref{enum:X1CohCalc-ii} then follows.

We will do this by identifying both sides with $H^{*}(A_1(g) ; \cH^{\otimes -})$ in a stable range. To do so, we will make use of the following.

\begin{lemma}\label{lem:AcyclicNilpotent}
Let $\pi \colon X \to Y$ be a fibration with path-connected fibre $A$, and $\cV$ be a local coefficient system of $\bQ$-modules on $X$. Suppose that
\begin{enumerate}[(i)]
\item $Y$ is a loop space,
\item the pullback $p \colon \overline{X} \to X$ of the universal cover $\widetilde{Y} \to Y$ induces an isomorphism $p^* \colon H^*(X;\cV) \to H^*(\overline{X};\cV)$ in degrees $* \leq N$.
\end{enumerate}
Then $\pi_1(Y)$ acts nilpotently on $H^*(A ; \cV)$ in degrees $* \leq N$.
\end{lemma}
\begin{proof}
By pulling back there is an induced fibration $\overline{\pi} \colon \overline{X} \to \widetilde{Y}$ between covering spaces, and $A$ can be identified as the fibre of $\overline{\pi}$ over the basepoint. From this point of view the action of $f \in \pi_1(Y)$ on $A$ is given as follows. It induces compatible deck transformations $\widetilde{f} \colon \widetilde{Y} \to \widetilde{Y}$ and $\overline{f} \colon \overline{X} \to \overline{X}$. As $\widetilde{Y}$ is simply-connected, $\widetilde{f}$ may be homotoped (unique up to homotopy) to a based map $\widetilde{f}'$: this homotopy may be lifted to homotope $\overline{f}$ to a map $\overline{f}'$ covering $\widetilde{f}'$. This induces a self-map $f_A$ of the fibre $A$ over the basepoint. These maps commute up to canonical homotopy with the map to $X$, so induce automorphisms of $H^*(A;\cV)$ and $H^*(\overline{X};\cV)$.

As the map $A \to \overline{X}$ (converted to a fibration) is the principal $\Omega \widetilde{Y}$-bundle pulled back along $\overline{\pi} \colon \overline{X} \to \widetilde{Y}$, and $\widetilde{Y}$ is simply-connected, the fibre transport action of $\pi_1(\overline{X})$ on $\Omega \widetilde{Y}$ is trivial up to homotopy, and so the Serre spectral sequence for this map takes the form
\[E_2^{p,q} = H^p(\overline{X} ; \cV) \otimes H^q(\Omega \widetilde{Y} ; \bQ) \Longrightarrow H^{p+q}(A ; \cV)\]
The maps just constructed induce a map of spectral sequences given by $(\overline{f}')^* \otimes (\Omega \widetilde{f}')^*$ on the $E_2$-page and converging to $(f_A)^*$. The commutativity of the diagram
\begin{equation*}
\begin{tikzcd}
H^p(\overline{X} ; \cV) \rar{(\overline{f}')^*}  & H^p(\overline{X} ; \cV) \\
H^p(X ; \cV) \arrow[u, "p^*"] \arrow[r, equals] & H^p(X ; \cV) \arrow[u, "p^*"]
\end{tikzcd}
\end{equation*}
and assumption (ii) show that $\overline{f}^*$ is the identity in degrees $p \leq N$ and hence so is $(\overline{f}')^*$. As $Y$ is a loop space by assumption (i), $\tilde{f}$ is based homotopic to the identity so $(\Omega \widetilde{f}')^*$ is the identity too. Thus on $E_2^{p,q}$ the induced map is the identity as long as $p \leq N$, so it is also the identity on $E_\infty^{p,q}$ in such bidegrees. Thus as long as $i \leq N$, $H^{i}(A ; \cV)$ has a filtration for which $f_A$ acts as the identity on associated graded, i.e.\ $f \in \pi_1(Y)$ acts nilpotently.
\end{proof}

Recall that we write $H$ for the standard representation of $G^{\fr,[\ell]}_g$, and $\cH$ for the associated local coefficient system on $BG^{\fr,[\ell]}_g$, or on $B\Diff^\fr_\partial(W_{g,1})_\ell$, $A_1(g)$, or $A_2(g)$ obtained by pulling back.

\begin{proposition}\label{prop:A1Coh1}
There is a natural transformation
\[H^*(A_1(g) ; \cH^{\otimes -}) \Longrightarrow [H^*(X_1(g);\bQ) \otimes H^{\otimes -})]^{G^{\fr,[\ell]}_g} \colon \cat{(s)Br}_{2g} \lra \cat{Gr}(\bQ\text{-}\cat{mod})\]
which is an isomorphism in a stable range.
\end{proposition}

\begin{proof}
First consider the Serre spectral sequence for the left-hand column
\[X_1(g) \lra A_1(g) \lra A_2(g)\]
of \eqref{eq:BigDiagram}, with coefficients in $\cH^{\otimes -}$ (which is pulled back from $BG^{\fr,[\ell]}_g$). This has the form
\[{^1}E_2^{p,q} = H^p(A_2(g) ; H^q(X_1(g);\bQ) \otimes \cH^{\otimes -}) \Longrightarrow H^{p+q}(A_1(g) ; \cH^{\otimes -}),\]
where we have used that the coefficient system $\cH$ is trivialised over $X_1(g)$. By Corollary \ref{cor:TActsTriv} the $\pi_1(A_2(g))$-representations $H^q(X_1(g);\bQ)$ factor over $G^{\fr,[\ell]}_g$, where they are algebraic representations. The edge homomorphism thus takes the form
\[H^{*}(A_1(g) ; \cH^{\otimes -}) \lra H^0(A_2(g) ; H^*(X_1(g);\bQ) \otimes \cH^{\otimes -}) = [H^*(X_1(g);\bQ) \otimes H^{\otimes -})]^{G^{\fr,[\ell]}_g},\]
and this is the natural transformation in the statement of the proposition. To show it is an isomorphism in a stable range, we will show ${^1}E_2^{p,*}=0$ for $p>0$ in a stable range. 

To do so we consider the Serre spectral sequence for the bottom row 
\[A_2(g) \lra BG^{\fr,[\ell_g]}_g \lra \left(BG^{\fr,[\ell_\infty]}_\infty \right)^+\]
of \eqref{eq:BigDiagram}, with coefficients in the $G^{\fr,[\ell]}_g$-representation $H^q(X_1(g);\bQ) \otimes H^{\otimes -}$, which has the form
\begin{align*}
{^2}E_2^{s,t} &= H^s(\big(BG^{\fr,[\ell_\infty]}_\infty  \big)^+ ;  \cH^t(A_2(g) ; \cH^q(X_1(g);\bQ) \otimes \cH^{\otimes -}))\\
 &\quad\quad\Longrightarrow H^{s+t}(BG^{\fr,[\ell]}_g ; \cH^q(X_1(g);\bQ) \otimes \cH^{\otimes -}).
\end{align*}

We wish to argue that the fundamental group of $\big(BG^{\fr,[\ell_\infty]}_\infty  \big)^+$ acts trivially on $H^t(A_2(g) ; \cH^q(X_1(g);\bQ) \otimes \cH^{\otimes -})$ in a stable range, by applying \cref{lem:AcyclicNilpotent} to the fibration $BG^{\fr,[\ell]}_g \to \big(BG^{\fr,[\ell_\infty]}_\infty  \big)^+$ with fibre $A_2(g)$, using $\cV = \cH^q(X_1(g);\bQ) \otimes \cH^{\otimes -}$. To verify the hypotheses of this lemma, first recall that $\big(BG^{\fr,[\ell_\infty]}_\infty  \big)^+$ is an infinite loop space with finite fundamental group by \cref{prop:BigDiagram} \ref{enum:BigDiagram-ii}. 

Pulling back its universal cover therefore gives a finite cover $B\overline{G}^{\fr,[\ell]}_g \to B{G}^{\fr,[\ell]}_g$, corresponding to some finite index subgroup $\smash{\overline{G}^{\fr,[\ell]}_g} \leq \smash{G^{\fr,[\ell]}_g}$. As $H^q(X_1(g);\bQ) \otimes H^{\otimes -}$ is an algebraic $G_g^{\fr,[\ell]}$-representation, it follows from \cref{thm:borel} that $H^*(BG^{\fr,[\ell]}_g ; \cV) \to H^*(B\overline{G}^{\fr,[\ell]}_g ; \cV)$ is an isomorphism in a stable range of degrees. Thus \cref{lem:AcyclicNilpotent} indeed applies, showing that
$\pi_1(\big(BG^{\fr,[\ell_\infty]}_\infty  \big)^+)$ acts nilpotently on $H^*(A_2(g) ; \cH^q(X_1(g);\bQ) \otimes \cH^{\otimes -})$ in a stable range. But this group is finite, so it acts trivially. Thus we may write
\[{^2}E_2^{s,t} = H^s(BG^{\fr,[\ell_\infty]}_\infty  ; \bQ) \otimes H^t(A_2(g) ; \cH^q(X_1(g);\bQ) \otimes \cH^{\otimes -}))\]
in a stable range.

On the other hand by \cref{thm:borel} we may write the abutment as
\[H^{s+t}(BG^{\fr,[\ell_\infty]}_\infty ;\bQ) \otimes [H^q(X_1(g);\bQ) \otimes H^{\otimes -})]^{G^{\fr,[\ell]}_g}\]
in a stable range of degrees. In a stable range we therefore have a spectral sequence $\{{^2}E_r^{s,t}\}$ of $H^{*}(BG^{\fr,[\ell_\infty]}_\infty ;\bQ)$-modules which starts with, and abuts to, a free module. By \cite[Lemma 4.3]{KR-WTorelli} it must therefore collapse in a stable range, giving
\[{^1}E_2^{p,q} = H^p(A_2(g) ;  \cH^q(X_1(g);\bQ) \otimes \cH^{\otimes -}) \cong \begin{cases}
0 & \text{if $p>0$,}\\
[H^q(X_1(g);\bQ) \otimes H^{\otimes -})]^{G^{\fr,[\ell]}_g} & \text{if $p=0$,}
\end{cases}\]
as required.
\end{proof}

In \cref{prop:A1Coh2} we will analyse the Serre spectral sequence for the middle row
\[A_1(g) \lra B\Diff^\fr_\partial(W_{g,1})_{\ell} \lra \left( B\Diff^\fr_\partial(W_{\infty,1})_{\ell_\infty}\right)^+\]
of \eqref{eq:BigDiagram}. In order to be able to apply \cref{lem:AcyclicNilpotent} we will need to know that for the maximal abelian cover $\overline{B\Diff}^\fr_\partial(W_{g,1})_{\ell} \to {B\Diff}^\fr_\partial(W_{g,1})_{\ell}$, which is a principal $\pi_{2n+1}(\mathbf{S})$-cover, the map
\[H^*({B\Diff}^\fr_\partial(W_{g,1})_{\ell} ; \cH^{\otimes -}) \lra H^*(\overline{B\Diff}^\fr_\partial(W_{g,1})_{\ell} ; \cH^{\otimes -})\]
is an isomorphism in a stable range, and furthermore we will need to know what these cohomology groups are. We formulate the necessary input as follows, which is analogous to \cite[Theorem 3.15]{KR-WTorelli}. It makes use of the full (signed) Brauer category $\cat{(s)Br}_{2g}$, which is described in \cite[Definition 2.14, 2.19]{KR-WTorelli}.

\begin{theorem}\label{thm:TwistedCoh}
There is a natural transformation
\[\overline{\Phi} \colon i_*\mathcal{P}(-)'_{\geq 0} \otimes {\det}^{\otimes n} \Longrightarrow H^{*}(\overline{B\Diff}^\fr_\partial(W_{g,1})_{\ell} ; \cH^{\otimes -}) \colon \cat{(s)Br}_{2g} \lra \cat{Gr}(\bQ\text{-}\cat{mod}),\]
factoring through $H^{*}({B\Diff}^\fr_\partial(W_{g,1})_{\ell} ; \cH^{\otimes -})$, which is an isomorphism in a stable range.
\end{theorem}

We will prove this in the following subsection. Assuming it for now, we proceed with the argument for \cref{thm:X1CohCalc} \ref{enum:X1CohCalc-ii}.

\begin{proposition}\label{prop:A1Coh2}
There is a natural transformation
\[i_*\mathcal{P}(-)'_{\geq 0} \otimes {\det}^{\otimes n} \Longrightarrow H^*(A_1(g) ; \cH^{\otimes -})   \colon \cat{(s)Br}_{2g} \lra \cat{Gr}(\bQ\text{-}\cat{mod})\]
which is an isomorphism in a stable range.
\end{proposition}

\begin{proof}
We consider the Serre spectral sequence for the middle row
\[A_1(g) \lra B\Diff^\fr_\partial(W_{g,1})_{\ell} \lra \left( B\Diff^\fr_\partial(W_{\infty,1})_{\ell_\infty}\right)^+\]
of \eqref{eq:BigDiagram}, with $\cH^{\otimes -}$-coefficients, which takes the form 
\begin{align*}
{^3}E_2^{u,v} &= H^u(\big( {B\Diff}^\fr_\partial(W_{\infty,1})_{\ell_\infty} \big)^+ ;   \cH^v(A_1(g) ; \cH^{\otimes -}))\\
 &\quad\quad \Longrightarrow H^{u+v}({B\Diff}^\fr_\partial(W_{g,1})_\ell ; \cH^{\otimes -}).
\end{align*}

Recall that $\big( {B\Diff}^\fr_\partial(W_{\infty,1})_{\ell_\infty} \big)^+ \simeq {\Omega^\infty_0 \mathbf{S}^{-2n}}$, which is an infinite loop space. By \cref{thm:TwistedCoh} there are natural transformations
\[\overline{\Phi} : i_*\mathcal{P}(-)'_{\geq 0} \otimes {\det}^{\otimes n} \overset{\Phi}\Longrightarrow H^{*}({B\Diff}^\fr_\partial(W_{g,1})_\ell ; \cH^{\otimes -}) \Longrightarrow H^{*}(\overline{B\Diff}^\fr_\partial(W_{g,1})_\ell ; \cH^{\otimes -})\]
whose composition is an isomorphism in a stable range. The second map is injective by transfer, as $\overline{B\Diff}^\fr_\partial(W_{g,1})_\ell$ is a finite cover of ${B\Diff}^\fr_\partial(W_{g,1})_\ell$, so in fact both maps are isomorphisms in a stable range. Thus we may apply \cref{lem:AcyclicNilpotent}, which implies that the finite group $\pi_1(\big( {B\Diff}^\fr_\partial(W_{\infty,1})_{\ell_\infty} \big)^+) \cong \pi_{2n+1}(\mathbf{S})$ acts nilpotently, and hence trivially, on $H^v(A_1(g) ; \cH^{\otimes -})$. Using that $\big( {B\Diff}^\fr_\partial(W_{\infty,1})_{\ell_\infty} \big)^+$ has finite homotopy, and hence homology, groups, we see that ${^3}E_2^{u,v}=0$ for $u>0$, and hence that the natural transformation
\[ H^{*}({B\Diff}^\fr_\partial(W_{g,1})_\ell ; \cH^{\otimes -}) \Longrightarrow H^*(A_1(g) ; \cH^{\otimes -}) \]
is an isomorphism in a stable range. By the discussion above the natural transformation $\Phi$ is also an isomorphism in a stable range, proving the claim.
\end{proof}

Combining Propositions \ref{prop:A1Coh1} and \ref{prop:A1Coh2} gives the required natural transformation \eqref{eq:RequiredNatTrans} and shows it is an isomorphism in a stable range, which with the discussion at the beginning of \cref{sec:AlgInvTwi} finishes the proof of \cref{thm:X1CohCalc} \ref{enum:X1CohCalc-ii}.

\subsubsection{Proof of \cref{thm:TwistedCoh}}\label{sec:proof:thm:TwistedCoh}
The space $B\Diff^\fr_\partial(W_{g,1})$ classifies the data of a smooth $W_{g,1}$-bundle $\pi'\colon E' \to B$ whose boundary $\partial E'$ is identified with $\partial W_{g,1} \times B$, and with a framing of the vertical tangent bundle which agrees with $\ell_\partial$ on $\partial E'$. We may glue in $D^{2n} \times B$ along the given identification of the boundaries to obtain a smooth $W_g$-bundle $\pi \colon E \to B$ with section $s \colon B \to E$ given by the centre of the disc $D^{2n}$. Using this data, as in Section 3 of \cite{KR-WTorelli} there is defined a class $\epsilon \in H^n(E ; \cH)$ and hence there are defined characteristic classes
\[\kappa_{\epsilon^S}(\pi) \coloneqq \pi_!(\epsilon^S) \in H^{n(|S|-2)}(B;\cH^{\otimes S})\]
for each finite set $S$, called twisted Miller--Morita--Mumford classes. Universally these provide classes
\[\kappa_{\epsilon^S} \in H^{n(|S|-2)}(B\Diff^\fr_\partial(W_{g,1});\cH^{\otimes S}),\]
whose definition does not make use of the framing, so they are pulled back from $B\Diff_\partial(W_{g,1})$. The interaction of these classes with the maps $\lambda_{i,j} \colon \cH^{\otimes S} \to \cH^{\otimes S \setminus \{i,j\}}$ can be deduced from Proposition 3.10 of \cite{KR-WTorelli}, and is as follows.

\begin{proposition}
For $k \geq 2$ we have
\begin{equation*}
\lambda_{1,2}({\pi}_!({\epsilon}^k)) = \begin{cases}
(-1)^n 2g & \text{ if } k=2 ,\\
0 & \text{ else.}
\end{cases}
\end{equation*}
For $a \geq 2$ and $b \geq 2$ we have $\lambda_{a, a+1}({\pi}_!({\epsilon}^a) \cdot {\pi}_!({\epsilon}^{b})) = {\pi}_!({\epsilon}^{\{1,2,\ldots, a-1\}} \cdot {\epsilon}^{\{a+2, \ldots, a+b\}})$.
\end{proposition}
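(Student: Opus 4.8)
The plan is to obtain both formulas by specialising the universal computation of Proposition 3.10 of \cite{KR-WTorelli} to the framed setting. Recall that the classes $\kappa_{\epsilon^S}$ are pulled back along $B\mr{Diff}^\mr{fr}_\partial(W_{g,1}) \to B\mr{Diff}_\partial(W_{g,1})$, and that the contraction operators $\lambda_{i,j}$ act only on the coefficient system $\mathcal{H}^{\otimes S}$, hence commute with fibre integration. Proposition 3.10 of \cite{KR-WTorelli} provides, over $B\mr{Diff}_\partial(W_{g,1})$, the general formula for $\lambda_{i,j}$ applied to a product of classes $\pi_!(\epsilon^{S})$: up to signs the outcome is a ``main term'' --- the coform $\omega \in H^{\otimes 2}$ dual to the intersection pairing when the two contracted indices together form a size-$2$ product, the merged product $\pi_!(\epsilon^{S\setminus i}\cdot\epsilon^{T\setminus j})$ when $i$ and $j$ lie in two distinct products, and a term carrying an Euler class $e(T_v E)$ when $i,j$ lie in the same product of size $>2$ --- together with correction terms, each of which is a product of the surviving $\epsilon^{\bullet}$-classes with a generalised Miller--Morita--Mumford class $\kappa_c$ for $c$ a monomial of positive degree in the Euler and Pontrjagin classes of the vertical tangent bundle.

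First I would invoke that over $B\mr{Diff}^\mr{fr}_\partial(W_{g,1})$ the vertical tangent bundle of the associated $W_g$-bundle $\pi\colon E \to B$ is framed (the framing near $\partial W_{g,1}$ extends over the glued-in disc), so that $e(T_v E)=0$ and $p_i(T_v E)=0$ in $H^*(E;\bQ)$, and consequently $\kappa_c = \pi_!(c(T_v E))=0$ for every $c$ of positive degree. Feeding these vanishings into Proposition 3.10 of \cite{KR-WTorelli} leaves, for the first identity: when $k=2$ only the coform term survives, and contracting $\omega$ against $\lambda$ gives $(-1)^n\cdot\mathrm{rk}_\bQ(H)=(-1)^n 2g$ (the sign being the $(-1)^n$-symmetry of $\lambda$), while for $k\geq 3$ the contraction $\lambda_{1,2}$ of the single product $\pi_!(\epsilon^k)$ leaves only an Euler-class term, which vanishes. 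For the second identity, $\lambda_{a,a+1}$ pairs an index of $\pi_!(\epsilon^a)$ with an index of $\pi_!(\epsilon^b)$, which is the ``distinct products'' case; its main term is exactly $\pi_!(\epsilon^{\{1,\ldots,a-1\}}\cdot\epsilon^{\{a+2,\ldots,a+b\}})$, and all correction terms again involve positive-degree characteristic classes of $T_v E$, hence vanish.

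The only genuine work is bookkeeping: reconciling the index labelling and the placement of the Koszul and $(-1)^n$ signs in Proposition 3.10 of \cite{KR-WTorelli} with the normalisations used here, and confirming that the correction terms listed there are indeed all divisible by some $\kappa_c$ with $|c|>0$ (equivalently, that the three ``main terms'' above are the only ones carrying no such factor). I expect a sign slip to be the main pitfall; there is no conceptual obstacle once the vanishing of $e$ and the $p_i$ on the framed moduli space has been used.
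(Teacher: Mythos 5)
Your key step — that the framing on the moduli space forces $e(T_\pi E)$ and $p_i(T_\pi E)$ to vanish in $H^*(E;\bQ)$, and hence that every $\kappa_c$ with $|c|>0$ vanishes — is false, and the gap cannot be papered over. The framing lives on the vertical tangent bundle of the $W_{g,1}$-bundle $E'$, but $\pi\colon E\to B$ is the $W_g$-bundle obtained by gluing a trivial $D^{2n}\times B$ onto $E'$, and the two framings do not match up across the gluing sphere. Indeed $e(T_\pi E)$ restricts on each fibre to $\chi(W_g)=2+(-1)^n 2g$ times the generator of $H^{2n}(W_g;\bQ)$, so it is certainly nonzero; correspondingly $\kappa_e = \pi_!\,e(T_\pi E) = \chi(W_g)$ is a nonzero constant. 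If you feed $e(T_\pi E)=0$ into Proposition~3.10 of \cite{KR-WTorelli} you lose the $\chi(W_g)$ contribution and get only the $-2$, which is the wrong answer for $k=2$.

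What the framing actually buys you is weaker but sufficient: $e(T_\pi E)$ restricts to zero on $E'$, so it lifts to the relative group $H^{2n}(E,E';\bQ)$. A relative Serre spectral sequence argument identifies this group with $\bQ$, generated by the class $v$ Poincar\'e dual to the section $s$, and by comparing restrictions to a fibre one gets the identity $e(T_\pi E)=\chi(W_g)\cdot v$ in $H^{2n}(E;\bQ)$. This identity, combined with $s^*\epsilon=0$ and $s^*e(T_\pi E)=0$ (the latter holds because $s$ factors through the trivial disc sub-bundle and needs no framing), is what makes the Euler-class correction terms in Proposition~3.10 collapse: the term $\pi_!(\epsilon^{k-2}\cdot e(T_\pi E))=\chi(W_g)\,s^*\epsilon^{k-2}$ equals $\chi(W_g)$ when $k=2$ and $0$ for $k\geq 3$, and the term $s^*e(T_\pi E)\cdot(\cdots)$ always vanishes. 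Combining $\chi(W_g)-2=(-1)^n 2g$ recovers the $k=2$ answer; your attribution of this number to ``contracting $\omega$ against $\lambda$'' gets the right value by coincidence but is not the mechanism. The second identity follows from the same relation $s^*e(T_\pi E)=0$ applied to the other formula in Proposition~3.10 of \cite{KR-WTorelli}; there the framing is not used at all.
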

\begin{proof}
By Proposition 3.10 of \cite{KR-WTorelli} we have
\begin{equation*}
\lambda_{1,2}({\pi}_!({\epsilon}^k)) = {\pi}_!({\epsilon}^{k-2} \cdot e(T_{{\pi}} {E})) + s^* e(T_\pi E) \cdot {\pi}_!({\epsilon}^{k-2}) - \begin{cases}
2 & \text{ if } k=2 ,\\
0 & \text{ else.}
\end{cases}
\end{equation*}
To analyse the first term, note that as the vertical tangent bundle of the subbundle $E' \subset E$ is framed, $e(T_{{\pi}} {E})$ vanishes when restricted to $E'$ and so lifts to a class $e(T_{{\pi}} {E}) \in H^{2n}(E, E';\bQ)$. The relative Serre spectral sequence for the pair of fibrations $(E, E') \to B$ shows that the map 
\[H^{2n}(E, E';\bQ) \lra H^{2n}(W_g, W_{g,1};\bQ) = \bQ\]
given by restricting to a fibre is an isomorphism. As $e(T_{{\pi}} {E})$ restricts to $\chi(W_g) = 2 + (-1)^n2g$ times the generator, and the class $v \in H^{2n}(E, E';\bQ)$ Poincar{\'e} dual to $s \colon B \to E$ (constructed in Lemma 3.1 of \cite{KR-WTorelli}) restricts to the generator, we must have the identity $e(T_{{\pi}} {E}) = \chi(W_g) \cdot v$. But then the first term of the expression above is
\[{\pi}_!({\epsilon}^{k-2} \cdot e(T_{{\pi}} {E})) = \chi(W_g) \pi_!({\epsilon}^{k-2} \cdot v) =  \chi(W_g) s^*{\epsilon}^{k-2} = \begin{cases}
2+(-1)^n 2g & \text{ if } k=2, \\
0 & \text{ else,}
\end{cases}\]
because $s^*\epsilon=0$ by definition of $\epsilon$. The second term vanishes, as $s \colon B \to E$ lies inside a trivial $D^{2n} \times B$-bundle so $s^* e(T_\pi E)=0$ (this did not use the framing). This gives the claimed formula.

By Proposition 3.10 of \cite{KR-WTorelli} we also have
\begin{equation*}
\lambda_{a, a+1}({\pi}_!({\epsilon}^a) \cdot {\pi}_!({\epsilon}^{b})) = {\pi}_!({\epsilon}^{\{1,2,\ldots, a-1\}} \cdot {\epsilon}^{\{a+2, \ldots, a+b\}})+ s^*e(T_\pi E) \cdot \pi_!(\epsilon^{a-1}) \cdot \pi_!(\epsilon^{b-1}),
\end{equation*}
but as $s^* e(T_\pi E)=0$ as above the second term vanishes, giving the claimed formula.
\end{proof}

Similarly, the effect of the dual forms $\omega_{i,j} \colon \cH^{\otimes S\setminus\{i,j\}} \to \cH^{\otimes S}$ on cohomology are determined by multiplicativity and $\omega_{1,2}(1) = \pi_!(\epsilon^2)$, as in Proposition 3.10 of \cite{KR-WTorelli}.

As in Section 3 of \cite{KR-WTorelli}, we define a functor $\mathcal{P}(-)^{2g}_{\geq 0} \colon \cat{(s)Br}_{2g} \otimes {\det}^{\otimes n} \to \cat{Gr}(\bQ\text{-}\cat{mod})$ where $\mathcal{P}(S)^{2g}_{\geq 0}$ is the vector space with basis the partitions $\{S_\alpha\}_{\alpha \in I}$ of $S$ with all parts of size $\geq 2$. The functoriality on the (signed) Brauer category is parallel to that of Definition \ref{defn:Pprime}, which is arranged so as to give a natural transformation
\[\Phi \colon \mathcal{P}(-)^{2g}_{\geq 0} \otimes {\det}^{\otimes n} \Longrightarrow H^*(B\Diff^\fr_\partial(W_{g,1})_\ell ; \cH^{\otimes -}) \colon \cat{(s)Br}_{2g} \lra \cat{Gr}(\bQ\text{-}\cat{mod})\]
by sending a partition $\{S_\alpha\}$ of $S$ and an orientation $(s_1 \wedge s_2 \wedge \cdots \wedge s_m)^{\otimes n}$ to $\pm \prod \kappa_{\epsilon^{S_\alpha}}$, for a sign as determined in Section 3.3 of \cite{KR-WTorelli}. As at the end of Section 3.5 of \cite{KR-WTorelli} there is an identification of $\mathcal{P}(-)^{2g}_{\geq 0} \otimes {\det}^{\otimes n}$ with the Kan extension $i_* \mathcal{P}(-)'_{\geq 0} \otimes {\det}^{\otimes n}$ of the functor $\mathcal{P}(-)'_{\geq 0} \otimes {\det}^{\otimes n}$ on the downwards (signed) Brauer category described in Definition \ref{defn:Pprime}, so combining the above map with pullback to the maximal abelian cover $\smash{\overline{B\Diff}^\fr_\partial}(W_{g,1})_\ell$ gives a natural transformation
\[\overline{\Phi} \colon i_* \mathcal{P}(-)'_{\geq 0} \otimes {\det}^{\otimes n} \Longrightarrow H^*(\overline{B\Diff}^\fr_\partial(W_{g,1})_\ell ; \cH^{\otimes -}) \colon \cat{(s)Br}_{2g} \lra \cat{Gr}(\bQ\text{-}\cat{mod}),\]
which by construction factors through $H^*({B\Diff}^\fr_\partial(W_{g,1})_\ell ; \cH^{\otimes -})$. This is the natural transformation to which \cref{thm:TwistedCoh} refers, so it remains to prove that it is an isomorphism in a stable range.

To do so, we proceed as in the proof of Theorem 3.15 of \cite{KR-WTorelli}, with some small changes that we now explain. That is, we suppose that $n$ is odd (the $n$ even case has no significant differences) we let $W$ be a finite dimensional $\bQ$-vector space, set $Y \coloneqq K(W^\vee, n+1)$, and consider the moduli spaces $B\Diff^{\fr \times Y}_\partial(W_{g,1})$ classifying framed $W_{g,1}$-bundles equipped with a map to $Y$. We form the diagram
\[\begin{tikzcd}
\mr{map}_*(W_{g,1}/\partial W_{g,1}, Y) \rar & B\Diff^{\fr \times Y}_\partial(W_{g,1})_\ell \rar \dar & B\Diff^\fr_\partial(W_{g,1})_\ell \dar\\
 & \Omega^\infty_0 (\mathbf{S}^{-2n} \wedge Y_+) \rar & \Omega^\infty_0 (\mathbf{S}^{-2n})
\end{tikzcd}\]
where the row is a fibre sequence and the vertical maps are acyclic in a stable range by \cite[Corollary 1.8]{grwstab2}. Just as in the proof of Theorem 3.15 of \cite{KR-WTorelli}, we have a natural identification
\[H^*(\mr{map}_*(W_{g,1}/\partial W_{g,1}, Y);\bQ) = \Lambda^*(H \otimes W[1]).\]
We may form a new diagram by replacing $\Omega^\infty_0 (\mathbf{S}^{-2n})$ by its universal cover $\overline{\Omega^\infty_0 (\mathbf{S}^{-2n})}$, and taking the associated finite covering spaces of the other spaces in the square: we use $\overline{(-)}$ to denote these covers too. The spectral sequence for the horizontal fibre sequence then takes the form
\[E_2^{p,q} = H^p(\overline{B\Diff}^\fr_\partial(W_{g,1})_\ell; \Lambda^q(\cH \otimes W)) \Longrightarrow H^{p+q}(\overline{B\Diff}^{\fr \times Y}_\partial(W_{g,1})_\ell ; \bQ).\]
On the other hand the abutment of this spectral sequence agrees in a stable range with
\begin{align*}
H^*(\overline{\Omega^\infty_0 (\mathbf{S}^{-2n} \wedge Y_+)};\bQ) &\cong H^*({\Omega^\infty_0 (\mathbf{S}^{-2n} \wedge Y_+)};\bQ)\\
 &\cong \mathrm{Sym}^*([\mathrm{Sym}^*(W[n+1])[-2n]]_{>0}).
\end{align*}
The second identification is just as in the proof of Theorem 3.15 of \cite{KR-WTorelli}. The first identification is because $\overline{\Omega^\infty_0 (\mathbf{S}^{-2n} \wedge Y_+)} \to {\Omega^\infty_0 (\mathbf{S}^{-2n} \wedge Y_+)}$ is a finite cover of infinite loop spaces.

At this point there are no further differences from the proof of Theorem 3.15 of \cite{KR-WTorelli}, and we follow that argument, concluding that $\overline{\Phi}$ is indeed an isomorphism in a stable range. This finishes the proof of \cref{thm:TwistedCoh}.

\subsection{The rational homotopy of $X_1(g)$}\label{sec:HtyX1}

For the results of this paper it will suffice to make a rather coarse estimate of the rational homotopy groups of the nilpotent space $X_1(g)$ from its rational cohomology, which we will do using the following tool.

For a nilpotent based space $X$ of finite type, there is a (dualised) unstable Adams spectral sequence
\begin{equation}\label{eq:UASS}
E^2_{s,t} = H^\mr{Com}_s(H^*(X;\bQ))_t \Longrightarrow \mathrm{Hom}_\bZ(\pi_{t-s}(\Omega X), \bQ).
\end{equation}
Here the $E^2$-page is given by commutative algebra homology, also known as Harrison homology, of the augmented commutative algebra $H^*(X;\bQ)$ (cf.~\cite[Section 4.2]{KR-WTorelli}). For an augmented commutative algebra $R \to \bQ$ with augmentation ideal $\overline{R}$, we take it to be defined by $H^\mr{Com}_p(R)_q = H_{p+q}(\mr{Harr}(\overline{R}))_q$, where $\mr{Harr}(\overline{R}) = (\mr{coLie}(\overline{R}[1]),d)$ with differential as in \cite[Corollary 11.3.5]{LodayVallette}. As $\mr{coLie}(\overline{R}[1])$ is a direct sum of Schur functors of $\overline{R}[1]$, in addition to a total grading it also has an internal grading: the first subscript $p+q$ is the total grading and the second subscript $q$ is the internal grading. This spectral sequence converges to the dual of the rational homotopy groups of $X$ (shifted by 1). In total degree 0, convergence means that $\bigoplus_{p \geq 0} E^\infty_{p,p}$ is the dual of the rationalisation of the associated graded of the lower central series of the nilpotent group $\pi_0(\Omega X)$. The spectral sequence is natural in based maps, and in unbased maps between simply-connected spaces. This spectral sequence seems to have been folklore for a long time, but a published reference can be found as \cite[Corollary A.2]{SinhaWalter}, using \cite[Remark 4.10]{SinhaWalter} to identify the $d^1$-differential as the Harrison differential. 

Applied to the nilpotent space $X_1(g)$ we would like to say that this is a spectral sequence of $\smash{G^{\fr,[\ell]}_g}$-representations, but this requires some justification. The following simplifies matters.

\begin{lemma}\label{lem:X1Pi1Fin}
	For all large enough $g$ the rationalisation $X_1(g)\oq$ of the nilpotent space $X_1(g)$ is simply-connected.
\end{lemma}

\begin{proof}
By \cref{thm:X1CohCalc} \ref{enum:X1CohCalc-ii} for large enough $g$ we have $H^1(X_1(g);\bQ)=0$, as by Definition \ref{defn:Pprime} the left-hand side in the statement of \cref{thm:X1CohCalc} \ref{enum:X1CohCalc-ii} is supported in degrees divisible by $n$. Thus the nilpotent group $\pi_1(X_1(g))$ is finite, so its rationalisation is trivial.
\end{proof}

It follows from this lemma that for large enough $g$ the $\pi_1(A_2(g))$-action on $X_1(g)$, by unbased maps, induces compatible $\pi_1(A_2(g))$-actions on the unstable Adams spectral sequence of $X_1(g)$ and on its abutment. We now argue that these descend to $G^{\fr,[\ell]}_g$-actions. For the action on the spectral sequence itself this follows from \cref{thm:X1CohCalc} \ref{enum:X1CohCalc-i}, as $\pi_1(A_2(g))$ acts on $H^*(X_1(g);\bQ)$ via $G^{\fr,[\ell]}_g$. For the action on the abutment we have the following.

\begin{lemma}\label{lem:X1Alg}
For all large enough $g$ the subgroup $\pi_2(\big(BG^{\fr,[\ell_\infty]}_\infty  \big)^+) \leq \pi_1(A_2(g))$ acts trivially on $\pi_q(X_1(g))\oq$ for each $q \geq 2$, and the induced $G^{\fr,[\ell]}_g$-action is algebraic.
\end{lemma}

\begin{proof}
Consider the action in the homotopy category of $\pi_1(A_2(g))$ on the rationalisation $X_1(g)_\bQ$, which is simply-connected by \cref{lem:X1Pi1Fin}. As the induced action on cohomology is $gr$-algebraic by \cref{lem:X1grAlg}, \cite[Lemma 2.10]{KR-WAlg} applied to the class of $gr$-algebraic representations implies that the induced action on homotopy is also $gr$-algebraic. Combining this with \cref{lem:A2RepsAreAlg} gives the claimed result.
\end{proof}

Thus the rational homotopy groups of $X_1(g)$ are indeed algebraic $G^{\fr,[\ell]}_g$-representations, and its unstable Adams spectral sequence is a spectral sequence of algebraic $G^{\fr,[\ell]}_g$-representations. The following is the coarse estimate of these rational homotopy groups that we shall establish. Recall that it contributes through the fibre sequence $X_1(g) \to B\mr{Tor}^\fr_\partial(W_{g,1})_{\ell_g} \to X_0$ to the rational homotopy groups of the framed Torelli group. We see here, for the first time, an instance of the ``bands'' described in the introduction.

\begin{proposition}\label{prop:X1HtyEstimate}\,
	\begin{enumerate}[(i)]
		\item In a stable range the groups $\pi_*(X_1(g)) \oq$ are supported in degrees $n$, $2n-1$, and $* \in \bigcup_{r \geq 3} [r(n-1)+1, rn-2]$.
		\item Furthermore, in a stable range the $G^{\fr,[\ell]}_g$-invariants $[\pi_*(X_1(g)) \oq]^{G^{\fr,[\ell]}_g}$ are supported in degrees $2n-1$ and $* \in \bigcup_{r \geq 2} [2r(n-1)+1, 2rn-2]$.
	\end{enumerate}
\end{proposition}

To prepare for the proof of this proposition we must describe some consequences of our description of the cohomology of $X_1(g)$ given in \cref{thm:X1CohCalc}, using results from Section 5 of \cite{KR-WTorelli}. Recall from \cref{sec:alg-representations} that we write $\omega \in H^{\otimes 2}$ for the form dual to the pairing $\lambda$. Here it is convenient to not refer to the hyperbolic basis of $H$, and rather let $\{a_i\}_{i=1}^{2g}$ be any basis of $H$ with dual basis $\{\smash{a_i^\#}\}_{i=1}^{2g}$ characterised by $\lambda(a_i^\#, a_j) = \delta_{ij}$, where we then have
\[\omega = \sum_{i=1}^{2g} a_i \otimes a_i^\#.\] 
In the notation of \cite[Section 5]{KR-WTorelli}, \cref{thm:X1CohCalc} shows that in a stable range we have $H^*(X_1(g);\bQ) \cong R^\cV$ for $\cV$ given by the graded vector space $\bQ[0]$ with $e=0 \in \cV$. In Theorem 5.1 of \cite{KR-WTorelli} we have shown how to obtain a generators and relations description of such algebras $R^\cV$, which applied to this case shows that in a stable range the graded-commutative algebra $H^*(X_1(g);\bQ)$ is generated by the generalised Miller--Morita--Mumford classes
\[\kappa_1(v_1 \otimes v_2 \otimes \cdots \otimes v_r) \text{ of degree $(r-2)n$, for $r \geq 3$ and $v_i \in H^n(W_{g,1};\bQ)$,}\]
where the subscript denotes $1 \in \bQ[0] = \cV$, subject to the relations of: 
\begin{enumerate}[(i)]
\item linearity in each $v_i$,

\item $\kappa_{1}(v_{\sigma(1)} \otimes v_{\sigma(2)} \otimes \cdots \otimes v_{\sigma(r)}) = \mr{sign}(\sigma)^n \cdot \kappa_{1}(v_{1} \otimes v_{2} \otimes \cdots \otimes v_{r})$,

\item $\sum_i \kappa_1(v \otimes a_i) \cdot\kappa_1(a_i^\# \otimes w) = \kappa_1(v \otimes w)$, for any tensors $v$ and $w$,

\item $\sum_i \kappa_1(v \otimes a_i \otimes a_i^\#)=0$ for any tensor $v$.
\end{enumerate}

The main consequences of this description that we shall use are that in a stable range the rational cohomology algebra of $X_1(g)$ is supported in degrees which are multiples of $n$, and furthermore has a presentation with generators certain classes $\kappa_{1}(v_1 \otimes v_2 \otimes v_3)$ of degree $n$, and all relations in degree $2n$.

\begin{proof}[Proof of \cref{prop:X1HtyEstimate}]
We consider the unstable Adams spectral sequence
\[E^2_{s,t} = H^\mr{Com}_s(H^*(X_1(g);\bQ))_t \Longrightarrow \mathrm{Hom}_\bZ(\pi_{t-s}(\Omega X_1(g)), \bQ)\]
which we have explained is a spectral sequence of algebraic $G^{\fr,[\ell]}_g$-representations.  Using the Harrison complex to compute the $E^2$-page, we see that $H^\mr{Com}_s(H^*(X_1(g);\bQ))_*$ is a subquotient of ${{H^{*>0}(X_1(g);\bQ)}}^{\otimes s}$, where ${H^{*>0}(X_1(g);\bQ)}$ denotes the augmentation ideal of $H^*(X_1(g);\bQ)$. As we have just mentioned, $H^*(X_1(g);\bQ)$ is supported in degrees which are multiples of $n$, so for each $s$ the groups $E^2_{s,t}$ are supported in $t$-degrees 
\[t = sn, (s+1)n, (s+2)n, \ldots,\]
which contribute to total degree 
\[t-s = s(n-1), s(n-1)+n, s(n-1)+2n, \ldots.\]
Thus $\pi_{*}(\Omega X_1(g)) \oq$ is supported in degrees $* \in \bigcup_{r \geq 1} [r(n-1), rn-1]$.

We can refine this slightly using the presentation of the algebra $H^*(X_1(g);\bQ)$. This is because commutative algebra homology is, up to a shift of indexing, given by derived indecomposables, i.e.\ Andr{\'e}--Quillen homology, and so can also be computed as the indecomposables of a minimal model. More precisely, if $V_1, V_2, V_3, \ldots$ are graded vector spaces and there is an equivalence
\[(S^*(V_1 \oplus V_2[1] \oplus V_3[2] \oplus \cdots), d) \overset{\sim}\lra {H^{*}(X_1(g);\bQ)}\]
with $d(V_i[i-1]) \subset S^{* \geq 2}(V_1 \oplus V_2[1]  \oplus \cdots \oplus V_{i-1}[i-2])$, then $H^\mr{Com}_s(H^*(X_1(g);\bQ))_* \cong V_s$ as graded vector spaces. In a stable range $H^*(X_1(g);\bQ)$ has a presentation with generators only in degree $n$, and relations only in degree $2n$. Such a minimal model may be constructed by taking $V_1$ to be a minimal space of generators and $V_2$ to be a minimal space of relations, so we see that $H^\mr{Com}_1(H^*(X_1(g);\bQ))_t = 0$ for $t \neq n$ and $H^\mr{Com}_2(H^*(X_1(g);\bQ))_t = 0$ for $t \neq 2n$, in a stable range. It follows that $\pi_{*}(\Omega X_1(g)) \oq$ is supported in degrees $n-1$, $2n-2$, and $* \in \bigcup_{r \geq 3} [r(n-1), rn-3]$, which proves the first part of the proposition.

To prove the claim about the $G^{\fr,[\ell]}_g$-invariants, recall (from \cref{sec:alg-representations}) that the category of algebraic $G^{\fr,[\ell]}_g$-representations is semi-simple, so we can take $G^{\fr,[\ell]}_g$-invariants of the unstable Adams spectral sequence to obtain a spectral sequence converging to the dual of $[\pi_*(X_1(g)) \oq]^{G^{\fr,[\ell]}_g}$. We will study the $G^{\fr,[\ell]}_g$-invariants of the $E^2$-page in a range.

As we have already mentioned, relation (iii) shows that this algebra is generated by the degree $n$ elements $\kappa_1(v_1 \otimes v_2 \otimes v_3)$ for $v_1, v_2, v_3 \in H^n(W_{g,1};\bQ) = H^\vee \cong H$. The representation $H^{\otimes 3}$ is odd (in the sense of \cref{sec:Parity}), so the representation $H^{*}(X_1(g);\bQ)$ has parity $k$ in degree $nk$. Thus ${{H^{*>0}(X_1(g);\bQ)}}^{\otimes s}$ has parity $k$ in degree $nk$ too, so 
\[[H^\mr{Com}_s(H^*(X_1(g);\bQ))_t]^{G^{\fr,[\ell]}_g}\] 
must be supported in bidegrees $(s,t)$ with $t=2kn$. The same analysis of degrees as in the first part of this proposition shows that $[\pi_{*}(\Omega X_1(g)) \oq]^{G^{\fr,[\ell]}_g}$ is supported in degrees $2n-2$ and $* \in \bigcup_{k \geq 2} [2k(n-1), 2kn-3]$, as claimed.
\end{proof}

We may use the fact that $\pi_*(X_1(g))\oq$ is supported in degrees $n$, $2n-1$, and $3n-2$ in the range $* < 4n-3$ to determine these groups, as follows.

\begin{proposition}\label{prop:HtyX1LowDeg}
For large enough $g$ we have isomorphisms of $G^{\fr,[\ell]}_g$-representations
\begin{align*}
\pi_n(X_1(g))_\bQ &\cong V_{1^3} \text{ if $n$ is odd, } V_{3} \text{ if $n$ is even,}\\
\pi_{2n-1}(X_1(g))_\bQ &\cong V_0+V_{1^2}+V_{2^2} \text{ if $n$ is odd, } V_{0} + V_2 + V_{2^2} \text{ if $n$ is even,}\\
\pi_{3n-2}(X_1(g))_\bQ &\cong V_{2, 1}+V_{3, 1^2}.
\end{align*}
\end{proposition}

\begin{proof} 
As discussed above, in terms of the notation of Section 5 of \cite{KR-WTorelli} we have $H^*(X_1(g);\bQ) \cong R^\cV$ in a stable range, where $\cV$ is the graded vector space $\bQ[0]$ with $e =0 \in \cV$. In Section 6 of \cite{KR-WTorelli} we explained how to determine the Hilbert--Poincar{\'e} series $\mr{ch}(R^\cV) \in \Lambda[[t]]$ of the algebras $R^\cV$ over the representation ring of $G^{\fr,[\ell]}_g$. We refer to \cref{sec:char-arithmetic} and Section 6.1 of \cite{KR-WTorelli} for background on symmetric functions. 

\cref{thm:X1CohCalc} \ref{enum:X1CohCalc-ii} provides a map
\[i^*(K^\vee) \otimes^\cat{d(s)Br} \left(\mathcal{P}(-)'_{\geq 0} \otimes {\det}^{\otimes n}\right) \lra H^*(X_1(g);\bQ)\]
of graded $G_g^{\fr, [\ell]}$-representations which is an isomorphism in a stable range, which was proved as an application of \cite[Proposition 2.16]{KR-WTorelli} (or its signed analogue). The second part of that proposition shows that if $\lambda \vdash q$ is a partition then, in a stable range of degrees, the multiplicity of $V_\lambda$ in the graded $G_g^{\fr, [\ell]}$-representation $H^*(X_1(g);\bQ)$ is the same as the multiplicity of $S^\lambda$ in the graded $\fS_q$-representation $\mathcal{P}(\{1,2,\ldots, q\})'_{\geq 0} \otimes (\det \bQ^{\{1,2,\ldots, q\}})^{\otimes n}$. The latter kind of calculation is explained at \cite[p.\ 70]{KR-WTorelli}. The answer is clearest when expressed for all $q$ and all partitions $\lambda$ at once, and the Frobenius character $\mr{ch} = \prod_{q \geq 0} \mr{ch}_q \colon R(\mathsf{FB}) = \prod_{q \geq 0} R(\fS_q) \to \widehat{\Lambda} = \prod_{q \geq 0} \Lambda_q$ is applied. The lax symmetric monoidality given by disjoint union makes
\[\mathcal{P}(-)'_{\geq 0}: \mathsf{FB} \lra \mathsf{Gr}(\bQ\text{-}\mathsf{mod})\]
into the free commutative algebra object on the functor 
\[B \colon S \longmapsto \begin{cases}
\bQ[n (|S|-2)] & |S| \geq 3\\
0 & |S| < 3,
\end{cases}\]
i.e.\ we have $\mathcal{P}(-)'_{\geq 0} = \underline{\bQ} \circ B$ in terms of the composition product, where $\underline{\bQ}$ is the constant functor with value $\bQ$. As explained in \cref{sec:characters-sym-functions}, the Frobenius character of the composition product of symmetric sequences is given by the plethysm of Frobenius characters. This gives
\begin{align}
&\prod_{q \geq 0} \mr{ch}_q\left(\mathcal{P}(\{1,2,\ldots, q\})'_{\geq 0} \otimes (\det \bQ^{\{1,2,\ldots, q\}})^{\otimes n}\right)\nonumber\\
&\quad\quad\quad\quad= \omega^n\left(\left(\sum_{q=0}^\infty h_q \right) \circ \left(\sum_{p=3}^\infty h_p t^{n(p-2)}\right)\right)  \in \widehat{\Lambda}[[t]],\label{eq:Character}
\end{align}
where $\omega \colon \widehat{\Lambda} \to \widehat{\Lambda}$ is the involution given on Schur polynomials by transposing partitions, and $h_i \in \widehat{\Lambda}$ is the $i$th complete symmetric homogeneous polynomial. This expression actually lies in $\Lambda[[t]]$, as each $\mathcal{P}(\{1,2,\ldots, q\})'_{\geq 0}$ is finite dimensional in each degree. The character of the graded $G_g^{\fr, [\ell]}$-representation $H^*(X_1(g);\bQ)$ is then obtained, by the second part of \cite[Proposition 2.16]{KR-WTorelli}, by applying to \eqref{eq:Character} the operator $D \colon \Lambda \to \Lambda$ sending the Schur polynomial $s_\lambda$ to $s_{\langle \lambda \rangle}$. (Recall from \cref{sec:char-arithmetic} that when $n$ is odd (resp.~even) this is the symplectic (resp.~orthogonal) Schur polynomial $sp_\lambda$ (resp.~$o_\lambda$).) Using \texttt{SageMath} \cite{sagemath} we evaluate the plethysm
\begin{align*}
&\left(\sum_{q=0}^\infty h_q \right) \circ \left(\sum_{p=3}^\infty h_p t^{n(p-2)}\right) = 1 + s_3 t^n + (s_4 + s_{4,2} + s_6) t^{2n}\\
 &\quad\quad\quad + (s_{9}+s_{7}+s_{7, 2}+s_{6, 3}+s_{6, 1}+s_{5, 2}+s_{5, 2^2}+s_{5}+s_{4, 3}+s_{4^2, 1})t^{3n} + O(t^{4n}),
\end{align*}
and hence in degrees $<4n$ the Frobenius character of $H^*(X_1(g);\bQ)$ is given by $D(\omega^n (-))$ applied to this expression.

Now by \cref{prop:X1HtyEstimate}, the homology $H_*(\Omega X_1(g);\bQ)$ is supported in degrees $(n-1)$, $2(n-1)$, and $3(n-1)$ in the range $* < 4(n-1)$. Consider the bar spectral sequence
\[E^2_{s,t} = \mathrm{Tor}^{H_*(\Omega X_1(g);\bQ)}_s(\bQ, \bQ)_t \Longrightarrow H_{s+t}(X_1(g);\bQ).\]
As the lowest reduced homology group of $\Omega X_1(g)$ is in degree $(n-1)$, computing the $E^2$-page using the reduced bar complex shows that it is supported in bidegrees satisfying $t \geq (n-1)s$. Furthermore, as this reduced homology is supported in degrees which are multiples of $(n-1)$ in the range $* < 4(n-1)$, we see that for $t< 4n-10$ the $E^2$-page is supported in $t$-degrees which are multiples of $(n-1)$. Thus the shortest differentials are $d^{n}$'s, so under our running assumption that $n \geq 3$ it follows that there are no differentials starting in degrees $t< 4(n-1)$, so the spectral sequence collapses in total degrees $s+t < 4(n-1)$. On the other hand this spectral sequence abuts to a graded vector space which is supported in degrees which are multiples of $n$, so in fact the $E^2$-page must be supported along the line $t = s(n-1)$ for $t < 4(n-1)$.

This identifies the associative algebras $H_*(\Omega X_1(g);\bQ)$ and $H^*(X_1(g);\bQ)$ as being Koszul dual in degrees $* < 4(n-1)$, allowing us to formally determine the Hilbert--Poincar{\'e} series of $H_*(\Omega X_1(g);\bQ)$ from that of $H^*(X_1(g);\bQ)$ in this range. Namely, if we write
\[P_n(x) \coloneqq D\left(\omega^n\left(\left(\sum_{q=0}^\infty h_q \right) \circ \left(\sum_{p=3}^\infty h_p x^{p-2}\right)\right)\right) \in \Lambda[[x]]\]
so that the Hilbert--Poincar{\'e} series of $H^*(X_1(g);\bQ)$ is $P_n(t^n)$, then the Hilbert--Poincar{\'e} series of $H_*(\Omega X_1(g);\bQ)$ is $1/P_n(-t^{n-1})$ in degrees $* < 4(n-1)$. As $H_*(\Omega X_1(g);\bQ)$ is the enveloping algebra of the Lie algebra $\pi_*(\Omega X_1(g))_\bQ$, by the Poincar{\'e}--Birkhoff--Witt theorem it has a filtration for which there is a natural isomorphism
\[S^*(\pi_*(\Omega X_1(g))_\bQ) \cong \mr{gr} H_*(\Omega X_1(g);\bQ)\]
from the free graded-commutative algebra to the associated graded, and hence an exponential relationship between their Hilbert--Poincar{\'e} series. More precisely, if $Q_n(t) \in \Lambda[[t]]$ denotes the Hilbert--Poincar{\'e} series of $\pi_*(\Omega X_1(g))_\bQ$, then we have the equation
\[\frac{1}{P_n(-t^{n-1})}=\begin{cases}
(1 + h_1 + h_2 + h_3 + \cdots) \circ Q_n(t) & \text{ $n$ odd,}\\
((1 + e_1 + e_2 + \cdots) \circ Q_n^{-}(t))((1 + h_1 + h_2  + \cdots) \circ Q_n^{+}(t)) & \text{ $n$ even,}
\end{cases}\]
where $Q_n^{\pm}(t) = \tfrac{Q_n(t) \pm Q_n(-t)}{2}$ is the decomposition of $Q_n(t)$ into even and odd parts. (The right-hand side is simply implementing the free graded-commutative algebra on the object with Hilbert--Poincar{\'e} series $Q_n(t)$, see \cref{sec:computing-koszul-dual} for a related discussion.) With $P_n(x)$ already calculated up to order $x^3$ above, in both cases this equation can be uniquely solved for $Q_n(t)$ up to order $t^{3(n-1)}$. This may be done recursively, because the operators on the right-hand side are of the form $(1+ \{\text{operators which raise $t$ exponent}\})$. We did this using \texttt{SageMath} \cite{sagemath}, giving the claimed result.
\end{proof}

\begin{remark}
In the companion paper \cite{KR-WKoszul} we prove that $H^*(X_1(g);\bQ)$ is in fact Koszul in a stable range, so the discussion in the proof of this proposition determines $\pi_*(X_1(g))\oq$ as a $G_g^{\fr, [\ell]}$-representation in a stable range, and in particular shows that these homotopy groups are supported in degrees of the form $r(n-1)+1$. This leads to the improvement in the width of the bands in Theorems \ref{thm:main-bands} and \ref{thm:HtyF} mentioned in \cref{rem:improvement-koszul}. For clarity we do not implement this improvement in this paper, because the argument in \cite{KR-WKoszul} uses the results of this paper (cf.\ \cref{rem:miracle}).
\end{remark}

\begin{remark}\label{rem:PoincSeriesX1}
The answers in \cref{prop:HtyX1LowDeg} for $n$ even and odd differ precisely by the transposition $\lambda \mapsto \lambda'$ of partitions, and this is not a coincidence as we now explain. Firstly the series $P_n(x)$ depends only on the parity of $n$, and the ring automorphism $\omega \colon \Lambda \to \Lambda$ satisfies $\omega (sp_\lambda) = o_{\lambda'}$ so interchanges the even and odd versions of $P_n$: we may call them $P_\mr{even}$ and $P_\mr{odd}$. Secondly, we have $\omega(h_n \circ s_\lambda) = (\omega^{|\lambda|} h_n) \circ s_{\lambda'}$. This is because $s_\mu \circ s_\lambda$ is the Frobenius character of the representation $\mr{Ind}_{\fS_{|\mu|} \ltimes (\fS_{|\lambda|})^{|\mu|}}^{\fS_{|\lambda|^{|\mu|}}}S^\mu \otimes (S^\lambda)^{\otimes |\mu|}$, and applying $\omega$ corresponds to tensoring with the sign representation of $\fS_{|\lambda|^{|\mu|}}$: the formula then follows by Frobenius reciprocity. Thus we also have $\omega(h_n \circ sp_\lambda) = (\omega^{|\lambda|} h_n) \circ o_{\lambda'}$.

Let us define $\overline{Q}_\mr{odd}(x), \overline{Q}_\mr{even}(x) \in \Lambda[[x]]$ so that (using the notation $\overline{Q}_\mr{even}^\pm$ for the even and odd parts) 
\begin{align*}
\frac{1}{P_\mr{odd}(-x)}&= (1 + h_1 + h_2  + \cdots) \circ \overline{Q}_\mr{odd}(x),\\
\frac{1}{P_\mr{even}(-x)} &= \Big((1 + e_1 + e_2 + \cdots) \circ \overline{Q}_\mr{even}^{-}(x)\Big)\cdot \Big((1 + h_1 + h_2  + \cdots) \circ \overline{Q}_\mr{even}^{+}(x)\Big).
\end{align*}
The discussion above then shows that $Q_{n}(t) = \overline{Q}_\mr{odd}(t^{n-1})$ (resp.~$\overline{Q}_\mr{even}(t^{n-1})$), when $n$ is odd (resp.~even), in degrees up to $t^{3(n-1)}$.

It is easy to see from the definition that the coefficient of $x^{r}$ in $P_\mr{odd}(-x)$ is a sum of $sp_{\lambda}$'s with $|\lambda| \equiv r \pmod 2$, and hence to see from the identity above that $\overline{Q}_\mr{odd}(x)$ also has this property. Thus if $\overline{Q}_\mr{odd}(x) = \sum_{r \geq 0} c_r x^r$ with $c_r \in \Lambda$ then
\[(1 + h_1 + h_2  + \cdots) \circ \overline{Q}_\mr{odd}(x) =  \prod_{r \geq 0}(1 + h_1 + h_2  + \cdots) \circ(c_r x^r) = \prod_{r \geq 0} \left( \sum_{i \geq 0} (h_i \circ c_r) x^{ri}\right)\]
so applying $\omega$ gives
\begin{align*}
&\prod_{r \geq 0 \text{ odd}} \left( \sum_{i \geq 0} (e_i \circ \omega(c_r)) x^{ri} \right) \cdot \prod_{r \geq 0 \text{ even}} \left( \sum_{i \geq 0} (h_i \circ \omega(c_r)) x^{ri} \right)  \\
&\quad\quad\quad\quad = \Big((1+e_1+e_2+\cdots) \circ \omega(\overline{Q}_\mr{odd}^-(x))\Big)\Big((1+h_1+h_2+\cdots) \circ \omega(\overline{Q}_\mr{odd}^+(x))\Big).
\end{align*}
This is identified with $\omega(1/P_\mr{odd}(-x)) = 1/P_\mr{even}(-x)$, showing that $\omega(\overline{Q}_\mr{odd}(x)) = \overline{Q}_\mr{even}(x)$ as claimed.
\end{remark}

\subsection{$X_0$ and Hirzebruch $L$-classes}\label{sec:x0-l-classes}

Finally, we wish to interpret the effect on rational cohomology of the composition
\begin{equation}\label{eq:DiffDiscToX0}
{B\Diff}^\fr_\partial(D^{2n})_{\ell_0} \lra {B\mr{Tor}}^\fr_\partial(W_{g,1})_\ell \lra X_0.
\end{equation}
Specifically, in \cref{prop:BigDiagramFacts} \ref{enum:BigDiagramFacts-v} we identified $H^*(X_0;\bQ) \cong \bQ[\overline{\sigma}_{4j-2n-1} \mid 4j-2n > 0]$ where $\overline{\sigma}_{4j-2n-1}$ transgresses to $\sigma_{4j-2n} \in H^{4j-2n}(\big( BG^{\fr, [\ell_\infty]}_\infty \big)^+;\bQ)$. 
On the other hand, smoothing theory and the Alexander trick give a map
\[B\Diff^\fr_\partial(D^{2n}) \lra \Omega^{2n}\mr{Top}(2n) \simeq \Omega^{2n+1}B\mr{Top}(2n),\]
which is a weak equivalence onto the path components that it hits. Thus ${B\Diff}^\fr_\partial(D^{2n})_{\ell_0}$ is a path component of the right-hand side. The topological Hirzebruch $L$-classes $\cL_j \in H^{4j}(B\mr{Top};\bQ)$ can be restricted to $B\mr{Top}(2n)$, and then looped $(2n+1)$ times to give classes 
\[\Omega^{2n+1} \cL_j \in H^{4j-2n-1}(\Omega^{2n+1}B\mr{Top}(2n);\bQ).\]
In these terms we can then phrase our comparison result, as follows.

\begin{theorem}\label{thm:ApplSignThm}
The composition \eqref{eq:DiffDiscToX0} pulls back $\overline{\sigma}_{4j-2n-1}$ to the class $\Omega^{2n+1} \cL_j$.
\end{theorem}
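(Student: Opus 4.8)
## Proof proposal for Theorem \ref{thm:ApplSignThm}

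The plan is to reduce the statement to a comparison of the signature/$L$-class story for block bundles versus that for topological bundles, and to identify the relevant classes through the chain of maps fitting into the diagram \eqref{eq:BigDiagram}. First I would set up the geometric picture on $\overline{B\mr{Diff}}^{\mr{fr}}_\partial(D^{2n})_\ell$: stabilising a framed disc-diffeomorphism along $W_{g,1}$ produces a framed $W_{g,1}$-bundle, and via Proposition \ref{prop:commsquare} the composite $\overline{B\mr{Diff}}^{\mr{fr}}_\partial(D^{2n})_\ell \to \overline{B\mr{Diff}}^{\mr{fr}}_\partial(W_{g,1})_\ell \to \overline{\Omega^\infty_0\mathbf{KH}}$ factors through $X_0$ by construction. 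I would then trace what the classes $\sigma_{4j-2n} \in H^{4j-2n}(\overline{\overline{\Omega^\infty_0\mathbf{KH}}};\bQ)$ pull back to, using the appendix's identification (Corollary \ref{cor:SignClasses}) that $\sigma_{4j-2n}$ on $\Omega^\infty_0\mathbf{KH}$ restricts to the generalised Miller--Morita--Mumford class $\kappa_{\cL_j}$ in $H^*(B\mr{Homeo}_\partial(W_{g,1});\bQ)$ — i.e.\ the fibre integral $\int_\pi \cL_j(T_\pi E)$ of the Hirzebruch $L$-class of the vertical tangent microbundle. Since $\overline\sigma_{4j-2n-1} \in H^{4j-2n-1}(X_0;\bQ)$ transgresses to $\sigma_{4j-2n}$ along the fibration $X_0 \to \overline{\Omega^\infty_0\mathbf{S}^{-2n}} \to \overline{\overline{\Omega^\infty_0\mathbf{KH}}}$ (Lemma \ref{lem:CalcCohomX0}), the task becomes: show the transgressing class $\overline\sigma_{4j-2n-1}$ pulls back along \eqref{eq:DiffDiscToX0} to the class which, after one further delooping in the fibration $\Omega^{2n+1}B\mr{Top}(2n) \to \ast \to B\mr{Top}(2n)$ (or rather the composite picture coming from smoothing theory), transgresses to $\kappa_{\cL_j}$.

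The key step is to match two transgressions. On the diffeomorphism side, $\overline{B\mr{Diff}}^{\mr{fr}}_\partial(D^{2n})_\ell$ sits inside $\Omega^{2n}\mr{Top}(2n) \simeq \Omega^{2n+1}B\mr{Top}(2n)$ (a weak equivalence onto the components it hits, via Morlet and the Alexander trick), and $\Omega^{2n+1}\cL_j$ is obtained by looping the restriction of $\cL_j \in H^{4j}(B\mr{Top};\bQ)$ to $B\mr{Top}(2n)$ down $2n+1$ times. I would argue that looping $2n+1$ times corresponds, on the level of the $W_g$-bundle over the base, to forming the bundle $E \to \overline{B\mr{Diff}}^{\mr{fr}}_\partial(D^{2n})_\ell$ whose monodromy is supported in a disc, computing the fibre integral of $\cL_j$ of the vertical microbundle, and recognising the ``extra'' $2n+1$ loops exactly as the operation that converts a class on $B\mr{Top}(2n)$ pulled back to $\int_\pi$ over a $2n$-dimensional fibre into a class in degree $4j-(2n+1)$. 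Concretely: the vertical tangent microbundle of this $W_g$-bundle is classified by a map $E \to B\mr{Top}(2n)$ which factors through the ``gluing'' description, and the composite $\overline{B\mr{Diff}}^{\mr{fr}}_\partial(D^{2n})_\ell \to \Omega^{2n+1}B\mr{Top}(2n)$ together with the fibre-integration/evaluation pairing identifies $\int_\pi \cL_j(T_\pi E)$ (pulled back from $B\mr{Homeo}_\partial(W_{g,1})$, hence the $\sigma$-class story) with $\Omega^{2n+1}\cL_j$. The cleanest way to organise this is to use naturality of the transgression in the map of fibrations: the fibration $X_0 \to \overline{\Omega^\infty_0\mathbf{S}^{-2n}} \to \overline{\overline{\Omega^\infty_0\mathbf{KH}}}$ receives a map from (a finite cover of) the smoothing-theory fibration $\Omega^{2n+1}B\mr{Top}(2n) \to \Omega^{2n+1}B\mr{STop} \to \Omega^{2n+1}B\mr{STop}/B\mr{Top}(2n)$ (or an appropriate variant), under which $\sigma_{4j-2n}$ corresponds to $\Omega^{2n+1}\cL_j$'s transgression partner; then $\overline\sigma_{4j-2n-1} \mapsto \Omega^{2n+1}\cL_j$ follows by naturality of transgression.

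The main obstacle I expect is the bookkeeping of \emph{which} map of fibrations to use and checking it is compatible with all the covering-space modifications (the passage from $B\mr{Diff}^{\mr{fr}}$ to $\overline{B\mr{Diff}}^{\mr{fr}}$, from $\Omega^\infty_0\mathbf{KH}$ to its universal cover and then to $\overline{\overline{\Omega^\infty_0\mathbf{KH}}}$, and the finite-cover adjustment of \eqref{eq:KillingPi1}); since all these covers are rationally invisible by the lemmas already proved (Lemma \ref{lem:TorBarvsTor} and the surrounding rational-acyclicity statements), this should be harmless but needs to be said carefully. The other point requiring care is identifying that the class appearing on $\Omega^\infty_0\mathbf{KH}$ via Hermitian $K$-theory of $\bZ$ and its signature operator genuinely is the $L$-class fibre integral — this is exactly the content of the appendix (Corollary \ref{cor:SignClasses}), so I would simply cite it; the substance of the present theorem is then the transgression-matching in the previous paragraph, which is essentially formal once the map of fibration sequences is in place.
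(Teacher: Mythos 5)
Your high-level plan --- reduce to rational cohomology so covers are invisible, identify $\sigma_{4j-2n}$ with the fibre-integrated $L$-class via the Family Signature Theorem, and match the transgression partner $\bar\sigma_{4j-2n-1}$ with $\Omega^{2n+1}\cL_j$ --- tracks the right invariants, and the opening reductions are exactly what the paper does. But the step you call ``essentially formal once the map of fibration sequences is in place'' is in fact the entire content of the theorem, and the map of fibrations you want is not available.

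Concretely: to run naturality of transgression you would need a commutative diagram of fibration sequences mapping the Morlet-side fibration $\Omega^{2n+1}(\mr{Top}/\mr{Top}(2n)) \to \Omega^{2n+1}B\mr{Top}(2n) \to \Omega^{2n+1}B\mr{Top}$ to $X_0 \to \overline{\Omega^\infty_0\mathbf{S}^{-2n}} \to \overline{\overline{\Omega^\infty_0\mathbf{KH}}}$. But under the Morlet equivalence the map $\Omega^{2n+1}B\mr{Top}(2n) \simeq B\mr{Diff}^\mr{fr}_\partial(D^{2n})_{\ell_0} \to \Omega^\infty_0\mathbf{S}^{-2n}$ is nullhomotopic (the parametrised Pontrjagin--Thom invariant of a framed disc bundle is trivial), while $\Omega^{2n+1}B\mr{Top}$ and $\overline{\overline{\Omega^\infty_0\mathbf{KH}}}$ have nontrivial rational cohomology in degrees differing by one; there is no natural map on total spaces making the required square commute and carrying the transgression data. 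The relationship between the topological-tangent-bundle datum (which lives on $B\mr{Top}(2n)$, hence on the Morlet side) and the Hermitian $K$-theory datum (which lives on $BG_g$, hence on the scanning/$\mathbf{KH}$-side) is not realised by a map of spaces; it is a \emph{cohomological} identity supplied by the Family Signature Theorem, and one must pay for it geometrically.

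The paper's proof does exactly this by dualising to framed bordism. Given a stably framed $M^{4j-2n-1}$ mapping to $\overline{B\mr{Tor}}^\mr{fr}_\partial(W_{g,1})_\ell$, it evaluates $\bar\sigma_{4j-2n-1}$ by choosing a framed nullbordism $W$ of $M$ mapping to $\overline{B\mr{Diff}}^\mr{fr}_\partial(W_{g,1})_\ell$ and integrating the relative class $p^*\sigma_{4j-2n}$ over $(W,M)$; and it evaluates $\Omega^{2n+1}\cL_j$ by applying the Alexander trick over $M \times [0,1]$ and fibre-integrating $\cL_j$ of the vertical tangent microbundle. The key construction is to glue the resulting topological $W_{g,1}$-bundle over $M \times [0,1]$ (trivial over $M\times\{0\}$) to the smooth framed $W_{g,1}$-bundle over $W$ along $M = M\times\{1\}$, producing a single topological $W_{g,1}$-bundle over $(M\times[0,1])\cup_M W$ with trivialised boundary; the Family Signature Theorem (Theorem \ref{thm:FamSig}) is then applied once, to this glued bundle, to identify $\langle L^*\sigma_{4j-2n}, [\cdot]\rangle$ with $\langle\int_{\bar{\bar\pi}}\cL_j(\tau), [\cdot]\rangle$, and the framed pieces contribute nothing to the latter. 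This bordism-plus-gluing argument is irreducibly geometric, and your sketch would need to be replaced by something of this kind --- or else the phantom map of fibration sequences would have to be constructed explicitly, which I do not see how to do.

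One small citation correction: the identity $\sigma_{4j-2n} = \kappa_{\cL_j}$ is Theorem \ref{thm:FamSig}, not Corollary \ref{cor:SignClasses}; the corollary is the Borel-stability consequence.
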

\begin{proof}

Consider the diagram
\[\begin{tikzcd}
B\Diff^\fr_\partial(D^{2n})_{\ell_0} \rar{j} & B\mr{Tor}^\fr_\partial(W_{g,1})_\ell \rar{i} \dar& B\Diff^\fr_\partial(W_{g,1})_\ell \dar{p}\\[-3pt]
& \{*\} \rar&  BG_g^{\fr,[\ell]}
\end{tikzcd}\]
for $g$ large enough that the degree $4j-2n-1$ is in the stable range for both
$B\Diff^\fr_\partial(W_{g,1})_\ell$ and $BG_g^{\fr,[\ell]}$. As $4j-2n>0$ we have the class
\[p^* \sigma_{4j-2n} \in H^{4j-2n}(B\Diff^\fr_\partial(W_{g,1})_\ell, B\mr{Tor}^\fr_\partial(W_{g,1})_\ell ; \bQ)\]
but in the stable range the $\bQ$-cohomology of $B\Diff^\fr_\partial(W_{g,1})_\ell$ is trivial, which identifies this relative cohomology group with $\widetilde{H}^{4j-2n-1}(B\mr{Tor}^\fr_\partial(W_{g,1})_\ell ; \bQ)$ and the class $p^*(\sigma_{4j-2n})$ with (the pull back of) $\overline{\sigma}_{4j-2n-1}$. This is simply interpreting the definition of the transgression.

If $M^{4j-2n-1}$ is a stably framed manifold and $f \colon M \to B\mr{Tor}^\fr_\partial(W_{g,1})_\ell$  is a map, then $\langle \overline{\sigma}_{4j-2n-1}, f_*[M] \rangle$ is evaluated as follows. Choose a nullbordism 
\[F \colon W \to B\Diff^\fr_\partial(W_{g,1}; \ell_\partial)_\ell\]
of the map $i \circ f$ (which is possible after perhaps replacing $[M,f]$ by a multiple in stably framed bordism, again because in the stable range the $\bQ$-homology of $B\Diff^\fr_\partial(W_{g,1})_\ell$ is trivial) and then pull back the relative class $p^*(\sigma_{4j-2n})$ along
\[(F, f) \colon (W, M) \lra (B\Diff^\fr_\partial(W_{g,1})_\ell, B\mr{Tor}^\fr_\partial(W_{g,1})_\ell)\]
and evaluate against the relative fundamental class of $W$.

\smallskip

On the other hand if $M^{4j-2n-1}$ is a stably framed manifold and $g \colon M \to B\Diff^\fr_\partial(D^{2n})_{\ell_0}$ is a map classifying a smooth $(D^{2n}, \partial D^{2n})$-bundle
\[\pi \colon (E, M \times \partial D^{2n}) \lra M\]
with a framing $\eta \colon T_\pi E \overset{\sim}\to \epsilon^{2n}$ extending $\ell_\partial$ on the boundary, then $\langle \Omega^{2n+1} \cL_j, g_*[M] \rangle$ may be evaluated as follows. Firstly, the Alexander trick gives a homeomorphism $E \cong M \times D^{2n}$ over $M$ and relative to $M \times \partial D^{2n}$, and this trivial topological $(D^{2n}, \partial D^{2n})$-bundle now has two topological framings relative to its boundary: one by virtue of being a trivial disc bundle, and the other by transporting the smooth framing $\eta$ along the homeomorphism to a topological framing of $M \times D^{2n}$. The difference between these two topological framings gives a map of pairs
\[\delta \colon (M \times D^{2n}, M \times \partial D^{2n}) \lra (\mr{Top}(2n), *)\]
which is adjoint to the original map $g \colon M \to B\Diff^\fr_\partial(D^{2n})_{\ell_0} \simeq \Omega^{2n+1}B\mr{Top}(2n) \simeq \Omega^{2n}\mr{Top}(2n)$. Thus we have $\langle \Omega^{2n+1} \cL_j, g_*[M] \rangle = \langle \delta^*(\Omega \cL_j), [M \times D^{2n}] \rangle$. The class $\Omega \cL_j$ is still somewhat awkward, so we express this in more geometric terms by considering the Alexander trick as providing a topological concordance between topologically framed $(D^{2n}, \partial D^{2n})$-bundles $E$ and $M \times D^{2n}$  (as opposed to two topological framings of the trivial disc bundle). Namely, the Alexander trick provides a topological bundle
\[\bar{\pi} \colon (\bar{E}, M \times [0,1] \times \partial D^{2n}) \lra M \times [0,1]\]
agreeing with $\pi$ over $M \times \{1\}$ and with the trivial bundle $M \times \{0\} \times D^{2n}$ over $M \times \{0\}$. The vertical tangent microbundle $\tau_{\bar{\pi}} \bar{E}$ is then trivialised over 
\[\partial \bar{E} = (M \times \{0\} \times D^{2n}) \cup (M \times [0,1] \times \partial D^{2n}) \cup (E)\]
and so it is classified by a map of pairs
\[t \colon (\bar{E}, \partial \bar{E}) \lra (B\mr{Top}(2n), *).\]
Using the Alexander trick again to identify $\bar{E}$ with $M \times [0,1] \times D^{2n}$, adjoining over the $[0,1]$ turns $t$ into the map $\delta$, and so we have
\[\langle \Omega^{2n+1} \cL_j, g_*[M] \rangle =\langle \delta^*(\Omega \cL_j), [M \times D^{2n}] \rangle = \langle t^*\cL_j, [\bar{E}] \rangle = \left\langle \int_{\bar{\pi}} \cL_j(\tau_{\bar{\pi}} \bar{E}), [M \times [0,1]] \right\rangle.\]
It is worth emphasising that the information in this expression is encoded in the framing over $\partial \bar{E}$, which promotes $\cL_j(\tau_{\bar{\pi}} \bar{E})$ to a relative cohomology class $\cL_j(\tau_{\bar{\pi}} \bar{E}) \in H^{4j}(\bar{E}, \partial \bar{E};\bQ)$ so that it may be integrated.

\smallskip

Let us now put these two things together. Let $g \colon M \to B\Diff^\fr_\partial(D^{2n})_{\ell_0}$ be given, and consider the topological $(W_{g,1}, \partial W_{g,1})$-bundle
\[\bar{E} \natural_{M \times [0,1]} (M \times [0,1] \times W_{g,1}) \lra M \times [0,1]\]
obtained by forming the fibrewise boundary connect sum of $\bar{E}$ and the trivial $(W_{g,1}, \partial W_{g,1})$-bundle. Over $M \times \{1\}$ this is the smooth bundle $E \natural_M (M \times W_{g,1}) \to M$, and is equipped with a framing of its vertical tangent bundle, which is classified by $f \coloneqq g \circ j$. As in the second paragraph above, there is a nullbordism $W$ of $M = M \times \{1\}$ and an extension of $E \natural_M (M \times W_{g,1}) \to M$ to a smooth framed $(W_{g,1}, \partial W_{g,1})$-bundle $T \to W$. In total we can consider the topological $(W_{g,1}, \partial W_{g,1})$-bundle 
\[\bar{\bar{\pi}} \colon \bar{\bar{E}} = (\bar{E} \natural_{M \times [0,1]} (M \times [0,1] \times W_{g,1})) \cup_{E \natural_M (M \times W_{g,1})} T \lra (M \times [0,1]) \cup_{M \times \{1\}} W\]
which is trivialised over $M \times \{0\}$. 

The map $p \circ (F,f) \colon (W, M) \to (BG_g^{\fr,[\ell]},*)$ classifying the local system of middle cohomology groups extends constantly to a map
\[L \colon ((M \times [0,1]) \cup_{M \times \{1\}} W, M \times \{0\}) \lra (BG_g^{\fr,[\ell]},*),\]
because over $M \times [0,1]$ the bundle is obtained from a disc bundle so the local system is canonically trivialised. Thus
\[\langle \overline{\sigma}_{4j-2n-1}, f_*[M] \rangle = \langle (p \circ (F,f))^*\sigma_{4j-2n}, [W]\rangle = \langle L^* \sigma_{4j-2n}, [(M \times [0,1]) \cup_{M \times \{1\}} W]\rangle.\]
As the topological $(W_{g,1}, \partial W_{g,1})$-bundle $\bar{\bar{\pi}}$ is trivialised over the boundary $M \times \{0\}$ of the base, by the Family Signature Theorem \cite[Theorem 2.6]{RWFST} we have
\[\langle L^* \sigma_{4j-2n}, [(M \times [0,1]) \cup_{M \times \{1\}} W]\rangle = \left\langle \int_{\bar{\bar{\pi}}}{\cL_j}(\tau_{\bar{\bar{\pi}}}\bar{\bar{E}}), [(M \times [0,1]) \cup_{M \times \{1\}} W]\right\rangle.\]
On the other hand, the vertical tangent bundle of $\bar{\bar{E}}$ over $T$ and $M \times [0,1] \times W_{g,1}$ is trivialised, which as required identifies this with
\[\left\langle \int_{\bar{\pi}} \cL_j(\tau_{\bar{\pi}} \bar{E}), [M \times [0,1]]\right\rangle = \langle \Omega^{2n+1} \cL_j, g_*[M] \rangle.\qedhere\]
\end{proof}

This computation has the following consequence regarding the rationalisations of the nilpotent spaces in  the top row of \eqref{eq:BigDiagram}, which are all nilpotent by \cref{prop:BigDiagramFacts} \ref{enum:BigDiagramFacts-iii}.

\begin{lemma}\label{lem:tor-rationally-simple} 
For $g$ sufficiently large, the rationalisations of the (nilpotent) spaces in the following fibre sequence are simple
\[X_1(g) \lra {B\mr{Tor}}^\fr_\partial(W_{g,1})_\ell \lra X_0.\]
\end{lemma}

\begin{proof}
By \cref{lem:X1Pi1Fin}, for $g$ sufficiently large $X_1(g)$ has finite fundamental group so after rationalisation it is simply-connected, and so simple. So let us for now take $g$ sufficiently large. 

The right-hand column of \eqref{eq:BigDiagram} shows that $X_0$ is rationally equivalent to $\Omega_0 \big( BG^{\fr, [\ell_\infty]}_\infty \big)^+$, so is a loop space and is hence simple. The rationalised fundamental group of $X_0$ may thus be obtained from \cref{prop:BigDiagramFacts} \ref{enum:BigDiagramFacts-v}: it is trivial if $n$ is even, and $\bQ$ if $n$ is odd. Thus if $n$ is even the rationalisation of $X_0$, and so of ${B\mr{Tor}}^\fr_\partial(W_{g,1})_\ell$, is simply-connected and so simple.
	
If $n$ is odd then we have
\[\pi_1({B\mr{Tor}}^\fr_\partial(W_{g,1})_\ell) \oq \cong \pi_1(X_0) \oq \cong \bQ.\]
Consider the fibre sequence
\begin{equation}\label{eq:WeissCover}
	{B\Diff}^\fr_\partial(D^{2n})_{\ell_0} \overset{s}\lra {B\mr{Tor}}^\fr_\partial(W_{g,1})_\ell \lra \overline{B\mr{TorEmb}}^{\fr, \cong}_{\half\partial}(W_{g,1})_\ell
\end{equation}
obtained from \eqref{eqn:weiss-framed-torelli} by looping. Recall that $\overline{B\mr{TorEmb}}^{\fr, \cong}_{\half\partial}(W_{g,1})_\ell$ denotes the finite covering space of ${B\mr{TorEmb}}^{\fr,\cong}_{\half\partial}(W_{g,1})_\ell$ which makes the right-hand map be 1-connected. To understand the map $s$, we consider the composition
\[
	{\Omega^{2n}_0 \mr{SO}(2n)} \lra {\Omega^{2n}_0 \mr{Top}(2n)} \simeq B\Diff^\fr_\partial(D^{2n})_{\ell_0} \overset{s}\lra {B\mr{Tor}}^\fr_\partial(W_{g,1})_\ell \lra X_0
\]
on $H^1(-;\bQ)$. By \cref{thm:ApplSignThm} this pulls back $\overline{\sigma}_1$ to $\Omega^{2n+1} \cL_{(n+1)/2}$, which is non-trivial as $\cL_{(n+1)/2} \in H^*(B\mr{SO}(2n);\bQ)$ is not decomposable. It follows that the map $s$ is surjective on $\pi_1(-) \oq$. As the fibration \eqref{eq:WeissCover} deloops, $\pi_1(B\Diff^\fr_\partial(D^{2n})_{\ell_0})$ acts trivially on the higher homotopy groups of ${B\mr{Tor}}^\fr_\partial(W_{g,1})_\ell$, so it is simple after rationalisation.
\end{proof}

\begin{remark}
In fact, the conclusion of \cref{lem:tor-rationally-simple} holds without the assumption that $g$ is sufficiently large (though because $X_1(g)$ is only known to be path-connected for $g$ sufficiently large, one should interpret ``the rationalisation $X_1(g)\oq$'' to mean the fibre of the rationalisation of the map ${B\mr{Tor}}^\fr_\partial(W_{g,1})_\ell \to X_0$). As this argument uses Corollary \ref{cor:main-BTop}, we give it separately to make clear that our reasoning is not circular.

Let ${I}{}_{g}^{\fr,\ell}$ and $\overline{L}{}_{g}^{\fr,\ell}$ denote the fundamental groups of the middle and right terms in \eqref{eq:WeissCover}: $\overline{L}{}_{g}^{\fr,\ell}$ is a subgroup of the group ${L}_{g}^{\fr,\ell}$ discussed in \cref{subsubsec:Framings}, and both ${I}{}_{g}^{\fr,\ell}$ and $\overline{L}{}_{g}^{\fr,\ell}$ are nilpotent by the discussion in that section. The long exact sequence of homotopy groups of \eqref{eq:WeissCover} induces an exact sequence
\[
\pi_{1}({\Omega^{2n}_0\mr{Top}(2n)}) \overset{s}\lra {I}{}^{\fr,\ell}_{g} \lra \overline{L}{}^{\fr,\ell}_{g} \lra 0.
\]
As the group ${L}{}^{\fr,\ell}_{g}$ is finite by \cref{lem:lfr-finite}, so is its subgroup $\overline{L}{}^{\fr,\ell}_{g}$ and hence $s$ is rationally surjective. By Corollary \ref{cor:main-BTop}, the map $\pi_{2n+1}(\mr{SO}(2n))\oq \to \pi_{2n+1}(\mr{Top}(2n))\oq$ is an isomorphism and hence the latter is $1$-dimensional if $n$ is odd and trivial if $n$ is even. 
	
If $n$ is even it follows that ${I}{}^{\fr,\ell}_{g}$ is finite, so ${B\mr{Tor}}^\fr_\partial(W_{g,1})_\ell$ is rationally simply-connected. On the other hand $\pi_2(X_0)\oq=0$ so we also have $\pi_1(X_1(g)\oq) = 0$, so $X_1(g)\oq$ is simply-connected too. If $n$ is odd then as in the proof of \cref{lem:tor-rationally-simple} the composition
\[
\pi_{1}({\Omega^{2n}_0\mr{SO}(2n)})\oq \overset{\cong}\lra \pi_{1}({\Omega^{2n}_0\mr{Top}(2n)})\oq \overset{s}\lra ({I}{}^{\fr,\ell}_{g})\oq \lra \pi_1(X_0)\oq
\]
is an isomorphism, so the map $s$ is an isomorphism (as it is both surjective and injective). Using $\pi_2(X_0)\oq=0$ again it follows that $\pi_1(X_1(g)\oq) = 0$, so $X_1(g)\oq$ is again simply-connected. Finally, as the image of $s$ acts trivially on the higher rational homotopy groups, the space ${B\mr{Tor}}^\fr_\partial(W_{g,1})_\ell$ is again rationally simple.
\end{remark}

\section{The homotopy groups of framed self-embeddings}\label{sec:HtyEmb} 
In \cref{sec:diff-emb} we described the fibre sequence \eqref{eqn:weiss-framed-torelli},
\[
B\mr{Tor}^\fr_\partial(W_{g,1})_\ell \lra \overline{B\mr{TorEmb}}^{\fr,\cong}_{\half \partial}(W_{g,1})_\ell \lra B(B\Diff^\fr_\partial(D^{2n})_{\ell_0}),
\]
obtained from the framed Weiss fibre sequence \eqref{eqn:weiss-framed}, which we will eventually use to understand the rational homotopy groups of $B\Diff^\fr_\partial(D^{2n})_{\ell_0}$. In \cref{sec:HtyDiffeo} we considered the rational homotopy groups of the left term, completing Step \circled{3}. Our goal in this section is to complete Step \circled{4} and compute the rational homotopy groups of the middle term in the range $*<4n-6$, and outside this range excluding certain bands. We emphasise once more that Steps \circled{3} and \circled{4} are independent of each other.

The general strategy is to use embedding calculus: this provides a tower whose homotopy limit is $B\mr{TorEmb}^\fr_{\half \partial}(W_{g,1})_\ell$ (working rationally we may ignore the finite cover indicated by the overline in the total space of \eqref{eqn:weiss-framed-torelli}), whose first stage is the classifying space of the group-like monoid of pointed homotopy automorphisms of $W_{g,1}$, and whose layers are section spaces involving configuration spaces of $W_{g,1}$. We have already analysed the structure of this tower in some detail in \cite{KR-WAlg}, though there we focussed on qualitative aspects: here we shall use it quantitatively, so the majority of this section concerns the rational homotopy groups of the these homotopy automorphism and section spaces. We will state these results in the next two subsections and prove them in the remaining subsections. 

We assume that $n \geq 3$ and $g \geq 2$ throughout this section. 

\subsection{Review of embedding calculus and the Bousfield--Kan spectral sequence} \label{sec:emb-calc} As explained above, our tool for understanding $B\mr{TorEmb}^\fr_{\half \partial}(W_{g,1})_\ell$ is embedding calculus. We review this here, explain how to conclude framings, and how to derive a spectral sequence from it.

\subsubsection{Review of embedding calculus} We give a summary of embedding calculus sufficient to our purposes, outsourcing the details to the references: \cite[Section 3.1]{KR-WAlg} fits our setting, \cite{KKDisc} is the state of the art in formal properties, and \cite{weissembeddings,weissembeddingserratum} is the original reference.  Embedding calculus provides a tower of group-like $E_1$-spaces
\begin{equation}\label{eqn:emb-tower}
\begin{tikzcd} &[-5pt] \Emb_{\half \partial}(W_{g,1}) \arrow[bend right=5]{ld} \dar \arrow[bend left=5]{rd} \arrow[bend left=10]{rrd} & & &[-5pt] \\[-5pt]
\cdots \rar & T_k\Emb_{\half \partial}(W_{g,1})^\times \rar & T_{k-1}\Emb_{\half \partial}(W_{g,1})^\times \rar & \cdots. \end{tikzcd}
\end{equation}
All self-embeddings of $W_{g,1}$ relative to $\half \partial W_{g,1}$ are invertible up to isotopy by the $h$-cobordism theorem. It is not the case that all elements of the $E_1$-spaces $T_k\Emb_{\half \partial}(W_{g,1})$ are homotopy-invertible, so we restrict to those path components $T_k\Emb_{\half \partial}(W_{g,1})^\times$ which are. By the convergence results of Goodwillie--Klein--Weiss \cite[Corollary 2.5]{goodwillieweiss} (relying on \cite{goodwillieklein}), the map
\[\Emb_{\half \partial}(W_{g,1}) \lra T_k\Emb_{\half \partial}(W_{g,1})^\times\]
is $(-(n-2)+k(n-2))$-connected. The first stage is given as a group-like $E_1$-space by
\begin{equation}\label{eqn:emb-first-layer}
T_1\Emb_{\half \partial}(W_{g,1})^\times \simeq \mr{Bun}_{\half \partial}(\mr{Fr}(TW_{g,1}))^\times,
\end{equation}
the space of $\rm{GL}_{2n}(\bR)$-equivariant self-maps of $\mr{Fr}(TW_{g,1})$ which are identity near $\half \partial W_{g,1}$ and are invertible up to homotopy, under composition. Under the identification \eqref{eqn:emb-first-layer}, the map from $\Emb_{\half \partial}(W_{g,1})$ to the first stage is given by taking the derivative. For $k \geq 2$, the layers 
\[L_k \Emb_{\half \partial}(W_{g,1})^\times \coloneqq \mr{hofib}_\mr{id} \left[T_k\Emb_{\half \partial}(W_{g,1})^\times \to T_{k-1}\Emb_{\half \partial}(W_{g,1})^\times \right]\] 
may be described as section spaces involving configuration spaces of $W_{g,1}$, as we will explain momentarily.

\subsubsection{Framed embedding calculus} We now add framings to this story. Using the identification \eqref{eqn:emb-first-layer}, the group-like $E_1$-spaces $T_k \Emb_{\half \partial}(W_{g,1})^\times$ act by precomposition on the space of framings $\mr{Bun}_{\half \partial}(\mr{Fr}(TW_{g,1}),\Theta_\fr;\ell_{\half \partial})$ and we set
\[
BT_k\Emb^\fr_{\half \partial}(W_{g,1})^\times \coloneqq \mr{Bun}_{\half \partial}(\mr{Fr}(TW_{g,1}),\Theta_\fr;\ell_{\half \partial}) \sslash T_k \Emb_{\half \partial}(W_{g,1})^\times.
\]
The tower \eqref{eqn:emb-tower} then yields a tower of spaces
\begin{equation}\label{eqn:emb-tower-fr}
	\begin{tikzcd} &[-5pt] B\Emb^\fr_{\half \partial}(W_{g,1}) \arrow[bend right=5]{ld} \dar \arrow[bend left=5]{rd} \arrow[bend left=10]{rrd} & & &[-5pt] \\[-5pt]
	\cdots \rar & BT_k\Emb^\fr_{\half \partial}(W_{g,1})^\times \rar & BT_{k-1}\Emb^\fr_{\half \partial}(W_{g,1})^\times \rar & \cdots \end{tikzcd}
\end{equation} 
whose maps have a connectivity that tends to infinity with $k$. Using a framing of $W_{g,1}$ to identify $\mr{Bun}_{\half \partial}(\mr{Fr}(TW_{g,1}),\Theta_\fr;\ell_{\half \partial})$ with $\mr{Map}_{\half \partial}(W_{g,1},\mr{GL}_{2n}(\bR))$ and $\mr{Bun}_{\half \partial}(\mr{Fr}(TW_{g,1}))$ with $\mr{Map}_{\half \partial}(W_{g,1},W_{g,1} \times \mr{GL}_{2n}(\bR))$, one recognises that the first stage of the tower \eqref{eqn:emb-tower-fr} is given by
\[
BT_1 \Emb^\fr_{\half \partial}(W_{g,1})^\times \simeq B\mr{hAut}_{\half \partial}(W_{g,1}).
\]
It follows directly from the construction that for each $k \geq 2$, the $k$th layer
\[
BL_k \Emb_{\half \partial}(W_{g,1})^\times \coloneqq \mr{hofib} \left[BT_k\Emb^\fr_{\half \partial}(W_{g,1})^\times \to BT_{k-1}\Emb^\fr_{\half \partial}(W_{g,1})^\times \right]
\]
does not involve framings and, as the notation is suggests, is a delooping of a collection of path components of $L_k \Emb_{\half \partial}(W_{g,1})$, namely those path components which map to $\pi_0(T_k\Emb_{\half \partial}(W_{g,1}))^\times$.

\subsubsection{The Bousfield--Kan spectral sequence}\label{sec:bk-ss} 
We will use spectral sequences which arise from a tower of pointed spaces, such as the embedding calculus tower, as discussed in \cite[Ch.~XI.~4.1]{bousfieldkan} and summarised in \cite[Section 5.1]{KR-WAlg}.

Given a tower of based spaces
\[
\cdots \lra X_2 \lra X_1 \lra X_0,
\]
we denote by $F_n \coloneqq \hofib[X_n \to X_{n-1}]$ the homotopy fibres between adjacent stages, with the convention $X_{-1} = \ast$. The long exact sequences of homotopy groups
\[
\cdots \to \pi_2(X_{n-1}) \to \pi_1(F_n) \to \pi_1(X_n) \to \pi_1(X_{n-1}) \overset{\circlearrowright}\to \pi_0(F_n) \to \pi_0(X_n) \to \pi_0(X_{n-1})
\] 
are \emph{extended exact sequences}: their rightmost three terms are pointed sets, next three terms groups, and remaining terms abelian groups, and they are exact in an appropriate sense. These can be assembled to \emph{extended exact couple} (in the sense of \cite[\S.IX.4.1]{bousfieldkan})
\[\begin{tikzcd} D^1 \arrow{rr}{i} & & D^1 \ar{ld}{j} \\[-3pt]
	& E^1 \arrow{lu}{k} & \end{tikzcd} \qquad \parbox{5cm}{$D^1_{p,q} = \pi_{q-p}(X_p),$ \\
	$E^1_{p,q} = \pi_{q-p}(F_p)$,}\]
and its derived couple is again an extended exact couple \cite[p.\ 259]{bousfieldkan}. The result is an \emph{extended spectral sequence}
\[E^1_{p,q} = \pi_{q-p}(F_p) \Longrightarrow \pi_{q-p}(\holim_p X_p),\]
with $E^r_{p,q}$ is concentrated in the range $p \geq 0$, $q-p \geq 0$. For $q-p=0$ its entries consist of pointed sets, for $q-p=1$ of groups, and $q-p \geq 2$ of abelian groups. The differentials have the form $d^r \colon E^r_{p,q} \to E^r_{p+r, q+r-1}$, to be interpreted as an action of the former on the latter when $q-p=1$. In the cases which appear in this paper it \emph{converges completely} in the sense of \cite[Ch.~IX.~5.3]{bousfieldkan}, which means that $\pi_{q-p}(\holim_p X_p)$ is the limit of a tower of epimorphisms with kernels given by entries on the $E^\infty$-page.

This spectral sequence is natural in maps of towers of based spaces. It is also natural in the following further sense. The long exact sequence of homotopy groups for $F_n \to X_n \to X_{n-1}$ comes with a natural $\pi_1(X_n)$-action . Thus, given a based map from $Y$ to the homotopy limit of the tower, we get an action $\pi_1(Y)$ on the extended couple. The construction of an extended spectral sequence from an extended exact couple is natural in the latter, so we obtain an action of $\pi_1(Y)$ on the extended spectral sequence. When the spectral sequence converges completely, this action converges to the action of $\pi_1(Y)$ on $\pi_*(\holim_p X_p)$. In particular, we can take $Y = \holim_p X_p$.

\subsection{The rational homotopy of framed self-embeddings}\label{sec:homotopy-self-embeddings} 

By the previous two subsections, we have the Bousfield--Kan spectral sequence associated to the tower \eqref{eqn:emb-tower}, which is of the form (\cite[Ch.~IX.~ 4]{bousfieldkan}, cf.~\cref{sec:bk-ss} or \cite[Section 5.1]{KR-WAlg})
\begin{equation}\label{eqn:bk-ss} 
	{}^{BK}\!E^1_{p,q} = \begin{cases} \pi_{q-p}(B\mr{hAut}_{\half \partial}(W_{g,1})) & \text{if $p=0$,} \\
	\pi_{q-p}(BL_{p+1} \Emb_{\half \partial}(W_{g,1})^\times) & \text{if $p \geq 1$,} \end{cases}
\end{equation}
with differentials of the form $d^r \colon {}^{BK}\!E^r_{p,q} \to {}^{BK}\!E^r_{p+r, q+r-1}$. As the connectivity of the layers tends to infinity with $p$ as a consequence of the previously stated connectivity results, this spectral sequence converges completely to $\pi_{q-p}(B\Emb^\fr_{\half \partial}(W_{g,1}))$. In particular, for $q-p \geq 2$ the rationalisation of this homotopy group has a finite filtration with associated graded given by subquotients of the vector spaces $({}^{BK}\!E^1_{p,q})\oq$ (in the cases $q-p=0,1$, rationalisation does not make sense, but the path components and fundamental group are understood well enough for our applications). Furthermore, this spectral sequence comes equipped with an action of
\[
\check{\Lambda}^{\fr,\ell}_g = \pi_1(B\Emb^{\fr, \cong}_{\half \partial}(W_{g,1})_\ell)
\]
which converges to the corresponding action of this fundamental group on the higher homotopy groups. By \cref{prop:group-action-weiss-framed} \ref{enum:group-action-weiss-framed-ii} this action on the abutment descends to $\smash{G_g^{\fr, [[\ell]]}} = \mr{im}[\check{\Lambda}^{\fr,\ell}_g \to G'_g]$, and as representations of this group they are algebraic. We will show that the same is true for the entries on the $E^1$-page of the spectral sequence. This is part \ref{enum:outcome-i} of the following theorem about this spectral sequence and its entries, which is the main result of this section. Part \ref{enum:outcome-ii} is a vanishing result for this spectral sequence and part \ref{enum:outcome-iii} describes the contribution to the ``second band''.

\begin{theorem}\label{thm:outcome-emb-calc} The Bousfield--Kan spectral sequence \eqref{eqn:bk-ss} for the embedding calculus tower \eqref{eqn:emb-tower-fr} has the following properties:
	\begin{enumerate}[(i)]
		\item \label{enum:outcome-i} The groups $({}^{BK}\!E^1_{p,q})\oq$ are algebraic $G^{\fr, [[\ell]]}_g$-representations for $q-p \geq 2$.
		\item \label{enum:outcome-ii} The groups $({}^{BK}\!E^1_{p,q})\oq$ are supported in bidegrees $(p,q)$ with $q=r(n-1)+1$ and $r \geq p-1$. Furthermore, the invariants $\smash{[({}^{BK}\!E^1_{p,q})\oq]^{G^{\fr, [[\ell]]}_g}}$ are supported in such bidegrees with $r$ even.
		\item \label{enum:outcome-iii} The dimensions of the invariants $[({}^{BK}\!E^1_{p,q})\oq]^{G^{\fr, [[\ell]]}_g}$ contributing to degrees ${\sim}2n$, for $2n \geq 10$ and $g$ sufficiently large, are given as follows:
			\[\begin{tikzpicture}
				\begin{scope}[scale=.7,xshift=6cm]
					
					\def\HH{6} 
					\def\WW{6} 
					\def\HHhalf{3} 
					\def\WWhalf{3} 
					
					\clip (-1,-1.5) rectangle ({\WW+0.5},{\HH+0.5});
					\draw (-.5,0)--({\WW+.5},0);
					\draw (0,-1) -- (0,{\HH+1.5});
					\begin{scope}
						\foreach \s[evaluate={\si=int(\s-1)}] in {1,...,\HH}
						{
							\draw [dotted] (-.5,\s)--(.25,\s);
							\draw [dotted] (.75,\s) -- ({\WW+.5},\s);
							\node [fill=white] at (-.25,\s) [left] {\tiny $\si$};
						}

						\foreach \s[evaluate={\si=int(\HH-\s)}] in {1,...,\WW}
						{
							\draw [dotted] (\s,-0.5)--(\s,{\HH+.5});
							\node [fill=white,rotate=-80,xshift=1.2ex] at (\s,-.5) {\tiny $2n-\si$};
						}
					\end{scope}
					
					\node [fill=white] at (-.5,-.75) {$\nicefrac{p}{q-p}$};
					
					\node at (5,1) [fill=white] {$1$};
					\node at (3,3) [fill=white] {$1$};
					\node at (2,4) [fill=white] {$1$};
					
					
				\end{scope}	
				\node at (2,.5) [align=center] {\small homotopy \\ \small automorphisms};
				\node at (2,{2*.7}) {\small second layer};
				\node at (2,{3*.7}) {\small third layer};
				\node at (2,{5*.7}) {\small $\vdots$};
			\end{tikzpicture}\]
		 Here the indexing is such that the entries contributing to $\pi_i(B\Emb_{\half \partial}^\fr(W_{g,1})_\ell) \oq$ are on the column indexed by $i$, and in this indexing the $d^r$-differentials have bidegree $(-1,r)$. The entries are zero for $p > 5$.
	\end{enumerate}
\end{theorem}

At the end of the proof of \cref{thm:outcome-emb-calc} we will comment on how \ref{enum:outcome-iii} should be modified for $2n=6$ or $8$; in summary, when $n$ is low some contributions from degrees $\sim 4n$ need to be taken into account. The analogue of \ref{enum:outcome-iii} for degrees $\sim 4n$ instead of $\sim 2n$---the fourth band instead of the second band---will be given in \cref{thm:outcome-fourth-band}, and leads to the proof of \cref{thm:fourth-band}.

\begin{corollary}\label{cor:EmbEstimate}\, 
	\begin{enumerate}[(i)]
		\item The rational homotopy groups
		\[\pi_*(B\Emb_{\half \partial}^\fr(W_{g,1})_\ell) \oq \qquad \text{for $*>1$,}\]
		are supported in degrees $\ast \in \bigcup_{r \geq 1} [r(n-2),r(n-1)+1]$.
		\item Furthermore, for $g \geq 2$ the nonzero $G^{\fr, [[\ell]]}_g$-invariants in these groups are supported in degrees $\ast \in \bigcup_{r \geq 1} [2r(n-2),2r(n-1)+1]$.
	\end{enumerate}
\end{corollary}

\begin{proof}\cref{thm:outcome-emb-calc} \ref{enum:outcome-ii} proves the first part, and that $[({}^{BK}\!E^1_{*,*})\oq]^{G^{\fr, [[\ell]]}_g}$ is supported in bidegrees $(p,q)$ with $p-q \in [2r(n-2),2r(n-1)+1]$ for some $r \geq 1$. As long as $g \geq 2$, all algebraic representations are semisimple, as discussed in \cref{sec:alg-representations}, and so taking $\smash{G^{\fr, [[\ell]]}_g}$-invariants is exact, yielding the second part.\end{proof}

We will now collect the results---to be proved in the remainder of this section---which allow us to understand the rationalisations of entries on the $E^1$-page of the Bousfield--Kan spectral sequence \eqref{eqn:bk-ss}, and then use them to prove \cref{thm:outcome-emb-calc}. 

\subsubsection{The first stage}\label{sec:first-layer-results} For $p=0$, the entries of the Bousfield--Kan spectral sequence \eqref{eqn:bk-ss} are given by
\[{}^{BK}\!E^1_{0,q} = \pi_{q}(B\mr{hAut}_{\half \partial}(W_{g,1})),\]
the homotopy groups of the classifying space of the homotopy automorphisms of $W_{g,1}$ relative to $\half \partial W_{g,1}$. As entries of the Bousfield--Kan spectral sequence, they come with an action of $\smash{\check{\Lambda}^{\fr,\ell}_g}$, geometrically given by conjugation with invertible framed self-embeddings. In \cref{sec:haut} we will identify the rationalisations of these homotopy groups with the positive derivation Lie algebra of the free Lie algebra $\mr{Lie}(H[n-1])$ (see \cref{lem:haut-der}), and we will explain that this implies the following (see \cref{lem:bhaut-vanishing}, \cref{comp:Der}):

\begin{proposition}\label{prop:outcome-haut-input} The homotopy groups $\pi_{*}(B\mr{hAut}_{\half \partial}(W_{g,1})) \oq$ for $*>1$ have the following properties:
	\begin{enumerate}[(i)]
		\item \label{enum:outcome-haut-input-i} The action of $\check{\Lambda}^{\fr,\ell}_g$ factors over $G^{\fr, [[\ell]]}_g$, and the latter action is algebraic.
		\item \label{enum:outcome-haut-input-ii} They vanish unless $*=r(n-1)+1$ for $r>0$.
		\item \label{enum:outcome-haut-input-iii} In degree $*=r(n-1)+1$, non-trivial invariants can occur only when $r$ is even. 
		\item \label{enum:outcome-haut-input-iv} In degree $*\leq 4n-10$, the only non-trivial invariants for $g$ sufficiently large are given by
		\[\dim \Big[\pi_{2n-1}(B\mr{hAut}_{\half \partial}(W_{g,1}))\oq\Big]^{G^{\fr, [[\ell]]}_g} = 1.\]
	\end{enumerate}
\end{proposition}

\subsubsection{The higher layers} \label{sec:higher-layer-results} For $p=k-1>0$, the entries of the Bousfield--Kan spectral sequence \eqref{eqn:bk-ss} are given by
	\[{}^{BK}\!E^1_{k-1,q} = \pi_{q-p}(BL_{k} \Emb_{\half \partial}(W_{g,1})^\times),\]
which can be identified with $\pi_{q-p-1}(L_{k} \Emb_{\half \partial}(W_{g,1}))$ for $q-p>1$. As entries of the Bousfield--Kan spectral sequence for the embedding calculus tower, they come with an action of $\smash{\check{\Lambda}^{\fr,\ell}_g}$. In \cref{sec:rational-homotopy-layers} we will study the rationalisations of these homotopy groups through the description of the layers of the embedding calculus tower as section spaces, and we will explain that this implies the following (see \cref{prop:higher-layers-qualitative}, \cref{comp:layer-2}, \cref{comp:layer-3}, \cref{comp:layer-4}):

\begin{proposition}\label{prop:outcome-layer-input} The homotopy groups $\pi_*(BL_{k} \Emb_{\half \partial}(W_{g,1})^\times) \oq$ for $*>1$ have the following properties:
	\begin{enumerate}[(i)]
		\item \label{enum:outcome-layer-input-i} The action of $\check{\Lambda}^{\fr,\ell}_g$ factors over $G^{\fr, [[\ell]]}_g$, and the latter action is algebraic.
		\item \label{enum:outcome-layer-input-ii} They vanish unless $* = r(n-1)-k+2$ for $r \geq k-2$.
		\item \label{enum:outcome-layer-input-iii} In degree $*= r(n-1)-k+2$, non-trivial invariants can only occur when $r$ is even.
		\item \label{enum:outcome-layer-input-iv} For $g$ sufficiently large we have that
		\begin{align*}\dim \left[\pi_{*}(BL_2 \Emb_{\half \partial}(W_{g,1})^\times) \oq\right]^{G^{\fr, [[\ell]]}_g} &= \begin{cases}0 & \text{if $*<4n-4$,}\end{cases} \\
			\dim \left[\pi_{*}(BL_3 \Emb_{\half \partial}(W_{g,1})^\times)  \oq\right]^{G^{\fr, [[\ell]]}_g} &= \begin{cases} 1 & \text{if $*=2n-3$,} \\
				0 & \text{if $*<4n-5,\neq 2n-3$,}\end{cases} \\
			\dim \left[\pi_{*}(BL_4 \Emb_{\half \partial}(W_{g,1})^\times) \oq\right]^{G^{\fr, [[\ell]]}_g}  &= \begin{cases} 1 & \text{if $*=2n-4$,} \\
				0 & \text{if $*<4n-6,\neq 2n-4$.}\end{cases} \end{align*}
	\end{enumerate}
\end{proposition}

These computations are sufficient to determine the rational homotopy groups of framed self-embeddings up to degree $\sim 4n$. In \cref{sec:explicit-computations} we will explain an algorithmic procedure to compute the rational homotopy groups of the layers further, and in Appendix \ref{sec:computational-results} we will give the results up to degree $\sim 5n$.

\subsubsection{Proof of \cref{thm:outcome-emb-calc}}\label{sec:BKSS} For \cref{thm:outcome-emb-calc} \ref{enum:outcome-i} we have already explained before the statement of the theorem that there is an action of $\smash{\check{\Lambda}^{\fr,\ell}_g}$ on the spectral sequence, so we must show that on the $E^1$-page this action factors over $\smash{G^{\fr, [[\ell]]}_g}$ via \eqref{eqn:ses-gralg-self-emb} and the latter action is algebraic. This is the content of \cref{prop:outcome-haut-input} \ref{enum:outcome-haut-input-i} and \cref{prop:outcome-layer-input} \ref{enum:outcome-layer-input-i}.

For the first part of \cref{thm:outcome-emb-calc} \ref{enum:outcome-ii} we need to establish a ``banded'' vanishing result for the $E^1$-page, more precisely, that the entries $({}^{BK}\!E^1_{p,q})\oq$ vanish in bidegrees $(p,q)$ with $q=r(n-1)+1$ and $r \geq p-1$. This is the content of \cref{prop:outcome-haut-input} \ref{enum:outcome-haut-input-ii} (for $p=0$) and \cref{prop:outcome-layer-input} \ref{enum:outcome-layer-input-ii} (for $p>0$). The second part of \cref{thm:outcome-emb-calc} \ref{enum:outcome-ii} concerns an analogous vanishing result for $\smash{G^{\fr, [[\ell]]}_g}$-invariants, and it is the content of \cref{prop:outcome-haut-input} \ref{enum:outcome-haut-input-iii} (for $p=0$) and \cref{prop:outcome-layer-input} \ref{enum:outcome-layer-input-iii} (for $p>0$).

Let us remark on the geometry of the Bousfield--Kan spectral sequence. \cref{thm:outcome-emb-calc} \ref{enum:outcome-ii} says that $({}^{BK}\!E^1_{*,*})\oq$ is supported in bidegrees $(p,q)$ lying in the intervals $[0,r+1] \times \{r(n-1)+1\}$ with $r \geq 1$. (In \cref{thm:outcome-emb-calc} \ref{enum:outcome-iii} and later \cref{thm:outcome-fourth-band} these are displayed as diagonal intervals of slope $-1$, as those charts plot $p$ against $q-p$.) Such intervals contribute to total degrees $[r(n-2), r(n-1)+1]$, and can contain nonzero $\smash{G^{\fr, [[\ell]]}_g}$-invariants to $({}^{BK}\!E^1_{*,*})\oq$ only if $r$ is even. These intervals correspond to the ``bands'' of the introduction.

As $n$ increases, more of the intervals $[2r(n-2),2r(n-1)+1]$ are disjoint. Because the $d^r$-differentials have bidegree $(r,r-1)$ (in the indexing of the charts displayed in \cref{thm:outcome-emb-calc} \ref{enum:outcome-iii} and \cref{thm:outcome-fourth-band} this corresponds to $(-1,r)$), in a range where these intervals overlap by at most one degree the Bousfield--Kan spectral sequence collapses rationally at the $E^2$-page. In this range the computation of $\smash{[\pi_*(B\Emb_{\half \partial}^\fr(W_{g,1})_\ell) \oq]^{G^{\fr, [[\ell]]}_g}}$ therefore reduces to computing the homology of certain chain complexes.

The explicit computations of \cref{prop:outcome-haut-input} \ref{enum:outcome-haut-input-iv} and \cref{prop:outcome-layer-input} \ref{enum:outcome-layer-input-iv} determine the entries of the first such chain complex, corresponding to the second band. The result is \cref{thm:outcome-emb-calc} \ref{enum:outcome-iii} for $n$ sufficiently large: more precisely, this is an accurate depiction as long as the fourth band $[4n-8, 4n-3]$ does not overlap with the range of degrees $[2n-5,2n]$ we are considering: explicitly, $2n < 4n-8$, or equivalently $2n \geq 10$. The chain complex has the form
\begin{equation*}
\begin{tikzcd}
\bQ & \bQ \lar & 0 \lar& \bQ \lar
\end{tikzcd}
\end{equation*}
and we will determine the one possibly nonzero differential (somewhat indirectly) in \cref{sec:ConseqEmbCalc}. When $2n=6$ or $8$, the range of degrees shown in \cref{thm:outcome-emb-calc} \ref{enum:outcome-iii} should also include contributions from the fourth band $[4n-8,4n-3]$. When $2n=6$ there could even potentially be a non-trivial higher differential in this range, but it can be ruled out by the method of \cref{sec:Reflection}, or using \cref{cor:2nMinus1Injects} below.

\subsection{The first stage: homotopy automorphisms} \label{sec:haut} The first stage of the framed embedding calculus tower is $B\mr{hAut}_{\half \partial}(W_{g,1})$ and here we will describe its rational homotopy groups, as well as related results for homotopy automorphisms rel the entire boundary. These were previously studied by Berglund and Madsen \cite{berglundmadsen2}. For use in \cref{sec:fourth-band}, we will include computations which slightly exceed what is needed for \cref{prop:outcome-haut-input}.

\subsubsection{The rational homotopy of the homotopy automorphisms rel half the boundary}  Throughout this subsection we will make use of the following device. The action of homotopy automorphisms on $H_n(W_{g,1};\bZ)$ gives an isomorphism
\[\pi_0(\mr{hAut}_{\half \partial}(W_{g,1})) \overset{\cong}\lra \mr{GL}_{2g}(\bZ).\] 
Recall that $G_g = \mr{O}_{g,g}(\bZ)$ or $\mr{Sp}_{2g}(\bZ)$ is the subgroup of $\mr{GL}_{2g}(\bZ)$ preserving the $(-1)^n$-symmetric intersection pairing $\lambda$. Even though the homology group $H = H_n(W_{g,1};\bQ)$ and its linear dual $H^\vee = H^n(W_{g,1};\bQ)$ are isomorphic as $G_g$-representations, using the pairing $\lambda$, they are \emph{not} isomorphic as $\mr{GL}_{2g}(\bZ)$-representations. It is helpful to keep track of the $\mr{GL}_{2g}(\bZ)$-action, and  for \cref{sec:Reflection} to distinguish between $H$ and $H^\vee$.

Our starting point is that $W_{g,1}$ is homotopy equivalent to $\vee_{2g} S^n$. Thus by the Hilton--Milnor theorem, for $n \geq 2$ the rational homotopy Lie algebra of $W_{g,1}$ is given by the free Lie algebra on the graded vector space $H[n-1]$ with $H = H_n(W_{g,1};\bQ)$:
\[\pi_{*+1}(\vee_{2g} S^n) \oq \cong \mr{Lie}(H[n-1]).\]  
This is in fact a Quillen model for $W_{g,1}$, and one can use it to obtain a Quillen model for the classifying space of the identity component of the pointed homotopy automorphisms through the following construction:

\begin{definition}
For a graded Lie algebra $L$ we let $\mr{Der}(L)$ denote the graded vector space which in grading $i$ consists of all degree $i$ derivations (that is, $\bQ$-linear maps $\phi \colon \cL \to \cL$ of degree $i$ satisfying $\phi([a,b]) = [\phi(a),b]+(-1)^{i|a|}[a,\phi(b)]$). It is equipped with a Lie bracket given by $[\phi,\psi] = \phi \circ \psi - (-1)^{|\psi|} \psi \circ \phi$. We let $\mr{Der}^+(L)$ denote the sub-Lie algebra of those derivations of strictly positive degree.
\end{definition}

Applied to the free graded Lie algebra $\mr{Lie}(H[n-1])$, we get $\mr{Der}^+(\mr{Lie}(H[n-1]))$. Considered as a dg Lie algebra with trivial differential, it is a Quillen model for  $B\mr{hAut}^\mr{id}_*(\vee_{2g} S^n)$ \cite[Corollary 3.3]{berglundmadsen2}. In particular, the rational homotopy groups of $\mr{hAut}_{\half \partial}(W_{g,1})$ are given by $\mr{Der}^+(\mr{Lie}(H[n-1]))$. We can also recover the $\mr{GL}_{2g}(\bZ)$-action: the action of $\pi_1(B\mr{hAut}_{\half \partial}(W_{g,1}))$ on the higher rational homotopy groups corresponds to the $\mr{GL}_{2g}(\bZ)$-action on derivations by conjugation, by the proof of \cite[Proposition 5.6]{berglundmadsen2}. The above discussion proves the following lemma:

\begin{lemma}\label{lem:haut-der} For $*>0$, we have an isomorphism of graded Lie algebras in $\mr{GL}_{2g}(\bZ)$-representations
	\[\pi_{*+1}(B\mr{hAut}_{\half \partial}(W_{g,1}))\oq \overset{\cong}\lra \mr{Der}^+(\mr{Lie}(H[n-1])).\]
\end{lemma}

Since $\mr{Lie}(H[n-1])$ is a free Lie algebra, a derivation $\phi \colon \mr{Lie}(H[n-1]) \to \mr{Lie}(H[n-1])$ is completely and uniquely determined by its restriction to the generators $H[n-1]$. Thus additively, we have an identification of graded $\mr{GL}_{2g}(\bZ)$-representations
\[\mr{Der}^+(\mr{Lie}(H[n-1])) \cong (H[n-1])^\vee \otimes \mr{Lie}(H[n-1]).\]
As a graded $\mr{GL}_{2g}(\bZ)$-representation the free graded Lie algebra is given by
\begin{equation}\label{eqn:free-lie-schur} 
	\mr{Lie}(H[n-1]) = \bigoplus_{s \geq 1} \mr{Lie}(s) \otimes_{\fS_s} (H[n-1])^{\otimes \ul{s}},
\end{equation}
where we use the notation $\ul{s}$ to identify the $\fS_s$-action on the right term by permutation of the terms in the tensor product (with the Koszul sign rule) and $\mr{Lie}(s)$ denotes the \emph{Lie representation} of the symmetric group $\fS_s$.  This is the subspace of the free Lie algebra $\mr{Lie}(\bQ\{x_1,\ldots,x_s\})$ on generators in degree $0$ spanned by those Lie words in which each generator appears exactly once. For $s \leq 6$, these representations are given in \cref{tab:lie-reps}. We will use this to prove the following, which implies \cref{prop:outcome-haut-input} \ref{enum:outcome-haut-input-i}--\ref{enum:outcome-haut-input-iii}: 

\begin{lemma}\label{lem:bhaut-vanishing} For $*>1$, we have that
	\[\pi_{*}(B\mr{hAut}_{\half \partial}(W_{g,1})) \oq = 0 \qquad \text{unless $* = r(n-1)+1$ for $r > 0$.}\]
Furthermore, the action of $\check{\Lambda}_g^{\fr,\ell}$ factors over $G^{\fr,[[\ell]]}_g$, and the latter action is algebraic. It can only contain nonzero invariants when $r$ is even.\end{lemma}

\begin{proof}
Using \cref{lem:haut-der}, it suffices to consider $\mr{Der}^+(\mr{Lie}(H[n-1]))$. By the remarks following it, upon restricting to the subgroup $G^{\fr,[[\ell]]}_g \leq \mr{GL}_{2g}(\bZ)$,
which acts on $H$ preserving the intersection form $\lambda$, we may use the duality $x \mapsto \lambda(x,-) \colon H[1-n] \overset{\sim}\to (H[n-1])^\vee$ to obtain an isomorphism 
	\[\mr{Der}^+(\mr{Lie}(H[n-1])) \cong (H[n-1])^\vee \otimes \mr{Lie}(H[n-1]) \cong H[1-n] \otimes \mr{Lie}(H[n-1])\] 
of graded $G^{\fr,[[\ell]]}_g$-representations. The first claim then follows from the observation that the formula \eqref{eqn:free-lie-schur} exhibits $\mr{Lie}(H[n-1])$ as being concentrated in degrees $s(n-1)$ for $s \geq 1$. We can incorporate the Koszul sign rule into this representation to give an expression of the free Lie algebra in terms of Schur functors, i.e.~write it as a direct sum of functors as in \cref{sec:schur-functors}:
\[\mr{Lie}(H[n-1])_{s(n-1)} = (\mr{Lie}(s) \otimes (1^s)^{\otimes n-1}) \otimes_{\fS_s} H^{\otimes \ul{s}}.\] 
The second and third claim follow from this because in degree $s(n-1)$ the representation $H[1-n] \otimes \mr{Lie}(H[n-1])$ is isomorphic to a subquotient of a direct sum of copies of $H^{\otimes s+2}$ so it is an odd (resp.~even) such representation if and only if $s$ is odd (resp.~even), and only the latter can contain invariants.
\end{proof}

\begin{table}[h]
	\begin{tabular}{cc}
		\toprule 
		$s$ & $\mr{Lie}(s)$ \\ \midrule 
		$1$ & (1) \\
		$2$ & $(1^2)$ \\
		$3$ & $(2,1)$ \\
		$4$ & $(3,1) + (2,1^2)$ \\
		$5$ & $(4,1)+(3,2)+(3,1^2)+(2^2,1)+(2,1^3)$ \\ 
		$6$ & $(5,1) + (4,2) + 2(4,1^2) + (3^2) + 3(3, 2, 1) + (3, 1^3) + 2(2^2, 1^2) + (2, 1^4)$ \\\bottomrule
	\end{tabular}
	\vspace{.5cm}
	\caption{The Lie representations, \cite[pages 387--388]{thrall}. Thrall also gives the cases $7 \leq s \leq 9$ (the case $s=10$ was corrected in \cite{Brandt}).}
	\label{tab:lie-reps}
\end{table}

For the more quantitative computation of \cref{prop:outcome-haut-input} \ref{enum:outcome-haut-input-iv}, we start with \eqref{eqn:free-lie-schur}. 
Using the isomorphism $\mr{Der}^+(\mr{Lie}(H[n-1])) \cong H[1-n] \otimes \mr{Lie}(H[n-1])$ of the proof of \cref{lem:bhaut-vanishing}, we get an expression in terms of Schur functors of $H$ as
\begin{equation}\label{eq:DerAlgSchur}
\mr{Der}^+(\mr{Lie}(H[n-1]))_{s(n-1)} \cong \mr{Der}^n(s+2) \otimes_{\fS_{s+2}} H^{\otimes \ul{s+2}},
\end{equation}
where the $\fS_{s+2}$-representation $\mr{Der}^n(s+2)$ depends only on the parity of $n$, and is given by the induced representation
\[
\mr{Der}^n(s+2) \coloneqq \mr{Ind}^{\fS_{s+2}}_{\fS_{s+1}}\,(\mr{Lie}(s+1) \otimes (1^{s+1})^{\otimes n-1}) = (\mr{Ind}^{\fS_{s+2}}_{\fS_{s+1}}\,\mr{Lie}(s+1)) \otimes (1^{s+2})^{\otimes n-1}.
\]
This follows because tensoring polynomial functors is given by Day convolution of their symmetric sequences of coefficients. (Although both sides of \eqref{eq:DerAlgSchur} are $\mr{GL}_{2g}(\bZ)$-representations, this isomorphism is only one of $G_g$-representations.) Using this information we can determine the decomposition of $\mr{Der}^+(\mr{Lie}(H[n-1]))_{s(n-1)}$ into irreducible algebraic $\mr{GL}_{2g}(\bZ)$-representations as follows: to find such a decomposition we need to decompose $\mr{Der}^n(s+2)$ into irreducible $\fS_{s+2}$-representations, which is straightforward given the above formula for it and $\mr{Lie}(s+1)$ from \cref{tab:lie-reps} (we used \texttt{SageMath} \cite{sagemath}). This establishes \cref{prop:outcome-haut-input} \ref{enum:outcome-haut-input-iv}:

\begin{computation}\label{comp:Der}
For $n$ both odd or even we have
\begin{align*}
	\mr{Der}^+(\mr{Lie}(H[n-1]))_{(n-1)} &= S_{1^3}+S_{2,1},\\
	\mr{Der}^+(\mr{Lie}(H[n-1]))_{2(n-1)} &= S_{3,1} + S_{2^2} + S_{2,1^2}, \\
	\mr{Der}^+(\mr{Lie}(H[n-1]))_{3(n-1)} &= S_{2,1^3} + S_{2^2,1} + 2 S_{3,1,1} + S_{3,2} + S_{4, 1},\\
	\mr{Der}^+(\mr{Lie}(H[n-1]))_{4(n-1)} &= S_{2, 1^4} + 2S_{2^2, 1^2} + S_{2^3} + 2S_{3, 1^3} + 3S_{3, 2, 1}\\
	& \quad +  2S_{4, 1^2} + S_{3^2} + 2S_{4, 2} + S_{5, 1},
\end{align*}
as $G_g$-representations, expressed in terms of Schur functors of $H$.

We may restrict these to $G^{\fr, [[\ell]]}_g$ and decompose them into irreducibles for sufficiently large $g$ (we used \texttt{SageMath} \cite{sagemath}). In particular, by looking at the multiplicity of the trivial representation, we find for $n$ both odd or even
\begin{align*}
	\dim\Big[\pi_{2n-1}(B\mr{hAut}_{\half \partial}(W_{g,1})) \oq \Big]^{G^{\fr, [[\ell]]}_g} &= 1, \\
	\dim\Big[\pi_{4n-3}(B\mr{hAut}_{\half \partial}(W_{g,1})) \oq \Big]^{G^{\fr, [[\ell]]}_g} &= 3.
\end{align*}
\end{computation}

\subsubsection{The rational homotopy of the homotopy automorphisms rel boundary} \label{sec:haut-rel-boundary} This section is \emph{not} necessary to establish \cref{thm:outcome-emb-calc}, but it will be used in \cref{sec:relation-to-x1} to determine a map in a certain long exact sequence.

Since every self-embedding of $W_{g,1}$ relative to  $\half \partial W_{g,1}$ extends canonically to a homeomorphism of $W_{g,1}$ relative to $\partial W_{g,1}$, by the Alexander trick, there is a factorisation
\[
B\Emb^{\fr}_{\half \partial}(W_{g,1})_\ell \lra B\mr{hAut}_\partial(W_{g,1}) \lra B\mr{hAut}_{\half \partial}(W_{g,1}).
\]
In particular, the image of $\pi_*(\Emb^{\fr}_{\half \partial}(W_{g,1})_\ell) \oq$ in $\pi_{*}(B\mr{hAut}_{\half \partial}(W_{g,1})) \oq$ must lie in the image of $\pi_{*}(B\mr{hAut}_\partial(W_{g,1})) \oq$. The latter rational homotopy groups also admit a complete description, which we will give following Berglund and Madsen. The boundary inclusion $S^{2n-1} \to W_{g,1}$ is represented by $(-1)^n$ times\footnote{The conventions of the paper \cite{berglundmadsen2} have $\omega \coloneqq \sum_{i=1}^g [e_i, f_i]$. The difference comes from that paper using $x \mapsto \lambda(-,x)$ to identify $H$ with $H^\vee$, whereas we use $x \mapsto \lambda(x,-)$. It makes no difference to the results we wish to use.} the element
\[
\omega = \sum_{i=1}^g [f_i, e_i] \in \pi_{2n-1}(W_{g,1}) \oq = \mr{Lie}^2(H[n-1]) \subset H^{\otimes 2}
\]
It follows from \cite[Theorem 3.12]{berglundmadsen2} that $B\mr{hAut}^\mr{id}_\partial(W_{g,1})$ has as Quillen model the Lie algebra $\mr{Der}^+_\omega(\mr{Lie}(H[n-1]))$ of those positive degree derivations which annihilate $\omega$, considered as a dg Lie algebra with trivial differential. This allows us to identify the homotopy groups, even equivariantly by  \cite[Proposition 5.6]{berglundmadsen2}.

\begin{lemma}\label{lem:haut-der-bdy} For $*>0$, there is an isomorphism of graded Lie algebras in $G^{\fr,[[\ell]]}_g$-representations
\[
\pi_{*+1}(B\mr{hAut}_{\partial}(W_{g,1}))\oq \overset{\cong}\lra \mr{Der}^+_\omega(\mr{Lie}(H[n-1])).
\]
\end{lemma}

It is helpful to recall a more concrete description: on \cite[page 40]{berglundmadsen2} the graded vector space $\mr{Der}^+_\omega(\mr{Lie}(H[n-1]))$ is identified, as a $G^{\fr,[[\ell]]}_g$-representation, with the kernel of the surjective linear map given by bracketing:
\begin{equation}\label{eqn:der-as-ker} 
	\ker\left[H[n-1] \otimes \mr{Lie}(H[n-1]) \xrightarrow{[-,-]} \mr{Lie}^{\geq 2}(H[n-1])\right][-2(n-1)].
\end{equation}
As the bracketing map in \eqref{eqn:der-as-ker} is surjective, we can extract a decomposition of \eqref{eqn:der-as-ker} into Schur functors of $H$ from \cref{sec:haut}. Computation \ref{comp:Der} and Table \ref{tab:lie-reps} give the following.

\begin{computation}\label{comp:der-omega}
For $n$ odd we have
\begin{align*}
	\mr{Der}^+_\omega(\mr{Lie}(H[n-1]))_{(n-1)} &= S_{1^3}, \\
	\mr{Der}^+_\omega(\mr{Lie}(H[n-1]))_{2(n-1)} &= S_{2^2}, \\
	\mr{Der}^+_\omega(\mr{Lie}(H[n-1]))_{3(n-1)} &= S_{3,1^2}, \\
	\mr{Der}^+_\omega(\mr{Lie}(H[n-1]))_{4(n-1)} &= S_{2^3} + S_{3, 1^3}  + S_{4, 2},
\end{align*}
as $G_g$-representations, expressed in terms of Schur functors of $H$. For $n$ even we have the same but with the partitions transposed. We restrict these to $\smash{G^{\fr, [[\ell]]}_g}$ and decompose them into irreducibles for sufficiently large $g$ (we used \texttt{SageMath} \cite{sagemath}). For $n$ both odd or even, we find
\begin{align*}
	\dim\Big[\pi_{2n-1}(B\mr{hAut}_\partial(W_{g,1})) \oq \Big]^{G^{\fr, [[\ell]]}_g} &= 1, \\
	\dim\Big[\pi_{4n-3}(B\mr{hAut}_\partial(W_{g,1})) \oq \Big]^{G^{\fr, [[\ell]]}_g} &= 0.
\end{align*}
\end{computation}

\begin{remark}
For $n$ odd, the Lie algebra $\mr{Der}^+_\omega(\mr{Lie}(H[n-1]))$ is, up to regrading, the same as Morita--Sakasai--Suzuki's $\mathfrak{h}_{g,1}$. In \cite{moritassstructure}, they give a description of the $\mr{Sp}_{2g}(\bZ)$-invariants and decomposition into $V_\lambda$ in a range.
\end{remark}

\subsubsection{Relation to $X_1(g)$}\label{sec:relation-to-x1}
This section is also \emph{not} necessary to establish \cref{thm:outcome-emb-calc}, but will be used in the proof of \cref{prop:HtyF}. It follows from the previous section that the lowest non-trivial rational homotopy group of $B\mr{hAut}_\partial(W_{g,1})$ is given by
\begin{equation}\label{eq:LowestHtyBAut}
\pi_n(B\mr{hAut}_\partial(W_{g,1})) \oq \cong \mr{Der}^+_\omega(\mr{Lie}(H[n-1]))_{n-1}
\end{equation}
and by \eqref{eqn:der-as-ker}, we have an identification of $\mr{Der}^+_\omega(\mr{Lie}(H[n-1]))_{n-1}$ as the kernel of $[-,-] \colon H[n-1] \otimes \mr{Lie}^2(H[n-1]) \to \mr{Lie}^3(H[n-1])$. By Computation \ref{comp:der-omega}, this kernel is given by $S_{1^3}(H[n-1])$, which is isomorphic to $S_3$ if $n$ is even and $S_{1^3}$ if $n$ is odd. On the other hand, in \cref{prop:HtyX1LowDeg} we have shown that $\pi_n(X_1(g)) \oq$ is $V_3$ if $n$ is even and $V_{1^3}$ if $n$ is odd.

\begin{proposition}\label{prop:X1IntohAut}
For large enough $g$, the following composition is injective 
	\[\pi_n(X_1(g))\oq \lra \pi_n(B\Diff^\fr_\partial(W_{g,1})_\ell)\oq \lra \pi_n(B\mr{hAut}_\partial(W_{g,1}))\oq.\]
\end{proposition}

\begin{proof}Let $\mr{TorhAut}_{\partial}(W_{g,1})$ denote those path components of $\mr{hAut}_{\partial}(W_{g,1})$ consisting of homotopy auitomorphisms which act trivially on $H_n(W_{g,1};\bZ)$. As $\pi_n(X_1(g)) \oq$ is the lowest rational homotopy group of $X_1(g)$, to show the composition is injective it is enough to show that the induced map
	\[H^n(B\mr{TorhAut}_\partial(W_{g,1});\bQ) \lra H^n(X_1(g);\bQ)\]
	is surjective. By \cref{thm:X1CohCalc}, as unravelled in the proof of \cref{prop:X1HtyEstimate}, the latter group is generated by the characteristic classes $\kappa_1(v_1, v_2, v_3)$ with $v_i \in H^n(W_{g,1};\bQ)$.
	
	The definition of the twisted Miller--Morita--Mumford classes $\kappa_{\epsilon^r}$ which we recalled in \cref{sec:proof:thm:TwistedCoh} makes sense for a fibration with fibre $W_g$ and section (see Remark 3.7 of \cite{KR-WTorelli}), and thus these classes are already defined on the space $B\mr{TorhAut}_\partial(W_{g,1})$, because gluing in a trivial $D^{2n}$-bundle to the universal fibration gives a $W_g$-fibration with section given by the centre of the disc. Thus the map is surjective as required.
\end{proof}

To understand the behaviour of the analogous map in degree $2n-1$, we need to make the injective image of the map in \cref{prop:X1IntohAut} more explicit. As in the proof we have characteristic classes $\kappa_1(v_1, v_2, v_3) \in H^n(B\mr{hAut}_\partial(W_{g,1});\bQ)$, and we may ask how to evaluate
\begin{align*}
\mr{ev}\colon H^n(W_{g,1};\bQ)^{\otimes 3} \otimes \pi_n(B\mr{hAut}_\partial(W_{g,1})) &\lra \bQ\\
v_1 \otimes v_2 \otimes v_3 \otimes [W_{g,1} \to E \overset{\pi}\to S^n] &\longmapsto \int_{S^n} \kappa_1(v_1 \otimes v_2 \otimes v_3)
\end{align*}
in terms of the description \eqref{eq:LowestHtyBAut}. Using $x \mapsto \lambda(x,-) \colon H \overset{\sim}\to H^\vee$, the pairing $\lambda$ on $H$ gives a pairing on $H^\vee$: abusing notation slightly we continue to call it $\lambda$.

\begin{proposition}\label{prop:kappaAndDerivations}
Let the fibration $W_{g,1} \to E \overset{\pi}\to S^n$ represent an element of $\pi_n(B\mr{hAut}_\partial(W_{g,1}))$ whose image in $\pi_n(B\mr{hAut}_\partial(W_{g,1})) \oq$ is the derivation determined by $\phi\colon H[n-1] \to \mr{Lie}^2(H[n-1]) \subset (H[n-1])^{\otimes 2}$. Then
\[
\mr{ev}(v_1 \otimes v_2 \otimes v_3 \otimes [W_{g,1} \to E \overset{\pi}\to S^n]) = \lambda(\phi^\vee(v_1, v_2), v_3).
\]
\end{proposition}

\begin{proof}
If $W_{g,1} \to E \overset{\pi}\to S^n$ is a fibration, with associated $W_g$-fibration $\bar{\pi}\colon \bar{E} \to S^n$ and section $s\colon S^n \to \bar{E}$, then the section provides a splitting of the exact sequence
\[
0 \lra H^n(S^n;\bQ) = \bQ\{w\} \overset{\pi^*}\lra H^n(\bar{E};\bQ) \lra H^n(W_g;\bQ) \lra 0
\]
and hence a map $\iota\colon H^n(W_g;\bQ) \to H^n(\bar{E};\bQ)$. The Poincar{\'e} dual of the section $s$ gives a class $s_!(1) = u \in H^{2n}(\bar{E};\bQ)$ which restricts to a cohomological fundamental class on each fibre. This gives a $H^*(S^n;\bQ)$-module isomorphism
\[
H^*(\bar{E};\bQ) =  H^*(S^n;\bQ) \otimes H^*(W_g ;\bQ)  =  \Lambda_\bQ(w) \otimes (\bQ\{1\} \oplus \iota(H^n(W_g;\bQ)) \oplus \bQ\{u\}).
\]
We have $u \smile \iota(v) = s_!(s^*(v))=0$, and $u \smile u = s_!(s^*(s_!(1)))=0$ as the section $s$ may be homotoped off of itself, so the remaining cup products are determined by
\[
\iota(v_1) \smile \iota(v_2) = \lambda(v_1, v_2) (1 \otimes u) + w \otimes \varphi(v_1, v_2)
\]
for a $(-1)^n$-symmetric map $\varphi\colon H^n(W_g;\bQ) \otimes H^n(W_g;\bQ) \to H^n(W_g;\bQ)$. Then
\[
\int_{S^n}\int_{\bar{\pi}} \iota(v_1) \smile \iota(v_2) \smile \iota(v_3) = \lambda(\varphi(v_1, v_2), v_3).
\]
	
The map $\varphi$ is related to the derivation $\phi \in  \mr{Der}^+_\omega(\mr{Lie}(H[n-1]))_{n-1}$, consider as an element of $ \mr{Hom}(H, \mr{Lie}^2(H[n-1]))$, classifying this fibration by 
\[
\varphi\colon H^n(W_g;\bQ) \otimes H^n(W_g;\bQ) = (H \otimes H)^\vee \lra \mr{Lie}^2(H[n-1])^\vee \overset{\phi^\vee}\lra H^\vee = H^n(W_g;\bQ).
\]
(The fact that the cup product is \emph{associative} corresponds to $\phi$ defining a derivation which \emph{annihilates} $\omega$.) This gives the claimed formula.
\end{proof}

It follows from this formula that the map
\[
\kappa_1\colon [H^n(W_{g,1};\bQ)^{\otimes 3} \otimes (1^3)^{\otimes n}]_{\fS_3} \lra \mr{Hom}(\pi_n(B\mr{hAut}_\partial(W_{g,1})),\bQ)
\]
is an isomorphism: both sides are isomorphic to $S_3$ for $n$ even or $S_{1^3}$ for $n$ odd, and it is easy to see from the formula of \cref{prop:kappaAndDerivations} that this map is injective. Dualising it gives an isomorphism $\pi_n(B\mr{hAut}_\partial(W_{g,1})) \oq \overset{\sim}\to [H^{\otimes 3} \otimes (1^3)^{\otimes n}]^{\fS_3}$, and for $w_1, w_2, w_3 \in H$ we write 
\[
t(w_1 \otimes w_2 \otimes w_3) \in \pi_n(B\mr{hAut}_\partial(W_{g,1})) \oq
\]
for the class corresponding to the (anti)symmetrisation of $w_1 \otimes w_2 \otimes w_3 \in H^{\otimes 3}$ under this isomorphism. We can characterise the injective image of $\pi_n(X_1(g))\oq$ inside $\pi_n(B\mr{hAut}_\partial(W_{g,1})) \oq \overset{\sim}\to [H^{\otimes 3} \otimes (1^3)^{\otimes n}]^{\fS_3}$ as follows.

\begin{lemma}\label{lem:x1-image-haut-char} Writing $\chi \coloneqq 2+(-1)^n 2g$ and supposing $\chi \neq 0$, for any $w_1, w_2, w_3 \in H$ the element
\begin{align*}
	u(w_1 \otimes w_2 \otimes w_3) \coloneqq t(w_1 \otimes w_2 \otimes w_3) &- \tfrac{1}{\chi} \big(\lambda(w_2, w_3)t(w_1 \otimes \omega)\\
	&\quad\quad + \lambda(w_3, w_1)t(w_2 \otimes \omega) + \lambda(w_1, w_2)t(w_3 \otimes \omega) \big)
\end{align*}
represents an element in the image of $\pi_n(X_1(g))\oq \to \pi_n(B\mr{hAut}_\partial(W_{g,1})) \oq$.
\end{lemma}

\begin{proof}
By construction we have
\[
\mr{ev}(v_1 \otimes v_2 \otimes v_3 \otimes t(w_1 \otimes w_2 \otimes w_3)) = \sum_{\sigma \in \fS_3} \mr{sign}(\sigma)^n \prod_{i=1}^3 v_i(w_{\sigma(i)}) \in \bQ,
\]
so the derivation associated to the bundle classified by $t(w_1, w_2, w_3)$ is determined by the unique $\phi\colon H[n-1] \to \mr{Lie}^2(H[n-1]) \subset (H[n-1])^{\otimes 2}$ satisfying
\[
\lambda(\phi^\vee(v_1, v_2), v_3) = \sum_{\sigma \in \fS_3} \mr{sign}(\sigma)^n \prod_{i=1}^3 v_{i}(w_{\sigma(i)}).
\]
A brief calculation shows that it is
\[
\phi(w) =  \sum_{\sigma \in \fS_3} \mr{sign}(\sigma)^n \lambda(w_{\sigma(3)}, w) w_{\sigma(1)} \otimes w_{\sigma(2)}.
\]
Using $x \mapsto \lambda(x,-)\colon H \overset{\sim}\to H^\vee$ to make the identification
\[
\mr{Der}^+(\mr{Lie}(H[n-1]))_{n-1} \cong \mr{Hom}(H[n-1], \mr{Lie}^2(H[n-1])) \cong H[1-n] \otimes \mr{Lie}^2(H[n-1]),
\]
the element $\phi$ associated to $t(w_1 \otimes w_2 \otimes w_3)$ is then given by $w_3 \otimes [w_1, w_2] +  w_2 \otimes [w_3, w_1] + w_1 \otimes [w_2, w_3]$. There is a non-trivial map $\kappa \colon [H^{\otimes 3} \otimes (1^3)^{\otimes n}]^{\fS_3} \to H$ given by 
\[
\kappa(t(w_1 \otimes w_2 \otimes w_3)) = \lambda(w_1, w_2) w_3 + \lambda(w_2, w_3)w_1 + \lambda(w_3, w_1) w_2,
\]
and the image of $\pi_n(X_1(g))\oq$ is precisely the kernel of $\kappa$ (as $S_3 \cong V_3 \oplus V_1$ if $n$ is even, and $S_{1^3} \cong V_{1^3} \oplus V_1$ if $n$ is odd).  If $\{a_i\}$ and $\{a_i^\#\}$ are dual bases, characterised by $\lambda(a_i^\#, a_j) = \delta_{ij}$, then $\omega$ can be written as $\sum_{i=1}^{2g} a_i \otimes a_i^\#$. Then we may calculate that
\[
\kappa(t(v \otimes \omega)) = (2+(-1)^n 2g) v
\]
so, for any $w_1, w_2, w_3 \in H$ the element $u(w_1 \otimes w_2 \otimes w_3)$	lies in the kernel of $\kappa$ and hence represents an element in the image of $\pi_n(X_1(g))\oq$.\end{proof}

It follows from \cref{prop:HtyX1LowDeg} that $[\pi_{2n-1}(X_1(g)) \oq]^{G^{\fr, [[\ell]]}_g} \cong \bQ$ for all large enough $g$. \cref{lem:x1-image-haut-char} allows us to analyse the fate of this class in the homotopy groups of $B\mr{hAut}_{\partial}(W_{g,1})$.

\begin{corollary}\label{cor:2nMinus1Injects}
For large enough $g$, the following map is injective: 
\[
\bQ \cong [\pi_{2n-1}(X_1(g)) \oq]^{G^{\fr, [[\ell]]}_g} \lra [\pi_{2n-1}(B\mr{hAut}_{\partial}(W_{g,1})) \oq]^{G^{\fr, [[\ell]]}_g}.\]
\end{corollary}

\begin{proof}
Consider the sum of Whitehead brackets
\[
\Theta \coloneqq \sum_{i,j,k=1}^{2g} [u(a_i^\#, a_j^\#, a_k^\#), u(a_i, a_j, a_k)] \in \pi_{2n-1}(B\mr{hAut}_\partial(W_{g,1}))\oq.
\]
As $u(a_i^\#, a_j^\#, a_k^\#), u(a_i, a_j, a_k) \in \pi_{n}(B\mr{hAut}_\partial(W_{g,1}))\oq$ lie in the image of the map from $\pi_n(X_1(g))\oq$ by \cref{lem:x1-image-haut-char}, the element $\Theta$ lies in the image of the map from $\pi_{2n-1}(X_1(g))\oq$. Furthermore, $\Theta$ is invariant, as it is obtained by inserting three copies of the invariant element $\omega = \sum_{i=1}^{2g} a_i^\# \otimes a_i$ into the map 
\[
(H^{\otimes 3})^{\otimes 2} \xrightarrow{u^{\otimes 2}} (\pi_{n}(B\mr{hAut}_\partial(W_{g,1}))\oq)^{\otimes 2} \xrightarrow{[-,-]} \pi_{2n-1}(B\mr{hAut}_\partial(W_{g,1}))\oq.
\]

To finish the argument it remains to show that $\Theta$ is nonzero. This can be done by brute force, expressing $u(a_i^\#, a_j^\#, a_k^\#)$ and $u(a_i, a_j, a_k)$ in terms of derivations and taking their commutator. However, it is simpler to make use of the graphical interpretation of the Lie algebra $\mr{Der}_\omega^+(\mr{Lie}(H[n-1]))$ as being spanned by unrooted Lie trees (``Lie spiders") with legs labelled by elements of $H[n-1]$, where the bracket is given by summing over all possible ways of combining two legs and applying $\lambda$ to the labels. See \cite{levineaddendum} for an exposition of this correspondence, and Section 2.4.1 of \cite{CV} (applied to the Lie cyclic operad) for a general construction of this Lie algebra, bearing in mind that with the Koszul sign rule in place the pairing $\lambda \colon H[n-1] \otimes H[n-1] \to \bQ$ is antisymmetric whatever the parity of $n$.

The class $\Theta$ must be a multiple $c(\chi)$ of the element
\[\sum_{i,j} a_i^\# \otimes [[a_i, a_j],a_j^\#] + a_i \otimes [a_j, [a_j^\#, a_i^\#]] + a_j \otimes [[a_j^\#, a_i^\#],a_i] + a_j^\# \otimes [a_i^\#, [a_i, a_j]],\]
as this spans the $G^{\fr, [[\ell]]}_g$-invariants of $\mr{Der}^+_\omega(\mr{Lie}(H[n-1]))_{2(n-1)} \cong S_{2^2}$ (which decomposes as $V_{2^2} \oplus V_{1^2} \oplus V_0$ if $n$ is odd and $V_{2^2} \oplus V_{2} \oplus V_0$ if $n$ is even). Calculating with the graphical formalism described above, it is easy to show that $c(\chi)$ is a Laurent polynomial in $\chi$, and to determine its highest order term as $(-1)^n 3 \chi$, so $c(\chi)$ is nonzero for all large enough $g$.
\end{proof}

\subsection{The higher layers: section spaces} \label{sec:rational-homotopy-layers} We now describe what needs to done to prove \cref{prop:outcome-layer-input}. This proposition described the rational homotopy groups of the higher layers of the embedding calculus tower, which appeared as the entries ${}^{BK}\!E^1_{p,q}$ of the Bousfield--Kan spectral sequence for $p \geq 1$. The actual proof will be given in \cref{sec:proof-higher-layers}, and this section serves to motivate the computations performed in the next two subsections that are used as input.

In \cite[Section 5.3]{KR-WAlg} we explained Weiss' description of the $k$th layer of the embedding calculus tower: they are relative section spaces of a bundle whose fibre is the total homotopy fibre of a cubical diagram of ordered configuration spaces of $W_{g,1}$ and whose base an unordered configuration space of $W_{g,1}$. The homotopy groups of this relative section space can be computed by a Federer spectral sequence
\[{}^F\!E^2_{p,q} \Longrightarrow \pi_{q-p}(L_k \Emb_{\half \partial}(W_{g,1})).\]
This is an instance of a completely convergent Bousfield--Kan spectral sequence. It comes with action of $\Gamma_g$ by naturality, which factors over $\Lambda_g$ and restricts to $\check{\Lambda}_g^{\fr,\ell}$. In \cite[Lemma 5.4]{KR-WAlg} we described the rationalised $E^2$-page of this Federer spectral sequence as
\begin{equation}\label{eqn:e2-higher-layers} 
	({}^F\!E^2_{p,q})\oq = \left[H^p(W_{g,1}^k,\Delta_{\half \partial};\bQ) \otimes \pi_q(\tohofib_{I \subset \ul{k}} \Emb(\ul{k} \setminus I,W_{g,1}))\oq\right]^{\fS_k}.
\end{equation}

As a consequence, for $q-p>1$ the rational homotopy group $\pi_{q-p}(L_k \Emb_{\half \partial}(W_{g,1})_\mr{id}) \oq$ admits a finite filtration as a $\smash{\check{\Lambda}_g^{\fr,\ell}}$-representation, whose associated graded consists of subquotients of the terms $(E^2_{p',q'})\oq$ with $q'-p'=q-p$. We will not attempt to ``rationalise'' the extended spectral sequence, which might not make sense for $q-p=0,1$, but as mentioned earlier, information in these degrees is unnecessary as we understand $\pi_0$ and $\pi_1$ of $\smash{B\Emb^\fr_{\half \partial}(W_{g,1})}$ well enough.

Let us now describe the right side of \eqref{eqn:e2-higher-layers}. The term $H^p(W_{g,1}^k,\Delta_{\half \partial};\bQ)$ is given by the rational cohomology of a product of $W_{g,1}$ relative to the subspace of those collections of particles in which two are equal or at least one is in $\half \partial W_{g,1}$; it will be studied in \cref{sec:cohomology-products-diagonals}. The term $\pi_q(\tohofib_{I \subset \ul{k}} \Emb(\ul{k} \setminus I,W_{g,1}))\oq$ is given by the rational homotopy groups of the total homotopy fibre of a cubical diagram of ordered configuration spaces of $W_{g,1}$; it will be studied in \cref{sec:homotopy-total-homotopy-fibres}.  Both come with $\fS_k$-actions from permutations of the particles. Using this input, we will prove \cref{prop:outcome-layer-input} by taking the symmetric group invariants of their tensor products and running the Federer spectral sequence.

\subsection{The cohomology of products relative to diagonals} 
\label{sec:cohomology-products-diagonals} Our goal in this subsection is to compute the term $H^*(W_{g,1}^k,\Delta_{\half \partial};\bQ)$ that appears in the Federer spectral sequence \eqref{eqn:e2-higher-layers}. Here $\Delta_{\half \partial} \subset W_{g,1}^k$ denotes the subspace of those $k$-tuples $(x_1,\ldots,x_k)$ such that $x_i = x_j$ for some $i \neq j$ or $x_i \in \half \partial$ for some $i$. The symmetric group $\fS_k$ acts by permutation on the pair $(W_{g,1}^k,\Delta_{\half \partial})$ and the mapping class group $\Gamma_g$ acts on it diagonally, so these actions commute and thus its cohomology groups are $\Gamma_g \times \fS_k$-representations.

We will see that there is a simple formula describing these cohomology groups as $\Gamma_g \times \fS_k$-representations. For stating this formula, recall that a partition $\lambda \vdash k$ is a collection of integers $\lambda_1 \geq  \lambda_2 \geq \cdots \geq \lambda_p > 0$ with $k = \sum_i \lambda_i$. Given such a partition, we write $\mu_j = \#\{i \, | \, \lambda_i=j\}$ for the number of $\lambda_i$'s which are equal to $j$, so $k = \sum_j j \cdot \mu_j$. For a group $G$ we write $G \wr \Sigma_p = G^p \rtimes \Sigma_p$ for the wreath product. Even though $H^\vee$ can be $G_g$-equivariantly identified with $H$, for \cref{sec:Reflection} it will be useful to distinguish the cohomology group $H^\vee = H^n(W_{g,1};\bQ)$ from its linear dual. We also write $(H^\vee)^{\otimes \ul{k}}$ for $(H^\vee)^{\otimes k}$ with $\fS_k$ acting by permuting the terms in the tensor product.

\begin{theorem}\label{thm:CohConfSpaces}
	The $\Gamma_g \times \fS_k$-representations $H^*(W_{g,1}^k,\Delta_{\half \partial};\bQ)$
	are isomorphic to
	\[\bigoplus_{\lambda \vdash k}  \mr{Ind}^{\fS_k}_{(\fS_{1} \wr \fS_{\mu_1}) \times  \cdots \times (\fS_{p} \wr \fS_{\mu_p})} \left(\boxtensor_{s=1}^p (\mr{Lie}(s) \otimes (1^s) \wr ((H^\vee)^{\otimes \underline{\mu_s}} \otimes (1^{\mu_s})^{\otimes n+s-1}))\right)\]
	where the summand given by $\lambda$ has degree $\sum_s \mu_s(n+s-1)$.
\end{theorem}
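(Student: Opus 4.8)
The plan is to compute $H^*(W_{g,1}^k, \Delta_{\half\partial};\bQ)$ by a stratification/inclusion--exclusion argument analogous to the classical description of the cohomology of configuration spaces, but relativised by the ``half-boundary''. The starting point is that $H^*(W_{g,1};\bQ) \cong H^*(\vee_{2g} S^n;\bQ)$, so $H^{*>0}(W_{g,1};\bQ)$ is concentrated in degree $n$ and equals $H^\vee$; in particular $\widetilde{H}^*(W_{g,1};\bQ)$ has a very simple structure, and collapsing $\half\partial W_{g,1} = D^{2n-1}$ does nothing to the (co)homology of a single factor. The reduced diagonal $\Delta_{\half\partial} \subset W_{g,1}^k$ is a union of the ``small diagonals'' $\Delta_{ij} = \{x_i = x_j\}$ together with the ``boundary walls'' $B_i = \{x_i \in \half\partial W_{g,1}\}$.

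First I would set up the long exact sequence of the pair, or equivalently compute $\widetilde{H}^*(W_{g,1}^k/\Delta_{\half\partial};\bQ)$, and identify this quotient up to homotopy. Collapsing each $B_i$ separately turns the $i$th factor $W_{g,1}$ with a marked disc in its boundary collapsed into something equivalent to $W_{g,1}$ again (since $D^{2n-1}$ is contractible and the pair $(W_{g,1}, D^{2n-1})$ is a good pair); the point of the half-boundary condition is rather that it makes the relevant configuration-space model a \emph{parenthesised} or cube-friendly one, allowing the ``adding points near the boundary'' trick used throughout the paper (cf. the splitting of cubes in Lemma \ref{lem:tohofib-cubes-w-sections} and in Section \ref{sec:total-dk}). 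Concretely, I expect $W_{g,1}^k/\Delta_{\half\partial}$ to be equivalent to the $k$-fold ``smash-like'' relative configuration object whose cohomology is computed by a Fadell--Neuwirth / Cohen-type spectral sequence, or more cleanly by the fact that $C_k(W_{g,1})$ (ordered configurations, with the boundary condition) has the homotopy type controlled by $\ft_g(\ul{k})$ via the previous subsection.

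The key combinatorial step is the passage to the sum over partitions $\lambda \vdash k$. This is the standard phenomenon: a class in $H^*(W_{g,1}^k,\Delta_{\half\partial};\bQ)$ is supported on a stratum indexed by how the $k$ points collide, i.e.\ by a partition of $\ul{k}$ into blocks; the $\fS_k$-action permutes strata, so one induces up from the stabiliser of a partition of \emph{type} $\lambda$, which is exactly the Young subgroup $\prod_s (\fS_s \wr \fS_{\mu_s})$. For a single block of size $s$, the local contribution is the reduced cohomology of $s$ points colliding in $W_{g,1}$ relative to the diagonal and the boundary: this is precisely the top reduced cohomology of the ordered configuration space of $s$ points in a $2n$-disc (rel boundary), which is $\mr{Lie}(s)$ tensored with an orientation/sign twist $(1^s)$, smashed with one copy of $\widetilde{H}^*(W_{g,1};\bQ) = H^\vee$ recording where that cluster sits --- hence the factor $(H^\vee)^{\otimes\underline{\mu_s}}$ for the $\mu_s$ clusters of size $s$, with the $\fS_{\mu_s}$ permuting them and a sign twist $(1^{\mu_s})^{\otimes n+s-1}$ from the Koszul rule (the cohomological degree of each such factor being $n + s - 1$, built from $n$ for the $H^\vee$ and $(s-1)(?)$... more precisely the degree bookkeeping gives $\mu_s(n+s-1)$ for the $\lambda$-summand, matching the stated grading). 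The Koszul signs and the precise identification of $\mr{Lie}(s)\otimes(1^s)$ as the relevant local cohomology should be deduced from Theorem \ref{thm:cohen-gitler} (the Drinfel'd--Kohno description of $\pi_*\mr{Emb}(\ul{s},\bR^{2n})$) together with the Koszul duality between the homotopy Lie algebra and cohomology of configuration spaces of Euclidean space.

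The main obstacle will be making the partition decomposition and the wreath-product induction precise and \emph{equivariant} --- both for the $\fS_k$-action (permuting strata and blocks) and for the residual $\Gamma_g$-action (acting only through the $H^\vee$ factors). I would handle this by working with an explicit cell- or stratum-indexed complex: filter $W_{g,1}^k$ by the partition lattice, identify the associated graded with a direct sum over ordered set partitions of tensor products of (i) $\mr{Lie}$-type pieces for each block from the Euclidean-local model and (ii) $H^\vee$-factors from $\widetilde{H}^*(W_{g,1})$, check the differentials vanish for degree reasons (everything is concentrated in the expected top degrees, as in the paper's repeated use of connectivity/degree arguments), and then reorganise the $\fS_k$-orbit sum into an induction from the Young subgroup, collecting equal block sizes into the wreath products $\fS_s\wr\fS_{\mu_s}$. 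Tracking the Koszul signs through this reorganisation --- i.e.\ pinning down the exact sign-representation twists $(1^s)$ and $(1^{\mu_s})^{\otimes n+s-1}$ --- is the delicate bookkeeping, but it is forced by the symmetric monoidal structure with the Koszul rule fixed in Section 2, and by consistency with the known $g=0$ case (configuration spaces of $\bR^{2n}$).
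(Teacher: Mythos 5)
Your proposal circles the right idea but misses the one step that makes the computation tractable, and as a result the bookkeeping cannot be made to close.

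The paper's proof begins by observing that $W_{g,1}^k/\Delta_{\half\partial}$ is homeomorphic to the one\-point compactification $F(k,X)^+$ of the ordered configuration space $F(k,X)$, where $X \coloneqq W_{g,1} \setminus \half\partial W_{g,1}$, so that $H^*(W_{g,1}^k,\Delta_{\half\partial};\bQ) \cong H^*_c(F(k,X);\bQ)$. You start in the right place (``compute $\widetilde{H}^*(W_{g,1}^k/\Delta_{\half\partial})$'') but then drift: you say you ``expect'' the quotient to be a ``smash-like relative configuration object'' whose cohomology is ``controlled by $\ft_g(\ul{k})$ via the previous subsection.'' That is precisely the confusion. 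The Drinfel'd--Kohno Lie algebra $\ft_g(\ul{k})$ (and the Kriz--Totaro algebra of Section 6) govern the \emph{ordinary} cohomology $H^*(F(k,W_{g,1}))$; what you need is the \emph{compactly supported} cohomology of the open manifold $F(k,X)$, which is related to the ordinary one by Poincar\'e duality, not by a homotopy equivalence. These are different objects, with different $\fS_k$-actions, and conflating them is a genuine gap.

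This shows up in your degree bookkeeping. You assert the local contribution of a block of size $s$ is ``the top reduced cohomology of the ordered configuration space of $s$ points in a $2n$-disc, which is $\mr{Lie}(s)$.'' The top cohomology of $F(s,\bR^{2n})$ sits in degree $(s-1)(2n-1)$, whereas the theorem's summand contributes degree $n+s-1$ per block; these differ, and your hedged ``$(s-1)(?)$'' is the symptom. In fact the correct source of $n+s-1$ is the operadic suspension: in Petersen's formula the contribution is $H^*_c(X)\otimes_H\cat{SLie}$ with $H^*_c(X)=H^\vee[n]$ and $\cat{SLie}(\ul{s}) = \cat{Lie}(\ul{s})[s-1]\otimes(1^s)$, giving degree $n+(s-1)$. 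Nothing about ``top cohomology of a Euclidean configuration space'' is invoked. And once one knows $H^*_c(X) = H^\vee[n]$ has trivial multiplication, the Chevalley--Eilenberg homology in Petersen's theorem is just a free graded-symmetric algebra, so the whole computation reduces to expanding $S^*(H^\vee[n]\otimes_H\cat{SLie})$ under Day convolution --- which is exactly the sum over $\lambda\vdash k$ with the wreath-product inductions. The partition lattice and Young-subgroup structure you describe appear here as the combinatorics of $S^*$ of a symmetric sequence, not as an ad hoc stratification spectral sequence you would then need to show degenerates. I would recommend pinning down the homeomorphism $W_{g,1}^k/\Delta_{\half\partial}\cong F(k,X)^+$ explicitly and then citing (or reproving) Petersen's Corollary 8.8; the stratification intuition can stay in the background as motivation for why that theorem is true.
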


\begin{proof}
	Consider the non-compact manifold $X \coloneqq W_{g,1} \setminus \half\partial W_{g,1}$. The quotient space $W_{g,1}^k / \Delta_{\half \partial}$ is then homeomorphic to the 1-point compactification $F(k, X)^+$ of the space $F(k,X)$ of ordered configurations in the manifold $X$. Thus we wish to compute $H^*_c(F(k,X);\bQ)$, as a $\check{\Lambda}^{\fr,\ell}_g \times \fS_k$-representation.
	
	We will use the work of Petersen \cite{Petersen}. To express his result, we need to introduce some notation. Firstly, for a $V \in \cat{Gr}(\bQ\text{-}\cat{mod})^{\cat{FB}}$, we write $\Sigma V$ for the functor
	\[\ul{s} \longmapsto V(\ul{s})[s] \otimes (1^s).\]
	Secondly, $\otimes_H$ denotes the Hadamard (that is, objectwise) tensor product on objects of $\cat{Gr}(\bQ\text{-}\cat{mod})^{\cat{FB}}$. Finally, we write $cW \in \cat{Gr}(\bQ\text{-}\cat{mod})^{\cat{FB}}$ for the constant functor with value the graded vector space $W$.
	
	The 1-point compactification $X^+ \simeq \vee^{2g} S^n$ is formal, so we may take $H^*_c(X) = H^\vee[n]$ as a cdga model for $C_c^*(X)$. Then, \cite[Corollary 8.8]{Petersen} (adapted to our grading convention and notation) gives an isomorphism in $\cat{Gr}(\bQ\text{-}\cat{mod})^\cat{FB}$
	\[\bigoplus_{k \geq 0} H^{*}_c(F(k,X);\bQ) \cong H_*^{CE}(\Sigma cH^\vee[n] \otimes_H \cat{Lie}).\]
	Here $\cat{Lie}$ denotes the Lie operad, considered as a monoid in $\cat{Gr}(\bQ\text{-}\cat{mod})^\cat{FB}$ with respect to the composition product; this induces the structure of a Lie algebra object on $\Sigma cH^\vee[n] \otimes_H \cat{Lie}$, with respect to which we form Chevalley--Eilenberg homology.
	
	As $H^\vee[n]$ has zero multiplication, the Lie algebra $\Sigma H^\vee[n] \otimes_H \cat{Lie}$ has trivial bracket, so the appearance of Chevalley--Eilenberg homology simplifies to
	\[\bigoplus_{k \geq 0} H^{*}_c(F(k,X);\bQ) \cong S^*(\Sigma H^\vee[n] \otimes_H \cat{Lie}[-1]) \cong S^*(H^\vee[n] \otimes_H \cat{SLie}),\]
	where $\cat{SLie}(\ul{k}) = \cat{Lie}(\ul{k})[k-1] \otimes (1^k)$ (using cohomological grading, so this is operadic suspension). In particular, the the term $H^\vee[n] \otimes \cat{SLie}(\ul{k})$ has homological degree $n+k-1$ and cardinality grading $k$. The expression in the statement of the theorem is obtained by expanding this out, using the definition of ``free commutative algebra $S^*$" under Day convolution.
\end{proof}

In Table \ref{tab:lie-reps}, we gave the representations $\mr{Lie}(s)$ for $s \leq 6$, which we can combine with \cref{thm:CohConfSpaces} to explicitly describe $H^*(W_{g,1}^k,\Delta_{\half \partial};\bQ)$. The following proposition collects the properties that we will use.

\begin{proposition}\label{prop:cohomology-products-diagonals-qualitative} $H^*(W_{g,1}^k,\Delta_{\half \partial};\bQ)$ has the following properties:
	\begin{enumerate}[(i)]
		\item \label{enum:cohomology-products-diagonals-qualitative-i} It is concentrated in degrees $* = k + t(n-1)$ for $1 \leq t \leq k$.
		\item \label{enum:cohomology-products-diagonals-qualitative-ii} The action of $\Gamma_g$ factors over $G'_g$, and the latter action is algebraic.
		\item \label{enum:cohomology-products-diagonals-qualitative-iii} In degree $* = k + t(n-1)$ it is odd (resp.~even) when $t$ is odd (resp.~even).
		\item \label{enum:cohomology-products-diagonals-qualitative-iv} In the highest nonzero degree we have
		\[H^{kn}(W_{g,1}^k,\Delta_{\half \partial};\bQ) \cong (H^\vee)^{\otimes \ul{k}} \otimes (1^k)^{\otimes n}.\]
		\item \label{enum:cohomology-products-diagonals-qualitative-v} In the next highest nonzero degree we have
		\[H^{(k-1)n+1}(W_{g,1}^k,\Delta_{\half \partial};\bQ) \cong \mr{Ind}^{\fS_k}_{\fS_2 \times \fS_{k-2}} H^\vee \boxtimes ((H^\vee)^{\otimes \ul{k-2}} \otimes (1^{k-2})^{\otimes n}).\]
\end{enumerate}\end{proposition}

\begin{proof} For \ref{enum:cohomology-products-diagonals-qualitative-i}, we use that in \cref{thm:CohConfSpaces}, a partition $\lambda \vdash k$ contributes to degree $\sum_s \mu_s(n+s-1) = k + (\sum_s \mu_s)(n-1)$. The sum $\sum_s \mu_s$ can take any value from 1 to $k$. 
	
	The formula in \cref{thm:CohConfSpaces} directly implies \ref{enum:cohomology-products-diagonals-qualitative-ii}, and  for \ref{enum:cohomology-products-diagonals-qualitative-iii} we then observe that in degree $k + (\sum_s \mu_s)(n-1)$ the representation $H^\vee$ arises with tensor power $\sum_s \mu_s$.
	
	For \ref{enum:cohomology-products-diagonals-qualitative-iv}, the highest degree corresponds to $\sum_s \mu_s=k$, meaning that $\mu_1=k$ and all other $\mu_i$ are zero, i.e.\ $\lambda = (1^k)$. This contributes
	\[\mr{Ind}^{\fS_k}_{\fS_1 \wr \fS_k} \bQ \wr((H^\vee)^{\otimes \ul{k}} \otimes (1^k)^{\otimes n})  = (H^\vee)^{\otimes \ul{k}} \otimes (1^k)^{\otimes n}.\]
	
	Finally, for \ref{enum:cohomology-products-diagonals-qualitative-v}, the second highest degree corresponds to $\sum_s \mu_s=k-1$, which can only arise as $\mu_1=k-2$, $\mu_2=1$, and all other $\mu_i$ are zero, i.e.\ $\lambda = (2, 1^{k-2})$. This contributes
	\[\mr{Ind}^{\fS_k}_{(\fS_{1} \wr \fS_{k-2}) \times (\fS_2 \wr \fS_1)}  (\bQ \wr (H^\vee)^{\otimes \ul{k-2}} \otimes (1^{k-2})^{\otimes n}) \boxtimes ( \mr{Lie}(2) \otimes (1^2) \wr (H^\vee) \otimes (1^1)^{\otimes n+1}),\]
	but $\mr{Lie}(2) = (1^2)$, giving the claimed expression.
\end{proof}

\subsection{The homotopy Lie algebra of total homotopy fibres} \label{sec:homotopy-total-homotopy-fibres} Our goal in this subsection is to compute the term $\pi_{*+1}(\mr{tohofib}_{I \subset \ul{k}}\,\Emb(\ul{k} \setminus I,W_{g,1})) \oq$ that appears in the Federer spectral sequence \eqref{eqn:e2-higher-layers}. For each $x_{\ul{k}} \in \Emb(\ul{k},W_{g,1})$ we have a cubical diagram of ordered configuration spaces
\[x_{\ul{k}} \supset I \longmapsto \Emb(x_{\ul{k}} \setminus I,W_{g,1}).\]
The element $x_{\ul{k}}$ provides basepoints so, as we recall in \cref{sec:tohofib}, we can form the total homotopy fibre $\tohofib_{I \subset x_{\ul{k}}}\,\Emb(x_{\ul{k}} \setminus I,W_{g,1})$ with basepoint provided by $x_{\ul{k}}$. This depends on $x_{\ul{k}}$, but because the space $\Emb(\ul{k},W_{g,1})$ containing $x_{\ul{k}}$ is 1-connected for $n \geq 2$, we can use isotopy extension to construct a basepoint-preserving homeomorphism
\[\tohofib_{I \subset x_{\ul{k}}}\,\Emb(x_{\ul{k}} \setminus I,W_{g,1}) \cong \tohofib_{I \subset x'_{\ul{k}}}\,\Emb(x'_{\ul{k}} \setminus I,W_{g,1}),\]
unique up to basepoint-preserving isotopy, allowing us to identify the homotopy group $\pi_{*+1}(\tohofib_{I \subset x_{\ul{k}}}\,\Emb(x_{\ul{k}} \setminus I,W_{g,1}),x_{\ul{k}})$ with
\[\colim_{x_{\ul{k}} \in \Pi(\Emb(\ul{k},W_{g,1}))}\,\pi_{*+1}(\tohofib_{I \subset x_{\ul{k}}}\,\Emb(x_{\ul{k}},W_{g,1}),x_{\ul{k}})\] where $\Pi(-)$ denotes the fundamental groupoid, and we will denote this colimit by
\[\pi_{*+1}(\tohofib_{I \subset \ul{k}}\,\Emb(\ul{k} \setminus I,W_{g,1})).\]

Upon rationalising, the Whitehead bracket endows it with the structure of a graded Lie algebra. Moreover, both the symmetric group $\fS_k$ and the mapping class group $\Gamma_g$ act on the cubical diagram and these actions commute, so it is fact a graded Lie algebra in the category of $\Gamma_g \times \fS_k$-representations, In this subsection we will give a description of this, after some preparation. Let us summarise it:
\begin{enumerate}[(i)]
	\item \cref{sec:homotopy-lie-alg} recalls that $\pi_{*+1}(\Emb(\ul{k},\bR^d))\oq$ admits a presentation known as the Drinfel'd--Kohno Lie algebra.
	\item \cref{sec:homotopy-lie-alg-ext} extends this to a presentation of $\pi_{*+1}(\Emb(\ul{k},W_{g,1}))\oq$, which we call the extended Drinfel'd--Kohno Lie algebra. The rather lengthy proof of its main result, \cref{prop:extended-kd}, will be postponed to \cref{sec:proof-extended-kd}. 
	\item \cref{sec:tohofib} contains some general results on total homotopy fibres.
	\item \cref{sec:total-dk} completes the description of $\pi_{*+1}(\tohofib_{I \subset \ul{k}}\,\Emb(\ul{k} \setminus I,W_{g,1}))$.
\end{enumerate}

\subsubsection{The Drinfel'd--Kohno Lie algebra}\label{sec:homotopy-lie-alg} 

We take $\Emb(\ul{k},\bR^d)$ to be based at $\ul{k} \ni i \longmapsto (i,0,\ldots,0)$, and define its rational homotopy groups $\pi_*(\Emb(\ul{k},\bR^d))_\bQ$ with respect to this basepoint. This basepoint is not fixed by the $\fS_k$-action which permutes the particles, but under the assumption $d \geq 3$ the space $\Emb(\ul{k},\bR^d)$ is 1-connected and we nonetheless obtain a well-defined $\fS_k$-action on its homotopy groups. Indeed, as before we can canonically identify the homotopy groups at different basepoints with their colimit over the fundamental groupoid and get an action on this colimit.

These configuration spaces admit additional structure. Firstly, we can forget particles: an injection $f \colon S \hookrightarrow T$ of finite sets induces by precomposition a map
\[f^* \colon \Emb(T,\bR^d) \lra \Emb(S,\bR^d)\]
and hence a map on rational homotopy groups. Secondly, we can add particles by bringing them in from infinity. To do so, suppose that $f \colon S_+ \to T_+$ is a map of finite pointed sets which is injective on $S_+ \setminus f^{-1}(+)$ (here $+$ denotes the basepoint element of $T_+$). Letting $S \subset S_+$ and $T \subset T_+$ denote the subsets of non-basepoint elements, we obtain a map
\[f^* \colon \Emb(T,\bR^d) \lra \Emb(S,\bR^d)\]
as follows: on $S_+ \setminus f^{-1}(+)$ it is given by precomposition with $f$, and the particles in $f^{-1}(+) \setminus \{+\}$ are brought in from infinity. The latter involves a choice, but the homotopy class of map obtained is independent of this choice and gives a well-defined map on rational homotopy groups.

To capture this data, let $\cat{FI}_\ast$ be the category whose objects are finite pointed sets and whose morphisms are basepoint-preserving maps which are injective on those elements not sent to the basepoint. Then the rational homotopy groups assemble to a functor
\begin{align*}\pi_{*+1}(\Emb(-,\bR^d))_\bQ \colon \cat{FI}_\ast^\mr{op} &\lra \cat{Alg}_\cat{Lie}(\cat{Gr}(\bQ\text{-}\cat{mod})) \\
S_\ast &\longmapsto \pi_{*+1}(\Emb(S,\bR^d))_\bQ\end{align*}
whose action on morphisms is described as above. 

There is a presentation of $\pi_{*+1}(\Emb(-,\bR^d))_\bQ$ in terms of the homotopy class $t_{12} \in \pi_{d-1}(\Emb(\ul{2},\bR^d))$ of the map
\begin{align*}S^{d-1} &\lra \Emb(\ul{2},\bR^{d}) \\
\theta &\longmapsto (\theta,-\theta).\end{align*}
There is a morphism $f_{ij} \colon \ul{k}_+ \to \ul{2}_+$ in $\cat{FI}_+$ given by $i \mapsto 1$, $j \mapsto 2$, and sending all other elements to the basepoint. This induces a map $f_{ij}^* \colon \Emb(\ul{2},\bR^d) \to \Emb(\ul{k},\bR^d)$ well-defined up to homotopy, and we define $t_{ij} \in \pi_{n-1}(\Emb(\ul{k},\bR^d))$ to be $(f_{ij}^*)_*(t_{12})$. 

A permutation $\sigma$ of $\ul{k}$ gives a map $\Emb(\ul{k},\bR^d) \to \Emb(\ul{k},\bR^d)$ by precomposition, sending $t_{ij}$ to $t_{\sigma^{-1}(i)\sigma^{-1}(j)}$. This defines a right action but we shall prefer to consider the associated left action, where $\fS_k$ acts through its opposite, so that $\sigma \cdot t_{ij} = t_{\sigma(i)\sigma(j)}$. 

Declaring that the symbol $t_{ij}$ goes to the element $t_{ij} \in \pi_{d-1}(\Emb(\ul{k},\bR^d))$ we obtain a unique map of graded Lie algebras in $\fS_k$-representations 
\[\mr{Lie}\left(\bQ\{t_{ij}\,\middle\vert\, i \neq j \in \ul{k}\}\right) \lra \pi_{*+1}(\Emb(\ul{k},\bR^{d})) \oq,\]
with $|t_{ij}| = d-2$. This map is surjective and its kernel has been determined. The following definition goes back to \cite{KohnoInf,Drinfeld}, and also known as \emph{Yang--Baxter Lie algebra}, or \emph{infinitesimal braid Lie algebra}.

\begin{definition}\label{def:drinfeld-kohno}  Let $S$ be a finite set and $d \geq 0$, then the \emph{Drinfel'd--Kohno Lie algebra $\ft_0(S)$} is the graded Lie algebra given by the quotient of the free graded Lie algebra 
	generated by
	\begin{enumerate}[(G1)]
		\item generators $t_{ij}$ in degree $d-2$ for each pair $(i,j)$ of distinct elements of $S$,
	\end{enumerate} by the ideal generated by the relations
	\begin{enumerate}[(R1)]
		\item $t_{ij} = (-1)^d t_{ji}$ for $i,j$ distinct,
		\item $[t_{ij},t_{rs}] = 0$ for $i,j,r,s$ all distinct,
		\item $[t_{ij},t_{ik}+t_{jk}] = 0$ for $i,j,k$ all distinct.
	\end{enumerate}
\end{definition}

\begin{remark}Though the notation does not reflect it, $\ft_0(S)$ depends on an integer $d$.\end{remark}

The Drinfel'd--Kohno Lie algebras assemble to a functor
\begin{align*}\ft_0(-) \colon \cat{FI}_\ast^\mr{op} &\lra \cat{Alg}_\cat{Lie}(\cat{Gr}(\bQ\text{-}\cat{mod})) \\
S_+ & \longmapsto \ft_0(S),\end{align*}
whose action on a morphism $f \colon S_+ \to T_+$ is determined by
\[\ft_0(f)(t_{ij}) = \begin{cases} t_{f^{-1}(i)f^{-1}(j)} & \text{if $i,j \in f(S)$,} \\
0 & \text{otherwise}.\end{cases}\]
As before, we consider the left $\fS_S$-action on $\ft_0(S)$ through its opposite, which is determined by $\sigma \cdot t_{ij} = t_{\sigma(i)\sigma(j)}$.

The following result has been proven in varying levels in generality over the years \cite{cohengitler}, \cite[Proposition 4.1]{Tamarkin}, \cite[Theorem 1]{SeveraWillwacher}, \cite[Example 5.5]{BerglundKoszul}. The stated version may be deduced from \cite[Theorem 14.1.14]{FresseBook2}:

\begin{theorem} \label{thm:cohen-gitler} For $d \geq 3$, there is an isomorphism 
	\[\ft_0(-) \overset{\cong}\lra \pi_{*+1}(\Emb(-,\bR^{d}))_\bQ\]
	of functors $\cat{FI}_\ast^\mr{op} \to \cat{Alg}_\cat{Lie}(\cat{Gr}(\bQ\text{-}\cat{mod}))$, uniquely determined by sending the generator $t_{12}$ to the element $t_{12}$ at the object $\ul{2}$.
\end{theorem}

\subsubsection{The extended Drinfel'd--Kohno Lie algebra}\label{sec:homotopy-lie-alg-ext} We will extend this description to the ordered configuration spaces of the manifold $W_{g,1}$ for $2n \geq 4$, and hence set $d=2n$. That is, we will identify the functor
\begin{align*}\pi_{*+1}(\Emb(-,W_{g,1}))_\bQ \colon \cat{FI}_\ast^\mr{op} &\lra \cat{Alg}_\cat{Lie}(\cat{GrRep}(\Gamma_g)) \\
S_+ &\longmapsto \pi_{*+1}(\Emb(S,W_{g,1}))_\bQ\end{align*}
whose effect on morphisms is given by relabelling points, forgetting points, or bringing them in by stabilisation. The answer is an extension of the Drinfel'd--Kohno Lie algebra $\ft_0(\ul{k})$ by the $G_g$-representation $H$.

\begin{definition}\label{def:drinfeld-kohno-extended}
	Let $S$ be a finite set and $n \geq 0$, then the \emph{extended Drinfel'd--Kohno Lie algebra $\ft_g(S)$} is the graded Lie algebra given by the quotient of the free graded Lie algebra generated by
	\begin{enumerate}[(G1)]
		\item elements $t_{ij}$ in degree $2n-2$ for each pair $(i,j)$ of distinct elements of $S$,
		\item a copy $H^{(r)}$ of the $2g$-dimensional vector space $H$ in degree $n-1$ for each $r \in S$,
	\end{enumerate} by the ideal generated by the relations
	\begin{enumerate}[(R1)]
		\item \label{enum:d-k-e-sym} $t_{ij} = t_{ji}$ for $i,j$ distinct,
		\item \label{enum:d-k-e-braid-disj} $[t_{ij},t_{rs}] = 0$ for $i,j,r,s$ all distinct,
		\item \label{enum:d-k-e-braid-rel} $[t_{ij},t_{ik}+t_{jk}] = 0$ for $i,j,k$ all distinct,
		\item \label{enum:d-k-e-ext-disj} for $a^{(r)} \in H^{(r)}$ and $i,j,r$ all distinct, $[t_{ij},a^{(r)}] = 0$,
		\item \label{enum:d-k-e-ext-braid} for $a^{(i)} \in H^{(i)}$ and $a^{(j)}$ the corresponding vector in $H^{(j)}$, $[t_{ij},a^{(i)}+a^{(j)}] = 0$,
		\item \label{enum:d-k-e-spheres} for $a^{(i)} \in H^{(i)}$ and $b \in H^{(j)}$ with $i,j$ distinct, $[a^{(i)},b^{(j)}] = \lambda(a,b)t_{ij}$, where we recall that $\lambda$ is the (rationalised) intersection form on $H = H_n(W_{g,1};\bQ)$.
	\end{enumerate}
\end{definition}

\begin{remark}This presentation is reminiscent of the one found by Bezrukavnikov for the Lie algebra of the Mal'cev completion of the fundamental group of configuration spaces of surfaces \cite{Bezrukavnikov}. The difference is due to the fact that he deals with closed surfaces, while we deal with the higher-dimensional analogues of punctured surfaces. It is also reminiscent of \cite[Theorem 2.10]{cohengitler}, which does not apply since even though $W_{g,1}$ is a ``$p$-manifold'' it is not ``braidable'', which is responsible for the difference between their relation 3 and our relation \ref{enum:d-k-e-spheres}.\end{remark}

The construction of $\ft_g(S)$ is natural in the bilinear form $(H,\lambda)$, so defines a Lie algebra object in graded $G'_g$-representations: $g \in G'_g$ acts trivially on $t_{ij}$ and acts on $H^{(r)}$ by $a^{(r)} \mapsto (ga)^{(r)}$. We consider $\ft_g(S)$ as a Lie algebra object in graded $\Gamma_g$-representations through the homomorphism $\Gamma_g \to G'_g$. Then these assemble to a functor
\begin{align*}\ft_g(-) \colon (\cat{FI}_\ast)^\mr{op} &\lra \cat{Alg}_\cat{Lie}(\cat{GrRep}(\Gamma_g)) \\
S_+ & \longmapsto \ft_g(S),\end{align*}
whose action on a morphism $f \colon S_+ \to T_+$ is determined by
\[\ft_g(f)(t_{ij}) = \begin{cases} t_{f^{-1}(i)f^{-1}(j)} & \text{if $i,j \in f(S)$,} \\
0 & \text{otherwise}.\end{cases} \quad
\ft_g(f)(a^{(i)}) = \begin{cases} a^{(f^{-1}(i))} & \text{if $i \in f(S)$,} \\
0 & \text{otherwise}.\end{cases}\]
In particular, using the opposite of the automorphisms of $S_+$, $\ft_g(S)$ has a left $\fS_S$-action which commutes with the action of $\Gamma_g$; an element $\sigma \in \fS_S$ acts by $t_{ij} \mapsto t_{\sigma(i)\sigma(j)}$ and $a^{(i)} \mapsto a^{(\sigma(i))}$.

\begin{proposition}\label{prop:extended-kd} For $2n \geq 4$ and $g \geq 0$, there is an isomorphism
	\[\ft_g(-) \overset{\cong}\lra \pi_{*+1}\left(\Emb(-,W_{g,1})\right) \oq\]
	of functors $\cat{FI}^\mr{op}_\ast \to \cat{Alg}_\cat{Lie}(\cat{GrRep}(\Gamma_g))$.
\end{proposition}

The proof of this proposition will occupy the rather long \cref{sec:proof-extended-kd}. In it, we will see that isomorphism of \cref{prop:extended-kd} is uniquely determined as follows. Firstly, there is an orientation-preserving embedding $\bR^{2n} \to W_{g,1}$, unique up to isotopy, inducing a map $\pi_{*+1}(\Emb(\ul{2},\bR^{2n})) \to \pi_{*+1}(\Emb(\ul{2},W_{g,1}))$, and the generator $t_{12}$ must go to the image of the element $t_{12} \in  \pi_n(\Emb(\ul{2},\bR^{2n}))$ under this map. Secondly, there are identifications $\Emb(\ul{1},W_{g,1}) \cong W_{g,1}$ and $\pi_n(W_{g,1}) \cong H$, and the generator $a^{(1)}$ must go to $a^{(1)}$.

\subsubsection{Total homotopy fibres} \label{sec:tohofib} Let us recall the notion of a total homotopy fibre of a cubical diagram of based spaces  \cite{munsonvolic}. A \emph{$k$-cube} is a functor $\cX$ from the poset $\cat{P}(S)$ of subsets of a finite set $S$ of cardinality $k$ to pointed spaces, e.g.~for $S = \ul{2}$ it is a square
\[\begin{tikzcd} 
	\cX_{\varnothing} \rar \dar & \cX_1 \dar \\[-3pt]
	\cX_2 \rar & \cX_{12}.
\end{tikzcd}\]
We may restrict this to the \emph{punctured $k$-cube} $\cat{P}_0(S)$ given by removing the object $\varnothing$. There is then a natural based map $\cX_\varnothing \to \holim_{I \in \cat{P}_0(S)} \cX_I$, and its homotopy fibre is the \emph{total homotopy fibre}
\[\underset{S \subset I}\tohofib\, \cX \coloneqq \hofib \left(\cX_{\varnothing} \to \underset{I \in \cat{P}_0(S)}\holim \cX_I\right).\]

We will need a general result on the homotopy groups of total homotopy fibres of certain split cubical diagrams. 

\begin{definition}\label{defn:SplitCube}
	A $k$-cube of spaces $\cX$ is \emph{split up to homotopy} if for each pair of disjoint subsets $I, J \subset \ul{k}$ with $J$ nonempty there are given maps $s_{I, J} \colon \cX_{I \sqcup J} \to \cX_I$ so that
	\begin{enumerate}[(i)]
		\item \label{enum:SplitCube-i} when $I'$ and $J$ are disjoint subsets and $I \subset I'$ there are homotopies
		\[s_{I', J}\circ \cX(I \sqcup J \to I' \sqcup J) \simeq \cX(I \to I')\circ s_{I, J} \colon \cX_{I \sqcup J} \lra \cX_{I'},\]
		\item \label{enum:SplitCube-ii} the composition $\cX(I \to I \sqcup J) \circ s_{I,J}$ is homotopic to the identity.
	\end{enumerate}
\end{definition}

Property \ref{enum:SplitCube-i} says that in the homotopy category the maps $s$ combine to form a map of $(k-|J|)$-cubes from $\ul{k} \setminus J \supset I \mapsto \cX_{I \sqcup J}$ to $\ul{k} \setminus J \supset I \mapsto \cX_{I}$; property \ref{enum:SplitCube-ii} says that this map of cubes splits the map in the opposite direction induced by $\cX$. (Note we use cubes in the homotopy category here, not objects of a homotopy category of cubical diagrams.)

\begin{lemma}\label{lem:tohofib-cubes-w-sections} 
Suppose that a $k$-cube $\ul{k} \supset I \mapsto \cX_I$ of 1-connected spaces splits up to homotopy. Then the natural homomorphism
	\[\pi_{*}(\underset{I \subset \ul{k}}{\tohofib}\,\cX_I) \lra
	\bigcap_{j \in \ul{k}} \ker\Big[\pi_{*}(\cX_\varnothing) \to \pi_{*}(X_j)\Big]\]
	is an isomorphism.\end{lemma}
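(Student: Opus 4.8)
The plan is to prove this by induction on $k$, using the fact that a total homotopy fibre of a $k$-cube can be computed iteratively as a homotopy fibre of a map of total homotopy fibres of two $(k-1)$-cubes. Write $\ul{k} = \ul{k-1} \sqcup \{k\}$ and split $\cX$ into its restriction $\cX^0$ to subsets not containing $k$ and its restriction $\cX^1$ to subsets containing $k$ (reindexed as a $(k-1)$-cube); then there is a natural map $\Phi \colon \tohofib(\cX^0) \to \tohofib(\cX^1)$ and $\tohofib(\cX) \simeq \hofib(\Phi)$. The splitting data $s_{I,J}$ restrict to splitting data for $\cX^0$ and $\cX^1$, so both are $1$-connected $(k-1)$-cubes that split up to homotopy, and moreover the sections $s_{I,\{k\}}$ assemble (by property (i)) to a map of $(k-1)$-cubes $\cX^1 \to \cX^0$ which by property (ii) is a section of the map inducing $\Phi$. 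Hence $\Phi$ itself admits a section up to homotopy.

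First I would record the base case $k=1$: a $1$-cube is just a map $\cX_\varnothing \to \cX_1$ which by hypothesis admits a section up to homotopy, so it is surjective on all homotopy groups, and the long exact sequence of the fibration gives $\pi_*(\hofib) \cong \ker[\pi_*(\cX_\varnothing) \to \pi_*(\cX_1)]$ for all $* \geq 1$ (using $1$-connectedness to handle low degrees and the fact that a split surjection of groups/pointed sets has the expected kernel). Next, for the inductive step, I would apply the inductive hypothesis to $\cX^0$ and to $\cX^1$, identifying $\pi_*(\tohofib \cX^0)$ with $\bigcap_{j \in \ul{k-1}} \ker[\pi_*(\cX_\varnothing) \to \pi_*(\cX_j)]$ and similarly $\pi_*(\tohofib \cX^1)$ with $\bigcap_{j \in \ul{k-1}} \ker[\pi_*(\cX_{\{k\}}) \to \pi_*(\cX_{\{k,j\}})]$. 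Because $\Phi$ has a homotopy section, it is surjective on homotopy groups and the long exact sequence for $\hofib(\Phi)$ degenerates into short exact sequences, giving $\pi_*(\tohofib \cX) \cong \ker[\pi_*(\tohofib \cX^0) \to \pi_*(\tohofib \cX^1)]$. Finally I would identify this kernel: under the inductive identification the map $\pi_*(\tohofib \cX^0) \to \pi_*(\tohofib \cX^1)$ is induced by $\cX(\varnothing \to \{k\})$, so an element of $\bigcap_{j \in \ul{k-1}} \ker[\pi_*(\cX_\varnothing)\to\pi_*(\cX_j)]$ lies in the kernel of $\Phi_*$ precisely when it also maps to $0$ in $\pi_*(\tohofib \cX^1) \subseteq \pi_*(\cX_{\{k\}})$, i.e. when it additionally lies in $\ker[\pi_*(\cX_\varnothing) \to \pi_*(\cX_{\{k\}})]$; combining gives $\bigcap_{j \in \ul{k}} \ker[\pi_*(\cX_\varnothing) \to \pi_*(\cX_j)]$ as desired. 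Throughout, the $1$-connectedness hypothesis guarantees all the spaces in sight are simple, so there are no basepoint or $\pi_1$-action subtleties and the long exact sequences behave like those of abelian groups in all relevant degrees.

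The main obstacle I anticipate is bookkeeping rather than conceptual: one must check carefully that the splitting data of Definition \ref{defn:SplitCube} really do pass to the two sub-$(k-1)$-cubes $\cX^0$, $\cX^1$ (which is immediate from the definition, restricting the index sets) and, more delicately, that the assembled section of $\Phi$ is compatible with the iterated-fibre description $\tohofib(\cX) \simeq \hofib(\tohofib(\cX^0)\to\tohofib(\cX^1))$ — this requires knowing that the equivalence $\tohofib(\cX)\simeq\hofib(\Phi)$ is natural for maps of cubes, which is standard (see \cite{munsonvolic}). A second, minor point is that the statement asserts an isomorphism onto the \emph{intersection of kernels} as subgroups of $\pi_*(\cX_\varnothing)$, so one should check the isomorphism is compatible with the canonical maps $\pi_*(\tohofib\,\cX) \to \pi_*(\cX_\varnothing)$ at each stage of the induction; this follows because the iterated-fibre comparison maps commute with the structure maps of the cube. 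Since all cubes here are made of $1$-connected spaces, no hypotheses beyond what is stated are needed to run the argument.
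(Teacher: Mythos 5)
Your overall strategy — induction on $k$ via the iterated-fibre description $\tohofib(\cX) \simeq \hofib\bigl(\Phi\colon \tohofib(\cX^0)\to\tohofib(\cX^1)\bigr)$ — is sound and leads to the right computation, and in fact closely parallels the abelian-category lemma the paper proves inside its own argument. However, there is a genuine gap at the step ``the sections $s_{I,\{k\}}$ assemble to a map of $(k-1)$-cubes $\cX^1 \to \cX^0$ \ldots\ Hence $\Phi$ itself admits a section up to homotopy.'' Definition~\ref{defn:SplitCube} only supplies the homotopies in property~(i) one at a time, with no coherence between them; as the paper explicitly remarks after the definition, the $s_{I,J}$ form a map of cubes \emph{in the homotopy category}, not a strict (or homotopy-coherent) map of cubes. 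A map of cubes in the homotopy category does not in general induce a map on homotopy limits, so there is no map $\tohofib(\cX^1)\to\tohofib(\cX^0)$ to be had from this data, and your assertion that $\Phi$ has a homotopy section is unjustified. The same issue afflicts your ``main obstacle'' discussion: the naturality of $\tohofib(\cX)\simeq\hofib(\Phi)$ is for actual maps of cubes, which the $s$'s are not.

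The good news is that the only thing you use the section for is to conclude that $\Phi_*$ is surjective on homotopy groups, and that can be proved directly, without ever building a map of total homotopy fibres. Under the inductive identifications $\pi_*(\tohofib\,\cX^0)\cong\bigcap_{j<k}\ker\bigl[\pi_*(\cX_\varnothing)\to\pi_*(\cX_j)\bigr]$ and $\pi_*(\tohofib\,\cX^1)\cong\bigcap_{j<k}\ker\bigl[\pi_*(\cX_{\{k\}})\to\pi_*(\cX_{\{j,k\}})\bigr]$, the map $\Phi_*$ is induced by $\cX(\varnothing\to\{k\})_*$. Given $x$ in the target, set $y=(s_{\varnothing,\{k\}})_*(x)$: property~(ii) gives $\cX(\varnothing\to\{k\})_*(y)=x$, and for each $j<k$ property~(i) applied with $I=\varnothing$, $I'=\{j\}$, $J=\{k\}$ gives $\cX(\varnothing\to\{j\})_*(y)=(s_{\{j\},\{k\}})_*\,\cX(\{k\}\to\{j,k\})_*(x)=0$, so $y$ lies in the source and $\Phi_*(y)=x$. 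The rest of your argument (kernel identification, long exact sequence, the $1$-connectedness bookkeeping) then goes through. Note that this is exactly the move the paper's own proof makes: it immediately passes to the cube of abelian groups $I\mapsto\pi_q(\cX_I)$ via the Munson--Voli\'c spectral sequence, where the splitting \emph{is} strict (homotopies become equalities on $\pi_*$), and then runs the same induction in the abelian category; your space-level version is fine once the surjectivity step is phrased at the level of homotopy groups rather than via an unavailable section of $\Phi$.
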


\begin{proof}
	Consider the spectral sequence of \cite[Theorem 9.6.12]{munsonvolic}, which has the form
	\[E^1_{p,q} = \prod_{\substack{J \subset \ul{k}\\|J|=p}} \pi_q(\cX_J) \lra \pi_{q-p}(\underset{I \subset \ul{k}}{\tohofib}\,\cX_I)\]
	with the differential $d^1 \colon \prod_{\substack{J \subset \ul{k}, |J|=p}} \pi_q(\cX_J) \to \prod_{\substack{K \subset \ul{k}, |K|=p+1}} \pi_q(\cX_K)$	having $K$th component $\sum_{i \in K} (-1)^{i|K|} \cX(K\setminus \{i\} \hookrightarrow K)_*$. Thus, the complex $(E^1_{*,q}, d^1)$ is the totalisation of the $k$-cube of abelian groups $I \mapsto \pi_q(\cX_I)$. The hypothesis that the $k$-cube consists of $1$-connected spaces permits us to neglect basepoints and to ignore that in general, extended spectral sequences have entries which are not abelian groups.
	
	The cube of abelian groups $I \mapsto \pi_q(\cX_I)$ is split (in the evident sense: replace homotopies in \cref{defn:SplitCube} by identities), and it is a general property about totalisations of split cubes in an abelian category that
	\[\ker\Big[d^1 \colon E^1_{0,q} \to E^1_{1,q}\Big] = \bigcap_{j \in \ul{k}} \ker\Big[\pi_{q}(\cX_\varnothing) \to \pi_q(X_j)\Big]\]
	and that the higher homology groups of $(E^1_{*,q}, d^1)$ vanish. This seems to be folklore; we could not find a published proof, so provide the following one.
	
	Let $I \mapsto A_I$ be a split $k$-cube in an abelian category, with $C_p(A) = \prod_{J \subset \ul{k}, |J|=p} A_I$ and differential $d$ given by the formula above. That $H_0(A) =  \bigcap_{j \in \ul{k}} \ker[A_\varnothing \to A_{\{j\}}]$ is immediate, and it remains to show that the higher homology vanishes. If $k=1$ then this chain complex is $d \colon A_\emptyset \to A_{\{1\}}$ and this differential is split surjective, proving the claim in this case. Suppose then that the claim holds for all split cubes of dimension $<k$. Let $S_p = \prod_{k \in J \subset \ul{k}, |J|=p} A_I$, which assemble to a subcomplex $S_*$ of $C_*(A)$. Then $S_* = C_{*-1}(A')$ with $A'_I = A_{I \cup \{k\}}$ a split $(k-1)$-cube, and $C_*(A)/S_* = C_*(A'')$ with $A''_I = A_{I}$ another split $(k-1)$-cube. This short exact sequence of chain complexes gives
	\[\cdots \lra H_{p-1}(A'') \lra H_{p-1}(A') \lra H_p(A) \lra H_p(A'') \lra H_{p}(A') \lra \cdots.\]
	By inductive assumption $C_*(A')$ and $C_*(A'')$ only have trivial higher homology, so $H_p(A)=0$ for $p>1$, and there is an exact sequence
	\[0 \lra H_0(A) \lra H_0(A'') \lra H_0(A') \lra H_1(A) \lra 0.\]
	The argument is completed by observing that the middle map is an epimorphism, as it is induced by the map $A'' \to A'$ of $(k-1)$-cubes, which is split surjective.
\end{proof}

Recall that $\cat{P}(\ul{k})$ denotes the poset of subsets of $\ul{k}$. Let $\cat{S}(\ul{k})$ denote the category whose objects are subsets $I \subset \ul{k}$, and whose morphisms $\mr{mor}_{\cat{S}(\ul{k})}(I,J)$ are bijections $\phi \colon \ul{k} \to \ul{k}$ sending $I$ into $J$. There is an inclusion $\cat{P}(\ul{k}) \to \cat{S}(\ul{k})$, given by those morphisms with $\phi = \mr{id}_{\ul{k}}$. In fact, $\cat{S}(\ul{k})$ is equivalent to the Grothendieck construction $\cat{P}(\ul{k}) \wr \fS_k$ for the evident $\fS_k$-action on the poset $\cat{P}(\ul{k})$. Thus any functor $\cX \colon \cat{S}(\ul{k}) \to \cat{Top}$ determines, by restriction, a $k$-cube. 

\begin{lemma}\label{lem:incl-excl} 
	A functor $\cX \colon \cat{S}(\ul{k}) \to \cat{Top}$ determines a $k$-cube with $\fS_k$-action on its total homotopy fibre $\tohofib_{I \subset \ul{k}}\,\cX_I$. If the values of $\cX$ are 1-connected with degreewise finite-dimensional rational homotopy groups, and the $k$-cube is split up to homotopy, then we have an equation in the Grothendieck group $gR(\fS_k)$ of graded $\fS_k$-representations
	\[\pi_{*}(\underset{I \subset \ul{k}}{\tohofib}\,\cX_I)\oq = \sum_{j=0}^k (-1)^j \mr{Ind}^{\fS_k}_{\fS_{k-j} \times \fS_j} \pi_*(\cX_{\ul{k-j}})\oq \boxtimes (1^j).\]
\end{lemma}

\begin{proof}
	The functor $\cX \colon \cat{S}(\ul{k}) \to \cat{Top}$ provides a $\fS_k$-action on the space $\cX_\varnothing$, as well as on the homotopy limit $\holim_{I \in \cat{P}_0(\ul{k})} \cX_I$ of the punctured cube. To see the latter action, we observe that $\cat{S}_0(\ul{k})$ is the Grothendieck construction $\cat{P}_0(\ul{k}) \wr \fS_k$, so the (homotopy) limit of a $\cat{S}_0(\ul{k})$-diagram over $\cat{P}_0(\ul{k})$ has a residual $\fS_k$-action. There is therefore an induced $\fS_k$-action on the total homotopy fibre.
	
	To obtain the equation, by the proof of \cref{lem:tohofib-cubes-w-sections}
	the group $\pi_{q}({\tohofib}_{I \subset \ul{k}}\,\cX_I)$ is identified with the Euler characteristic of the complex $(E^1_{*,q}, d^1)$, and the homology of this complex is supported in degree $*=0$, so it is enough to endow the terms $E^1_{p,q}$ with $\fS_k$-actions such that
	\begin{enumerate}[(i)]
		\item $E^1_{0,q} = \pi_q(X_\varnothing)$ with its induced $\fS_k$-action,
		
		\item $d^1\colon E^1_{p,q} \to E^1_{p+1,q}$ is $\fS_k$-equivariant,
	\end{enumerate}
	and then take its Euler characteristic as $\bZ$-modules with $\fS_k$-action. The following identification has these properties:
	\[\mr{Ind}^{\fS_k}_{\fS_{p} \times \fS_{k-p}} \pi_q(\cX_{\ul{p}}) \boxtimes (1^{k-p}) = \prod_{\substack{J \subset \ul{k}\\ |J|=p}} \pi_q(\cX_J) = E^1_{p,q}.\qedhere\]
\end{proof}

\subsubsection{The rational homotopy of the total homotopy fibres}\label{sec:total-dk} We next express the rational homotopy groups of the total homotopy fibres of cubical diagrams of configuration spaces of $W_{g,1}$ in terms of $\ft_g(\ul{k})$. 

For each $j \in \ul{k}$, the inclusion $\ul{k}_* \setminus \{j\} \hookrightarrow \ul{k}_*$ induces a map
\[\ft_g(\ul{k}) \lra \ft_g(\ul{k} \setminus \{j\}),\]
which sets to zero all generators involving the index $j$.

\begin{definition}\label{def:total-extended-dk}  Let $S$ be a finite set and $n \geq 0$, then the \emph{total extended Drinfel'd--Kohno Lie algebra} $\ff_g(S)$ is given by
	\[\ff_g(S) \coloneqq \bigcap_{j \in S} \ker\Big[\ft_g(S) \to \ft_g(S \setminus \{j\})\Big].\]
\end{definition}

\begin{lemma}\label{lem:total-extended-dk-homotopy} There is a commutative diagram of $\Gamma_g \times \fS_k$-representations
	\[\begin{tikzcd}\ff_g(\ul{k}) \dar \rar{\cong} & \pi_{*+1}(\underset{I \subset \ul{k}}{\tohofib}\,\Emb(\ul{k} \setminus I,W_{g,1}))\oq \dar \\[-5pt]
	\ft_g(\ul{k}) \rar{\cong} & \pi_{*+1}(\Emb(\ul{k},W_{g,1}))\oq \end{tikzcd}\]
	where the horizontal maps are isomorphisms and the vertical maps are injective.\end{lemma}

	\begin{proof}
	The $k$-cube $I \mapsto \Emb(\ul{k} \setminus I,W_{g,1})$ is split up to homotopy in the sense of Definition \ref{defn:SplitCube}, by adding configuration points near the boundary of $W_{g,1}$. It also consists of simply-connected spaces, so by \cref{lem:tohofib-cubes-w-sections} the map from $\pi_{*+1}(\smash{\underset{I \subset \ul{k}}{\tohofib}}\,\Emb(\ul{k} \setminus I,W_{g,1}))\oq$ to
	\[\bigcap_{j \in \ul{k}} \ker\Big[\pi_{*+1}(\Emb(\ul{k},W_{g,1}))\oq \to \pi_{*+1}(\Emb(\ul{k} \setminus \{j\},W_{g,1})\oq)\Big]\]
	is an isomorphism. Combining this with \cref{prop:extended-kd} gives the required diagram.
	\end{proof}

To explicitly compute the character of $\ff_g(\ul{k})$ in \cref{sec:explicit-computations}, we will use that \cref{lem:incl-excl} is natural in the cubical diagram to obtain:

\begin{lemma}\label{lem:total-d-t-alternating} In the Grothendieck group $gR(G'_g \times \fS_k)$ of graded algebraic $G'_g \times\fS_k$-representations, we have an equation
	\[\ff_g(\ul{k}) = \sum_{j=0}^k (-1)^j \, \mr{Ind}^{\fS_k}_{\fS_{k-j} \times \fS_j}\,\ft_g(\ul{k-j}) \boxtimes (1^j).\]
\end{lemma}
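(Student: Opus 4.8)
The statement is an incarnation of inclusion--exclusion for total homotopy fibres, and the natural route is to invoke Lemma~\ref{lem:incl-excl} with the cube $\cX\colon \cat{S}(\ul{k}) \to \cat{Top}$ given by $I \mapsto \mr{Emb}(\ul{k}\setminus I, W_{g,1})$, where a bijection $\phi\colon \ul{k}\to\ul{k}$ sending $I$ into $J$ acts by relabelling configuration points. First I would check that this genuinely defines a functor on $\cat{S}(\ul{k})$ (not merely the underlying $k$-cube): relabelling points is evidently functorial, and the forgetful maps $\mr{Emb}(\ul{k}\setminus I,W_{g,1}) \to \mr{Emb}(\ul{k}\setminus J,W_{g,1})$ for $I\subset J$ are compatible with it. Next I would verify the two hypotheses of Lemma~\ref{lem:incl-excl}: the spaces $\mr{Emb}(S,W_{g,1})$ are $1$-connected since $2n \geq 4$, and their rational homotopy groups are degreewise finite-dimensional by Proposition~\ref{prop:extended-kd} (each $\ft_g(\ul{m})$ is so, being a quotient of a free graded Lie algebra on finitely many generators concentrated in a single degree in each internal degree); and the cube is split up to homotopy in the sense of Definition~\ref{defn:SplitCube}, with the sections $s_{I,J}$ given by bringing in extra configuration points near $\half\partial W_{g,1}$ (or more precisely near a fixed collar), exactly as used in the proof of the preceding lemma identifying $\ff_g(\ul{k})$ with $\pi_{*+1}(\tohofib)$.

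Granting these, Lemma~\ref{lem:incl-excl} yields
\[
\pi_{*+1}\bigl(\underset{I\subset\ul{k}}{\tohofib}\,\mr{Emb}(\ul{k}\setminus I,W_{g,1})\bigr)\oq
= \sum_{j=0}^k (-1)^j\,\mr{Ind}^{\fS_k}_{\fS_{k-j}\times\fS_j}\,\pi_{*+1}\bigl(\mr{Emb}(\ul{k-j},W_{g,1})\bigr)\oq\boxtimes(1^j)
\]
in $gR(\fS_k)$, where I use that $\mr{Emb}(\ul{k}\setminus\ul{k-j},W_{g,1}) = \mr{Emb}(\ul{j}^{\,c},W_{g,1})$ is homeomorphic to $\mr{Emb}(\ul{k-j},W_{g,1})$ via the standard inclusion, and shift degrees by $1$ throughout. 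Then I would upgrade this identity from $gR(\fS_k)$ to $gR(G'_g\times\fS_k)$: the entire cubical diagram carries a $\Gamma_g$-action commuting with the $\fS_k$-action, this action factors through $G'_g$ and is algebraic on each rational homotopy group of each configuration space by Proposition~\ref{prop:extended-kd} and the closing remark of Section~\ref{sec:total-dk}, and the identifications in Lemma~\ref{lem:incl-excl} (induction, the Euler characteristic of the Bousfield--Kan $E^1$-complex, the identification $E^1_{0,q}=\pi_q(\cX_\varnothing)$) are all manifestly $\Gamma_g$-equivariant. Hence the displayed equation holds in $gR(G'_g\times\fS_k)$. Finally, substituting $\ff_g(\ul{k}) \cong \pi_{*+1}(\tohofib\,\mr{Emb}(\ul{k}\setminus I,W_{g,1}))\oq$ and $\ft_g(\ul{k-j}) \cong \pi_{*+1}(\mr{Emb}(\ul{k-j},W_{g,1}))\oq$ from the previous lemma and Proposition~\ref{prop:extended-kd} gives the claimed formula.

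The only genuinely non-routine point is confirming that the cube $I\mapsto \mr{Emb}(\ul{k}\setminus I,W_{g,1})$ is split up to homotopy \emph{compatibly with the $\cat{S}(\ul{k})$-structure}, i.e.\ that the sections $s_{I,J}$ can be chosen to intertwine appropriately with the relabelling automorphisms so that the $\fS_k$-action on the total homotopy fibre is the one used in Lemma~\ref{lem:incl-excl}; but this is exactly the splitting already exploited in the proof of the preceding lemma (adding points near the boundary), and the $\fS_k$-equivariance is visible because the boundary region is fixed setwise by all the relabellings in question. Everything else is bookkeeping: tracking the grading shift by $1$ (from $\pi_{*+1}$), the homeomorphism $\mr{Emb}(\ul{k}\setminus I,W_{g,1})\cong\mr{Emb}(\ul{k-|I|},W_{g,1})$, and the passage from $\fS_k$- to $G'_g\times\fS_k$-equivariance via algebraicity.
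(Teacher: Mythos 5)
Your approach is exactly the paper's: the paper does not write out a proof of this lemma, merely stating that it follows by applying the naturality of Lemma~\ref{lem:incl-excl} to the cubical diagram, and you supply the implicit details -- the $\cat{S}(\ul{k})$-functor $I \mapsto \mr{Emb}(\ul{k}\setminus I, W_{g,1})$, the hypotheses of Lemma~\ref{lem:incl-excl} (1-connectedness, degreewise finiteness of rational homotopy, splitting up to homotopy by adding points near the boundary exactly as in the proof of the preceding lemma), and the upgrade from $gR(\fS_k)$ to $gR(G'_g\times\fS_k)$ via $\Gamma_g$-equivariance of the diagram and algebraicity of the representations.

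One bookkeeping slip should be corrected: $\ul{k}\setminus\ul{k-j}$ has $j$ elements, so $\mr{Emb}(\ul{k}\setminus\ul{k-j},W_{g,1})$ is a $j$-point configuration space and is \emph{not} homeomorphic to $\mr{Emb}(\ul{k-j},W_{g,1})$, contrary to what you wrote. The term in the inclusion--exclusion sum that contributes $\ft_g(\ul{k-j})$ is $\cX_{\ul{j}} = \mr{Emb}(\ul{k}\setminus\ul{j},W_{g,1}) \cong \mr{Emb}(\ul{k-j},W_{g,1})$. Comparing with the proof of Lemma~\ref{lem:incl-excl}, which identifies $E^1_{p,q}$ in terms of $\cX_{\ul{p}}$ with $p$ the cardinality of the subset, this is the intended reading of that lemma's displayed formula; your slip propagates the same notational imprecision. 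It does not affect the structure or conclusion of the argument.
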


To get a qualitative understanding of the graded Lie algebras $\ff_g(\ul{k})$, we describe a method to compute their restriction to a graded $G'_g \times \fS_{k-1}$-representation. To do so, we introduce for a finite set $S$, the free graded Lie algebra
\[\mathfrak{l}_g(S) \coloneqq \mr{Lie}(\bQ\{t_i \mid i \in S\} \oplus H)\]
generated by elements $t_i$ in degree $2n-2$ for all $i \in S$ and a copy of the $2g$-dimensional vector space $H$ in degree $n-1$. This becomes a graded Lie algebra in $G'_g \times \fS_S$-representations if we let $\sigma \in \fS_S$ act trivially on $H$ and by $t_i \mapsto t_{\sigma(i)}$, and let $g \in G'_g$ act trivially on the $t_i$ and by $a \mapsto ga$. For each $j \in S$, there is a map
\[\mathfrak{l}_g(S) \lra \mathfrak{l}_g(S \setminus \{j\}),\]
uniquely determined by sending the generator $t_j$ to $0$ and by the identity on the remaining generators. 

\begin{lemma}\label{lem:tohofib-restr-free-lie-alg} There is an isomorphism of algebraic $G'_g \times \fS_{k-1}$-representations
	\[\mr{Res}^{\fS_k}_{\fS_{k-1}}\,\ff_g(\ul{k}) \cong \bigcap_{j \in \ul{k-1}} \ker\Big(\mathfrak{l}_g(\ul{k-1}) \to \mathfrak{l}_g(\ul{k-1} \setminus \{j\})\Big).\]
\end{lemma}

\begin{proof}Consider cubical diagram $\ul{k} \supset I \mapsto \Emb(\ul{k} \setminus I,W_{g,1})$ as a map of $(k-1)$-cubes from $\cX \colon \ul{k-1} \supset I \mapsto \Emb(\ul{k} \setminus I,W_{g,1})$ to $\cY \colon \ul{k-1} \supset  I \mapsto \Emb(\ul{k-1} \setminus I,W_{g,1})$. The objectwise homotopy fibres are given by 
	\[\ul{k-1} \supset I \longmapsto W_{g,1} \vee \bigvee_{\ul{k-1} \setminus I} S^{2n-1},\]
	so by using \cite[Corollary 5.4.11, Proposition 5.5.4]{munsonvolic} we see there is a weak equivalence
	\[\underset{I \subset \ul{k}}\tohofib\, \Emb(\ul{k} \setminus I,W_{g,1}) \simeq  \underset{I \subset \ul{k-1}}\tohofib\, \left(W_{g,1} \vee \bigvee_{\ul{k-1} \setminus I} S^{2n-1}\right)\]
	equivariant for the $\Gamma_g \times \fS_{k-1}$-action. The right-hand side consists of 1-connected spaces, and is split up to homotopy by maps which add wedge summands. Using the Hilton--Milnor theorem \cite{Hilton}, we can identify the rational homotopy groups of $W_{g,1} \vee \bigvee_S S^{2n-1}$ with $\mr{Lie}(\bQ\{t_i \mid i \in \ul{k-1}\} \oplus H)$ as a $\Gamma_g \times \fS_{k-1}$-representation. The result now follows from \cref{lem:tohofib-cubes-w-sections} and the fact that the action factors over $G'_g$.
\end{proof}

\begin{proposition}\label{prop:tohofib-degrees} For $n \geq 3$, $\pi_{*+1}(\tohofib_{I \subset \ul{k}}\,\Emb(\ul{k} \setminus I,W_{g,1}))\oq$ has the following properties:
	\begin{enumerate}[(i)]
		\item \label{enum:tohofib-degrees-i} It is concentrated in degrees $* = r(n-1)$ for $r \geq 2(k-1)$.
		\item \label{enum:tohofib-degrees-ii} The $\Gamma_g$-action factors over $G'_g$, and the latter action is algebraic.
		\item \label{enum:tohofib-degrees-iii} In degree $*=r(n-1)$ it is odd (resp.~even) when $r$ is odd (resp.~even).
	\end{enumerate}
\end{proposition}

\begin{proof}
For \ref{enum:tohofib-degrees-i}, first observe that $\ff_k(\ul{k}) \subset \mathfrak{l}_g(\ul{k-1})$, by \cref{lem:tohofib-restr-free-lie-alg}, and that $\mathfrak{l}_g(\ul{k-1}) = \mr{Lie}(\bQ\{t_i \mid i \in \ul{k}\} \oplus H)$ with generators $t_i$ in degree $2n-2$ and $H$ in degree $n-1$, so this graded vector space is supported in degrees $* = r(n-1)$. To see it also vanishes when $r<2(k-1)$, observe that a nonzero element in 
	\[\bigcap_{j \in \ul{k-1}} \ker(\mathfrak{l}_g(\ul{k-1}) \to \mathfrak{l}_g(\ul{k-1} \setminus \{j\}))\]
must have all generators $t_1,\ldots,t_{k-1}$ appear in it at least once; its degree must thus be at least $2(k-1)(n-1)$. For \ref{enum:tohofib-degrees-ii} and \ref{enum:tohofib-degrees-iii} we use that the description of a free graded Lie algebra in terms of Schur functors, as in \cref{sec:schur-functors}, exhibits $\mathfrak{l}_g(\ul{k-1})$ in degree $r(n-1)$ as a sum of summands of $H^{\otimes r'}$ with $r \equiv r' \pmod 2$.
\end{proof}

We will give a general procedure to compute $\ff_g(\ul{k})$ in \cref{sec:explicit-computations} using Koszul duality, and tabulate some results in \cref{sec:computational-results}, but here we do the computations by hand. We take $g$ to be sufficiently large ($g \geq 5$ will suffice), and fix a basis $a_1,\ldots,a_{2g}$ of $H$.

\begin{computation}\label{comp:total-d-k-e-2} We show that
	\begin{align*}\ff_g(\ul{2})_{r(n-1)} &=  \begin{cases} (2) & \text{if $r=2$,} \\
	H \otimes (1^2) & \text{if $r=3$,} \\
	H^{\otimes \ul{2}} \otimes (1^2)^{\otimes n-1} & \text{if $r=4$.}\end{cases}\end{align*}

	First, as a consequence of \cref{lem:tohofib-restr-free-lie-alg} in each Lie word in $\ff_g(\ul{2})_{2(n-1)}$ must have at least one entry equal to $t_{12}$. As this generator lies in degree $2(n-1)$, it is one-dimensional spanned by $t_{12}$. By relation \ref{enum:d-k-e-sym}, the non-trivial element of $\fS_2$ acts on it trivially. Thus this $G'_g \times \fS_2$-representation is $(2)$, with $G'_g$-acting trivially.
	
	Next, by \cref{lem:tohofib-restr-free-lie-alg}, $\ff_g(\ul{2})_{2(n-1)}$ is $2g$-dimensional and is spanned by Lie words with one entry $t_{12}$ and one entry $\smash{a_j^{(i)}}$ for $i \in \ul{2}$. Using relation \ref{enum:d-k-e-ext-braid}, it has a basis of those Lie words with $i=1$. The non-trivial element of $\fS_2$ acts by 
	\[[t_{12},a_j^{(1)}] \mapsto [t_{12},a_j^{(2)}] = -[t_{12},a_j^{(1)}]\]
	using \ref{enum:d-k-e-ext-braid}, so this is the $G'_g \times \fS_2$-representation $H \otimes (1^2)$.
	
	Finally, by \cref{lem:tohofib-restr-free-lie-alg}, $\ff_g(\ul{2})_{4(n-1)}$ is $2g^2$-dimensional and spanned by Lie words with one entry $t_{12}$ and two entries $\smash{x_i^{(r)}}$ with $i,r \in \ul{2}$, Using the Jacobi relation and anti-symmetry, it is spanned by Lie words of the form 
	\[[a_i^{(r)},[t_{12},a_j^{(s)}]].\]
	Here we allow $r=s$, but using relation \ref{enum:d-k-e-ext-braid} we can replace a term $a_j^{(1)}$ in the inner bracket with $\smash{-a_j^{(2)}}$, and vice versa. Using the Jacobi relation, anti-symmetry, and relation \ref{enum:d-k-e-ext-disj}, we may assume the superscript $(2)$ lies in the inner bracket. Thus, it is spanned by the two Lie words $[a_i^{(1)},[t_{12},a_j^{(2)}]]$, which necessarily form a basis. The non-trivial element of $\fS_2$ acts by
	\begin{align*}[a_i^{(1)},[t_{12},a_j^{(2)}]] &\mapsto [a_i^{(2)},[t_{12},a_j^{(1)}]] \\
	&= (-1)^{n}[t_{12},[a_j^{(1)},a_i^{(2)}]]+(-1)^{n}[a_j^{(1)},[a_i^{(2)},t_{12}]] && \text{Jacobi}\\
	&= (-1)^{n}\lambda(a_j,a_i)[t_{12},t_{12}]+(-1)^{n}[a_j^{(1)},[a_i^{(2)},t_{12}]] && \text{\ref{enum:d-k-e-spheres}}\\
	&= (-1)^{n-1}[a_j^{(1)},[t_{12},a_i^{(2)}]] && \text{anti-symmetry.}\end{align*}
	So it is the $G'_g \times \fS_2$-representation $H^{\otimes \ul{2}} \otimes (1^2)^{\otimes n-1}$.
\end{computation}

\begin{computation}\label{comp:total-d-k-e-3}We show that
	\[\ff_g(\ul{3})_{r(n-1)} = \begin{cases}(1^3) & \text{if $r=4$,} \\
	H \otimes (2,1) & \text{if $r=5$.}\end{cases}\]
	
	First, $\ff_g(\ul{3})_{4(n-1)}$ restricts to $\mr{Lie}(2) = (1^2)$. This is the restriction of a unique $\fS_3$-representation, namely $(1^3)$. Next, $\ff(\ul{3})_{5(n-1)}$ restricts to the $G'_g \times \fS_2$-representation $H \otimes ((2)+(1^2))$, which is $2g$-dimensional. To determine it, we observe that it is spanned by elements of the form  $[t_{ij},[t_{jk},a_s^{(l)}]]$. This is zero unless $l \in \{j,k\}$, by relation \ref{enum:d-k-e-ext-disj}. By relation \ref{enum:d-k-e-ext-braid} $[t_{jk},a_s^{(j)}+a_s^{(k)}] = 0$, so it is spanned by terms $[t_{ij},[t_{jk},a_s^{(j)}-a_s^{(k)}]]$. The Jacobi relation gives
\begin{align*}-[a_s^{(j)}-a_s^{(k)},[t_{ij},t_{jk}]] &= [t_{ij},[t_{jk},a_s^{(j)}-a_s^{(k)}]] - [t_{ij},[t_{jk},a_s^{(j)}-a_s^{(k)}]] \\ &=[t_{ij},[t_{jk},a_s^{(j)}-a_s^{(k)}]] - \frac{1}{2}[t_{kj},[t_{ji},a_s^{(j)}-a_s^{(i)}]].\end{align*}
Relation \ref{enum:d-k-e-v-braid-rel} says $[t_{ij},t_{jk}] = -[t_{ij},t_{ik}]$, so $-[a_s^{(j)}-a_s^{(k)},[t_{ij},t_{jk}]]$ is equal to
\begin{align*}[a_s^{(j)}-a_s^{(k)},[t_{ij},t_{ik}]] &=-[t_{ij},[t_{ik},a_s^{(j)}-a_s^{(k)}]] + [t_{ki},[t_{ij},a_s^{(j)}-a_s^{(k)}]] \\ &=-\frac{1}{2}[t_{ji},[t_{ik},a_s^{(i)}-a_s^{(k)}]] - \frac{1}{2}[t_{ki},[t_{ij},a_s^{(i)}-a_s^{(j)}]].\end{align*}

For an ordered triple $(i,j,k) \in \ul{3}^3$ of distinct elements, write 
\[z_{i,j,k} \coloneqq [t_{ij},[t_{jk},a_s^{(j)}-a_s^{(k)}]].\]
Then $\ff_g(\ul{3})_{5(n-1)}$ is the tensor product of $H$ with the quotient of the permutation representation on ordered triples $(i,j,k) \in \ul{3}^3$ of distinct elements. From the above computations we obtain the relation
\[z_{i,j,k} +\tfrac{1}{2}z_{j,i,k}+\tfrac{1}{2}z_{k,i,j}-\tfrac{1}{2}z_{k,j,i}=0.\]
The quotient of the permutation representation by this relation is $(2,1)$, and hence there are no further relations. We conclude that $\ff_g(\ul{3})_{5(n-1)} = H \otimes (2,1)$.
\end{computation}

\begin{computation}\label{comp:total-d-k-e-4}
That $\ff_g(\ul{4})_{6(n-1)} = (2^2)$ follows as it restricts to the $\fS_3$-representation $\mr{Lie}(3) = (2,1)$, which is the restriction of a unique $\fS_4$-representation, namely $(2^2)$.
\end{computation}

\subsection{The proof of \cref{prop:outcome-layer-input}} \label{sec:proof-higher-layers}

We start with the qualitative results of \cref{prop:outcome-layer-input} \ref{enum:outcome-layer-input-i}--\ref{enum:outcome-layer-input-iii}, as well as an additional observation concerning the Federer spectral sequence in its proof, which we will use in later computations.

\begin{proposition}\label{prop:higher-layers-qualitative} For $k \geq 2$ and $* \geq 2$, we have that 
	\[\pi_{*}(BL_k \Emb_{\half \partial}(W_{g,1})^\times)\oq = 0 \qquad \text{unless $* = r(n-1)-k+2$ for $r \geq k-2$.}\]
	Moreover, $\pi_*(BL_k \Emb_{\half \partial}(W_{g,1}))\oq$ has a finite filtration with associated graded given by the terms $({}^F\!E^2_{p,q})\oq$ with $*-1=q-p$. 
	
	The action of $\check{\Lambda}^{\fr,\ell}_g$ on these groups factors over $G^{\fr, [[\ell]]}_g$ and the latter action is algebraic. It can only contain nonzero invariants when $r$ is even. 
\end{proposition}

\begin{proof}We use the Federer spectral sequence ${}^F\!E^2_{p,q} \Rightarrow \pi_{q-p}(L_k \Emb_{\half \partial}(W_{g,1}))$, with rationalised $E^2$-page as in \eqref{eqn:e2-higher-layers}:
\[
({}^F\!E^2_{p,q})\oq = \left[ H^p(W_{g,1}^k,\Delta_{\half \partial};\bQ) \otimes \pi_q(\tohofib_{I \subset \ul{k}} \Emb(\ul{k} \setminus I,W_{g,1}))\right]^{\fS_k}.
\]
	
We first prove the vanishing statement.	On one hand, \cref{prop:tohofib-degrees} \ref{enum:tohofib-degrees-i} provides a vanishing result for $\pi_q(\tohofib_{I \subset \ul{k}} \Emb(\ul{k} \setminus I,W_{g,1}))\oq$: it is zero unless $q-1=r'(n-1)$ for $r' \leq 2(k-1)$. On the other hand \cref{prop:cohomology-products-diagonals-qualitative} \ref{enum:cohomology-products-diagonals-qualitative-i} provides a vanishing result for $H^p(W_{g,1}^k,\Delta_{\half \partial};\bQ)$: it is zero unless $p = k + t(n-1)$ for $1 \leq t \leq k$. Hence the total degrees $q-p$ in which $({}^F\!E^2_{p,q})\oq$ can be nonzero are given by $r(n-1)-k+1$ for $r \geq k-2$. The first statement follows once we shift degrees by $1$. The second statement is equivalent to the rational vanishing of all $d^s$-differentials for $s \geq 2$. This follows from the above estimates by observing that the $d^s$-differential has bidegree $(s,s-1)$ so changes total degree by $-1$.
	
For the algebraicity statement, let us recall the following notation from \cref{sec:path-components-etc}:
\[\begin{tikzcd} 
	\Gamma_g \coloneqq \pi_0(\Diff_\partial(W_{g,1})) \arrow[r, two heads] \arrow[d, two heads] &[-10pt] \Lambda_g \coloneqq \pi_0(\Emb_{\half \partial}^{\cong}(W_{g,1}) \dar[two heads] &[-10pt] \lar \check{\Lambda}^{\fr,\ell}_g \coloneqq \pi_1(B\Emb^{\cong,\fr}_\partial(W_{g,1})_\ell) \dar[two heads] \\[-5pt]
	G'_g \rar[equal] & G'_g & G^{\fr,[[\ell]]}_g,\lar[swap]{\supset}
\end{tikzcd}\]
where the bottom row consists of the images in $G_g \in \{\mr{O}_{g,g}(\bZ),\mr{Sp}_{2g}(\bZ)\}$ of the top row, given by their action on homology. As the higher layers are unchanged by imposing framings, the $\smash{\check{\Lambda}^{\fr,\ell}_g}$-action on their homotopy groups factors over $\Lambda_g$, and is thus determined by the $\Gamma_g$-action. By the proof of \cite[Proposition 5.11]{KR-WAlg} the $\Gamma_g$-action on 
\[
H^p(W_{g,1}^k,\Delta_{\half \partial};\bQ) \otimes \pi_q(\tohofib_{I \subset \ul{k}} \Emb(\ul{k} \setminus I,W_{g,1})
\]
is diagonal, and \cref{prop:cohomology-products-diagonals-qualitative} \ref{enum:cohomology-products-diagonals-qualitative-ii} and \cref{prop:tohofib-degrees} \ref{enum:tohofib-degrees-ii} show that on each tensor factor the $\Gamma_g$-action is $gr$-algebraic. The property of being $gr$-algebraic is closed under forming tensor products, subquotients, and extensions, so the $\Gamma_g$-action on $\pi_{*}(BL_k \Emb_{\half \partial}(W_{g,1})^\times_\mr{id})\oq$ is $gr$-algebraic, and hence the $\smash{\check{\Lambda}^{\fr,\ell}_g}$-action is too. By \cref{cor:gr-alg-lambda-alg} this action descends to an algebraic $\smash{G^{\fr, [[\ell]]}_g}$-representation as required. By the same reasoning the action on each page of the spectral sequence does too.
	
For the claim about invariants, as the category of algebraic $\smash{G^{\fr, [[\ell]]}_g}$-representations is semisimple	it suffice to prove the analogous claim for the entries $({}^F\!E^2_{p,q})\oq$. This follows, using that only even representations can contain invariants, from \cref{prop:cohomology-products-diagonals-qualitative} \ref{enum:cohomology-products-diagonals-qualitative-iii} and \cref{prop:tohofib-degrees} \ref{enum:tohofib-degrees-iii}.
\end{proof}

By the previous proposition, to justify the remaining claim \cref{prop:outcome-layer-input} \ref{enum:outcome-layer-input-iv} it remains to compute the invariants
\[
\left[\pi_{*}(BL_k \Emb_{\half \partial}(W_{g,1})^\times) \oq\right]^{G^{\fr, [[\ell]]}_g}
\]
in degrees $*\leq 4n-10$. By our estimate above, only the layers $k=2,3,4$ can have nonzero invariants in this range of degrees. We assume $g$ is sufficiently large, cf.~\cref{sec:irreps}, and performed our computations in \texttt{SageMath} \cite{sagemath}.

\begin{computation}\label{comp:layer-2} For $k=2$, we get contributions in total degree $q-p=2n-2$ from two terms:
	\[H^{2n}(W_{g,1}^2,\Delta_{\half \partial};\bQ) \otimes \ff_g(\ul{2})_{4(n-1)}, \qquad 	H^{n+1}(W_{g,1}^2,\Delta_{\half \partial};\bQ) \otimes \ff_g(\ul{2})_{3(n-1)},\]
	with $\ff_g(\ul{k})$ as in \cref{def:total-extended-dk}, isomorphic to $\pi_{*+1}(\tohofib_{I \subset \ul{k}} \Emb(\ul{k} \setminus I,W_{g,1}))\oq$ by \cref{lem:total-extended-dk-homotopy}. We will prove that both have trivial $\smash{G^{\fr, [[\ell]]}_g} \times \fS_2$-invariants, using Computation \ref{comp:total-d-k-e-2} and \cref{thm:CohConfSpaces}. The first is given by the tensor product of $(H^\vee)^{\otimes \ul{2}} \otimes (1^2)^{\otimes n}$ and $H^{\otimes \ul{2}} \otimes (1^2)^{\otimes n-1}$. When we identify $H^\vee$ with $H$ and take $\fS_2$-invariants, we get two copies of $S^2(H) \otimes \Lambda^2(H)$. Decomposing this into a direct sum of irreducible $\smash{G^{\fr, [[\ell]]}_g}$-representations, we find it contains no $\smash{G^{\fr, [[\ell]]}_g}$-invariants. The second is given by the tensor product of $H^\vee \otimes (1^2)$ and $H \otimes (2)$, which has trivial $\fS_2$-invariants.	
\end{computation}

\begin{computation}\label{comp:layer-3} For $k=3$, we get a contribution in total degree $q-p=2n-3$ from two terms:
	\[H^{3n}(W_{g,1}^3,\Delta_{\half \partial};\bQ) \otimes \ff_g(\ul{3})_{5(n-1)}, \qquad
	H^{2n+1}(W_{g,1}^2,\Delta_{\half \partial};\bQ) \otimes \ff_g(\ul{3})_{4(n-1)}.\]
	We will prove that the first has trivial $\smash{G^{\fr, [[\ell]]}_g} \times \fS_3$-invariants while the latter contains a one-dimensional space of invariants, using Computation \ref{comp:total-d-k-e-3} and \cref{thm:CohConfSpaces}. The first is given by the tensor product of $(H^\vee)^{\otimes \ul{3}} \otimes (1^3)^{\otimes n}$ and $H \otimes (2,1)$. When we identify $H^\vee$ with $H$ and take $\fS_3$-invariants, we get $S_{2,1}(H) \otimes H$ and this has one-dimensional $\smash{G^{\fr, [[\ell]]}_g}$-invariants. The second is given by the tensor product of $(H^\vee)^{\otimes 2} \otimes ((3)+(2,1))$ and $(1^3)$, which has trivial $\fS_3$-invariants.
\end{computation}

\begin{computation}\label{comp:layer-4} For $k=4$, we get a contribution in degree $2n-4$ from a single term:
	\[H^{4n}(W_{g,1}^4,\Delta_{\half \partial};\bQ) \otimes \ff_g(\ul{4})_{6(n-1)}.\]
	Using Computation \ref{comp:total-d-k-e-4} and \cref{thm:CohConfSpaces}, this is given by the tensor product of $(H^\vee)^{\otimes \ul{4}} \otimes (1^4)^{\otimes n}$ and $(2,2)$. When we identify $H^\vee$ with $H$ and take $\fS_4$-invariants, we get $S_{2,2}(H)$ and this has a one-dimensional subspace of $\smash{G^{\fr, [[\ell]]}_g}$-invariants.
\end{computation}

\subsection{The proof of \cref{prop:extended-kd}}\label{sec:proof-extended-kd} 
We now give the postponed proof of \cref{prop:extended-kd}, which asks for the construction of a natural isomorphism
\[\Phi \colon \mathfrak{t}_g(-) \lra \pi_{*+1}(\Emb(-,{W}_{g,1})) \oq\]
between the extended Drinfel'd--Kohno Lie algebras and the homotopy Lie algebras of configuration spaces of $W_{g,1}$. 

\subsubsection{Strategy} We will proceed as follows. 
	\begin{enumerate}[(i)]
		\item First, we construct a natural transformation of functors $\cat{FI}_*^\mr{op} \to \cat{GrRep}(\Gamma_g)$
\[\overline{\Phi} \colon \overline{\ft}_g(-) = \bQ\{t_{ij} \mid  i \neq j \in (-)\} \oplus H^{\oplus (-)} \lra \pi_{*+1}(\Emb(-,W_{g,1})) \oq,\]
where the functoriality of the left-hand side is given by the formulas after Definition \ref{def:drinfeld-kohno-extended}. Since the target lifts to the category $\cat{Alg}_\cat{Lie}(\cat{GrRep}(\Gamma_g))$ of graded Lie algebras of $\Gamma_g$-representations, this extends to a canonical natural transformation with domain $\mr{Lie}\left(\overline{\ft}_g(-)\right) \colon \cat{FI}_\ast^\mr{op} \to \cat{Alg}_\cat{Lie}(\cat{GrRep}(\Gamma_g))$.
	\item Next, to get a natural transformation $\Phi$ with domain $\ft_g(-)$, it suffices check that the relations \ref{enum:d-k-e-sym}--\ref{enum:d-k-e-spheres} hold in the target of $\mr{Lie}\left(\overline{\ft}_g(-)\right)$ for each $\ul{k}_+$.
	\item Finally, we prove by induction over $k = |\ul{k}|$ that the natural transformation $\Phi$ is in fact a natural isomorphism.
\end{enumerate}

\subsubsection{Step 1: construction of $\overline{\Phi}$} We may replace $W_{g,1}$ by its interior $\mathring{W}_{g,1}$, as the inclusions $\Emb(\ul{k},\mathring{W}_{g,1}) \hookrightarrow \Emb(\ul{k},W_{g,1})$ are weak equivalences. Then the components of $\overline{\Phi}_k$ will be obtained from a map
\begin{equation}\label{eqn:d-k-e-gen-spaces}
	\Emb(\ul{k},\bR^{2n}) \vee \bigvee_{\ul{k}} \mathring{W}_{g,1} \lra \Emb(\ul{k},\mathring{W}_{g,1})
\end{equation}
which has the desired functoriality up to homotopy; both sides assemble to functors $\cat{FI}_\ast^\mr{op} \to \cat{Ho}(\cat{Top})$ and these maps are natural transformations of such functors.

\smallskip

\noindent \emph{Construction of the map \eqref{eqn:d-k-e-gen-spaces}.} Fix an orientation-preserving collar $\bR^{2n-1} \times [-\infty,\infty] \hookrightarrow W_{g,1}$ which sends $\bR^{2n-1} \times \{-\infty\}$ to $\partial W_{g,1}$. Any element of $\Gamma_g$ can be represented by a diffeomorphism which fixes the collar pointwise. This restricts to an embedding $e_0 \colon \bR^{2n-1} \times \bR \hookrightarrow \mathring{W}_{g,1}$. It induces a map \[\Emb(\ul{k},\bR^{2n}) \lra \Emb(\ul{k},\mathring{W}_{g,1})\]
of configuration spaces which is the first term of \eqref{eqn:d-k-e-gen-spaces}.

To obtain the remaining wedge summands of \eqref{eqn:d-k-e-gen-spaces} for $1 \leq i \leq k$, we choose an embedding $\mathring{W}_{g,1} \sqcup (\ul{k} \setminus \{i\}) \hookrightarrow \mathring{W}_{g,1}$ as follows. We first describe its restriction to $\mathring{W}_{g,1}$: outside the image of $e_0$ it is identity and on its image, it is given by $(x,t) \mapsto (x,\lambda(x-(i,0,\ldots,0),t))$, with smooth map $\lambda \colon \bR^{2n-1} \times \bR \to \bR$ having the following properties: (i) it is given by $(x,t) \mapsto t$ if $||x||<1/4$ or $t>3/2$, (ii) for fixed $x$, the function $t \mapsto \lambda(x,t)$ is strictly increasing, (iii) for fixed $x$ satisfying $||x||>3/4$, the image of $t \mapsto \lambda(x,t)$ is $(1,\infty)$. We next describe its restriction to $\ul{k} \setminus \{i\}$: it is given by $k \setminus \{i\} \ni j \mapsto (j,0,\ldots,0)$.

This induces a map $\Emb(\ul{k},\mathring{W}_{g,1} \cup (\ul{k} \setminus \{i\})) \lra \Emb(\ul{k},\mathring{W}_{g,1})$ of configuration spaces, which we precompose with the inclusion of the path component
\[\Emb(\{i\},\mathring{W}_{g,1}) \times \prod_{j \neq i} \Emb(\{j\},\ast) \subset \Emb(\ul{k},\bR^{2n} \cup (\ul{k} \setminus \{i\})),\]
which is homeomorphic to $\mathring{W}_{g,1}$. We consider $\Emb(\ul{k},\mathring{W}_{g,1})$ as based at $\ul{k} \ni i \mapsto (i,0,\ldots,0)$, using the collar coordinates. Similarly, we consider the $i$th copy of $\mathring{W}_{g,1}$ as based at $(i,0,\ldots,0)$. Then these maps are all basepoint-preserving, and hence induce a map \eqref{eqn:d-k-e-gen-spaces}.

\vspace{.5em}

\noindent \emph{Construction of $\overline{\Phi}$.} Having defined \eqref{eqn:d-k-e-gen-spaces}, we use it to define
\[t_{ij} \in \pi_{*+1}(\Emb(\ul{k},\mathring{W}_{g,1}))\oq \qquad \text{and} \qquad a^{(i)} \in \pi_{*+1}(\Emb(\ul{k},\mathring{W}_{g,1}))\oq\]
as follows: $t_{ij}$ is obtained by applying \eqref{eqn:d-k-e-gen-spaces} to $t_{ij} \in \pi_{*+1}(\Emb(\ul{k},\bR^{2d}))\oq$, and $a^{(i)}$ is obtained by taking the $i$th term $\smash{\mathring{W}_{g,1}}$ in the wedge sum and applying \eqref{eqn:d-k-e-gen-spaces} to $a \in \pi_{*+1}(\mathring{W}_{g,1})\oq$. This completes the definition of the components $\overline{\Phi}_k$.

The map \eqref{eqn:d-k-e-gen-spaces} is $\Gamma_g \times \fS_k$-equivariant up to homotopy. Thus, to verify that the maps $\overline{\Phi}_k$ give a natural transformation on $\cat{FI}_*$, it suffices to verify naturality with respect to 
\begin{enumerate}[(i)]
	\item the standard inclusion $i_k \colon \ul{k-1}_+ \to \ul{k}_+$ and 
	\item the standard projection $p_k \colon \ul{k}_+ \to \ul{k-1}_+$ sending $k$ to the basepoint. 
\end{enumerate}

The map $i_k^*$ on $\pi_{*+1}(\Emb(-,\mathring{W}_{g,1}))\oq$ is that induced by the map 
\[\pi_k \colon \Emb(\ul{k},\mathring{W}_{g,1}) \lra \Emb(\ul{k-1},\mathring{W}_{g,1})\] 
which forgets the $k$th particle. Similarly, the map $i_k^*$ on $\overline{\ft}_g(-)$ is induced by the map
\[\pi'_k \colon \Emb(\ul{k},\bR^{2n}) \vee \bigvee_{\ul{k}} \mathring{W}_{g,1} \lra \Emb(\ul{k-1},\bR^{2n}) \vee \bigvee_{\ul{k-1}} \mathring{W}_{g,1}\]
forgetting the $k$th particle in the first term and the $k$th term in the wedge of $\mathring{W}_{g,1}$'s. Case (i) follows as the following diagram commutes:
\[\begin{tikzcd}
	\Emb(\ul{k},\bR^{2n}) \vee \bigvee_{\ul{k}} \mathring{W}_{g,1} \dar[swap]{\pi'_k} \rar & \dar{\pi_k} \Emb(\ul{k},\mathring{W}_{g,1}) \\[-3pt]
	\Emb(\ul{k-1},\bR^{2n}) \vee \bigvee_{\ul{k-1}} \mathring{W}_{g,1} \rar & \Emb(\ul{k-1},\mathring{W}_{g,1}).
\end{tikzcd}\]

The map $p_k^*$ on $\pi_{*+1}(\Emb(-,\mathring{W}_{g,1}))\oq$ is induced by the stabilisation map, denoted $s \colon \Emb(\ul{k-1},\mathring{W}_{g,1}) \to \Emb(\ul{k},\mathring{W}_{g,1})$, which brings in a $k$th particle from $(0,\ldots,0,-\infty)$ with respect to the collar, which is well-defined up to homotopy. Similarly the map $p_k^*$ on $\overline{\ft}_g(-)$ is induced by the map
\[s \vee \iota \colon \Emb(\ul{k-1},\bR^{2n}) \vee \bigvee_{\ul{k-1}} \mathring{W}_{g,1} \lra \Emb(\ul{k},\bR^{2n})\vee \bigvee_{\ul{k}} \mathring{W}_{g,1}\]
given by the stabilisation map $s$ on the first term, and the inclusion $\iota$ of the first $k-1$ terms of the wedge of $\mathring{W}_{g,1}$'s on the second term.  These fit in a diagram
\[\begin{tikzcd} 
	\Emb(\ul{k-1},\bR^{2n})  \vee \bigvee_{\ul{k-1}} \mathring{W}_{g,1} \rar \dar[swap]{s \vee \iota} & \Emb(\ul{k-1},\mathring{W}_{g,1})  \dar{s} \\[-3pt]
	\Emb(\ul{k},\bR^{2n}) \vee \bigvee_{\ul{k}} \mathring{W}_{g,1} \rar & \Emb(\ul{k},\mathring{W}_{g,1}),
\end{tikzcd}\]
commuting up to  homotopy, which proves case (ii).

\subsubsection{Step 2: verification of relations}

It now makes sense to ask whether the relations \ref{enum:d-k-e-sym}--\ref{enum:d-k-e-spheres} of Definition \ref{def:drinfeld-kohno-extended} hold in the target of $\overline{\Phi}$. Below we verify that they do:

\vspace{.5em}

\noindent \emph{Verification of \ref{enum:d-k-e-sym}, \ref{enum:d-k-e-braid-disj}, and \ref{enum:d-k-e-braid-rel}.} By naturality it suffices to verify these relations in the wedge summand $\Emb(\ul{k},\bR^{2n})$, where they are a consequence of \cref{thm:cohen-gitler}.

\vspace{.5em}

\noindent \emph{Verification of \ref{enum:d-k-e-ext-disj}.} We fix distinct $i,j,r$. There is a map \[\Emb(\{i,j\},\bR^{2n}) \times \mathring{W}_{g,1} \lra \Emb(\{i,j,r\},\mathring{W}_{g,1})\] induced by an embedding $\bR^{2n} \sqcup \mathring{W}_{g,1} \hookrightarrow \mathring{W}_{g,1}$ constructed similarly to those used to construct $\overline{\Phi}$, and such that the induced map on rational homotopy Lie algebras is surjective onto the subalgebra generated by $t_{ij}$ and $a^{(r)}$'s. Now note that relation \ref{enum:d-k-e-ext-disj} holds in the domain of this map, because it is a product. The general case follows by naturality with respect to stabilisation. 

\vspace{.5em}

For relations \ref{enum:d-k-e-ext-braid} and \ref{enum:d-k-e-spheres}, we need to compute Whitehead products in the rational homotopy groups of $\Emb(\ul{k},\smash{\mathring{W}}_{g,1})$. A general reference for Whitehead products is \cite[Section X.7]{WhiteheadElements}. Suppose we have two based maps $f \colon S^n \to X$ and $g \colon S^m \to X$. Fixing an orientation-preserving embedding $D^{n+m} \hookrightarrow S^n \times S^m$, there is a deformation retraction $r \colon S^n \times S^m \setminus \mr{int}(D^{n+m}) \to S^n \vee S^m$ and a boundary inclusion $i \colon S^{n+m-1} = \partial D^{n+m} \hookrightarrow S^n \times S^m \setminus \mr{int}(D^{n+m})$, then the Whitehead product is given by
\[[f,g] \coloneqq (f \vee g) \circ r \circ i \colon S^{n+m-1} \to X.\] 
In particular, if the map $(f \vee g) \circ r \colon S^n \times S^m \setminus \mr{int}(D^{n+m}) \to X$ extends over $S^n \times S^m$ then the Whitehead product vanishes.

\vspace{.5em} 

\noindent \emph{Verification of \ref{enum:d-k-e-spheres}.} By bilinearity of the Whitehead product, it suffices to prove this for $a$, $b$ representing the cores of $W_{g,1}$. That is, we can take them to be among a hyperbolic basis $e_1,f_1, e_2, f_2, \ldots, e_g, f_g$ of $H$, cf.~\cref{sec:alg-representations}. By $\fS_k$-equivariance we may assume $i=1$ and $j=2$. By naturality with respect to stabilisation, we may take $\ul{k} = \ul{2}$. Then $[a^{(1)},b^{(2)}]$ is represented by the map 
\[S^n \vee S^n \lra \Emb(\ul{2},\mathring{W}_{g,1})\]
given by having a particle travel around a core on the corresponding wedge summand. If $\omega(a,b) = 0$, these cores can be represented disjointly, and the map extends to $S^n \times S^n$. However, if we are dealing with $a = e_i$ and $b=f_i$, this is not possible. By $\Gamma_g$-equivariance we may assume $i=1$:

\begin{lemma}\label{lem:d-k-e-spheres-input-1} The relation $[e_1^{(1)},f_1^{(2)}] = t_{12}$ holds in  $\pi_{*+1}(\Emb(\ul{2},\mathring{W}_{g,1})) \oq$.\end{lemma}

\begin{proof}
We are required to construct a map $S^n \times S^n \setminus \mr{int}(D^{2n}) \to  \Emb(\ul{2},\mathring{W}_{g,1})$ whose restriction to $S^n \vee S^n$ is $\smash{e_1^{(1)} \vee f_1^{(2)}}$, and whose restriction to the boundary represents $t_{12}$. To do so, we observe there is a map $S^n \times S^n \to (\mathring{W}_{1,1})^2$ by letting the first particle travel around the first core, and the second particle around the second core. Its composition with the map induced by inclusion $\mathring{W}_{1,1} \hookrightarrow \mathring{W}_{g,1}$ lands in $\Emb(\ul{2},\mathring{W}_{g,1})$ unless both particles are at the unique intersection point of the cores. Removing a small neighbourhood, given by the interior of the image of an orientation-preserving embedding $D^n \times D^n \hookrightarrow S^n \times S^n$, from the inverse image of this point in $S^n \times S^n$ we obtain a map 
	\begin{equation}\label{eqn:d-k-e-spheres-input-1-rep} S^n \times S^n \setminus \mr{int}(D^{n} \times D^n) \lra \Emb(\ul{2},\mathring{W}_{g,1})\end{equation}
	with desired restriction to $S^n \vee S^n$. Its restriction to boundary is given by the two particles circling around each other once, i.e.~$\pm t_{12}$. 
	
	To determine that the sign is $+1$, we observe that $t_{12} \in \pi_{2n+1}(\Emb(\ul{2},\mathring{W}_{g,1}))$ can also be obtained by taking an orientation-preserving embedding $e \colon D^{2n} \hookrightarrow \mathring{W}_{g,1}$, noting that the map $(e|_{D^n \times \{0\}} \times e|_{\{0\} \times D^n}) \colon D^n \times D^n \to \bR^{2n} \times \bR^{2n}$ restricts to a map $S^{2n-1} \cong \partial D^{2n} \to \Emb(\ul{2},\mathring{W}_{g,1})$, and taking the image of the canonical generator of $\pi_{2n-1}(S^{2n-1})$. That the sign is $+1$ then follows because the map \eqref{eqn:d-k-e-spheres-input-1-rep} is obtained from this construction with $D^{2n} \hookrightarrow \mathring{W}_{g,1}$ an orientation-preserving chart around the unique intersection point, as the sign of the intersection between $e_1$ with $f_1$ is positive.
\end{proof}

\vspace{.5em}

\noindent \emph{Verification of \ref{enum:d-k-e-ext-braid}.} A similar argument as above deduces this relation from the following:

\begin{lemma}\label{lem:d-k-e-ext-braid-input} The relation $[t_{12},e_1^{(1)}+e_1^{(2)}] = 0$ holds in $\pi_{*+1}(\Emb(\ul{k},\mathring{W}_{g,1})) \oq$.\end{lemma}

\begin{proof}It suffices to produce a map $S^{2n-1} \times S^n \to \Emb(\ul{2},\mathring{W}_{g,1})$ whose restriction to $S^{2n-1}$ represents $t_{12}$, and whose restriction to $S^n$ represents $\smash{e_1^{(1)}+e_1^{(2)}}$.
	
	Fix an exponential map $T(S^n \times D^n) \to S^n \times D^n$ which is a fibrewise embedding. As $T(S^n \times D^n)$ is trivial, we choose to think of this as a map $\bR^{2n} \times (S^n \times D^n) \to S^n \times D^n$. Let us restrict to $S^{2n-1} \times S^n \to S^n \times D^n$. This is an embedding when restricted to $S^{2n-1} \times \{t\}$ for any $t \in S^n$. Taking opposite points in these spheres, we obtain a map
	\[S^{2n-1} \times S^n \lra \Emb(\ul{2},S^n \times D^n)\]
	The desired map is obtained by composing it with that induced by the embedding $S^n \times D^n \hookrightarrow \mathring{W}_{g,1}$.	By construction its restriction to $S^{2n-1} \times \{t\}$ is a representative of $t_{ij}$ and the restriction to $\{x\} \times S^n$ is two particles travelling alongside parallel to the core representing $e_1$. We may homotope this so that the first particle is at its basepoint on the top hemisphere of $S^{2n-1}$ while the second particle is at its basepoint on the bottom hemisphere, and this is visibly a representative of $\smash{e_1^{(1)}+e_1^{(2)}}$.\end{proof}

\subsubsection{Step 3: verifying that $\Phi$ is a natural isomorphism} At this point we have constructed a natural transformation $\Phi$ with components
\[\Phi_k \colon \mathfrak{t}_g(\ul{k}) \lra \pi_{*+1}(\Emb(\ul{k},W_{g,1})) \oq.\]
We will prove by induction over $k$ that $\Phi_k$ is an isomorphism. For $k=1$, this is a map $\Phi_1 \colon \mr{Lie}(H[n-1]) \to \pi_{*+1}(W_{g,1})\oq$ which is an isomorphism by the Hilton--Milnor theorem \cite{Hilton} using that $W_{g,1} \simeq \vee_{2g} S^n$. For the induction step, we use the map 
\[\pi_k \colon \Emb(\ul{k},W_{g,1}) \lra \Emb(\ul{k-1},W_{g,1})\] which forgets the $k$th particle. On rational homotopy groups it is the map induced by the morphism $i_k \colon \ul{k-1}_+ \to \ul{k}_+$ of $\cat{FI}_+$. By construction, the diagram
\[\begin{tikzcd}
	\mr{Lie}(\overline{\mathfrak{t}}_g(\ul{k})) \dar{\overline{i}_k^*} \rar[two heads] & \mathfrak{t}_g(\ul{k}) \dar[swap]{i_k^*} \rar{\Phi_k} &[10pt]  \pi_{*+1}(\Emb(\ul{k},\mathring{W}_{g,1}))\oq  \dar{(\pi_k)_*} \\[-2pt]
	\mr{Lie}(\overline{\mathfrak{t}}_g(\ul{k-1})) \rar[two heads]& \mathfrak{t}_g(\ul{k-1})   \rar{\Phi_{k-1}} & \pi_{*+1}(\Emb(\ul{k-1},W_{g,1}))\oq
\end{tikzcd}\] 
commutes. Furthermore, the vertical maps are split surjective because they have sections, induced by the morphism $\ul{k}_* \to \ul{k-1}_*$ of $\cat{FI}_+$ which sends $k$ to the basepoint and is the identity otherwise. Thus we have maps of short exact sequences
\begin{equation}\label{eqn:phi-k-induction}\begin{tikzcd} 0 \dar &[10 pt] 0 \dar &[10 pt] 0 \dar \\[-6pt]
		\ker(\overline{i}_k^*) \rar[two heads] \dar & \ker(i_k^*) \rar \dar & \pi_{*+1}(W_{g,1,k-1}) \oq \dar \\[-5pt]
		\mr{Lie}(\overline{\mathfrak{t}}_g(\ul{k})) \dar{\overline{i}_k^*} \rar[two heads] & \mathfrak{t}_g(\ul{k}) \dar{i_k^*} \rar{\Phi_k} & \pi_{*+1}(\Emb(\ul{k},W_{g,1}))\oq  \dar{(\pi_k)_*} \\[-3pt]
		\mr{Lie}(\overline{\mathfrak{t}}_g(\ul{k-1})) \dar \rar[two heads]& \mathfrak{t}_g(\ul{k-1}) \dar \arrow[r, "\Phi_{k-1}", "\sim"'] &  \pi_{*+1}(\Emb(\ul{k-1},W_{g,1})\oq \dar \\[-6pt]
		0 & 0 & 0,\end{tikzcd}\end{equation}
where the identification of the top-right term uses that $\pi_k$ is a fibration with fibre $W_{g,1,k-1} \coloneqq W_{g,1} \setminus \{\text{$(k-1)$ points}\}$. The left horizontal maps are surjective, as indicated; the bottom two by construction and the top one because the middle and left column are compatibly split. That the bottom-right map is an isomorphism is the inductive assumption.

The equivalence $W_{g,1,k-1} \simeq W_{g,1} \vee \bigvee_{i=1}^{k-1} S^{2n-1}$ and the Hilton--Milnor theorem imply that $\pi_{*+1}(W_{g,1,k-1}) \oq$ is a free graded Lie algebra, generated by $t_{ik}$ for $1 \leq i \leq k-1$ in degree $2n-2$ and $H^{(k)}$ in degree $n-1$. The top-right horizontal map
\[\ker(i_k^*) \lra \pi_{*+1}(W_{g,1,k-1}) \oq\] 
of \eqref{eqn:phi-k-induction} is surjective as $\ker(i_k^*)$ contains both $t_{ik}$ for $1 \leq i \leq k-1$ and $H^{(k)}$. To see that this map is an isomorphism it suffices to show that $\ker(i_k^*)$ is generated as a Lie algebra by these elements: as the target is a free Lie algebra, there can then be no relations between the $t_{ik}$ and $H^{(k)}$ in $\ker(i_k^*)$, so the map 
is an isomorphism. It then follows by the $5$-lemma the middle-right horizontal map of \eqref{eqn:phi-k-induction} is an isomorphism too, completing the induction step. We now establish the remaining claim (cf.~\cite[Algorithm 5.2]{ScannellSinha} in the case $g=0$):

\begin{lemma}\label{lem:ker-ik} 
	The Lie algebra $\ker(i_k^*)$ is generated as a graded Lie algebra by $t_{ik}$ for $1 \leq i \leq k-1$ and $a^{(k)}$ for $a \in H$.
\end{lemma}

\begin{proof}
	Given a graded Lie algebra with set of generators (i.e.~a basis for the graded vector space of generators), a \emph{Lie word} is an abstract word in the generators and brackets, defined inductively by saying that the generators are Lie words of length one and if $u,v$ are Lie words of length $n,m$ then $[u,v]$ is a Lie word of length $n+m$. Thus a Lie word of length $n$ is some iterated bracketing of $n$ generators, which we refer to as its ``entries''. By interpreting these symbols, each Lie word gives an element of the graded Lie algebra, and we refer to this as its \emph{image}.
	
	We now focus our attention on $\mr{Lie}(\overline{\mathfrak{t}}_g(\ul{k}))$, fixing the generators of $\overline{\mathfrak{t}}_g(\ul{k})$ to be the $t_{ij}$ for $i<j \in \ul{k}$ and the $a^{(i)}$ for $i \in \ul{k}$ and $a$ an element of a fixed basis of $H$. We claim 
	\[\ker(\overline{i}_k^*) = \mr{span}_\bQ\{\text{images of Lie words containing at least one copy of $t_{ik}$ or $a^{(k)}$}\}.\]
	That $\ker(\overline{i}_k^*)$ contains the right side follows because $\overline{i}_k^*$ sends images of such Lie words to zero. To prove the right side contains $\ker(\smash{\overline{i}_k^*)}$, we will use the construction of a basis for a free graded Lie algebra in terms of images of certain Lie words, from \cite[Theorem 4.1]{Chibrikov}, which depends on the additional data of an ordering of the generators. We apply this to $\mr{Lie}(\overline{\mathfrak{t}}_g(\ul{k}))$ by choosing an ordering of the generators so that $x_{ij}$ and $a^{(j)}$ with $i,j<k$ appear before $x_{ik}$ and $a^{(k)}$. Then the inductive definition of the collection in \cite[Definition 2.4]{Chibrikov} yields a basis $X_k$ of $\mr{Lie}(\overline{\mathfrak{t}}_g(\ul{k}))$, which contains a basis $X_{k-1}$ of the image of the section $\mr{Lie}(\overline{\mathfrak{t}}_g(\ul{k-1})) \to \mr{Lie}(\overline{\mathfrak{t}}_g(\ul{k}))$ given by those Lie words in $X_k$ which do not contain $x_{ik}$ or $a^{(k)}$. 	By definition, $\overline{i}_k^*$ acts as the identity on the basis elements in $X_{k-1}$ and as zero on those in $X_k \setminus X_{k-1}$, so we see that $\ker(\overline{i}_k^*)$ has a basis given by $X_k \setminus X_{k-1}$ and hence is contained in the right-hand side.
	
	\smallskip
	
	It follows from this description of $\ker(\overline{i}_k^*)$, and the surjectivity of $\ker(\overline{i}_k^*) \to \ker(i_k^*)$, that $\ker(i_k^*)$ is spanned by the images of Lie words containing \emph{at least} one copy of $t_{ik}$ or $a^{(k)}$ for $a \in H$. We now show that these can be rewritten as linear combinations of images of Lie words which \emph{only} involve $t_{ik}$ and $a^{(k)}$. Let us say that such a Lie word has \emph{type} $2r+s$ if it has $r$ entries of the form $t_{ij}$ and $s$ entries of the form $a^{(j)}$ (if the image of Lie word has degree $*$, its type is equal to the number $\frac{*-1}{n-1}$). Doing so, the relations \ref{enum:d-k-e-sym}--\ref{enum:d-k-e-spheres} only relate images of Lie words of the same type (relations \ref{enum:d-k-e-sym}--\ref{enum:d-k-e-ext-braid} further preserve the length of the Lie word, but \ref{enum:d-k-e-spheres} does not: type will serve as a proxy for length of Lie words). The proof is by induction over type of the following statement:
	\begin{equation}
		\tag{$I_m$}\label{eq:induction}
		\parbox{\dimexpr\linewidth-4em}{%
			\strut
			If a Lie word has type $m$, and its image lies in $\ker(i_k^*)$, then its image is a linear combination of images of Lie words involving only $t_{ik}$ and $a^{(k)}$.%
			\strut
		}
	\end{equation}
	If a Lie word $z$ has type 1 and its image lies in $\ker(i_k^*)$, then it must be $a^{(k)}$, so $(I_1)$ holds. Similarly, if a Lie word $z$ has type 2 and lies in $\ker(i_k^*)$, then it must be $t_{ik}$ or $[a^{(j)}, b^{(k)}]$. In the first case we are done, and in the second case we are done if $j=k$; if $j \neq k$ then by \ref{enum:d-k-e-spheres} we have $[a^{(j)}, b^{(k)}] = \lambda(a,b) t_{jk}$, so $(I_2)$ holds.
	
	Let us now consider Lie words of type $m \geq 3$, which in particular have length $\geq 2$. We may therefore write such Lie words as $[x,y]$, and without loss of generality we may suppose that $x$ contains a $t_{ik}$ or $a^{(k)}$. As $x$ has type $<m$, using $(I_{<m})$ we may write its image as a linear combination of images of Lie words involving only $t_{ik}$ and $a^{(k)}$. We therefore proceed by a secondary induction of the following statement:
	\begin{equation}
		\tag{$J_{m, m'}$}\label{eq:induction2}
		\parbox{\dimexpr\linewidth-5em}{%
			\strut
			If a Lie word $[x,y]$ has type $m$, with $x$ involving only $t_{ik}$ or $a^{(k)}$ and being of type $m'$, and its image lies in $\ker(i_k^*)$, then its image is a linear combination of images of Lie words involving only $t_{ik}$ and $a^{(k)}$.
			\strut
		}
	\end{equation}
	For each $m$ we will prove this by downwards induction on $m'$: the base case $m'=m$ is vacuous, as there are no such Lie words. We distinguish three cases:
	
	\vspace{1ex}
	
	\noindent\textbf{(i) $x$ and $y$ have a single entry:} Necessarily, we must have $x=t_{ik}$ or $x=a^{(k)}$. There is only something to prove if $y=t_{ij}$ with $i,j \neq k$ or $y=a^{(r)}$ with $r \neq k$. Then the relations of $\mathfrak{t}_g(\ul{k})$ allow us to rewrite $[x,y]$ in the desired form. For example, if $x = t_{ik}$ and $y = a^{(r)}$, use relations \ref{enum:d-k-e-ext-disj} or \ref{enum:d-k-e-ext-braid} depending whether $i=r$ or not.
	
	\vspace{1ex}
	
	\noindent\textbf{(ii) $y$ has at least two entries:} Write $y = [y',y'']$. The Jacobi identity and symmetry gives
	\[[x,[y',y'']] = \pm[[y'',x], y'] \pm [[x,y'], y''].\] 
	Both $[y'',x]$ and $[x,y']$ now contain $t_{ik}$ or $a^{(k)}$ and have type $<m$, so by $(I_{<m})$ their images are linear combinations of images of Lie words containing only $t_{ik}$ and $a^{(k)}$. They also have type $>m'$, as we have appended $y''$ or $y'$ to $x$ which had type $m'$. Thus we have expressed $[x,y]$ as a sum of terms $[u,v]$ with $u$ involving only $t_{ik}$ or $a^{(k)}$ and being of type $> m'$, so by $(J_{m, >m'})$ are done.
	
	\vspace{1ex}
	
	\noindent\textbf{(iii) $x$ has at least two entries:} Write $x = [x',x'']$ with both $x',x''$ having only entries of the form $t_{ik}$ or $a^{(k)}$. The Jacobi identity gives
	\[[[x',x''],y] = \pm[[x'',y],x']\pm[[y,x'],x''].\]
	As $[x'',y]$ and $[y,x']$ now contain $t_{ik}$ or $a^{(k)}$ and have type $<m$, by $(I_{<m})$ their images are linear combinations of images of Lie words containing only $t_{ik}$ and $a^{(k)}$; as $x'$ and $x''$ also only contain these elements, we are done.\qedhere
\end{proof}

This lemma finishes the proof that the map $\Phi_k \colon \ft_g(\ul{k}) \to \pi_{*+1}(\Emb(\ul{k},W_{g,1})) \oq$ is an isomorphism for each $k$. That this gives an natural isomorphism of functors $\cat{FI}_*^\mr{op} \to \cat{Alg}_\cat{Lie}(\cat{GrRep}(\Gamma_g))$ follows from the fact that we established naturality on the generators $\overline{\ft}_g(-)$ of $\ft_g(-)$. Since the $\Gamma_g$-action on $\ft_g(\ul{k})$ factors over $G'_g$ and is algebraic as a representation of this group, we conclude that the same is true for the rational homotopy groups of the configuration spaces $\Emb(\ul{k},W_{g,1})$.

\section{The proofs of Theorems \ref{thm:main-BDiff}, \ref{thm:main-bands}, and \ref{thm:HtyF}, and Corollary \ref{cor:main-BTop}}\label{sec:proofs} 

In this section we prove the indicated results from the introduction. We will also explain the effect of the reflection involution which was mentioned in the introduction, and explain some consequences for the rational homotopy groups of $B\Diff_\partial(W_{g,1})$.

\subsection{Preparation}\label{sec:Prep}
The proofs of our main theorems are based on the diagram
\begin{equation}\label{eq:CrossDiagram}
\begin{tikzcd}
 & {B\Diff}^\fr_\partial(D^{2n})_{\ell_0} \dar \arrow[dashed]{rd} \\
  X_1(g)  \rar \arrow[dashed]{rd} & {B\mr{Tor}}^\fr_\partial(W_{g,1})_\ell \rar \dar& X_0\\
  & \overline{B\mr{TorEmb}}^{\fr, \cong}_{\half\partial}(W_{g,1})_\ell
\end{tikzcd}
\end{equation}
in which the row is the top row of \eqref{eq:BigDiagram}, and the column is obtained by looping \eqref{eqn:weiss-framed-torelli}: recall that $\overline{B\mr{TorEmb}}^{\fr, \cong}_{\half\partial}(W_{g,1})_\ell$ denotes the finite cover of ${B\mr{TorEmb}}^{\fr, \cong}_{\half\partial}(W_{g,1})_\ell$ which makes the bottom map be 1-connected. Combining Lemmas \ref{lem:lfr-finite} and \ref{lem:tor-rationally-simple}, we get that all the spaces in \eqref{eq:CrossDiagram} are nilpotent and their rationalisations are simple. The map $(\overline{B\mr{TorEmb}}^{\fr, \cong}_{\half\partial}(W_{g,1})_\ell)_\bQ \to ({B\mr{TorEmb}}^{\fr, \cong}_{\half\partial}(W_{g,1})_\ell)_\bQ$ is an equivalence, as it is given by rationalising a finite cover of a nilpotent space, so we will allow ourselves to identify these spaces.

\begin{definition}\label{defn:Fn}
Write $F_n$ for the common homotopy fibre of the dashed compositions
\begin{align}
	X_1(g)_\bQ &\lra ({B\mr{TorEmb}}^{\fr, \cong}_{\half\partial}(W_{g,1})_\ell)_\bQ \label{eq:Fn1}\\
	(B\Diff^\fr_\partial(D^{2n})_{\ell_0})_\bQ &\lra (X_0)_\bQ \label{eq:Fn2}
\end{align}
formed from \eqref{eq:CrossDiagram}. These homotopy fibres are equivalent as they are both identified with the homotopy fibre of ${B\mr{Tor}}^\fr_\partial(W_{g,1})_\ell \to X_0 \times \overline{B\mr{TorEmb}}^{\fr,\cong}_{\half\partial}(W_{g,1})_\ell$ on rationalisations.
\end{definition}

By the second description, the space $F_n$ is indeed independent of $g$ as the notation suggests. As $({B\mr{TorEmb}}^{\fr, \cong}_{\half\partial}(W_{g,1})_\ell)_\bQ$ is simply-connected as a consequence of \cref{lem:lfr-finite}, and $X_1(g)$ is connected and rationally simple by \cref{lem:tor-rationally-simple}, the space $F_n$ is connected and simple. In fact, we have the following identification.

\begin{proposition}\label{prop:FisTopModTop2n}
There is an equivalence $F_n \simeq (\Omega^{2n+1}_0 \frac{\mr{Top}}{\mr{Top}(2n)})_\bQ$, and this space is $(2n-5)$-connected.
\end{proposition}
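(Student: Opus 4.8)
The plan is to identify $F_n$ using the second description \eqref{eq:Fn2} and smoothing theory. First I would recall from Section \ref{sec:x0-l-classes} that smoothing theory and the Alexander trick give a map $B\mr{Diff}^\mr{fr}_\partial(D^{2n}) \to \Omega^{2n}\mr{Top}(2n) \simeq \Omega^{2n+1}B\mr{Top}(2n)$ which is a weak equivalence onto the path components it hits, so that $(B\mr{Diff}^\mr{fr}_\partial(D^{2n})_{\ell_0})_\bQ$ is the rationalisation of one path component of $\Omega^{2n+1}_0 B\mr{Top}(2n)$. There is a fibration sequence $\mr{Top}(2n) \to \mr{Top} \to \tfrac{\mr{Top}}{\mr{Top}(2n)}$, and looping it $2n+1$ times and restricting to appropriate basepoint components gives a fibration sequence with fibre $(\Omega^{2n+1}_0 \tfrac{\mr{Top}}{\mr{Top}(2n)})$ over $\Omega^{2n+1}_0 B\mr{Top}(2n)$, with base $\Omega^{2n+1}_0 B\mr{Top}$. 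Rationally, the homotopy groups of $\Omega^{2n+1}_0 \mr{Top} \simeq \Omega^{2n+1}_0 \mr{O}$ (using that $\mr{Top}/\mr{O}$ is rationally trivial) are $\bQ$ in degrees $d \geq 0$ with $d \equiv 2n-1 \bmod 4$, and on cohomology these are generated by the looped Hirzebruch classes $\Omega^{2n+1}\cL_j$.

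The crux is then to compare this with the other side. By Theorem \ref{thm:ApplSignThm} the composition \eqref{eq:DiffDiscToX0}, namely $B\mr{Diff}^\mr{fr}_\partial(D^{2n})_{\ell_0} \to X_0$, pulls back the polynomial generators $\overline{\sigma}_{4j-2n-1}$ of $H^*(X_0;\bQ)$ (computed in Lemma \ref{lem:CalcCohomX0}) precisely to the classes $\Omega^{2n+1}\cL_j$. Since both $H^*(X_0;\bQ)$ and $H^*(\Omega^{2n+1}_0 \mr{Top};\bQ)$ are free graded-commutative algebras on classes in bijective correspondence (both indexed by $j > n/2$, in degree $4j-2n-1$), and the map identifies the generators, the composition $B\mr{Diff}^\mr{fr}_\partial(D^{2n})_{\ell_0} \to X_0$ factors rationally through the map $B\mr{Diff}^\mr{fr}_\partial(D^{2n})_{\ell_0} \to \Omega^{2n+1}_0 \mr{Top}$ up to rational equivalence of the target $X_0 \simeq_\bQ \Omega^{2n+1}_0 \mr{Top}$; more precisely, $X_0$ is rationally equivalent to $\Omega^{2n+1}_0\mr{Top}$ compatibly with the maps from $B\mr{Diff}^\mr{fr}_\partial(D^{2n})_{\ell_0}$. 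Taking homotopy fibres of the two horizontal maps then gives $F_n \simeq (\Omega^{2n+1}_0 \tfrac{\mr{Top}}{\mr{Top}(2n)})_\bQ$. I should be a little careful that the relevant maps of loop spaces are loop maps, so that taking fibres commutes appropriately with rationalisation; this is the reason Theorem \ref{thm:ApplSignThm} and Lemma \ref{lem:tor-rationally-simple} were set up, and the various covers $\overline{(-)}$ do not affect rational homotopy type.

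For the connectivity statement, I would use the fibration sequence \eqref{eq:IntroFibSeq} together with known low-dimensional facts. The space $\Omega^{2n+1}_0 \tfrac{\mr{Top}}{\mr{Top}(2n)}$ has $\pi_d$ equal to $\pi_{d+2n+1}(\tfrac{\mr{Top}}{\mr{Top}(2n)})$. Rationally, $\pi_*(\tfrac{\mr{Top}}{\mr{Top}(2n)})\oq$ fits in a long exact sequence with $\pi_*(\tfrac{\mr{Top}(2n)}{\mr{O}(2n)})\oq$ and $\pi_*(\tfrac{\mr{Top}}{\mr{O}(2n)})\oq$; one knows $\tfrac{\mr{Top}(2n)}{\mr{O}(2n)}$ is $(2n-1)$-connected (its loop space is $B\mr{Diff}_\partial(D^{2n})$, which by \cite{farrellhsiang} or \cite{oscarconcordance} is rationally trivial through degree $2n-5$, so $\tfrac{\mr{Top}(2n)}{\mr{O}(2n)}$ is rationally $(4n-5)$-connected) and $\tfrac{\mr{Top}}{\mr{O}(2n)}$ is rationally $(2n-2)$-connected with first nonzero homotopy in degree $2n-1$; chasing the sequence shows $\pi_*(\tfrac{\mr{Top}}{\mr{Top}(2n)})\oq$ vanishes below degree roughly $4n-3$, hence $F_n = (\Omega^{2n+1}_0 \tfrac{\mr{Top}}{\mr{Top}(2n)})_\bQ$ is $(2n-5)$-connected (the constant being what is actually needed later). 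The main obstacle is bookkeeping the loop-map structures and basepoint components carefully enough that the homotopy-fibre comparison is legitimate; the connectivity bound itself is a routine consequence of the cited rational computations of $B\mr{Diff}_\partial(D^{2n})$.
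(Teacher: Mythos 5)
Your argument is correct and essentially the same as the paper's proof: both halves match — the rational identification compares the maps to $X_0$ and to $\Omega^{2n+1}_0 B\mr{Top}$ as maps to a rational product of Eilenberg--MacLane spaces using Theorem~\ref{thm:ApplSignThm} and Lemma~\ref{lem:CalcCohomX0}, and the connectivity bound follows from the rational $(4n-5)$-connectivity of $\tfrac{\mr{Top}(2n)}{\mr{O}(2n)}$ supplied by \cite{oscarconcordance}. A minor slip that does not affect the conclusion: the first nonzero rational homotopy group of $\tfrac{\mr{Top}}{\mr{O}(2n)}$ (the one detected by the Euler class) is in degree $2n$, not $2n-1$, and the integral connectivity of $\tfrac{\mr{Top}(2n)}{\mr{O}(2n)}$ from Kirby--Siebenmann is $2n+2$ rather than $2n-1$.
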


\begin{proof}
Using the Morlet equivalence $\Omega^{2n+1}_0 B\mr{Top}(2n) \simeq B\Diff^\fr_\partial(D^{2n})_{\ell_0}$, we have a map
\[f \colon B\Diff^\fr_\partial(D^{2n})_{\ell_0} \lra \Omega^{2n+1}_0 B\mr{Top}\]
with homotopy fibre $\Omega^{2n+1} \frac{\mr{Top}}{\mr{Top}(2n)}$. The target of this map has rational cohomology an exterior algebra on the classes $\Omega^{2n+1} \cL_j$ with $4j-2n-1 > 0$. By \cref{prop:BigDiagramFacts} \ref{enum:BigDiagramFacts-v} the rational cohomology of $X_0$ is an exterior algebra on classes $\overline{\sigma}_{4j-2n-1}$ with $4j-2n-1 > 0$, so after rationalising there are equivalences
\[(\Omega^{2n+1}_0 B\mr{Top})_\bQ \overset{\sim}\lra \prod_{\mathclap{\substack{d >0\\ d \equiv 2n-1 \!\!\!\! \mod 4}}} K(\bQ, d) \overset{\sim}\longleftarrow (X_0)_\bQ,\]
where the left-hand map is given by the $\Omega^{2n+1} \cL_j$, and the right-hand map is given by the $\overline{\sigma}_{4j-2n-1}$. By \cref{thm:ApplSignThm} the maps \eqref{eq:Fn2} and $f$ are---considered as maps to the product of Eilenberg--Mac Lane spaces---rationally homotopic, and so their homotopy fibres are rationally equivalent.

For the connectivity statement, by \cite[Corollary 4.2]{oscarconcordance} the space $\tfrac{\mr{Top}(2n)}{\mr{O}(2n)}$ is rationally $(4n-5)$-connected. As $\mr{O} \to \mr{Top}$ is a rational equivalence, it follows that 
\[(\Omega^{2n+1}_0 \tfrac{\mr{O}}{\mr{O}(2n)})_\bQ  \lra  (\Omega^{2n+1}_0 \tfrac{\mr{Top}}{\mr{Top}(2n)})_\bQ \simeq F_n\]
is $(2n-5)$-connected. We have $\pi_*(B\mr{SO}(2n))\oq = \bQ[2n] \oplus \bigoplus_{i = 1}^{n-1} \bQ[4i]$ detected by the Euler and first $n-1$ Pontrjagin classes, and $\pi_*(B\mr{SO})\oq = \bigoplus_{i \geq 1} \bQ[4i]$ detected by the Pontrjagin classes, so $\pi_*(\tfrac{\mr{O}}{\mr{O}(2n)})\oq = \bQ[2n] \oplus \bigoplus_{i \geq n} \bQ[4i-1]$ and hence $(\Omega^{2n+1}_0 \tfrac{\mr{O}}{\mr{O}(2n)})_\bQ$ is $(2n-3)$-connected. It follows that $F_n$ is $(2n-5)$-connected.
\end{proof}

We finish our preparation by explaining how the rational homotopy groups of  all spaces appearing in \eqref{eq:CrossDiagram} as well as $F_n$ have the structure of algebraic $\smash{G^{\fr, [\ell]}_g}$-representations, in a compatible way.

By the diagram \eqref{eq:BigDiagram} and the discussion in \cref{sec:GpActions}, the long exact sequence of rational homotopy groups for the row of \eqref{eq:CrossDiagram} is one of $\check{\Gamma}_g^{\fr, \ell}$-representations, and so in particular one of $\pi_1(A_2(g))$-representations. The $\pi_1(A_2(g))$-action on $\pi_*(X_0)\oq$ is trivial. The $\pi_1(A_2(g))$-action on $\pi_*(X_1(g))\oq$ descends to an algebraic $\smash{G^{\fr, [\ell]}_g}$-action by \cref{lem:X1Alg}. The $\check{\Gamma}_g^{\fr, \ell}$-action on $\pi_*({B\mr{Tor}}^\fr_\partial(W_{g,1})_\ell)\oq$ descends to an algebraic $G^{\fr, [\ell]}_g$-action by \cref{prop:group-action-weiss-framed}  \ref{enum:group-action-weiss-framed-i} (as $G^{\fr, [\ell]}_g \leq G^{\fr, [[\ell]]}_g$).  Hence the long exact sequence of rational homotopy groups for the row of \eqref{eq:CrossDiagram} is one of algebraic $G^{\fr, [\ell]}_g$-representations. The column of \eqref{eq:CrossDiagram} arises from taking covering spaces in the framed Weiss fibre sequence \eqref{eqn:weiss-framed}, so its long exact sequence of rational homotopy groups is one of $\smash{\check{\Gamma}_g^{\fr, \ell}}$-representations, and in fact one of algebraic $G^{\fr, [\ell]}_g$-representations by \cref{prop:group-action-weiss-framed}.

In view of the fibre sequence \eqref{eq:Fn2} we endow $\pi_*(F_n)$ with the trivial $G^{\fr, [\ell]}_g$-action, in which case the long exact sequence on homotopy groups for \eqref{eq:Fn2} is one of $\smash{G^{\fr, [\ell]}_g}$-representations. We must then justify why the long exact sequence on homotopy groups for \eqref{eq:Fn1} is one of $G^{\fr, [\ell]}_g$-representations. This is equivalent to showing that the kernel and cokernel of the $G^{\fr, [\ell]}_g$-equivariant map
\begin{equation}\label{eq:MCmap}
\pi_*(X_1(g)) \oq \lra \pi_*({B\mr{TorEmb}^{\fr, \cong}_{\half\partial}(W_{g,1})_\ell}) \oq
\end{equation}
consist of trivial $G^{\fr, [\ell]}_g$-representations, but given that these homotopy groups are algebraic $G^{\fr, [\ell]}_g$-representations this follows by combining the long exact sequences by for the row and column of \eqref{eq:CrossDiagram}.

\begin{remark}[Miraculous cancellation] \label{rem:miracle}
The fact that the kernel and cokernel of \eqref{eq:MCmap} are trivial $\smash{G^{\fr, [\ell]}_g}$-representations, and independent of $g$, is entirely opaque from the way we have proposed to calculate these groups in Sections \ref{sec:HtyDiffeo} and \ref{sec:HtyEmb} respectively.

By \cref{prop:X1HtyEstimate} the $r$th band for $\pi_*(X_1(g)) \oq$ is $[r(n-1)+1, rn-2]$ as long as $r \geq 3$, and is $r(n-1)+1$ for $r \leq 3$. By Corollary \ref{cor:EmbEstimate} the $r$th band for $\pi_*({B\mr{TorEmb}^{\fr,\cong}_{\half\partial}(W_{g,1})_\ell}) \oq$ is $[r(n-2), r(n-1)+1]$. In the range where such bands do not overlap, it follows that non-trivial representations may only arise in degrees of the form $r(n-1)+1$. 

In this paper, this provides a highly non-trivial verification for the explicit calculations of  $\pi_*({B\mr{Emb}^{\fr,\cong}_{\half\partial}(W_{g,1})_\ell}) \oq$ described in \cref{sec:explicit-computations} and \cref{sec:computational-results}. In the companion paper \cite{KR-WKoszul}, we use this miracle to establish Koszulness of the algebra $H^*(X_1(g);\bQ)$ in a stable range.
\end{remark}

Because, for $g \geq 2$, taking $G^{\fr, [\ell]}_g$-invariants is exact on sequences of algebraic representations, we obtain from \eqref{eq:Fn1} a long exact sequence 
\begin{equation}\label{eq:LESinvariants}
\cdots \to \pi_*(F_n) \to \left[\pi_*(X_1(g)) \oq\right]^{G^{\fr, [\ell]}_g} \to \left[\pi_*(B\mr{TorEmb}^{\fr, \cong}_{\half\partial}(W_{g,1})_\ell) \oq\right]^{G^{\fr, [\ell]}_g} \to \cdots.
\end{equation}
We use this to estimate the homotopy groups of $F_n$, in \cref{prop:HtyF}, after some preparation.

\subsection{A consequence for embedding calculus}\label{sec:ConseqEmbCalc}
The previous subsection has consequences for differentials in Bousfield--Kan spectral sequence of \cref{sec:BKSS} in the band $[2n-4, 2n-1]$: the spectral sequence in these degrees is shown in \cref{thm:outcome-emb-calc} \ref{enum:outcome-iii}. This is the part of the proof where we determine what happens in the second band.

\begin{corollary}\label{cor:SecondBandDifferential}
For all large enough $g$ we have
\begin{align*}
	\left[\pi_{2n-4}(B\mr{TorEmb}^{\fr, \cong}_{\half\partial}(W_{g,1})_\ell) \oq\right]^{G^{\fr, [\ell]}_g} &= 0\\
	\left[\pi_{2n-3}(B\mr{TorEmb}^{\fr, \cong}_{\half\partial}(W_{g,1})_\ell) \oq\right]^{G^{\fr, [\ell]}_g} &= 0.
\end{align*}
\end{corollary}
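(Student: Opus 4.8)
The idea is to read off these vanishing statements from the diagram \eqref{eq:CrossDiagram} together with the connectivity of $F_n$ from Proposition \ref{prop:FisTopModTop2n} and the explicit calculations of the layers collected in Section \ref{sec:higher-layer-results}. The key point is that $(\overline{B\mr{TorEmb}}^\mr{fr, \cong}_{\half\partial}(W_{g,1})_\ell)_\bQ$ is the cofibre (up to rationalisation) of the Weiss fibre sequence obtained by looping Theorem \ref{thm:weiss-framed}, so it fits in a fibration sequence
\[F_n \lra X_1(g)_\bQ \lra (\overline{B\mr{TorEmb}}^\mr{fr, \cong}_{\half\partial}(W_{g,1})_\ell)_\bQ\]
by the very definition of $F_n$ via \eqref{eq:Fn1}. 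Here $\overline{B\mr{TorEmb}}^\mr{fr, \cong}_{\half\partial}(W_{g,1})_\ell$ and ${B\mr{TorEmb}}^\mr{fr, \cong}_{\half\partial}(W_{g,1})_\ell$ differ by a finite cover so have the same rational homotopy groups in positive degrees, and this cover was chosen to be $\pi_1$-surjective, so the sequence is one of connected simple spaces. Since the action of $\olG$ on all three terms is compatible and algebraic, and since taking $\olG$-invariants is exact on algebraic representations (as $g \geq 2$, see Section \ref{sec:alg-representations}), we obtain a long exact sequence of $\olG$-invariants
\[\cdots \to [\pi_{*}(F_n)]^{\olG} \to [\pi_{*}(X_1(g))\oq]^{\olG} \to [\pi_{*}(\overline{B\mr{TorEmb}}^\mr{fr, \cong}_{\half\partial}(W_{g,1})_\ell)\oq]^{\olG} \to [\pi_{*-1}(F_n)]^{\olG} \to \cdots.\]

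\textbf{Key steps.} First I would invoke Proposition \ref{prop:FisTopModTop2n} to conclude that $F_n$ is $(2n-5)$-connected, so $\pi_{*}(F_n) = 0$ for $* \leq 2n-5$; in particular $[\pi_{2n-5}(F_n)]^{\olG}= 0$ and $[\pi_{2n-4}(F_n)]^{\olG} = 0$. Next I would record, from Proposition \ref{prop:X1HtyEstimate} (or rather the refined statement in its proof), that in a stable range $\pi_{*}(X_1(g))\oq$ is supported in degrees $n$, $2n-1$, and $* \in \bigcup_{r\geq 3}[r(n-1)+1, rn-2]$; for $2n \geq 10$ the degrees $2n-4$ and $2n-3$ lie strictly between $n$ and $2n-1$ and outside all the higher bands (one checks $n < 2n-4$ and $2n-3 < 2(n-1)+1 = 2n-1$, and the band $r=3$ starts at $3(n-1)+1 = 3n-2 > 2n-3$ once $n > 1$), so $\pi_{2n-4}(X_1(g))\oq = \pi_{2n-3}(X_1(g))\oq = 0$ in a stable range. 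Feeding this into the long exact sequence gives, for $* = 2n-3$ and $*=2n-4$,
\[0 = [\pi_{*}(X_1(g))\oq]^{\olG} \to [\pi_{*}(\overline{B\mr{TorEmb}}^\mr{fr, \cong}_{\half\partial}(W_{g,1})_\ell)\oq]^{\olG} \to [\pi_{*-1}(F_n)]^{\olG},\]
and since $[\pi_{2n-4}(F_n)]^{\olG} = [\pi_{2n-5}(F_n)]^{\olG} = 0$ by connectivity, the middle groups vanish. Finally I would note $\pi_{*}(\overline{B\mr{TorEmb}}^\mr{fr, \cong}_{\half\partial}(W_{g,1})_\ell)\oq = \pi_{*}(B\mr{TorEmb}^\mr{fr, \cong}_{\half\partial}(W_{g,1})_\ell)\oq$ for $* > 1$ since the two spaces differ by a finite covering; as $2n-4, 2n-3 \geq 2$ (using $2n \geq 6$) this translates the conclusion into the statement of the corollary. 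I should double-check the edge cases $2n = 6$ and $2n = 8$: there one must also track the fourth band $[4n-8,4n-3]$, but for $2n=8$ we have $4n-8 = 8 = 2n > 2n-3$ so the fourth band still does not reach degrees $2n-4, 2n-3$, and for $2n=6$ one has $4n-8 = 4 < 2n-3 = 3$? — no, $4 > 3$, so it also does not interfere; in all cases the relevant degrees lie below the fourth band.

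\textbf{Main obstacle.} The only subtlety is bookkeeping: making sure that the relevant degrees $2n-4$ and $2n-3$ genuinely avoid every band contributing to $\pi_*(X_1(g))\oq$ and to $\pi_*(B\mr{TorEmb})\oq$, and that the ``stable range'' provided by Theorem \ref{thm:X1CohCalc} and Proposition \ref{prop:X1HtyEstimate} actually contains these degrees for $g$ large enough — which it does, since those ranges tend to infinity with $g$. A secondary point is to confirm that the long exact sequence of $\olG$-invariants is legitimate: this needs the compatibility of the algebraic $\olG$-actions on $F_n$, $X_1(g)_\bQ$, and the base, which was set up in the paragraph preceding this corollary, together with exactness of $\olG$-invariants on algebraic representations. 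Neither of these is hard, so the proof is short.
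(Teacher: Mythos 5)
Your argument for the degree $2n-4$ case is correct and essentially identical to the paper's: you use the long exact sequence in homotopy for the fibration $F_n \to X_1(g)_\bQ \to (\overline{B\mr{TorEmb}}^{\mr{fr},\cong}_{\half\partial}(W_{g,1})_\ell)_\bQ$, the vanishing of $[\pi_{2n-4}(X_1(g))\oq]^{\olG}$ from Proposition \ref{prop:X1HtyEstimate}, and $\pi_{2n-5}(F_n)=0$ from the $(2n-5)$-connectivity of $F_n$.

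The degree $2n-3$ case has a genuine gap. You assert ``in particular $[\pi_{2n-5}(F_n)]^{\olG}=0$ and $[\pi_{2n-4}(F_n)]^{\olG}=0$'' from $(2n-5)$-connectivity, and then use the second of these when treating $*=2n-3$. But $(2n-5)$-connected means $\pi_i(F_n)=0$ only for $i \leq 2n-5$; it says nothing about $\pi_{2n-4}(F_n)$. Proposition \ref{prop:FisTopModTop2n} is proved by comparing $F_n$ rationally to $\Omega^{2n+1}_0(\mathrm{O}/\mathrm{O}(2n))$ via a map that is only $(2n-5)$-connected, so one cannot squeeze out the extra degree. It is true that $\pi_{2n-4}(F_n)=0$ ultimately holds — Theorem \ref{thm:HtyF} puts the first band at $[4n-9,4n-2]$, and $4n-9>2n-4$ for $n \geq 3$ — but Theorem \ref{thm:HtyF} (via Proposition \ref{prop:HtyF}) is proved \emph{using} the present corollary, so invoking it here would be circular.

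The paper instead handles $*=2n-3$ by a separate argument in the Bousfield--Kan spectral sequence of Section \ref{sec:BKSS}: having established the $2n-4$ vanishing, it observes that the $\olG$-invariant entry $[({}^{BK}\!E^1_{3,2n-1})\oq]^{\olG}\cong\bQ$ (total degree $2n-4$) must die, and the only possible differential that can kill it is $d^1$ from $[({}^{BK}\!E^1_{2,2n-1})\oq]^{\olG}\cong\bQ$; since that $d^1$ must then be an isomorphism, and $E^1_{2,2n-1}$ is the only entry contributing invariants in total degree $2n-3$, the $\olG$-invariants of the abutment vanish in degree $2n-3$. (This uses the explicit layer computations displayed in Figure \ref{fig:bb-ss-emb-2n} and the fact that for $n\geq 3$ the fourth band $[4n-8,4n-3]$ does not reach these degrees.) You should replace your second bullet by this spectral sequence argument, or else prove directly that $\pi_{2n-4}(F_n)=0$ without appealing to Theorem \ref{thm:HtyF}.
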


\begin{proof}
For the first of these we consider the portion of \eqref{eq:LESinvariants} given by
\[
\left[\pi_{2n-4}(X_1(g))_\bQ\right]^{G^{\fr, [\ell]}_g} \lra \left[\pi_{2n-4}({B\mr{TorEmb}}^{\fr,\cong}_{\half\partial}(W_{g,1})_\ell)_\bQ\right]^{G^{\fr, [\ell]}_g} \overset{\partial}\lra \pi_{2n-5}(F_n).
\]
By the second part of \cref{prop:FisTopModTop2n} the right-hand term vanishes; by the second part of \cref{prop:X1HtyEstimate} the left-hand term vanishes for all large enough $g$ (as $2n-4 \neq 2n-1$ and $2n-4 < 4n-3$): thus the middle term vanishes too. 

For the second of these we consider the $G^{\fr, [\ell]}_g$-invariant part of the Bousfield--Kan spectral sequence from \cref{sec:BKSS}; there we determined the $\smash{G^{\fr,[[\ell]]}_g}$-invariant part, but restriction along $G^{\fr, [\ell]}_g \leq G^{\fr,[[\ell]]}_g$ induces an isomorphism on invariants by the discussion in \cref{sec:irreps}. The fourth band is the range of degrees $[4n-8, 4n-3]$, and $2n-3 < 4n-8$ as long as $n \geq 3$ so this band does not contain the degrees $2n-3$ and $2n-4$. Consulting \cref{thm:outcome-emb-calc} \ref{enum:outcome-iii}, it follows that the differential 
\[
d^1  \colon [({}^{BK}\!E^1_{2,2n-1})\oq]^{G^{\fr, [\ell]}_g} = \bQ \lra [({}^{BK}\!E^1_{3,2n-1})\oq]^{G^{\fr, [\ell]}_g} = \bQ
\]
must be surjective, as this is the only way for the latter term to die in this spectral sequence, and hence must be an isomorphism. As the former term is the only one contributing to total degree $2n-3$, the required vanishing follows.
\end{proof}

\subsection{The homotopy groups of $F_n$}
The band phenomena from \cref{prop:X1HtyEstimate} and \cref{cor:EmbEstimate} combine to give the following.

\begin{proposition}\label{prop:HtyF}
$\pi_*(F_n) $ is supported in degrees $* \in \bigcup_{r \geq 2} [2r(n-2)-1, 2rn-2]$.
\end{proposition}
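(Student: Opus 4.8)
The plan is to read off the support of $\pi_*(F_n)$ from the long exact sequence displayed just above the statement, using the two estimates already established: Proposition \ref{prop:X1HtyEstimate} for $[\pi_*(X_1(g))\oq]^{\olG}$ and Corollary \ref{cor:EmbEstimate} for $[\pi_*(B\mr{Emb}_{\half\partial}^\mr{fr}(W_{g,1})_\ell)\oq]^{\olG}$, which controls $[\pi_*(B\mr{TorEmb}^\mr{fr,\cong}_{\half\partial}(W_{g,1})_\ell)\oq]^{\olG}$ as well (the Torelli-embedding space and the full embedding space differ only by the arithmetic group, which contributes no invariants above degree $1$). First I would record from Proposition \ref{prop:X1HtyEstimate} that, in a stable range, $[\pi_*(X_1(g))\oq]^{\olG}$ is $\bQ$ in degree $2n-1$ and otherwise supported in $\bigcup_{r\geq 2}[2r(n-1)+1,2rn-2]$; and from Corollary \ref{cor:EmbEstimate} that $[\pi_*(B\mr{TorEmb}^\mr{fr,\cong}_{\half\partial}(W_{g,1})_\ell)\oq]^{\olG}$ is supported in $\bigcup_{r\geq 1}[2r(n-2),2r(n-1)+1]$. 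Both estimates hold in a stable range in $g$, but since $F_n$ is independent of $g$ (by its description as the fibre of \eqref{eq:Fn2}), I may let $g\to\infty$, so it suffices to prove the support statement in any fixed stable range.

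Next I would run the long exact sequence. A degree $d$ can contribute to $\pi_d(F_n)$ only if either $[\pi_d(X_1(g))\oq]^{\olG}\neq 0$ or $[\pi_{d+1}(B\mr{TorEmb}^\mr{fr,\cong}_{\half\partial}(W_{g,1})_\ell)\oq]^{\olG}\neq 0$ (the map into $\pi_d(F_n)$ comes from the cokernel in degree $d+1$ of the embedding space, and the map out of $\pi_d(F_n)$ lands in $\pi_d(X_1(g))$). So I need: $d\in\{2n-1\}\cup\bigcup_{r\geq 2}[2r(n-1)+1,2rn-2]$, or $d+1\in\bigcup_{r\geq 1}[2r(n-2),2r(n-1)+1]$, equivalently $d\in\bigcup_{r\geq 1}[2r(n-2)-1,2r(n-1)]$. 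The second family of intervals already contains the target intervals $[2r(n-2)-1,2rn-2]$ at their left ends, so the remaining work is to check that each ``$X_1$-interval'' $[2r(n-1)+1,2rn-2]$ (for $r\geq 2$) together with the isolated degree $2n-1$ is absorbed into $\bigcup_{r'\geq 2}[2r'(n-2)-1,2r'n-2]$. Indeed for $r\geq 2$ one has $2r(n-2)-1 \leq 2r(n-1)+1$ and $2rn-2\leq 2rn-2$, so $[2r(n-1)+1,2rn-2]\subset[2r(n-2)-1,2rn-2]$; and $2n-1\in[2(n-2)-1,2n-2]$ fails on the right end ($2n-1>2n-2$), so this isolated class must instead be killed in the long exact sequence. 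This is the one point requiring an argument rather than interval bookkeeping.

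The main obstacle, then, is handling the isolated generator in degree $2n-1$: a priori $[\pi_{2n-1}(X_1(g))\oq]^{\olG}\cong\bQ$ could map nontrivially to $\pi_{2n-1}(F_n)$, which would violate the claimed support (since $2n-1$ lies in no interval $[2r(n-2)-1,2rn-2]$ with $r\geq 2$, as $2n-1>2n-2$ and $2n-1<4(n-2)-1$ once $n\geq 3$... wait, for small $n$ one must check $2n-1<4(n-2)-1=4n-9$, i.e.\ $n>4$, so for $n=3,4$ the degree $2n-1$ might accidentally fall in the fourth band's left tail — but $4(n-2)-1=4n-9$ equals $3$ when $n=3$ and $7$ when $n=4$, while $2n-1$ equals $5$ and $7$; so for $n=4$ indeed $2n-1=7=4n-9$ is in the fourth band, and only $n=3$ needs the argument below). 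The resolution is that the composite $\bQ\cong[\pi_{2n-1}(X_1(g))\oq]^{\olG}\to[\pi_{2n-1}(\overline{B\mr{TorEmb}}^\mr{fr,\cong}_{\half\partial}(W_{g,1})_\ell)\oq]^{\olG}$ is injective: this is exactly Corollary \ref{cor:2nMinus1Injects} combined with the factorisation $B\mr{Emb}^\mr{fr}_{\half\partial}(W_{g,1})_\ell\to B\mr{hAut}_\partial(W_{g,1})\to B\mr{hAut}_{\half\partial}(W_{g,1})$ recorded in Section \ref{sec:haut-rel-boundary}, so the class survives to the embedding side. Hence in the long exact sequence the map $[\pi_{2n-1}(X_1(g))\oq]^{\olG}\to\pi_{2n-1}(F_n)$ is zero, and degree $2n-1$ contributes nothing to $\pi_*(F_n)$ beyond what the embedding-space cokernel in degree $2n$ already accounts for (which lies in $[2(n-2)-1,2n-2]\ni 2n-2$, not $2n-1$, consistent with the claim since the cokernel contributes to $\pi_{2n-1}(F_n)$ only via degree $2n$... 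I should double-check: $[\pi_{2n}(\mathrm{emb})\oq]^{\olG}$ support requires $2n\in[2r(n-2),2r(n-1)+1]$, true for $r=1$, and this contributes to $\pi_{2n-1}(F_n)$ — so actually $2n-1$ \emph{is} in the support, via $[2(n-2)-1,2n-2]$? No: $2n-1\notin[2n-5,2n-2]$. So the $r=1$ embedding class in degree $2n$ maps to $\pi_{2n-1}(F_n)$, which would put $2n-1$ in the support). Therefore the honest statement is that the embedding-space contribution to $\pi_{2n-1}(F_n)$ comes from degree $2n$, and I must verify that the relevant invariants in degree $2n$ of the embedding space actually vanish — this is precisely Corollary \ref{cor:SecondBandDifferential}, which gives $[\pi_{2n-3}(\cdots)\oq]^{\olG}=[\pi_{2n-4}(\cdots)\oq]^{\olG}=0$ and, together with the collapse analysis of the Bousfield--Kan spectral sequence in the second band, forces $[\pi_{2n}(B\mr{TorEmb}^\mr{fr,\cong}_{\half\partial}(W_{g,1})_\ell)\oq]^{\olG}$ into the fourth band $[4n-8,4n-3]$ only, hence zero in degree $2n$ for $n\geq 3$ except possibly $n=3$; the residual low-$n$ cases are dispatched by Corollary \ref{cor:SecondBandDifferential} and the reflection-involution argument of Section \ref{sec:Reflection} as indicated there. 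Assembling: every degree in the support of $\pi_*(F_n)$ lies in some $[2r(n-2)-1,2rn-2]$ with $r\geq 2$, which is the claim.
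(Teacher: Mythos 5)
Your overall strategy matches the paper's: feed Proposition~\ref{prop:X1HtyEstimate} and (a refinement of) Corollary~\ref{cor:EmbEstimate} into the long exact sequence for \eqref{eq:Fn1} and do interval bookkeeping, handling the isolated class in degree $2n-1$ via Corollary~\ref{cor:2nMinus1Injects}. But the execution has three concrete problems.

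First, the $r=1$ embedding band is underestimated in your initial bookkeeping. Corollary~\ref{cor:EmbEstimate} puts $[\pi_*(B\mr{TorEmb}^{\mr{fr},\cong}_{\half\partial})\oq]^{\olG}$ into $[2n-4,2n-1]$ for $r=1$, hence contributes a priori to $\pi_d(F_n)$ for $d\in[2n-5,2n-2]$, which for $n\geq 3$ is \emph{not} inside any target interval $[2r(n-2)-1,2rn-2]$ with $r\geq 2$ (it would need $n\leq 2$). Your opening paragraph declares only the $X_1$-intervals and $\{2n-1\}$ as "remaining work" and never lists $[2n-5,2n-2]$; the later discussion is meant to cover this but is garbled. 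What the paper actually does is refine the $r=1$ embedding band to the single degree $\{2n-1\}$: degree $2n-3$ and $2n-4$ vanish by Corollary~\ref{cor:SecondBandDifferential}, and degree $2n-2$ vanishes because the second layer first contributes in degree $4n-4$ (Computation~\ref{comp:layer-2}, encoded in Figure~\ref{fig:bb-ss-emb-2n}); only with this refinement do the $F_n$-contributions from the embedding side reduce to $\{2n-2\}\cup\bigcup_{r\geq 2}[2r(n-2)-1,2r(n-1)]$.

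Second, you write "the map $[\pi_{2n-1}(X_1(g))\oq]^{\olG}\to\pi_{2n-1}(F_n)$ is zero", but in the fibration $F_n\to X_1(g)_\bQ\to (\overline{B\mr{TorEmb}}\cdots)_\bQ$ the arrows go $\pi_{2n-1}(F_n)\to\pi_{2n-1}(X_1)\to\pi_{2n-1}(\mr{emb})$; there is no map in the direction you assert. The correct use of Corollary~\ref{cor:2nMinus1Injects} is that the second map is injective between two one-dimensional spaces, hence an isomorphism, which simultaneously kills the potential contribution to $\pi_{2n-1}(F_n)$ (via exactness at $\pi_{2n-1}(X_1)$) and to $\pi_{2n-2}(F_n)$ (via exactness at $\pi_{2n-1}(\mr{emb})$), taking care of both of the stray degrees $\{2n-2,2n-1\}$.

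Third, a small slip: you claim $2n\in[2r(n-2),2r(n-1)+1]$ is "true for $r=1$"; but $[2n-4,2n-1]\not\ni 2n$, so there is no $r=1$ contribution in degree $2n$ to worry about. (If $2n$ did lie in the $r=2$ band $[4n-8,4n-3]$ one would have $n\leq 4$, but in that case $2n-1$ lies in the target interval $[4n-9,4n-2]$ anyway — the paper's shortcut "$2n\leq 8$, we are done".) Fixing these points gives exactly the paper's argument.
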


\begin{proof}
We consider the long exact sequence \eqref{eq:LESinvariants}: as the groups $\pi_*(F_n)$ are independent of $g$, we may suppose $g$ is arbitrarily large. It follows from \cref{prop:HtyX1LowDeg} that 
\[\left[\pi_{2n-1}(X_1(g)) \oq\right]^{G^{\fr, [\ell]}_g}=\bQ\]
and from \cref{prop:X1HtyEstimate} that apart from this $[\pi_*(X_1(g)) \oq]^{G^{\fr, [\ell]}_g}$ is supported in degrees $* \in \bigcup_{r \geq 2}[2r(n-1)+1, 2rn-2]$. 

By restricting the results of \cref{cor:EmbEstimate} along $G^{\fr,[\ell]}_g \leq G^{\fr,[[\ell]]}_g$ we get that
\[\left[\pi_*(B\mr{TorEmb}^{\fr, \cong}_{\half\partial}(W_{g,1})_\ell) \oq\right]^{G^{\fr, [\ell]}_g}\]
is supported in degrees $* \in \bigcup_{r \geq 1} [2r(n-2), 2r(n-1)+1]$, and in fact by \cref{thm:outcome-emb-calc} \ref{enum:outcome-iii} and \cref{cor:SecondBandDifferential} it is supported in degrees $\{2n-1\} \cup \bigcup_{r \geq 2} [2r(n-2), 2r(n-1)+1]$.

If $2n+1 \geq 4n-8$ (i.e.\ $2n \leq 8$) we are therefore done. Otherwise, we use that by \eqref{eqn:der-as-ker} and Corollary \ref{cor:2nMinus1Injects} the map 
\[\left[\pi_{2n-1}(X_1(g)) \oq\right]^{G^{\fr, [\ell]}_g} \lra \left[\pi_{2n-1}(B\mr{TorEmb}^{\fr, \cong}_{\half\partial}(W_{g,1})_\ell) \oq\right]^{G^{\fr, [\ell]}_g}\]
is an injection, and by \cref{thm:outcome-emb-calc} \ref{enum:outcome-iii} the latter group is 1-dimensional. Thus these do not contribute to $\pi_*(F_n)$.
\end{proof}

\subsection{The proofs}
We will now explain how to deduce our main results from this, starting with the result that the rational homotopy of $\Omega^{2n+1}_0 (\tfrac{\mr{Top}}{\mr{Top}(2n)})$ is supported in certain bands.

\begin{proof}[Proof of \cref{thm:HtyF}]
This is an immediate consequence of Propositions \ref{prop:FisTopModTop2n} and \ref{prop:HtyF}, which say respectively that there is an equivalence $(\Omega^{2n+1}_0 \frac{\mr{Top}}{\mr{Top}(2n)})_\bQ \simeq F_n$ and that $\pi_*(F_n)$ is supported in degrees $* \in \bigcup_{r \geq 2} [2r(n-2)-1, 2rn-2]$.
\end{proof}

The next results concern the rational homotopy of $B\Diff_\partial(D^{2n})$, respectively in degrees $\leq 4n-10$ and outside certain bands.

\begin{proof}[Proof of Theorems \ref{thm:main-BDiff} and \ref{thm:main-bands}]
Using Morlet's equivalence $\Omega^{2n}_0 \frac{\mr{Top}(2n)}{\mr{O}(2n)} \simeq B\Diff_\partial(D^{2n})$, the long exact sequence on rational homotopy groups for the fibre sequence
\[\Omega^{2n+1} \tfrac{\mr{Top}}{\mr{Top}(2n)} \lra \Omega^{2n} \tfrac{\mr{Top}(2n)}{\mr{O}(2n)} \lra \Omega^{2n} \tfrac{\mr{Top}}{\mr{O}(2n)},\]
combined with \cref{thm:HtyF} and the known rational homotopy groups of $\frac{\mr{Top}}{\mr{O}(2n)}$, gives Theorems \ref{thm:main-BDiff} and \ref{thm:main-bands}.
\end{proof}

We end with the comparison between $B\mr{STop}(2n)$ and $B\mr{STop} \times K(\bZ,2n)$.

\begin{proof}[Proof of \cref{cor:main-BTop}]
Combining \cref{prop:FisTopModTop2n} and \cref{thm:HtyF} we see that the map
\begin{equation}\label{eq:comp}
s \times e\colon B\mr{STop}(2n) \lra B\mr{STop} \times K(\bZ,2n),
\end{equation}
given by stabilisation and the Euler class, is an epimorphism on rational homotopy groups in degrees satisfying $2n+2 \leq * \leq 6n-8$, and an isomorphism in degrees satisfying $2n+2 \leq * \leq 6n-9$. On the other hand by \cite[p.\ 246]{kirbysiebenmann}, the map
\[\tfrac{\mr{STop}(2n)}{\mr{SO}(2n)} = \tfrac{\mr{Top}(2n)}{\mr{O}(2n)} \lra \tfrac{\mr{Top}}{\mr{O}}\]
is $(2n+2)$-connected, and the target is rationally contractible. As the analogous map $s \times e \colon B\mr{SO}(2n) \to B\mr{SO} \times K(\bZ,2n)$ is $(4n-1)$-connected it follows that \eqref{eq:comp} is also an isomorphism on rational homotopy groups in degrees $* \leq 2n+1$. Thus \eqref{eq:comp} is rationally $(6n-8)$-connected, which proves \cref{cor:main-BTop}.
\end{proof}

\subsection{The reflection automorphism}\label{sec:Reflection} To obtain information about the maps in the long exact sequence of homotopy groups for the fibre sequence
\[\Omega^{2n+1} \tfrac{\mr{Top}}{\mr{Top}(2n)} \lra \Omega^{2n} \tfrac{\mr{Top}(2n)}{\mr{O}(2n)} \lra \Omega^{2n} \tfrac{\mr{Top}}{\mr{O}(2n)},\]
we use that conjugating by the reflection $r$ of the disc $D^{2n}$ in the first coordinate gives an (outer) automorphism of the group $\Diff_\partial(D^{2n})$, and hence an automorphism of $B\Diff_\partial(D^{2n})$. Under the Morlet equivalence $B\Diff_\partial(D^{2n}) \simeq \Omega^{2n}_0 \frac{\mr{Top}(2n)}{\mr{O}(2n)}$ this involution is given by sending a map 
\[f\colon (D^{2n}, \partial D^{2n}) \lra \Big(\tfrac{\mr{Top}(2n)}{\mr{O}(2n)}, *\Big),\]
given by $f(x) = h_x \,\mr{O}(2n)$ for homeomorphisms $h_x$, to the map $\bar{f}(x) = r\circ h_{r(x)}\circ r\,\mr{O}(2n)$ (where we also write $r$ for the reflection of $\bR^{2n}$ in the first coordinate). The analogous formula, using the reflection of $\bR^\infty$ in the first coordinate, makes
\[\Omega^{2n}_0 \tfrac{\mr{Top}(2n)}{\mr{O}(2n)} \lra \Omega^{2n}_0 \tfrac{\mr{Top}}{\mr{O}(2n)}\]
into a map of spaces with involution, and so induces an involution on the homotopy fibre $\Omega^{2n+1}_0 \frac{\mr{Top}}{\mr{Top}(2n)} \simeq_\bQ F_n$. 

\begin{theorem}\label{thm:Reflection}
These reflection involutions act as follows:
\begin{enumerate}[(i)]
\item \label{enum:Reflection-i} On $\pi_*(\Omega^{2n}_0 \tfrac{\mr{Top}}{\mr{O}(2n)}) \oq$ it acts by multiplication by $(-1)$.

\item \label{enum:Reflection-ii} On $\pi_*(\Omega^{2n+1}_0 \frac{\mr{Top}}{\mr{Top}(2n)}) \oq \cong \pi_*(F_n) \oq$ in the band of degrees $[2r(n-2)-1, 2rn-2]$ it acts by $(-1)^r$. 
\end{enumerate}
In \ref{enum:Reflection-ii}, in degrees contained in two overlapping bands we make no conclusion.
\end{theorem}

As a non-trivial class of $\pi_*(\Omega^{2n+1}_0 \frac{\mr{Top}}{\mr{Top}(2n)}) \oq$ in the $(-1)$-eigenspace of this involution must have degree $\geq 6n-13$, we obtain the following extension of \cref{thm:main-BDiff}.

\begin{corollary}
Let $2n \geq 6$. Then the map
\[\pi_*(B\Diff_\partial(D^{2n})) \oq \lra \pi_*(\Omega^{2n}_0 \tfrac{\mr{Top}}{\mr{O}(2n)}) \oq = \begin{cases} \bQ & \text{if $d \geq 2n-1$ and $d \equiv 2n-1\;\text{mod}\; 4$,} \\
	0 & \text{otherwise.}
	\end{cases}\]
is surjective in degrees $* \leq 6n-13$.
\end{corollary}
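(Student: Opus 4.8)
The plan is to deduce the surjectivity statement directly from Theorem \ref{thm:Reflection}, using the long exact sequence of the fibration
\[
\Omega^{2n+1}_0 \tfrac{\mr{Top}}{\mr{Top}(2n)} \lra \Omega^{2n}_0 \tfrac{\mr{Top}(2n)}{\mr{O}(2n)} \simeq B\mr{Diff}_\partial(D^{2n}) \lra \Omega^{2n}_0 \tfrac{\mr{Top}}{\mr{O}(2n)}
\]
rationalised, and the compatibility of the reflection involution with this sequence. First I would recall that rationally $\Omega^{2n}_0 \tfrac{\mr{Top}}{\mr{O}(2n)} \simeq_\bQ \prod_{d \equiv 2n-1 (4),\, d \geq 2n-1} K(\bQ,d)$, so its homotopy groups are the displayed $\bQ$'s; by Theorem \ref{thm:Reflection}(i) the reflection acts on all of these by $-1$. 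The obstruction to surjectivity of the map $\pi_*(B\mr{Diff}_\partial(D^{2n}))\oq \to \pi_*(\Omega^{2n}_0 \tfrac{\mr{Top}}{\mr{O}(2n)})\oq$ in degree $d$ is the connecting homomorphism $\partial\colon \pi_d(\Omega^{2n}_0 \tfrac{\mr{Top}}{\mr{O}(2n)})\oq \to \pi_{d-1}(\Omega^{2n+1}_0\tfrac{\mr{Top}}{\mr{Top}(2n)})\oq = \pi_{d-1}(F_n)\oq$, and it suffices to show this connecting map is zero for $d \leq 6n-13$.

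The key point is an eigenvalue incompatibility. The connecting homomorphism $\partial$ commutes with the reflection involutions (as it is induced by a map of spaces-with-involution), so $\partial$ lands in the $(-1)$-eigenspace of the involution acting on $\pi_{d-1}(F_n)\oq$. By Theorem \ref{thm:Reflection}(ii), in the $2r$th band of degrees $[2r(n-2)-1, 2rn-2]$ the involution acts on $\pi_*(F_n)\oq$ by $(-1)^r$; hence the $(-1)$-eigenspace can only be nonzero in degrees lying in an \emph{odd} band, i.e.\ in $\bigcup_{r \geq 3 \text{ odd}}[2r(n-2)-1, 2rn-2]$, together with degrees where two bands of opposite parity overlap (where Theorem \ref{thm:Reflection} gives no conclusion). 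The lowest such problematic degree is the bottom of the third band, $2\cdot 3(n-2)-1 = 6n-13$. Therefore, for $d-1 < 6n-13$, equivalently $d \leq 6n-13$, the target $\pi_{d-1}(F_n)\oq$ has trivial $(-1)$-eigenspace (note that the fourth band $[4n-9, 4n-2]$ and below are all even bands, so they cannot overlap an odd band until one reaches the third band), so $\partial = 0$ and the map is surjective in these degrees.

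The main point to verify carefully is that below degree $6n-13$ no even band overlaps an odd band, so that the only ambiguity introduced by the overlap caveat in Theorem \ref{thm:Reflection}(ii) occurs at or above $6n-13$: the even bands are the $2r$th for $r \geq 2$, namely $[2r(n-2)-1, 2rn-2]$, and the odd bands are the $(2s{+}1)$th for $s \geq 1$ (with $\pi_*(F_n)\oq = 0$ there by Theorem \ref{thm:HtyF}, but we do not even need this); the first odd band is the third band $[6n-13, 6n-6]$, and one checks directly that the fourth band $[4n-9, 4n-2]$ has top $4n-2 < 6n-13$ precisely when $2n > 11$, i.e.\ $2n \geq 12$, while for $2n = 6, 8, 10$ one compares endpoints by hand (and for small $2n$ the claimed range $*\leq 6n-13$ is short enough that the stated conclusion of Theorem \ref{thm:main-BDiff} already covers it, or is vacuous). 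So the only real work is this elementary interval bookkeeping; the substance is entirely carried by Theorems \ref{thm:Reflection} and \ref{thm:HtyF}.
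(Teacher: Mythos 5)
Your proof is correct and takes essentially the same route as the paper, which deduces the corollary from the one-line observation (stated just before the corollary) that a nontrivial class in the $(-1)$-eigenspace of the reflection involution on $\pi_*(\Omega^{2n+1}_0\tfrac{\mr{Top}}{\mr{Top}(2n)})\oq$ must have degree $\geq 6n-13$; you have simply made the equivariant long-exact-sequence step explicit. One cosmetic slip: what you repeatedly call the ``third band'' is, in the paper's indexing, the \emph{sixth} band (the $2r$th band with $r=3$), though your arithmetic $2\cdot 3(n-2)-1 = 6n-13$ is correct.
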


\begin{example}In particular, $\pi_5(B\Diff_\partial(D^{6})) \oq \neq 0$ and $p_3 \neq 0 \in H^{12}(B\mr{Top}(6);\bQ)$.\end{example}

\begin{example}Let $C(D^{2n-1})$ be the topological group of diffeomorphisms $f$ of $D^{2n-1} \times [0,1]$ which fix $D^{2n-1} \times \{0\} \cup \partial D^{2n-1} \times [0,1]$ pointwise. This fits into a fibre sequence studied in pseudoisotopy theory,
	\[\Diff_\partial(D^{2n}) \lra C(D^{2n-1}) \lra \Diff_\partial(D^{2n-1}).\]
This is a fibre sequence of spaces with involution, if we use the one described above on $\Diff_\partial(D^{2n})$ and the ``duality'' involution on $C(D^{2n-1})$, given by reflection in $[0,1]$ and composition with $f^{-1}|_{D^{2n-1} \times \{1\}} \times \mr{id}_{[0,1]}$. The induced involution on  $\Diff_\partial(D^{2n-1})$ is inversion, so acts by $-1$ on all homotopy groups. Thus, the $(+1)$-eigenspaces of $\pi_*(B\Diff_\partial(D^{2n}))\oq$ inject into $\pi_*(BC(D^{2n-1}))\oq$. This is in particular the case for nonzero such eigenspaces in the fourth band, which we will discuss in \cref{sec:fourth-band}.\end{example}

To prove \cref{thm:Reflection} we analyse how such reflections act on all our constructions. We will do so by considering  a slightly larger group of diffeomorphisms of $W_{g,1}$, as follows. Write $r_\partial \in \mr{O}(2n)$ for the reflection in the first coordinate, which induces a diffeomorphism $r_\partial \colon \partial W_{g,1} \to \partial W_{g,1}$, and let $\mr{Diff}_{\pm}(W_{g,1}) \subset \mr{Diff}(W_{g,1})$ be the subgroup of those diffeomorphisms of $W_{g,1}$ which either fix the boundary or induce the diffeomorphism $r_\partial$ on the boundary. There is an extension
\[1 \lra \mr{Diff}_{\partial}(W_{g,1}) \lra \mr{Diff}_{\pm}(W_{g,1}) \xrightarrow{\phi \mapsto \phi\vert_{\partial}} C_2 = \langle r_\partial \, | \, r_\partial^2 \rangle \lra 1.\]
We allow this group to act on the space $\mr{Bun}(\mr{Fr}(TW_{g,1}), \Theta_{\fr})$ of $\mathrm{GL}_{2n}(\bR)$-equivariant maps $\ell \colon \mr{Fr}(TW_{g,1}) \to \Theta_{\fr}$, where the latter denotes $\mathrm{GL}_{2n}(\bR)$ with the canonical right $\mathrm{GL}_{2n}(\bR)$-action, by the formula
\begin{equation}\label{eq:TwistedAction}
\ell \cdot \varphi \colon \mr{Fr}(TW_{g,1}) \overset{D\varphi} \lra \mr{Fr}(TW_{g,1}) \overset{\ell}\lra \Theta_{\fr} \xrightarrow{\mr{res}(\varphi) \cdot -} \Theta_{\fr}.
\end{equation}
If $\varphi \in \mr{Diff}_{\partial}(W_{g,1})$ then this is just the usual action on $\mr{Bun}(\mr{Fr}(TW_{g,1}), \Theta_{\fr})$.

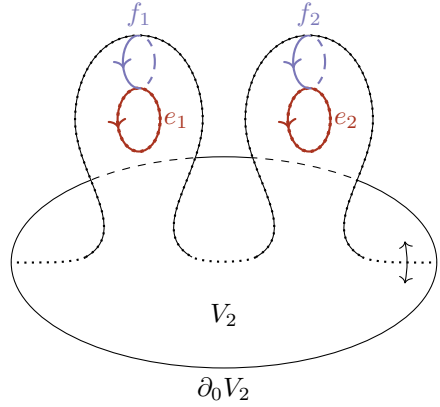
\begin{figure}[h]
	\begin{tikzpicture}[scale=1.3]
		\draw (0,0) ellipse (2cm and 1cm);
		
		\begin{scope}[xshift=-.8cm,yshift=.55cm]
			\fill[white] (-.5,-.5) to[out=30,in=-90] (-.6,0.8) to[out=90,in=180] (0,1.6) to[out=0,in=90] (.6,0.8) to[out=-90,in=150] (.5,-.5);	
			\draw (-.5,-.5) to[out=30,in=-90] (-.6,0.8) to[out=90,in=180] (0,1.6) to[out=0,in=90] (.6,0.8) to[out=-90,in=150] (.5,-.5);
			\draw [thick,dotted] (-.5,-.5) to[out=30,in=-90] (-.6,0.8) to[out=90,in=180] (0,1.6) to[out=0,in=90] (.6,0.8) to[out=-90,in=150] (.5,-.5);	
			\draw [thick,Mahogany,->-=.55] (0,0.8) ellipse (.2cm and .3cm);
			\draw [dotted,very thick,Mahogany] (0,0.8) ellipse (.2cm and .3cm);
			\node at (0.35,.8) [Mahogany] {$e_1$};
			\begin{scope}
				\clip (-.25,1) rectangle (0,1.6);
				\draw [thick,Periwinkle,->-=.55] (0,1.35) ellipse (.15cm and .25cm);
			\end{scope}
			\begin{scope}
				\clip (0,1) rectangle (.25,1.6);
				\draw [dashed,thick,Periwinkle] (0,1.35) ellipse (.15cm and .25cm);
			\end{scope}
			\node at (0,1.8) [Periwinkle] {$f_1$};
			\begin{scope}
				\clip (-.5,-.5) to[out=30,in=-90] (-.6,0.8) to[out=90,in=180] (0,1.6) to[out=0,in=90] (.6,0.8) to[out=-90,in=150] (.5,-.5);
				\draw [dashed] (.8,-.55) ellipse (2cm and 1cm);	
			\end{scope}
		\end{scope}
		
		\begin{scope}[xshift=.8cm,yshift=.55cm]
			\fill[white] (-.5,-.5) to[out=30,in=-90] (-.6,0.8) to[out=90,in=180] (0,1.6) to[out=0,in=90] (.6,0.8) to[out=-90,in=150] (.5,-.5);	
			\draw (-.5,-.5) to[out=30,in=-90] (-.6,0.8) to[out=90,in=180] (0,1.6) to[out=0,in=90] (.6,0.8) to[out=-90,in=150] (.5,-.5);
			\draw [thick,dotted] (-.5,-.5) to[out=30,in=-90] (-.6,0.8) to[out=90,in=180] (0,1.6) to[out=0,in=90] (.6,0.8) to[out=-90,in=150] (.5,-.5);		
			\draw [thick,Mahogany,->-=.55] (0,0.8) ellipse (.2cm and .3cm);
			\draw [dotted,very thick,Mahogany] (0,0.8) ellipse (.2cm and .3cm);
			\node at (0.35,.8) [Mahogany] {$e_2$};
			\begin{scope}
				\clip (-.25,1) rectangle (0,1.6);
				\draw [thick,Periwinkle,->-=.55] (0,1.35) ellipse (.15cm and .25cm);
			\end{scope}
			\begin{scope}
				\clip (0,1) rectangle (.25,1.6);
				\draw [dashed,thick,Periwinkle] (0,1.35) ellipse (.15cm and .25cm);
			\end{scope}
			\node at (0,1.8) [Periwinkle] {$f_2$};
			\begin{scope}
				\clip (-.5,-.5) to[out=30,in=-90] (-.6,0.8) to[out=90,in=180] (0,1.6) to[out=0,in=90] (.6,0.8) to[out=-90,in=150] (.5,-.5);
				\draw [dashed] (-.8,-.55) ellipse (2cm and 1cm);	
			\end{scope}
		\end{scope}

		\draw [thick,dotted] (.3,0.05) to[in=-30,out=-150,looseness=.5] (-.3,0.05);
		\draw [thick,dotted] (1.3,0.05) to[out=-30,in=180,looseness=.5] (2,0);
		\draw [thick,dotted] (-1.3,0.05) to[out=-150,in=0,looseness=.5] (-2,0);
		\draw [<->] (1.7,-.2) to[bend right=10] (1.7,.2);
		\node at (0,-1.2) {$\partial_0 V_2$};
		\node at (0,-.5) {$V_2$};
		
	\end{tikzpicture}
	\caption{The reflection $r_g$ in dimension 2 and $g=2$. It is given by reflection in the $yz$-plane, fixes the red curves $e_i$, and reflects the blue curves $f_i$. The dotted line segments indicate the fixed points $\partial_1 V_2$ of the involution.}
	\label{fig:reflection}
\end{figure}

\begin{lemma}
There is an involution $r_g \in \mr{Diff}_{\pm}(W_{g,1})$ restricting to $r_\partial$ on the boundary and a framing $\ell_g \in \mr{Bun}(\mr{Fr}(TW_{g,1}), \Theta_{\fr})$ such that $\ell_g \cdot r_g = \ell_g$. This $\ell_g$ can serve as the basepoint framing that we have been considering since \cref{sec:DecomposingFramedTorelli}.
\end{lemma}

\begin{proof}
Let us first construct $r_g$, following Figure \ref{fig:reflection}. The manifold $W_g = \#^g S^n \times S^n$ may be obtained by gluing together two copies of the handlebody $V_g = \natural^g S^n \times D^n$ along their common boundary $\partial V_g = \#^g S^n \times S^{n-1}$. Choosing a $D^{2n-1} \subset \partial V_g$, we can modify the smooth structure on $V_g$ to make it a manifold with corners, whose boundary strata are $\partial_0 V_g \coloneqq D^{2n-1}$ and $\partial_1 V_g \coloneqq \partial V_g \setminus \mathrm{int}(D^{2n-1})$ meeting along their common boundary. Gluing two copies of this modified $V_g$ along $\partial_1 V_g$ gives $W_{g,1}$. Swapping the two copies of $V_g$ therefore gives an involution on $W_{g,1}$: this is our choice for the reflection $r_g$. Under the natural identification $\partial W_{g,1} \approx S^{2n-1}$ the reflection $r_g$ restricts to $r_\partial$ on the boundary.

The fixed set of this involution is $\partial_1 V_g$, and $TW_{g,1}\vert_{\partial_1 V_g} = \epsilon^1 \oplus T \partial_1 V_g$. The manifold $\partial_1 V_g \cong \#^g S^n \times S^{n-1} \setminus \mathrm{int}(D^{2n-1})$ admits a framing $\ell'_\mr{mid}\colon \mr{Fr}(T\partial_1 V_g) \to \mr{GL}_{2n-1}(\bR)$, which induces up to a framing
\[\ell_\mr{mid} \colon \mr{Fr}(TW_{g,1}\vert_{\partial_1 V_g}) = \mr{Fr}(\epsilon^1 \oplus T\partial_1 V_g) = \mathrm{Ind}_{\mr{GL}_{2n-1}(\bR)}^{\mr{GL}_{2n}(\bR)} \mr{Fr}(T\partial_1 V_g) \lra \Theta_{\fr} = \mr{GL}_{2n}(\bR).\]
This framing has the property that $\ell_\mr{mid} \circ Dr_g\vert_{\partial_1 V_g}$ differs from $\ell_\mr{mid}$ by reflecting the first coordinate. We wish to show that it extends to a framing on $W_{g,1}$ which also transforms under $- \circ Dr_g$ by reflecting the first coordinate.
 
There is a single obstruction to solving the $\mr{GL}_{2n}(\bR)$-equivariant extension problem
\begin{equation*}
\begin{tikzcd}
\mr{Fr}(TW_{g,1}\vert_{\partial_1 V_g}) \dar \rar{\ell_\mr{mid}} & \mr{GL}_{2n}(\bR)\\[-3pt]
\mr{Fr}(TV_{g}), \arrow[dotted]{ru}[swap]{\ell_{V}}
\end{tikzcd}
\end{equation*}
which lies in $H^n(V_g, \partial_1 V_g ; \pi_{n-1}(\mr{GL}_{2n}(\bR)))$. But this is surjected upon by the choices $H^{n-1}(\partial_1 V_g ; \pi_{n-1}(\mr{GL}_{2n-1}(\bR)))$ of the framing $\ell'_\mr{mid}$. Thus after perhaps re-choosing $\ell'_\mr{mid}$, the extension $\ell_V$ exists. This extends to a unique framing $\ell_g$ of $W_{g,1}$ by demanding that it transforms under $- \circ Dr_g$ by reflecting the first coordinate.
\end{proof}

In a sense the choice of $r_g$ provided by this lemma will not matter, as we will shortly see. The choice of $\ell_g$ induces a boundary condition $\ell_\partial$ which has the property that $\ell_\partial \cdot r_\partial = \ell_\partial$ under the analogue of the action \eqref{eq:TwistedAction}, meaning that that formula also defines an action of $\mr{Diff}_{\pm}(W_{g,1})$ on $\mr{Bun}(\mr{Fr}(TW_{g,1}), \Theta_{\fr} ; \ell_\partial)$. Using this we can form the homotopy fibre sequence
\begin{equation*}
\begin{tikzcd}[column sep=0.21cm]
{\mr{Bun}(\mr{Fr}(TW_{g,1}), \Theta_{\fr} ; \ell_\partial) \! \sslash \! \mr{Diff}_{\partial}(W_{g,1})} \rar \arrow[d, equals] & {\mr{Bun}(\mr{Fr}(TW_{g,1}), \Theta_{\fr} ; \ell_\partial) \! \sslash \! \mr{Diff}_{\pm}(W_{g,1})} \rar \arrow[d, equals] & BC_2\\[-7pt]
B\mr{Diff}^{\fr}_\partial (W_{g,1}) & B\mr{Diff}^{\fr}_{\pm} (W_{g,1})
\end{tikzcd}
\end{equation*}
and as usual abbreviating $\ell \coloneqq \ell_g$ and taking this as a basepoint we write
\[\check{\Gamma}^{\fr, \ell, \pm}_g \coloneqq \pi_1(B\mr{Diff}^{\fr}_{\pm} (W_{g,1})_\ell).\]
As the involution $r_g$ fixes $\ell=\ell_g$ it defines a section of this homotopy fibre sequence, so on fundamental groups there is an extension
\begin{equation}\label{eqn:gamma-ext-pm} 
	1 \lra \check{\Gamma}^{\fr, \ell}_g \lra \check{\Gamma}^{\fr, \ell, \pm}_g \lra C_2 \lra 1
\end{equation}
with a splitting coming from $r_g$, but we shall endeavour not to use it. The homotopy fibre sequence shows that $\check{\Gamma}^{\fr, \ell, \pm}_g$ acts, in the \emph{based} homotopy category, on $B\mr{Diff}^{\fr}_\partial (W_{g,1})_{\ell}$, extending the tautological action of the fundamental group $\check{\Gamma}^{\fr, \ell}_g$ of this space. In the unbased homotopy category this action descends to an action of $\check{\Gamma}^{\fr, \ell, \pm}_g/\check{\Gamma}^{\fr, \ell}_g = C_2$, which is given by the monodromy of the homotopy fibre sequence above. 

Similar considerations, with $\half \partial W_{g,1}$ chosen to be invariant under $r_\partial$, give an action of $\smash{\check{\Gamma}^{\fr, \ell, \pm}_g}$ on $\smash{B\Emb^{\fr, \cong}_{\half\partial}(W_{g,1})_{\ell}}$ in the based homotopy category. Giving $D^{2n}$ the standard framing $\ell_0$, the reflection $r_\partial \in \mr{O}(2n)$ equips the space $B\Diff^{\fr}_\partial(D^{2n})_{\ell_0}$ with a based involution, in particular inducing an action of $\check{\Gamma}^{\fr, \ell, \pm}_g$ on $B\Diff^{\fr}_\partial(D^{2n})_{\ell_0}$ in the based homotopy category. With these choices the framed Weiss fibre sequence \eqref{eqn:weiss-framed} is a homotopy fibre sequence of spaces equipped with $\check{\Gamma}^{\fr, \ell, \pm}_g$-actions in the based homotopy category. The group given by 
\[G_g^{\fr,[\ell], \pm} \coloneqq \mr{im}\left[\check{\Gamma}^{\fr, \ell, \pm}_g \to  \mathrm{GL}_{2g}(\bZ)\right],\]
the image of the map given by the action on $H_n(W_{g,1};\bZ)$, fits into an extension
\[1\lra G_g^{\fr,[\ell]} \lra G_g^{\fr,[\ell], \pm} \lra C_2 \lra 1\]
where the right map is surjective since any lift of the generator of $C_2$ in \eqref{eqn:gamma-ext-pm} reverses orientation. It acts on $H = H_n(W_{g,1};\bQ)$ and $H^\vee = H^n(W_{g,1};\bQ)$, extending the $G_g^{\fr,[\ell]}$-actions we have discussed all along. While $H$ and $H^\vee$ are isomorphic $G_g^{\fr,[\ell]}$-representations, by Poincar{\'e} duality, they are not isomorphic as $G_g^{\fr,[\ell], \pm}$-representations: writing $\bQ^{\text{sign}}$ for the sign representation of $C_2$ pulled back to $G_g^{\fr,[\ell], \pm}$, we instead have $H^\vee \cong H \otimes \bQ^{\text{sign}}$.

\begin{lemma}\label{lem:RefActionOnInvariants}
The $G^{\fr, [\ell]}_g$-invariants of $H^{\otimes p} \otimes (H^\vee)^{\otimes q}$ are trivial if $p+q$ is odd. If $p+q=2k$ then the residual action of $G_g^{\fr,[\ell], \pm}/G_g^{\fr,[\ell]} = C_2$ on these invariants is as $(-1)^{p+k}$.
\end{lemma}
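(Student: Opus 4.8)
The statement is a pure representation-theory computation about the algebraic $\olG$-representation $H^{\otimes p} \otimes (H^\vee)^{\otimes q}$, so the plan is to reduce it to two inputs: (a) the structure theory of invariants of such tensor powers under the arithmetic group $\olG$, and (b) the explicit way the reflection $(r_g)_*$ acts. First I would dispose of the parity statement: if $p+q$ is odd then $H^{\otimes p} \otimes (H^\vee)^{\otimes q}$ is an odd algebraic representation of $\olG$ in the sense of Section~\ref{sec:Parity}, because $-\mr{id} \in \mathbf{G}(\bZ)$ acts on each tensor factor $H$ or $H^\vee$ by $-1$ and hence by $(-1)^{p+q}$ on the tensor power; by the last displayed formula of Section~\ref{sec:irreps}, $V_\mr{odd}^{\olG} = 0$, so the invariants vanish. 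This handles the first sentence.

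For the case $p+q = 2k$, I would first identify $H^\vee$ with $H$ as a $\olG$-representation using the pairing $\lambda$, via $x \mapsto \lambda(x,-)$; this identification is $\olG$-equivariant but not $(r_g)_*$-equivariant in an obvious way, and tracking the discrepancy is exactly the point. Concretely, the reflection $r_g$ acts on $H = H_n(W_{g,1};\bQ)$ by an involution which (in a hyperbolic basis adapted to the decomposition $W_{g,1} = V_g \cup_{\partial_1 V_g} V_g$ from the proof of Lemma~\ref{lem:InvFraming}, cf.\ Figure~\ref{fig:reflection}) fixes the classes $e_i$ and negates the classes $f_i$, so it satisfies $\lambda((r_g)_* x, (r_g)_* y) = -\lambda(x,y)$: that is, $(r_g)_*$ is an \emph{anti}-isometry of $(H,\lambda)$. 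Consequently the identification $H \cong H^\vee$ intertwines the action of $(r_g)_*$ on $H$ with $-1$ times its action on $H^\vee$ (reading off the sign from $\lambda((r_g)_* x, -) = -\lambda(x, (r_g)_*^{-1}(-))$). Thus under $H^{\otimes p} \otimes (H^\vee)^{\otimes q} \cong H^{\otimes (p+q)} = H^{\otimes 2k}$, the involution $(r_g)_*$ gets multiplied by the sign $(-1)^q$ relative to its ``diagonal'' action $(r_g)_*^{\otimes 2k}$ on $H^{\otimes 2k}$.

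Next I would invoke the first fundamental theorem of invariant theory, as used throughout Section~\ref{sec:CohX1} (see \cite[Section~2.1.4]{KR-WTorelli}): $[H^{\otimes 2k}]^{\olG} = [H^{\otimes 2k}]^{\mathbf{G}(\bZ)}$ is spanned by the contractions indexed by perfect matchings of $\ul{2k}$, i.e.\ by inserting $k$ copies of the invariant form $\omega = \sum_i a_i \otimes a_i^\#$ into the $2k$ tensor slots in all possible ways. Since $(r_g)_*$ acts on $H^{\otimes 2k}$ diagonally and sends the $\olG$-invariant subspace to itself, and since $\omega$ spans the one-dimensional $\olG$-invariants of $H^{\otimes 2}$, the diagonal action of $(r_g)_*$ on $\omega \in H^{\otimes 2}$ is by the scalar $\det((r_g)_*|_H)^{?}$—more precisely, $(r_g)_*^{\otimes 2}(\omega)$ equals the unique-up-to-scalar invariant, so it is $\pm \omega$; the sign is computed by pairing with $\lambda \in (H^\vee)^{\otimes 2}$, on which $(r_g)_*$ acts as the anti-isometry, giving $(r_g)_*^{\otimes 2}(\omega) = -\omega$ (one anti-isometry sign per contracted pair). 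Therefore the diagonal action of $(r_g)_*$ on each basis matching-contraction in $[H^{\otimes 2k}]^{\olG}$ is $(-1)^k$ (one sign per matched pair, $k$ pairs), possibly combined with a permutation of the matchings which, being a permutation of a basis, does not affect the scalar when all basis elements carry the same eigenvalue—so $(r_g)_*$ acts on all of $[H^{\otimes 2k}]^{\olG}$ by the scalar $(-1)^k$. Combining this with the sign $(-1)^q$ from the previous paragraph gives total eigenvalue $(-1)^{k+q}$. Finally, since $p + q = 2k$ we have $q \equiv p \pmod 2$, so $(-1)^{k+q} = (-1)^{k+p} = (-1)^{p+k}$, which is the claimed formula.

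\textbf{Main obstacle.} The delicate point is getting every sign right: establishing that $(r_g)_*$ is an anti-isometry of $(H,\lambda)$ (this uses the explicit handlebody model of $r_g$ from Lemma~\ref{lem:InvFraming} and the observation that $r_g$ swaps the two copies of $V_g$, hence fixes ``half'' a hyperbolic pair and reverses the other half), and then carefully bookkeeping how this interacts both with the $\lambda$-duality $H \cong H^\vee$ and with the $\omega$-insertion description of the invariants. I would present this by first proving the $(r_g)_*$-anti-isometry claim as a short lemma, then reducing to $H^{\otimes 2k}$, then quoting the FFT and computing the scalar on a single $\omega$. The permutation-of-matchings subtlety is harmless precisely because all matching-contractions lie in a single eigenspace, so no cancellation or mixing occurs; I would state this explicitly to avoid the appearance of a gap.
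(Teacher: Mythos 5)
Your proof is correct and takes essentially the same route as the paper: parity disposes of the odd case, the fundamental theorem of invariant theory reduces the computation to the scalar by which $(r_g)_*$ acts on $\omega$-insertions, and the $(-1)^q$ from the non-equivariance of the identification $H^\vee \cong H$ combines with $(-1)^k$ to give $(-1)^{p+k}$. The only difference is cosmetic — you establish that $(r_g)_*$ is an anti-isometry of $(H,\lambda)$ and deduce $(r_g)_*\omega = -\omega$ indirectly by pairing with $\lambda$, whereas the paper reads this off directly from the hyperbolic-basis action $e_i \mapsto e_i$, $f_i \mapsto -f_i$ — and your worry about a possible permutation of the matchings is unnecessary (the diagonal action of $(r_g)_*$ commutes with permutations of tensor slots, so each matching insertion is sent to $(-1)^k$ times itself), though as you note it would be harmless anyway.
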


\begin{proof}
The first part follows from the discussion in Sections \ref{sec:Parity} and \ref{sec:irreps}, as $H^{\otimes p} \otimes (H^\vee)^{\otimes q}$ is odd when $p+q$ is odd. If $p+q=2k$ then we have $H^{\otimes p} \otimes (H^\vee)^{\otimes q} \cong H^{\otimes 2k} \otimes (\bQ^{\text{sign}})^{\otimes q}$ whose $G^{\fr, [\ell]}_g$-invariants are $[H^{\otimes 2k}]^{G^{\fr, [\ell]}_g} \otimes (\bQ^{\text{sign}})^{\otimes q}$. It follows from the fundamental theorem of invariant theory (see \cite[Section 2.1.4]{KR-WTorelli}) that the invariants $[H^{\otimes 2k}]^{G^{\fr, [\ell]}_g}$ are obtained from $\omega^{\otimes k}$ by permuting the $2k$ factors, where $\omega = \sum_{i=1}^g f_i \otimes e_i + (-1)^n e_i \otimes f_i \in H^{\otimes 2}$. Any lift of the generator of $C_2 = G_g^{\fr,[\ell], \pm}/G_g^{\fr,[\ell]}$ to $G_g^{\fr,[\ell], \pm}$ acts on this as $\omega \mapsto -\omega$, and acts on $\bQ^\text{sign}$ as $(-1)$, so acts on $[H^{\otimes 2k}]^{G^{\fr, [\ell]}_g} \otimes (\bQ^{\text{sign}})^{\otimes q}$ as $(-1)^{k+q}$, which equals $(-1)^{p+k}$ as $p$ and $q$ have the same parity.
\end{proof}

\begin{proof}[Proof of \cref{thm:Reflection}]
For part \ref{enum:Reflection-i}, the involution on $\Omega^{2n} \tfrac{\mr{Top}}{\mr{O}(2n)}$ is given by sending
\[f\colon (D^{2n}, \partial D^{2n}) \lra \left(\tfrac{\mr{Top}}{\mr{O}(2n)}, *\right),\] of the form $f(x) = h_x \,\mr{O}(2n)$, to the map $\bar{f}(x) = r \circ h_{r(x)}\circ  r\,\mr{O}(2n)$. As $\mr{Top}$ is an infinite loop space, conjugating by $r$ is homotopic to the identity, so this action is homotopic to $x \mapsto  h_{r(x)} \,\mr{O}(2n)$, i.e.~precomposing by the reflection of $D^{2n}$. This acts by multiplication by $-1$.

For part \ref{enum:Reflection-ii} we develop the commutative braid
\begin{equation*}
\begin{tikzcd}[row sep=0.45cm, column sep=-1.2cm]
\pi_{i+1}(B\mr{Diff}^{\fr}_\partial(D^{2n}))\oq \arrow{rd}{(e)} \arrow[bend right=60]{dd}{(e')} & & {[\pi_{i+1}(X_1(g))\oq]^{G_g^{\fr, [\ell]}}} \arrow{ld} \arrow[bend left=60]{dd}\\
 & {[\pi_{i+1}(B\mr{Tor}^{\fr}_\partial(W_{g,1}))\oq]^{G_g^{\fr, [\ell]}}} \arrow{rd} \arrow{ld} {(e)}\\
\pi_i(X_0)\oq \arrow{rd}{(z)} \arrow[bend right=60]{dd} & & {[\pi_{i+1}(B\mr{TorEmb}^{\fr}_{\half \partial}(W_{g,1}))\oq]^{G_g^{\fr, [\ell]}}} \arrow[dashed]{ld} \arrow[bend left=60]{dd}\\
 & \pi_i(F_n) \arrow{rd}{(z)}\arrow[dashed]{ld}\\
{[\pi_i(X_1(g))\oq]^{G_g^{\fr, [\ell]}}} \arrow{rd} \arrow[bend right=60]{dd} & & \pi_i(B\mr{Diff}^{\fr}_\partial(D^{2n}))\oq \arrow{ld}{(m)} \arrow[bend left=60]{dd}{(m')}\\
& {[\pi_i(B\mr{Tor}^{\fr}_\partial(W_{g,1}))\oq]^{G_g^{\fr,[\ell]}}} \arrow{rd}{(m)}\arrow{ld}\\
{[\pi_{i}(B\mr{TorEmb}^{\fr}_{\half \partial}(W_{g,1}))\oq]^{G_g^{\fr,[\ell]}}} & & \pi_i(X_0)\oq,
\end{tikzcd}
\end{equation*}
from the four long exact sequences of ($G_g^{\fr, [\ell]}$-invariants of) rational homotopy groups associated to \eqref{eq:Fn1}, \eqref{eq:Fn2}, and the row and column of \eqref{eq:CrossDiagram}. 

\vspace{1ex}

\noindent\textbf{Claim.} Every group in this braid is equipped with  a residual action of $G_g^{\fr,[\ell], \pm}/G_g^{\fr,[\ell]} = C_2$, and the solid arrows (but maybe not the dashed arrows) are $C_2$-equivariant.

\begin{proof}[Proof of Claim]
Developing the analogous diagram to \eqref{eq:BigDiagram} starting from
\[B\Diff^{\fr}_{\pm}(W_{g,1})_{\ell} \lra BG_g^{\fr,[\ell], \pm},\]
i.e.\ mapping from this to the plus-construction of its stabilisation, and then taking homotopy fibres, promotes \eqref{eq:BigDiagram} to a diagram of spaces equipped with actions of $\check{\Gamma}^{\fr, \ell, \pm}_g$ in the based homotopy category, In particular the long exact sequence of rational homotopy groups for the top row of that diagram, which is the row of \eqref{eq:CrossDiagram}, is $G^{\fr, [\ell], \pm}_g$-equivariant, so its associated long exact sequence of $G^{\fr, [\ell]}_g$-invariants is $C_2$-equivariant. Under conjugation by an orientation-reversing map, the signature classes in $H^{*}((BG_\infty^{\fr, [\ell_\infty]})^+;\bQ)$ transform as $\sigma_{4i-2n} \mapsto -\sigma_{4i-2n}$, and so by \cref{prop:BigDiagramFacts} \ref{enum:BigDiagramFacts-v} and \ref{enum:BigDiagramFacts-vi} the $C_2$-action on $\pi_*(X_0)\oq$ is by $(-1)$.

We have already explained why the long exact sequence of rational homotopy groups for the framed Weiss fibre sequence, which is the column of \eqref{eq:CrossDiagram}, is equipped with a $\check{\Gamma}^{\fr, \ell, \pm}_g$-action. The $\check{\Gamma}^{\fr, \ell}_g$-action descends to $G^{\fr, [\ell]}_g$, so this action descends to $G^{\fr, [\ell], \pm}_g$ and so its associated long exact sequence of $\smash{G^{\fr, [\ell]}_g}$-invariants is $C_2$-equivariant. 

Finally, the $C_2$-action on $\pi_*(F_n)$ is defined by the equivalence of \cref{prop:FisTopModTop2n} and the geometrically-defined involution on $\Omega^{2n+1}_0 \tfrac{\mr{Top}}{\mr{Top}(2n)}$. With the geometrically-defined involutions on all terms the long exact sequence on homotopy groups for $\Omega^{2n+1}_0 \tfrac{\mr{Top}}{\mr{Top}(2n)} \to \Omega^{2n}_0 {\mr{Top}(2n)} \to \Omega^{2n}_0 \mr{Top}$ is $C_2$-equivariant. Furthermore, $C_2$ acts by $(-1)$ on $\pi_*(\Omega^{2n}_0 \mr{Top})\oq$, as in part \ref{enum:Reflection-i}. The homotopy equivalence $(\Omega^{2n}_0 \mr{Top})\oq \simeq (X_0)\oq$ in the proof of \cref{prop:FisTopModTop2n} is therefore $C_2$-equivariant on homotopy groups, as we have explained above that it also acts by $(-1)$ on $\pi_*(X_0)\oq$. Thus the long exact sequence on rational homotopy groups for \eqref{eq:Fn2} is $C_2$-equivariant.
\end{proof}

Let $r$ and $i$ be such that $i \in [2r(n-2)-1, 2rn-2]$, but also such that $i> 2(r-1)n-2$ and $i < 2(r+1)(n-2)-1$, i.e.\ $i$ lies in the $r$th band and not in any other bands. We wish to show that the $C_2$-action on $\pi_i(F_n)\oq$ is as $(-1)^r$.

\vspace{1ex}

\noindent\textbf{Claim.} $C_2$ acts on $[\pi_i(X_1(g))\oq]^{G_g^{\fr, [\ell]}}$ and $[\pi_{i+1}(B\mr{TorEmb}^{\fr}_{\half \partial}(W_{g,1}))\oq]^{G_g^{\fr, [\ell]}}$ as $(-1)^r$.
\begin{proof}[Proof of Claim]
Firstly, recall from the discussion before the proof of \cref{prop:X1HtyEstimate} that in the stable range the cohomology of $X_1(g)$ is supported in degrees which are multiples of $n$, and is generated as an algebra by its $n$th cohomology, namely by the generalised Miller--Morita--Mumford classes $\kappa_1(v_1 \otimes v_2 \otimes v_3) \in H^n(X_1(g);\bQ)$ with $v_i \in H^\vee$. These classes are defined by fibre integration, and the fibre integration map depends on an orientation of the fibres and so transforms as $\bQ^\text{sign}$. Thus $H^{nr}(X_1(g);\bQ)$ is a quotient of $((H^\vee)^{\otimes 3} \otimes \bQ^\text{sign})^{\otimes r}$, and similarly the degree $nr$ part of $H^{*}(X_1(g);\bQ)^{\otimes p}$ is a quotient of $((H^\vee)^{\otimes 3} \otimes \bQ^\text{sign})^{\otimes r}$. By following the proof of \cref{prop:X1HtyEstimate} and using \cref{lem:RefActionOnInvariants}, we see that in the band of degrees $[2r(n-1)+1, 2rn-2]$ for $r \geq 2$, and in degree $2rn-1$ for $r=1$, the reflection acts on the $G^{\fr, [\ell]}_g$-invariants in the rational homotopy of $X_1(g)$ as $(-1)^r$ (as usual, with the understanding that this is inconclusive in degrees contained in two overlapping bands).

Secondly, recall from \cref{cor:EmbEstimate} that the $G^{\fr, [\ell]}_g$-invariants of the rational homotopy of $B\mr{TorEmb}^{\fr, \cong}_{\half\partial}(W_{g,1})$ are concentrated in the bands of degrees $[2r(n-2),2r(n-1)+1]$. These rational homotopy groups have a filtration whose associated graded is given by subquotients of the $G^{\fr, [\ell]}_g$-invariants of
\begin{enumerate}[(i)]
\item $\pi_*(B\mr{hAut}_{\half \partial}(W_{g,1}))\oq$, or 
\item $\pi_{*}(BL_k \Emb_{\half \partial}(W_{g,1})^\times_{\mr{id}}))\oq$.
\end{enumerate}
For (i), by \cref{lem:haut-der} and the following remarks there is a contribution in degree $2r(n-1)+1$ given by a direct sum of quotients of $H^\vee \otimes H^{\otimes 2r+1}$. The reflection acts as $(-1)^r$ on its $G^{\fr, [\ell]}_g$-invariants, by \cref{lem:RefActionOnInvariants}. 
For (ii), there is a contribution in degree $2r(n-1)-k$ when $k \leq 2r+2$, which is a subquotient of  
\begin{equation}\label{eqn:refl-emb-term} \left[ H^p(W_{g,1}^k,\Delta_{\half \partial};\bQ) \otimes \pi_q(\tohofib_{I \subset \ul{k}} \Emb(\ul{k} \setminus I,W_{g,1})\oq\right]^{\fS_k}
\end{equation}
for $q-p+1 = 2r(n-1)-k$, $q = (2r+s)(n-1)+1$, and $p = s(n-1)+k$. The two terms were identified in \cref{thm:CohConfSpaces} and \cref{prop:extended-kd} respectively. 

Considering the proof \cref{thm:CohConfSpaces} one sees it does not involve a choice of orientation and we read off that $H^p(W_{g,1}^k,\Delta_{\half \partial};\bQ)$ for $p = s(n-1)+k$ is a direct sum of quotients of $(H^\vee)^{\otimes s}$. \cref{prop:extended-kd} needs more unwinding: forgetting the $\fS_k$-action, \cref{lem:tohofib-restr-free-lie-alg} exhibits $\pi_{*+1}(\tohofib_{I \subset \ul{k}} \Emb(\ul{k} \setminus I,W_{g,1})\oq$ as a subquotient of $\mr{Lie}(\bQ\{t_i \mid i \in S\} \oplus H)$, where the reflection acts on the elements $t_i$ of degree $2n-2$ by $-1$, as one sees from their construction in \cref{sec:homotopy-lie-alg}. This shows $\pi_{q}(\tohofib_{I \subset \ul{k}} \Emb(\ul{k} \setminus I,W_{g,1}))\oq$ for $q = (2r+s)(n-1)+1$ is a direct sum of quotients of $(\bQ^\mr{sign})^{\otimes r'} \otimes H^{\otimes s'}$ for $2r'+s' = 2r+s$. Thus \eqref{eqn:refl-emb-term} is  a direct sum of quotients of $(H^{\vee})^{\otimes s} \otimes (\bQ^\mr{sign})^{\otimes r'} \otimes H^{\otimes s'}$. This can contain non-zero $\smash{G_g^{\fr, [\ell]}}$-invariants only if $s+s'$ is even and then the reflection acts on these by $(-1)^{r'+s'+(s+s')/2}$ by \cref{lem:RefActionOnInvariants}, which equals $(-1)^r$ as $r'+s'+(s+s')/2 \equiv r'+(s-s')/2 \equiv r \pmod 2$.

Under the assumptions we have made on $i$ both of the groups $[\pi_i(X_1(g))\oq]^{G_g^{\fr, [\ell]}}$ and $[\pi_{i+1}(B\mr{TorEmb}^{\fr}_{\half \partial}(W_{g,1}))\oq]^{G_g^{\fr, [\ell]}}$ lie in only the $r$th band, and so the reflection acts on them as $(-1)^r$.
\end{proof}

To finish the argument, consider the diagram obtained by taking $(-1)^{r+1}$-eigenspaces in the braid diagram: the dashed arrows should then be ignored, but the remainder of the diagram still commutes and the remaining strands are still exact. By the Claim both $[\pi_i(X_1(g))\oq]^{G_g^{\fr, [\ell]}}$ and $[\pi_{i+1}(B\mr{TorEmb}^{\fr}_{\half \partial}(W_{g,1}))\oq]^{G_g^{\fr, [\ell]}}$ contribute trivially to this diagram of $(-1)^{r+1}$-eigenspaces. Thus the maps $(e)$ are epimorphisms, so $(e')$ is too, and the maps $(m)$ are monomorphisms, so $(m')$ is too: thus the maps $(z)$ are zero. It follows that the $(-1)^{r+1}$-eigenspace of $\pi_i(F_n)$ is trivial: in other words $C_2$ acts as $(-1)^r$ on it.
\end{proof}

\subsection{The rational homotopy of diffeomorphism groups}\label{sec:rat-homotopy-diff}

We may use the discussion in this section to obtain information about the higher rational homotopy groups of $B\Diff_\partial(W_{g,1})$ in stable range.

\begin{theorem}\label{thm:HtyBDiff}
Let $n \geq 3$. The $\Gamma_g$-representations $\pi_*(B\Diff_\partial(W_{g,1}))_\bQ$, in degrees $* \geq 2$, are $gr$-algebraic with respect to 
\[1 \lra \ker \left[{\Gamma}_g \to G'_g\right] \lra {\Gamma}_g \lra G'_g \lra 1.\]
For any filtration exhibiting them as such, as long as $g$ is large enough and $n$ is odd, in degrees satisfying $2 \leq * \leq 4n-10$ we have that $gr \, \pi_*(B\Diff_\partial(W_{g,1}))_\bQ$ is isomorphic to 
\[\left(\bigoplus_{j=0}^{\lfloor (3n-5)/4 \rfloor} V_1[3n-4-4j]\right) \oplus \left(\bigoplus_{i \geq 0} V_0[2n-1+4i]\right) \oplus (V_1+V_{1^3})[n]\]
\[ \oplus (V_0 + V_{1^2} + V_{2^2})[2n-1] \oplus (V_{2,1} + V_{3,1^2})[3n-2]\]
as $G'_g$-representations. If $n$ is even then the same holds with all partitions transposed.
\end{theorem}
\begin{proof}
The space $B\mathrm{Tor}_\partial(W_{g,1})$ is nilpotent by \cite[Theorem C]{KR-WAlg}. As discussed in \cref{sec:HtyX1}, in this setting there is an unstable Adams spectral sequence converging to $\mathrm{Hom}_\bZ( \pi_*(\Omega B\mathrm{Tor}_\partial(W_{g,1})), \bQ)$ and starting with the Harrison homology of $H^*(B\mathrm{Tor}_\partial(W_{g,1});\bQ)$, and it is functorial with respect to based maps. As $\mathrm{Tor}_\partial(W_{g,1})$ is a normal subgroup of $\Diff_\partial(W_{g,1})$, $B\mathrm{Tor}_\partial(W_{g,1})$ is equipped with a $\Gamma_g$-action in the based homotopy category, so the unstable Adams spectral sequence is one of $\Gamma_g$-representations. As the $\Gamma_g$-action on $H^*(B\mathrm{Tor}_\partial(W_{g,1});\bQ)$ descends to an algebraic $G'_g$-representation, by \cite[Theorem A]{KR-WAlg}, and algebraic representations are closed under passing to subquotients, it follows that the $\Gamma_g$-action on the $E^\infty$-page also descends to an algebraic $G'_g$-representation: thus the filtration of the $\pi_i(B\Diff_\partial(W_{g,1}))_\bQ$ given by the unstable Adams spectral sequence exhibits them as being $gr$-algebraic $\Gamma_g$-representations.

Consider the defining fibration sequence
\begin{equation}\label{eq:ForgetFr}
\mr{Bun}_\partial^{ \fr}(\mr{Fr}(TW_{g,1}); \ell_\partial)_{[\ell]} \lra B\Diff^\fr_\partial(W_{g,1})_{\ell} \lra B\Diff_\partial(W_{g,1}),
\end{equation}
where $(-)_{[\ell]}$ denotes those path-components given by the $\pi_0(\Diff_\partial(W_{g,1}))$-orbit of $\ell$. Using the notation introduced in \cref{sec:path-components-etc}, there are homomorphisms
\begin{equation*}
\begin{tikzcd}
\pi_1(B\Diff^\fr_\partial(W_{g,1}), \ell) \arrow[r, equals] & \check{\Gamma}^{\fr,\ell}_g \rar \arrow[d, two heads] & \Gamma_g \arrow[d, two heads] \arrow[r, equals] & \pi_1(B\Diff_\partial(W_{g,1}))\\[-7pt]
& G_g^{\fr, [\ell]} \arrow[r, hook] & G'_g.
\end{tikzcd}
\end{equation*}
The inclusion $G_g^{\fr, [\ell]} \to G'_g$ has finite index by \cite[Theorem 8.7]{KR-WAlg}. So if $V$ is a $gr$-algebraic $\Gamma_g$-representation, with a filtration exhibiting it as such having associated graded $gr \, V$ which descends to an algebraic $G'_g$-representation, then the algebraic representation $gr \, V$ is determined by its restriction to the finite index subgroup $\smash{G_g^{\fr, [\ell]}}$, so is also determined by $V$ considered as a $gr$-algebraic representation of $\check{\Gamma}^{\fr,\ell}_g$.

The long exact sequence on rational homotopy groups for \eqref{eq:ForgetFr} and based at $\ell$ is equipped with an action of $\check{\Gamma}^{\fr,\ell}_g$. We will use it to determine $\pi_*(B\Diff_\partial(W_{g,1}))_\bQ$ as a $\smash{\check{\Gamma}^{\fr,\ell}_g}$-representation, and then appeal to the previous paragraph.

We first determine $\pi_*(B\Diff_\partial^{\fr}(W_{g,1})_\ell)\oq$. To do so consider the diagram \eqref{eq:CrossDiagram}, and recall that $F_n$ denotes the homotopy fibre of either of the dashed maps after rationalisation. By \cref{prop:HtyF} the space $F_n$ is $(4n-10)$-connected, and hence the map $\pi_*(B\Diff_\partial^{\fr}(D^{2n})_{\ell_0}) \oq \to \pi_*(X_0) \oq$ is an isomorphism in the range $* \leq 4n-10$. This implies that the long exact sequence on rational homotopy groups for the row of \eqref{eq:CrossDiagram} splits into short exact sequences in this range of degrees. By \cref{prop:X1HtyEstimate}, in the range $* < 4n-3$ the rational homotopy groups $\pi_*(X_1(g)) \oq$ are supported in degrees $n$, $2n-1$, and $3n-2$, and in \cref{prop:HtyX1LowDeg} we determined these groups. Together with \cref{prop:BigDiagramFacts} \ref{enum:BigDiagramFacts-vi} this gives
\[\pi_*({B\mr{Tor}}_\partial^{\fr}(W_{g,1})_\ell)\oq \cong \,\bigoplus_{\mathclap{i \geq -\lfloor (2n-1)/4\rfloor}}\, V_0[2n-1+4i] \oplus \substack{V_{1^3}[n]\\ \oplus\\ (V_0 + V_{1^2} + V_{2^2})[2n-1]\\ \oplus\\ (V_{2,1} + V_{3,1^2})[3n-2]}\]
as algebraic $G^{\fr, [\ell]}_g$-representations in degrees $* \leq 4n-10$. In degrees $* \geq 2$ this is the same as  $\pi_*(B\Diff_\partial^{\fr}(W_{g,1})_\ell)\oq$. Implicitly, the action of $\check{\Gamma}^{\fr,\ell}_g$ on these groups descends to an algebraic action of $G^{\fr, [\ell]}_g$.

We now determine $\pi_*(\mr{Bun}_\partial^{ \fr}(\mr{Fr}(TW_{g,1}); \ell_\partial)_{[\ell]})\oq$. The group $\check{\Gamma}^{\fr,\ell}_g$ acts, in the homotopy category, on $\mr{Bun}_\partial^{ \fr}(\mr{Fr}(TW_{g,1}); \ell_\partial)$ preserving the basepoint $\ell$. Thus the equivalence
\[- \cdot\ell \colon \mr{map}_\partial(W_{g,1}, \mr{SO}(2n)) \overset{\sim}\lra \mr{Bun}_\partial^{ \fr}(\mr{Fr}(TW_{g,1}); \ell_\partial),\]
induced by acting on $\ell$ by reframing, is $\check{\Gamma}^{\fr,\ell}_g$-equivariant when this group acts on the domain by precomposition. There is a fibre sequence
\[\mr{map}_\partial(W_{g,1}, \mr{SO}(2n)) \lra \mr{map}_\ast(W_{g,1}, \mr{SO}(2n)) \lra \Omega^{2n-1} \mr{SO}(2n)\]
and (cf.\ \cite[Section 8.2.2]{KR-WAlg}) the rightmost map is zero on homotopy groups as it is given by a sum of Whitehead products and these vanish in the $H$-space $\mr{SO}(2n)$, giving a short exact sequence of $\check{\Gamma}^{\fr,\ell}_g$-representations
\[0 \lra \bigoplus_{\mathclap{i = -\lfloor (2n-1)/4\rfloor}}^{-1} V_0[2n-1+4i] \lra \pi_*(\mr{map}_\partial(W_{g,1}, \mr{SO}(2n))) \lra \substack{ V_1[n-1]\\ \oplus\\ \bigoplus_{j=0}^{\lfloor (3n-5)/4 \rfloor} V_1[3n-5-4j]} \lra 0.\]
 
To determine the map in the long exact sequence associated to \eqref{eq:ForgetFr}, we see by comparison with the case $g=0$ that the terms $V_0[2n-1+4i]$ with $i<0$ in our description of $\pi_*(\mr{map}_\partial(W_{g,1}, \mr{SO}(2n)))\oq$ are sent isomorphically to those of the same name in our description of $\pi_*({B\Diff}_\partial^{\fr}(W_{g,1})_\ell)\oq$, and that the $V_1$ terms are sent to zero as this irreducible representation does not arise in $\pi_*({B\Diff}_\partial^{\fr}(W_{g,1})_\ell)\oq$. This leaves a short exact sequence of $\check{\Gamma}^{\fr,\ell}_g$-representations
 \[0 \lra \substack{\bigoplus_{i \geq 0} V_0[2n-1+4i]\\\oplus\\ V_{1^3}[n]\\ \oplus\\ (V_0 + V_{1^2} + V_{2^2})[2n-1]\\ \oplus\\ (V_{2,1} + V_{3,1^2})[3n-2]} \lra \pi_*(B\Diff_\partial(W_{g,1}))_\bQ \lra \substack{ V_1[n]\\ \oplus\\ \bigoplus_{j=0}^{\lfloor (3n-5)/4 \rfloor} V_1[3n-4-4j]} \lra 0\]
presenting the middle term as a $gr$-algebraic $\check{\Gamma}^{\fr,\ell}_g$-representation, and its associated graded algebraic $\smash{G_g^{\fr, [\ell]}}$-representation is given by the sum of the outer terms: as discussed above, this is also the associated graded algebraic $G'_g$-representation when $\pi_*(B\Diff_\partial(W_{g,1}))_\bQ$ is considered as a $gr$-algebraic $\Gamma_g$-representation. 

For $n$ even we use the even case of \cref{prop:HtyX1LowDeg}.
\end{proof}

\begin{remark}
If $n \not \equiv 3 \pmod 4$ then the kernel of $\Gamma_g \to G'_g$ is finite (by Kreck's description of it, see \cite[Theorem 2]{kreckisotopy}), and so the $\Gamma_g$-representations $\pi_*(B\Diff_\partial(W_{g,1}))_\bQ$ descend to algebraic $G'_g$-representations, which are given in degrees satisfying $2 \leq * \leq 4n-10$ by the expression in \cref{thm:HtyBDiff}.

If $n \equiv3 \pmod 4$ then this kernel is not finite and we believe that the $\Gamma_g$-action does not descend to a $G'_g$-action, or equivalently that the Torelli space $B\mr{Tor}_\partial(W_{g,1})$, which is nilpotent, is not rationally simple. We believe this may be obtained from the calculation in \cite[Section 5]{KR-WTorelli} of the cohomology algebra $H^*(B\mr{Tor}_\partial(W_{g,1});\bQ)$ in a stable range, the rational unstable Adams spectral sequence (which is a spectral sequence of graded Lie algebras), and the fact that Hirzebruch $L$-classes contain all possible monomials in Pontrjagin classes with nonzero coefficient \cite{BB}. We leave the details to the interested reader.
\end{remark}

Of course using Propositions \ref{prop:X1HtyEstimate} and \ref{prop:HtyX1LowDeg} one can also say something about $\pi_*(B\mr{Diff}_\partial(W_{g,1}))_\bQ$ in higher degrees outside certain bands.

\section{Koszul duality and the proof of \cref{thm:fourth-band}} \label{sec:explicit-computations} 

In the previous section we completed the proofs of Theorems \ref{thm:main-BDiff}, \ref{thm:main-bands}, and \ref{thm:HtyF}, and Corollary \ref{cor:main-BTop}; these results concern the existence of bands in the rational homotopy groups of $B\Diff^\fr_\partial(D^{2n})_{\ell_0}$, the behaviour of the rational homotopy groups outside of these bands, and an understanding of the second band. It remains to prove \cref{thm:fourth-band}, which concerns the fourth band and in particular asserts that it is non-trivial.

Let us explain what needs to be done. In \cref{sec:emb-calc} we described the Bousfield--Kan spectral sequence \eqref{eqn:bk-ss} for the embedding calculus Taylor tower \eqref{eqn:emb-tower}:
\[{}^{BK}\!E^1_{p,q} = \begin{cases} \pi_{q-p}(B\mr{hAut}_{\half \partial}(W_{g,1})) & \text{if $p=0$,} \\
\pi_{q-p}(BL_{p+1} \Emb_{\half \partial}(W_{g,1})^\times_\mr{id}) & \text{if $p \geq 1$,} \end{cases} \, \Longrightarrow \, \pi_{q-p}(B\Emb^\fr_{\half \partial}(W_{g,1})).\]
This is a completely convergent extended spectral sequence with an action of $\smash{\check{\Lambda}^{\fr,\ell}_g} = \pi_1(B\Emb^{\fr, \cong}_{\half \partial}(W_{g,1})_\ell)$. When we rationalise the $E^1$-page in total degree $>1$, this action factors over its image $\smash{G^{\fr,[[\ell]]}_g}$ in $G'_g$ and the latter action is algebraic.

In \cref{sec:haut} we described the rational homotopy groups of the homotopy automorphisms (the $p=0$ entries on the $E^1$-page) in terms of Schur functors involving the Lie-representations of symmetric groups. This description is amenable to algorithmic computation. We would like a similar description for the higher layers (the $p>0$ entries on the $E^1$-page), which are accessible through a Federer spectral sequence, with rationalised $E^2$-page as in \eqref{eqn:e2-higher-layers} of \cref{sec:rational-homotopy-layers}:
\[({}^F\!E^2_{p,q})\oq = \left[ H^p(W_{g,1}^k,\Delta_{\half \partial};\bQ) \otimes \pi_q\big(\tohofib_{I \subset \ul{k}} \Emb(\ul{k} \setminus I,W_{g,1})\big)\oq\right]^{\fS_k}.\]
This is a completely convergent extended spectral sequence with $\smash{\check{\Lambda}^{\fr,\ell}_g}$-action. By \cref{prop:higher-layers-qualitative} it rationally collapses at the $E^2$-page, the action factors over $\smash{G^{\fr,[[\ell]]}_g}$, and the latter action is algebraic. Thus, we need to find algorithmically computable descriptions of
\[H^\ast(W_{g,1}^k,\Delta_{\half \partial};\bQ) \quad \text{and} \quad \pi_q\big(\tohofib_{I \subset \ul{k}} \Emb(\ul{k} \setminus I,W_{g,1})\big)\oq\]
as $\smash{G^{\fr,[[\ell]]}_g} \times \fS_k$-representations. For the former, a formula was given in \cref{thm:CohConfSpaces} of \cref{sec:cohomology-products-diagonals}. For the latter, a description as a graded Lie algebra $\ff_g(\ul{k})$ was given in \cref{lem:total-extended-dk-homotopy} of \cref{sec:homotopy-total-homotopy-fibres}. \cref{lem:incl-excl} obtained $\ff_g(\ul{k})$ by inclusion-exclusion from the extended Drinfel'd--Kohno Lie algebras $\ft_g(\ul{k})$, and for these we gave a presentation in \cref{def:drinfeld-kohno-extended}. We did a few computations using this presentation by hand, but what is missing so far is an algorithmic description of the Frobenius character of $\ft_g(\ul{k})$. 

In this section we will provide exactly that, as a consequence of the following results:
\begin{enumerate}[(i)]
	\item In \cref{sec:edk-ass-gr} we describe a filtration of $\ft_g(\ul{k})$, whose associated graded $\mr{gr}\,\ft_g(\ul{k})$ is given by the value at $H[n-1]$ of a functor
	\begin{align*} 
	\cat{Gr}(\bQ\text{-}\cat{mod}^f) &\lra \cat{GrRep}(\fS_k) \\
	V &\longmapsto \bigoplus_s \mr{gr}\,\ft(\ul{s},\ul{k}) \otimes_{\fS_s} V^{\otimes \ul{s}},
	\end{align*}
	for certain $\cat{FB} \times \fS_k$-representations $\mr{gr}\,\ft(-,\ul{k})$. The Frobenius character of $\ft_g(\ul{k})$ is the same as that of $\mr{gr}\,\ft_g(\ul{k})$.	It thus suffices to determine the coefficients $\mr{gr}\,\ft(-,\ul{k})$, and by Schur--Weyl duality this task is equivalent to determining $\ft_g(\ul{k})$ when $n$ is odd and $\dim(H)$ is arbitrarily large, as a $\mr{GL}(H) \times \fS_k$-representation.
	\item In \cref{sec:kt-algebra} we prove $\mr{gr}\,\ft_g(\ul{k})$ is the Koszul dual of the Kriz--Totaro algebra.
	\item In \cref{sec:kt-free} we prove the Kriz--Totaro algebra is a free commutative algebra in the category of symmetric sequences with the Day convolution tensor product.
	\item In \cref{sec:computing-koszul-dual} we then give an algorithm to compute the Hilbert--Poincar\'e series of $\mr{gr}\,\ft(-,\ul{k})$, in the language of symmetric functions.
\end{enumerate}
In \cref{sec:computational-results} we describe the results of this applying algorithmic procedure in degrees $<5n-10$. These results are used in \cref{sec:fourth-band} to prove \cref{thm:fourth-band}: a computation of the fourth ``band.'' Throughout this section we suppose $n \geq 2$, so that various constructions yield degreewise finite-dimensional graded vector spaces.

\subsection{The associated graded of the extended Drinfel'd--Kohno Lie algebra}\label{sec:edk-ass-gr} The goal of this subsection is to explain that if one is interested in understanding $\ft_g(S)$---and hence $\ff_g(S)$---only as a graded $G'_g \times \fS_S$-representation, it suffices to understand the associated graded $\mr{gr}\,\ft_g(S)$ of a certain filtration. Doing so, $\mr{gr}\,\ft_g(S)$ is Koszul and we can proceed with the next subsection.

\medskip

We will lift $\ft_g(S)$ and $\ff_g(S)$ to Lie algebra objects in filtered objects in $\cat{GrRep}(G'_g \times \fS_S)$ as follows. Firstly, by \cref{def:drinfeld-kohno-extended}, $\ft_g(S)$ is a quotient of the free Lie algebra on a graded representation $\bQ\{t_{ij} \mid i,j \in S \text{ distinct}\} \oplus \bigoplus_{r \in S} H^{(r)}$. If we lift the generators to a filtered object in $\cat{GrRep}(G'_g \times \fS_S)$ then the free Lie algebra on them inherits a filtration: we do so by declaring that $t_{ij}$ has filtration $ \leq -1$ and $\smash{H^{(r)}}$ has filtration $\leq 0$. We then take the quotient by the ideal generated by the relations in \cref{def:drinfeld-kohno-extended} to get a filtration on $\ft_g(S)$.  This induces a filtration on $\ff_g(S)$ by intersection with the inclusion $\ff_g(S) \subset \ft_g(S)$.

The associated gradeds of these filtered objects are Lie algebra objects in $\cat{GrRep}(G'_g \times \fS_S)$. (They have an additional grading which we shall ignore.) As $g \geq 2$, algebraic $G'_g \times \fS_S$-representations are semi-simple, so there are non-canonical isomorphisms
\[\ft_g(S) \cong \mr{gr}\,\ft_g(S) \qquad \text{and} \qquad \ff_g(S) \cong \mr{gr}\,\ff_g(S)\]
of graded algebraic $G'_g \times \fS_S$-representations. The Lie algebra $\mr{gr}\,\ft_g(S)$ has a presentation similar to that of $\ft_g(S)$.

\begin{lemma}There is a presentation for $\mr{gr}\,\ft_g(S)$ equal to that in \cref{def:drinfeld-kohno-extended} but with relation \ref{enum:d-k-e-spheres} replaced by:
	\begin{enumerate}[(R1')] \setcounter{enumi}{5}
		\item \label{enum:d-k-e-gr} for $a^{(i)} \in H^{(i)}$ and $b \in H^{(j)}$ with $i,j$ distinct, $[a^{(i)},b^{(j)}] = 0$.
	\end{enumerate}
\end{lemma}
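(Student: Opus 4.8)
The plan is to identify the filtration explicitly, compute its associated graded relations directly from the defining presentation of $\ft_g(S)$, and then check that the candidate presentation with \ref{enum:d-k-e-gr} in place of \ref{enum:d-k-e-spheres} both surjects onto $\mr{gr}\,\ft_g(S)$ and has no room for extra relations, using dimension count / Hilbert--Poincar\'e series considerations (or, more cleanly, a direct argument that the obvious surjection is an isomorphism).

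First I would fix the filtration on $\ft_g(S)$: the free graded Lie algebra on the generators has a weight-type filtration where each $t_{ij}$ has filtration $-1$ and each $a^{(r)}\in H^{(r)}$ has filtration $0$, and a bracket of elements of filtration $p,q$ has filtration $p+q$. One checks this descends to the quotient $\ft_g(S)$: relations \ref{enum:d-k-e-sym}--\ref{enum:d-k-e-ext-braid} are homogeneous for this filtration (both sides lie in filtration $-1$ or $-2$), while relation \ref{enum:d-k-e-spheres}, namely $[a^{(i)},b^{(j)}]=\lambda(a,b)t_{ij}$, equates something of filtration $0$ (the left side) with something of filtration $-1$ (the right side), hence is an inhomogeneous relation of the form (filtration-$0$ element) $=$ (lower-filtration element). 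This is precisely the situation in which passing to the associated graded kills the top-filtration part: in $\mr{gr}$ the relation becomes $[a^{(i)},b^{(j)}]=0$, i.e.\ \ref{enum:d-k-e-gr}. So there is a canonical surjection from the Lie algebra $L$ defined by the presentation of the lemma onto $\mr{gr}\,\ft_g(S)$.

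Next I would prove this surjection is an isomorphism. The cleanest route is a dimension count degree by degree. The key point is that \ref{enum:d-k-e-spheres} is "filtration-lowering": writing $\ft_g(S)$ with its filtration $F_\bullet$, the inclusion-of-generators map and the defining relations show that $\dim_\bQ \ft_g(S)_d = \dim_\bQ (\mr{gr}\,\ft_g(S))_d$ in each internal degree $d$ (this is automatic from semisimplicity and the fact that $\mr{gr}$ preserves total dimension in each degree, since both $H^{(r)}$ and $t_{ij}$ sit in fixed degrees). Then I would argue that $L\twoheadrightarrow \mr{gr}\,\ft_g(S)$ must be an isomorphism by exhibiting a surjection in the \emph{other} direction, or by comparing both with the explicit combinatorial model developed later in Section \ref{sec:explicit-computations}. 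Concretely: the associated graded $\mr{gr}\,\ft_g(S)$ is, by the discussion at the start of Section \ref{sec:explicit-computations}, the value at $H[n-1]$ of the functor built from the $\cat{FB}\times\fS_k$-representations $\mr{gr}\,\ft(-,\ul k)$, and the same inclusion-exclusion / Koszul-duality description applies verbatim to the Lie algebra $L$ defined by the lemma's presentation, since that presentation is exactly the "quadratic part" of the Drinfel'd--Kohno presentation. Alternatively, one can use the PBW-type argument: both $L$ and $\mr{gr}\,\ft_g(S)$ have the same generators and the relations of $L$ are a subset (after taking $\mr{gr}$) of those forced on $\mr{gr}\,\ft_g(S)$, so $L\twoheadrightarrow \mr{gr}\,\ft_g(S)$; and $\mr{gr}\,\ft_g(S)$ cannot be smaller than $L$ because $\dim\ft_g(S)_d$, which equals $\dim(\mr{gr}\,\ft_g(S))_d$, is bounded below by what the relations \ref{enum:d-k-e-sym}--\ref{enum:d-k-e-ext-braid} together with \ref{enum:d-k-e-spheres} allow --- and those are precisely the relations whose $\mr{gr}$ gives $L$.

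The main obstacle I anticipate is the lower bound: showing that no relations beyond the images of \ref{enum:d-k-e-sym}--\ref{enum:d-k-e-ext-braid} and \ref{enum:d-k-e-gr} hold in $\mr{gr}\,\ft_g(S)$, i.e.\ that the surjection $L\twoheadrightarrow \mr{gr}\,\ft_g(S)$ does not collapse anything. I expect the right way to handle this is to observe that $\mr{gr}$ is exact on the (semisimple) category of graded algebraic $G'_g\times\fS_S$-representations and preserves the Hilbert--Poincar\'e series, so $\mathrm{ch}(\mr{gr}\,\ft_g(S))=\mathrm{ch}(\ft_g(S))$; then it suffices to know $\mathrm{ch}(\ft_g(S))$, which is pinned down by Proposition \ref{prop:extended-kd} (identifying $\ft_g(\ul k)$ with $\pi_{*+1}(\mr{Emb}(\ul k, W_{g,1}))\oq$) and the Hilton--Milnor / inclusion-exclusion computations already in hand. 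Comparing this known series with the series of the Lie algebra $L$ presented in the lemma --- which is a standard quadratic presentation whose character is computed by the Koszul-duality machinery of Section \ref{sec:explicit-computations} --- forces equality and hence the isomorphism. In the write-up I would keep this short: state the filtration, note \ref{enum:d-k-e-spheres} becomes \ref{enum:d-k-e-gr} in $\mr{gr}$, observe that $\mr{gr}$ preserves dimensions degreewise by semisimplicity, and conclude that the evident surjection from the presented Lie algebra is an isomorphism by a dimension count.
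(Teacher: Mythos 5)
Your setup is correct and matches the paper's: the filtration putting $t_{ij}$ in level $-1$ and $H^{(r)}$ in level $0$ does pass to $\ft_g(S)$, relation \ref{enum:d-k-e-spheres} is the unique inhomogeneous one, and its associated-graded is \ref{enum:d-k-e-gr}, yielding a canonical surjection from the presented Lie algebra $L$ onto $\mr{gr}\,\ft_g(S)$. The gap is in your ``dimension count'' conclusion. The observation that $\mr{gr}$ preserves dimensions degreewise (so $\dim (\mr{gr}\,\ft_g(S))_d = \dim (\ft_g(S))_d$) bounds the \emph{target} of the surjection, but says nothing about the \emph{source} $L$. To conclude injectivity you need an upper bound $\dim L_d \le \dim(\ft_g(S))_d$, and the ``short write-up'' you propose simply does not supply one: semisimplicity of the category of algebraic representations is irrelevant here, since $L$ is a quotient of a free Lie algebra by a homogeneous ideal and its graded dimensions are not determined by the presentation without further argument. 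Your fallback (``compare with the explicit combinatorial model of Section \ref{sec:explicit-computations}'') would in principle work, because Proposition \ref{prop:kriz-totaro-koszul-dual} identifies the presented Lie algebra $L$ with the Koszul dual of the Kriz--Totaro algebra and establishes Koszulness via Bezrukavnikov's PBW result, which pins down $\mathrm{ch}(L)$; but you would then still need an independent computation of $\mathrm{ch}(\ft_g(S))$ from the iterated extensions of Proposition \ref{prop:extended-kd} and a verification that the two series agree, which you do not carry out.

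The paper's actual argument is more elementary and avoids all of this. It proceeds by induction on $k$, using the short exact sequence $0 \to \bL(\{t_{ik}\} \oplus H[n-1]) \to \ft_g(\ul k) \xrightarrow{i_k^*} \ft_g(\ul{k-1}) \to 0$ established in the proof of Proposition \ref{prop:extended-kd}. Lifting $i_k^*$ and the analogous map $j_k^*$ on the presented algebra $\widetilde{\mr{gr}\,\ft}_g(\ul k)$ to a commuting square, the induction reduces to showing $\dim \ker(j_k^*)_d \le \dim \ker(i_k^*)_d = \dim \bL(\{t_{ik}\} \oplus H[n-1])_d$. The crucial step is rerunning the rewriting argument of Lemma \ref{lem:ker-ik} with \ref{enum:d-k-e-gr} in place of \ref{enum:d-k-e-spheres} to see that $\ker(j_k^*)$ is generated by $t_{ik}$ and $a^{(k)}$, hence is a \emph{quotient} of the free Lie algebra $\bL(\{t_{ik}\} \oplus H[n-1])$. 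This is precisely the upper bound your sketch is missing. You should either adopt this inductive route, or, if you want to pursue the Koszul duality route, explicitly compute both Hilbert--Poincar\'e series and acknowledge the dependence on the PBW basis theorem and on the (nontrivial) comparison of the two series.
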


\begin{proof}We may assume that $S = \ul{k}$. It is clear that $\mr{gr}\,\ft_g(\ul{k})$ is generated by the image of elements $t_{ij}$ and $a^{(r)}$, which we shall denote the same way, and that these satisfy the relations \ref{enum:d-k-e-sym}--\ref{enum:d-k-e-ext-braid} and \ref{enum:d-k-e-gr}. Let us denote by $\widetilde{\mr{gr}\,\ft}_g(\ul{k})$ the graded Lie algebra presented by these generators and relations, so that there is a surjective map
	\[\widetilde{\mr{gr}\,\ft}_g(\ul{k}) \longtwoheadrightarrow \mr{gr}\,\ft_g(\ul{k}),\]
	which we must show is an isomorphism. We will do so by induction over $k$; in the initial case $k=1$ it is the map $\mr{id} \colon \mr{Lie}(H[n-1]) \to \mr{Lie}(H[n-1])$ as there are no $t_{ij}$'s and so all generators have filtration $\leq 0$.
	
	For the induction step we use the description of $\ft_g(\ul{k})$ as an iterated extension of free graded Lie algebras from the proof of \cref{prop:extended-kd}: there is a map $i_k^* \colon \ft_g(\ul{k}) \to \ft_g(\ul{k-1})$, which is surjective because it admits a section, with kernel $\mr{Lie}(\{t_{ik} \mid i \in \ul{k-1}\} \oplus H[n-1])$. The map $i_k^*$ lifts to a filtered map with section, and hence induces a surjection $\mr{gr}\,\ft_g(\ul{k}) \to \mr{gr}\,\ft_g(\ul{k-1})$, uniquely determined by the fact that it sends generators $t_{ik}$ and $a^{(k)}$ to $0$ and is the identity on the remaining generators. The same prescription gives the left vertical map in the commutative diagram
	\[\begin{tikzcd} \widetilde{\mr{gr}\,\ft}_g(\ul{k}) \rar[two heads] \dar[swap]{j_k^*} & \mr{gr}\,\ft_g(\ul{k}) \dar[two heads]{i_k^*} \\[-3pt]
	\widetilde{\mr{gr}\,\ft}_g(\ul{k-1}) \rar{\cong} & \mr{gr}\,\ft_g(\ul{k-1}).\end{tikzcd}\]
	By the induction hypothesis the bottom horizontal map is an isomorphism, and we also know the top horizontal map is surjective. So to prove that the middle map is also an isomorphism, it suffices prove that $\dim\,\ker(j_k^*)_d \leq \dim\, \ker(i_k^*)_d$ for each degree $d$. But the argument in \cref{lem:ker-ik} goes through with \ref{enum:d-k-e-gr} replacing \ref{enum:d-k-e-spheres}, and hence the kernel of the left vertical map is generated by $t_{ik}$ and $a^{(k)}$. This implies that it is a quotient of $\mr{Lie}(\{t_{ik} \mid i \in \ul{k-1}\} \oplus H[n-1])$, which supplies the required inequality.
\end{proof}

In particular, this presentation for $\mr{gr}\,\ft_g(S)$ only depends on $H$ as a graded vector space. We generalise this as follows:

\begin{definition}\label{def:gr-drinfeld-kohno-extended}
	Let $S$ be a finite set, $n \geq 0$, and $L$ be a graded Lie algebra, then $\mr{gr}\,\ft_L(S)$ is the graded Lie algebra given by the quotient of the free graded Lie algebra generated by the direct sum of
	\begin{enumerate}[(G1)]
		\item the graded vector space with a basis elements $t_{ij}$ in degree $2n-2$ for each pair $(i,j)$ of distinct elements of $S$,
		\item a copy $L^{(r)}$ of underlying graded vector space $L$ for each $r \in S$,
	\end{enumerate} by the ideal generated by the relations
	\begin{enumerate}[(R1)]
		\item \label{enum:d-k-e-v-sym} $t_{ij} = t_{ji}$ for $i,j$ distinct,
		\item \label{enum:d-k-e-v-braid-disj} $[t_{ij},t_{rs}] = 0$ for $i,j,r,s$ all distinct,
		\item \label{enum:d-k-e-v-braid-rel} $[t_{ij},t_{ik}+t_{jk}] = 0$ for $i,j,k$ all distinct,
		\item \label{enum:d-k-e-v-ext-disj} for $a^{(r)} \in L^{(r)}$ and $i,j,r$ all distinct, $[t_{ij},a^{(r)}] = 0$,
		\item \label{enum:d-k-e-v-ext-braid} for $a^{(i)} \in L^{(i)}$ and $a^{(j)}$ the corresponding vector in $L^{(j)}$, $[t_{ij},a^{(i)}+a^{(j)}] = 0$,
		\item \label{enum:d-k-e-v-gr} for $a^{(i)} \in L^{(i)}$ and $b \in L^{(j)}$ with $i,j$ distinct, $[a^{(i)},b^{(j)}] = 0$,
		\item \label{enum:d-k-e-v-original} for $a^{(i)},b^{(i)} \in L^{(i)}$, $[a^{(i)},b^{(i)}] = ([a,b]_L)^{(i)}$, where $[-,-]_L$ denotes the Lie bracket of $L$.
	\end{enumerate}
\end{definition}

This extends to a functor $\mr{gr}\,\ft_L(-)\colon \cat{FI}_\ast^\mr{op} \to \cat{Alg}_\cat{Lie}(\cat{Gr}(\bQ\text{-}\cat{mod}))$. In particular, $\mr{gr}\,\ft_L(S)$ consists of $\fS_S$-representations with $\sigma \in \fS_S$ acting by $t_{ij} \mapsto t_{\sigma(i)\sigma(j)}$ and $a^{(i)} \mapsto a^{(\sigma(i))}$. This construction is functorial in the graded Lie algebra $L$, and we will consider its composition with the free graded Lie algebra functor $\mr{Lie} \colon \cat{Gr}(\bQ\text{-}\cat{mod})\to \cat{Alg}_\cat{Lie}(\cat{Gr}(\bQ\text{-}\cat{mod}))$. Evaluating at $H[n-1]$ then recovers $\mr{gr}\,\ft_g(S)$.  Forgetting the Lie algebra structure, we obtain a functor 
\begin{equation}\label{eqn:grft-functor}
	 \mr{gr}\,\ft_{\mr{Lie}(-)}(\ul{k}) \colon \cat{Gr}(\bQ\text{-}\cat{mod}) \lra \cat{GrRep}(\fS_k).
 \end{equation}
(Strictly speaking, we need to assume its input is concentrated in strictly positive or negative degrees for this construction to be degreewise finite-dimensional.) This is natural in $\ul{k}$ and as in \cref{def:total-extended-dk}, we can take the intersections of the kernels of the maps induced by the map $\ul{k-1} \to \ul{k}$ to obtain a functor 
\begin{equation}\label{eqn:grff-functor}
	\mr{gr}\,\ff_{\mr{Lie}(-)}(\ul{k}) \colon \cat{Gr}(\bQ\text{-}\cat{mod}) \lra \cat{GrRep}(\fS_k).
\end{equation}

\begin{lemma}\label{lem:fgk-poly}  The functors \eqref{eqn:grft-functor} and \eqref{eqn:grff-functor} are of the form 
	\[V \longmapsto \bigoplus_{s \geq 0} \mr{gr}\,\ft(\ul{s},\ul{k}) \otimes_{\fS_s} V^{\otimes \ul{s}} \qquad \text{and} \qquad V \longmapsto \bigoplus_{s \geq 0} \mr{gr}\,\ff(\ul{s},\ul{k}) \otimes_{\fS_s} V^{\otimes \ul{s}}\]
respectively, for graded $\fS_s \times \fS_k$-representations $\mr{gr}\,\ft(\ul{s},\ul{k})$ and $\mr{gr}\,\ff(\ul{s},\ul{k})$.	
\end{lemma}

\begin{proof}
	We will prove that the functor $\mr{gr}\,\ft_{\mr{Lie}(-)}(\ul{k})$ is degreewise polynomial in the sense of \cite[Appendix 1.A]{MacdonaldBook}.
	From the definition of such functors, it follows they are closed under passing to subquotients 	 and tensor products. As $\mr{gr}\,\ft_{\mr{Lie}(-)}(\ul{k})$ is quadratically presented by a pair of functors $\cat{Gr}(\bQ\text{-}\cat{mod})  \to \cat{GrRep}(\fS_k)$, one for the generators and one of the relations, it suffices to observe that the one for the generators is a degreewise polynomial functor. The result for $\mr{gr}\,\ff_{\mr{Lie}(-)}(\ul{k})$ follows because it is a subfunctor of $\mr{gr}\,\ft_{\mr{Lie}(-)}(\ul{k})$.
\end{proof}

It follows from the construction that the coefficients of $\mr{gr}\,\ff_{\mr{Lie}(-)}(\ul{k})$ are given by
\[\mr{gr}\,\ff(\ul{s},\ul{k}) = \bigcap_{j \in \ul{k}} \ker\Big[\mr{gr}\,\ft(\ul{s},\ul{k}) \to \mr{gr}\,\ft(\ul{s},\ul{k} \setminus \{j\}))\Big],\]
and can be determined in terms of $\mr{gr}\,\ft(\ul{s},\ul{k})$ using the formula in \cref{lem:total-d-t-alternating}. We will collect these for all $s$ and fixed $k$ in a single graded $\cat{FB} \times \fS_k$-representation
\[\mr{gr}\,\ff(-,\ul{k}) \colon S \mapsto \mr{gr}\,\ff(S,\ul{k}).\]

\subsection{The Kriz--Totaro algebra and its Koszul dual} \label{sec:kt-algebra} The goal of this subsection is determine the Koszul dual of $\mr{gr}\,\ft_L(\ul{k})$ when the graded Lie algebra $L$ is Koszul (under the mild assumption that its Koszul dual $A$ admits a PBW-basis). The content of \cref{prop:kriz-totaro-koszul-dual} is that the Koszul dual of $\mr{gr}\,\ft_L(\ul{k})$ is the Kriz--Totaro algebra $\cT_A(\ul{k})$, to be explained shortly. The reader should keep in mind the example $L = \mr{Lie}(H[n-1])$, so that $A = \bQ \oplus H[-n]$.

\subsubsection{The Kriz--Totaro algebra} The Kriz--Totaro algebra arises as the $E^1$-page of the Totaro spectral sequence for $\Emb(S,W_{g,1})$ \cite[Theorem 1]{totaro}, the cohomological Leray spectral sequence for the inclusion of the fat diagonal into $W_{g,1}^S$ (see also \cite[pp.~117--118]{CohenTaylorGF} and \cite[Theorem 1.1]{Kriz}). We deviate from those references by using a homological grading instead of a cohomological one. As a consequence, the Kriz--Totaro algebra will be non-positively graded. Similarly, the cohomology algebra of $W_{g,1}$ is given by $\bQ \oplus H^\vee[-n]$, with $\bQ$ in degree $0$ and $H^\vee$ in degree $-n$, and multiplication uniquely determined by requiring that $\bQ$ be generated by the unit. 

\begin{definition}[Kriz--Totaro algebra]\label{defn:KTALg}
 Let $S$ be a finite set, $n \geq 0$, and $A$ be a graded-commutative algebra. The \emph{Kriz--Totaro algebra} $\cT_A(S)$ is the quotient of the free graded-commutative algebra generated by
	\begin{enumerate}[($\cG$1)]
		\item elements $x_{ij}$ of degree $-(2n-1)$ for each pair $(i,j)$ of distinct elements of $S$,
		\item a copy $A^{(r)}$ of $A$ for each $r \in S$,
	\end{enumerate}
	by the ideal generated by the relations
	\begin{enumerate}[($\cR$1)]
		\item \label{enum:k-t-sym} $x_{ij} = x_{ji}$ for all $i,j$ distinct,
		\item \label{enum:k-t-arnold} $x_{ij}x_{jk}+x_{jk}x_{ki}+x_{ki}x_{ij} = 0$ for $i,j,k$ all distinct,
		\item \label{enum:k-t-pullback} for $\alpha^{(i)} \in A^{(i)}$ and $\alpha^{(j)} \in A^{(j)}$ the corresponding element, $x_{ij}\alpha^{(i)} = x_{ij}\alpha^{(j)}$,
		\item \label{enum:k-t-id} $1_A^{(i)} = 1$ for all $i$,
		\item \label{enum:k-t-zero} for any $\alpha^{(i)},\beta^{(i)} \in A^{(i)}$, $\alpha^{(i)}\beta^{(i)} = (\alpha\cdot_A\beta)^{(i)}$.
	\end{enumerate}
\end{definition}
 
This algebra admits an action of $\fS_S$, with $\sigma \in \fS_S$ acting by $x_{ij} \mapsto x_{\sigma(i)\sigma(j)}$ and $\alpha^{(i)} \mapsto \alpha^{(\sigma(i))}$, and is natural in $A$. When we take $A = \bQ \oplus H^\vee[-n]$ with $H$ of dimension $2g$, we shall shorten $\cT_A(S)$ to $\cT_g(S)$.

\begin{remark}\label{rem:k-t-alt-presentation} To see this is the same as the algebras described in the above references, use \ref{enum:k-t-id} and \ref{enum:k-t-zero} to write $\cT_A(S)$ as the quotient of $A^{\otimes S}[x_{ij} \mid i \neq j \in S]$ by the relations \ref{enum:k-t-sym}--\ref{enum:k-t-pullback} (that $x_{ij}^2 = 0$ follows as $x_{ij}$ has odd degree).\end{remark}

We will now show that if $A$ has a PBW-basis in the sense of \cite[Section 4.3.7]{LodayVallette} and has Koszul dual $L$, then $\cT_A(S)$ is Koszul dual to $\mr{gr}\,\ft_L(S)$, compatibly with the actions of $\fS_S$ and naturally in $A$. This will allow us to relate the Hilbert--Poincar\'e series of $\cT_A(S)$ and $\mr{gr}\,\ft_L(S)$.

We start with a short overview of Koszul duality between commutative and Lie algebras; reference includes \cite{GinzburgKapranov,LodayVallette,Milles}, and it may be familiar to the reader from Quillen's work on rational homotopy theory \cite{QuillenRat}. The operads $\cat{Com}$ and $\cat{Lie}$ are Koszul dual \cite[Corollary 4.2.7]{GinzburgKapranov}, \cite[Proposition 13.1.5]{LodayVallette} and hence there is a Koszul duality relating nonunital commutative algebras, or equivalently augmented unital commutative algebras, and Lie algebras. A \emph{quadratic datum for a nonunital graded-commutative algebra} is a pair $(V,S)$ of a finite-dimensional graded vector space $V$ and a subspace $S \subset \mr{Com}(2) \otimes_{\fS_2} V^{\otimes 2}$. From this we construct the \emph{quadratic nonunital graded-commutative algebra} $A(V,S)$ as the quotient of the free nonunital graded-commutative algebra $\mr{Com}(V)$ by the ideal generated by $S$. Similarly, a \emph{quadratic datum for a graded Lie algebra}, given by a pair $(W,R)$ of a finite-dimensional graded vector space $W$ and a subspace $R \subset \mr{Lie}(2) \otimes_{\fS_2} W^{\otimes 2}$, yields a \emph{quadratic graded Lie algebra} $L(W,R)$. Given a quadratically presented nonunital graded-commutative algebra $A=A(V,S)$, its \emph{quadratic dual} graded Lie algebra is $A^! \coloneqq L(V^\vee[-1],S^\perp[-2])$, where $S^\perp$ is the annihilator of $S$ under the evaluation pairing. Similarly, given a quadratically presented graded Lie algebra $L = L(W,R)$, its \emph{quadratic dual} nonunital graded-commutative algebra is $L^! \coloneqq A(W^\vee[1],R^\perp[2])$. The construction of a quadratic algebra and its quadratic dual is natural in the quadratic datum. In particular, an action of a group on a quadratic datum for a nonunital graded-commutative algebra induces an action on both $A = A(V,S)$ and its quadratic dual $A^!$, such that $A \overset{\sim}\lra (A^!)^!$ is equivariant, and similarly for graded Lie algebras.

Let us return to the task at hand: proving that $\cT_A(\ul{k})$ is the Koszul dual of $\mr{gr}\,\ft_L(\ul{k})$. Suppose $A$ is a quadratic nonunital commutative algebra with quadratic datum $(V,R)$ and $L$ is its Koszul dual. Then we can replace the generators $A^{(r)}$ and $L^{(r)}$ by $V^{(r)}$ and $(V^\vee[1])^{(r)}$, as well as replace \ref{enum:d-k-e-v-original} and \ref{enum:k-t-zero} with the quadratic relations from $R$ and $R^\perp[2]$. The resulting alternative presentations of $\ft_g(S)$ and $\cT_g(S)$ are not yet quadratic, as the relations \ref{enum:d-k-e-v-sym} and \ref{enum:k-t-sym} are linear. This is of course easily dealt with, by taking $S = \ul{k}$ and only using the generators $t_{ij}$ and $x_{ij}$ for $i<j$; this removes relations \ref{enum:d-k-e-v-sym} and \ref{enum:k-t-sym}. 

The following proposition concerns these modified quadratic presentations. Let us write $A = A(V,R)$ and $\cT_A(\ul{k}) = A(\widetilde{V},\widetilde{R})$. Here we have  
\[\widetilde{V} = V_X \oplus V_A,\]
with $V_X = \bQ\{x_{ij} \mid i<j\}$ and $V_A = \oplus_{r \in \ul{k}} V^{(r)}$. There is a direct sum decomposition $S^2(\widetilde{V}) = S^2(V_X) \oplus V_X \otimes V_A \oplus S^2(V_A)$, and $\widetilde{R}$ decomposes accordingly:
\[\widetilde{R} = R_X \oplus R_{XA} \oplus R_A,\]
with $R_X \subset S^2(V_X)$ spanned by \ref{enum:k-t-arnold}, $R_{XA} \subset V_X \otimes V_A$ spanned by \ref{enum:k-t-pullback}, and $R_A \subset S^2(V_A)$ spanned by $k$ copies of $R$ and \ref{enum:k-t-zero}. (Note that \ref{enum:k-t-id} does not appear when presenting the Kriz--Totaro algebra quadratically.)

\begin{proposition}\label{prop:kriz-totaro-koszul-dual} Suppose that $A$ admits a PBW-basis in the sense of \cite[Section 4.3.7]{LodayVallette} and has Koszul dual $L$. Then the commutative algebra $\cT_A(\ul{k})$ is Koszul, and $(\cT_A(\ul{k}))^! \cong \mr{gr}\,\ft_L(\ul{k})$ as objects in $\cat{Alg}_\cat{Lie}(\cat{GrRep}(\fS_k))$.\end{proposition}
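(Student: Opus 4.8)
The plan is to identify the quadratic dual of $\cT_A(\ul k)$ directly from its quadratic presentation and then to show that this dual equals $\mathrm{gr}\,\ft_L(\ul k)$, finally deducing Koszulness by a distributive-laws argument. Since $\cT_A(\ul k)$ is a commutative algebra, its quadratic dual is a Lie algebra, and by Example~\ref{exam:koszul-dual-formula} it has presentation $\bL(\widetilde V^\vee[-1])/\widetilde R^\perp$, where $\widetilde V^\vee[-1]$ sits in homological degrees such that the dual of $x_{ij}$ (degree $-(2n-1)$) becomes an element of degree $2n-2$, and the dual of $V^{(r)}$ (with $A$ in degrees involving $-n$, so $V$ in degree $-n$ after accounting for the quadratic datum of $A$) becomes a copy of $L^{(r)}=V^\vee[-1]$ in the appropriate degree. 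So I would first set up the dictionary: generators $t_{ij}\leftrightarrow x_{ij}^\vee$ and $L^{(r)}\leftrightarrow (V^{(r)})^\vee[-1]$, matching the two lists (G1)--(G2) of Definition~\ref{def:gr-drinfeld-kohno-extended} with the duals of ($\cG$1)--($\cG$2) of Definition~\ref{defn:KTALg}.

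\textbf{Matching the relations.}
The core computation is to check that $\widetilde R^\perp$, under this dictionary, is exactly the span of relations \ref{enum:d-k-e-v-braid-disj}--\ref{enum:d-k-e-v-original}. Using the decomposition $\widetilde R = R_X\oplus R_{XA}\oplus R_A$ and the fact that the evaluation pairing on $S^2(\widetilde V)$ respects the direct sum decomposition $S^2(\widetilde V)=S^2(V_X)\oplus V_X\otimes V_A\oplus S^2(V_A)$ (after the grading shift this becomes a pairing on $\Lambda^2$), the annihilator also splits as $R_X^\perp\oplus R_{XA}^\perp\oplus R_A^\perp$ inside $\Lambda^2(\widetilde V^\vee[-1])$. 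I would then verify three things separately. (i) For the pure-$x$ block: $R_X$ is spanned by the Arnold/Jacobi relations \ref{enum:k-t-arnold}; its annihilator inside $\Lambda^2$ of the $t$-variables is the classical computation (carried out e.g.\ in the proof that the Drinfel'd--Kohno Lie algebra is Koszul dual to the Arnold algebra) giving precisely the infinitesimal braid relations \ref{enum:d-k-e-v-braid-disj} ($[t_{ij},t_{rs}]=0$ for disjoint indices) and \ref{enum:d-k-e-v-braid-rel} ($[t_{ij},t_{ik}+t_{jk}]=0$). (ii) For the mixed block $V_X\otimes V_A$: $R_{XA}$ is spanned by $x_{ij}\alpha^{(i)}-x_{ij}\alpha^{(j)}$, i.e.\ relations \ref{enum:k-t-pullback}; a direct pairing computation shows its annihilator is spanned by $[t_{ij},a^{(r)}]$ for $r\notin\{i,j\}$ (relation \ref{enum:d-k-e-v-ext-disj}) together with $[t_{ij},a^{(i)}+a^{(j)}]$ (relation \ref{enum:d-k-e-v-ext-braid}). (iii) For the pure-$A$ block $S^2(V_A)$: $R_A$ consists of $k$ disjoint copies of $R\subset S^2(V)$ together with the ``diagonal'' cross-terms $\alpha^{(i)}\beta^{(j)}$ for $i\ne j$ which in $\cT_A$ are \emph{set equal to} $(\alpha\cdot_A\beta)$ when $i=j$ and freely allowed when $i\ne j$ --- more precisely \ref{enum:k-t-zero} says $\alpha^{(i)}\beta^{(i)}=(\alpha\cdot_A\beta)^{(i)}$, which quadratically means $R_A$ contains, per copy $i$, the quadratic datum of $A$, and contains nothing relating distinct copies. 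Hence $R_A^\perp$ per diagonal copy is the Koszul dual relations of $A$, giving \ref{enum:d-k-e-v-original} $[a^{(i)},b^{(i)}]=[a,b]_L^{(i)}$, while the off-diagonal part of $S^2(V_A)^\vee$ (spanned by $[a^{(i)},b^{(j)}]$, $i\ne j$) is entirely annihilated, giving relation \ref{enum:d-k-e-v-gr} $[a^{(i)},b^{(j)}]=0$. Putting (i)--(iii) together with the grading bookkeeping (the shift $[1]$ and the operadic desuspension $\cat S^{-1}$ in $\cat P^!$) reproduces Definition~\ref{def:gr-drinfeld-kohno-extended} verbatim, and naturality in $A$ and $\fS_k$-equivariance are automatic since every construction used (quadratic dual, annihilator, direct-sum splitting) is functorial.

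\textbf{Koszulness.}
It remains to prove that $\cT_A(\ul k)$ is Koszul. Here I would use the method of distributive laws / Gröbner bases: the presentation of $\cT_A(\ul k)$ is assembled from three quadratic pieces --- the Arnold algebra on the $x_{ij}$ (known Koszul), $k$ copies of $A$ (Koszul by hypothesis, indeed with a PBW basis), and the mixed relations \ref{enum:k-t-pullback} --- and one checks that the rewriting rules coming from \ref{enum:k-t-arnold}, \ref{enum:k-t-pullback}, \ref{enum:k-t-zero} and the PBW basis of $A$ are \emph{confluent}, so that the induced filtration has associated graded a quadratic algebra with a monomial (PBW-type) basis; a quadratic algebra with a PBW basis is Koszul, and Koszulness passes up the filtration (\cite[Theorem 4.3.something]{LodayVallette}-style argument). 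Concretely, I would exhibit a PBW-type basis for $\cT_A(\ul k)$ --- monomials $x_{i_1j_1}\cdots x_{i_rj_r}\cdot a_1^{(s_1)}\cdots a_m^{(s_m)}$ with the $x$'s forming an admissible Arnold monomial (a forest) and the $a$'s lying in the PBW basis of the appropriate copy of $A$, with at most one $a$-factor per index after using \ref{enum:k-t-pullback} to push labels along the tree --- and verify it is closed under the rewriting rules. Then Koszulness of $\cT_A(\ul k)$ is immediate, and Koszulness of the Lie-algebra dual $\mathrm{gr}\,\ft_L(\ul k)$ follows by \cite[Theorem 3.3.2]{Milles}.

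\textbf{Main obstacle.}
The delicate point is step~(iii) above together with the confluence check: getting the signs and the grading shifts right in the pairing $E^\vee(V^\vee)\otimes E(V)\to \bQ[2]$ when $A$ (hence $V$) is itself graded, and making sure that the off-diagonal $S^2(V_A)$ relations in $\cT_A$ really are ``empty'' (no relation between $\alpha^{(i)}$ and $\beta^{(j)}$ for $i\ne j$), so that dually the full off-diagonal of $\Lambda^2(V_A^\vee[-1])$ survives as the relation set \ref{enum:d-k-e-v-gr}; the PBW hypothesis on $A$ is exactly what guarantees the rewriting system is confluent and hence that no hidden relations appear. I expect the relation-matching to be bookkeeping-heavy but conceptually routine, while the honest content is the confluence/PBW argument establishing Koszulness.
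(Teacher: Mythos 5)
Your proposal is correct and follows essentially the same route as the paper: the same decomposition $\widetilde R = R_X \oplus R_{XA} \oplus R_A$ and the same block-by-block identification of the annihilator $\widetilde R^\perp$ with the relations of Definition~\ref{def:gr-drinkfeld-kohno-extended}. The one place you leave unexecuted is Koszulness of $\cT_A(\ul k)$: you correctly reduce it to exhibiting a PBW-type basis and checking confluence of the rewriting rules (R2), (R3), (R5), but you do not carry out that check — the paper instead cites \cite[Corollary~2.2]{Bezrukavnikov} for precisely this statement (Kriz--Totaro has a PBW basis whenever $A$ does), combined with \cite[Theorem~4.1.6]{Milles} to pass between commutative and associative Koszulness and \cite[Theorem~4.3.8]{LodayVallette} to conclude; you should likewise either cite this or actually verify the diamond lemma.
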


\begin{proof}To prove that $\cT_A(\ul{k})$ is Koszul we may ignore the group actions. A commutative quadratic algebra is Koszul if and only if its underlying associative algebra is Koszul, cf.\ \cite[Theorem 4.1.6]{Milles}. For the latter, it suffices to prove that it admits a PBW-basis \cite[Theorem 4.3.8]{LodayVallette}. Corollary 2.2 of \cite{Bezrukavnikov} says that the Kriz--Totaro algebra admits a PBW-basis if $A$ does.
	
	Now we compute the Koszul dual Lie algebra $\cT_A(\ul{k})^!$. The graded vector space of generators of this graded Lie algebra is the linear dual of the graded vector space $\widetilde{V}$ of generators of $\cT_A(\ul{k})$, shifted in degree by $-1$. We fix a basis $\{\alpha_j\}_{1 \leq j \leq 2g}$ of $V$, and use the following suggestive notation for a basis of $\widetilde{V}^\vee[-1]$:
	\begin{itemize}
		\item elements $t_{ij}$ for $i < j$ in degree $2n-2$, dual to $x_{ij}$, and
		\item elements $a_j^{(r)}$ for $1 \leq j \leq 2g$ in degree $n-1$ for $r \in S$, dual to $\alpha_j^{(r)}$.
	\end{itemize}
	The relations $\widetilde{R}^\perp$ are the annihilator of the relations $\widetilde{R} \subset S^2(\widetilde{V})$ of $\cT_A(\ul{k})$. We have
	\[\widetilde{R}^\perp \cong R_A^\perp \oplus R_{XA}^\perp \oplus R_A^\perp,\]
	and below we compute each of these terms.
	
	We claim that $R_X^\perp$ gives \ref{enum:d-k-e-v-braid-disj} and \ref{enum:d-k-e-v-braid-rel}: $\Lambda^2(V_X)$ decomposes as a direct sum
	\[\Lambda^2(V_X) = \Lambda^2(V_X)_{\mr{disj}} \oplus \Lambda^2(V_X)_{\mr{br}}\]
	with the former spanned by terms $x_{ij} \wedge x_{rs}$ for $i,j,r,s$ all distinct and the latter spanned by terms $x_{ij} \wedge x_{jk}$ for $i,j,k$ all distinct (here we use the convention that for $i>j$, we have $x_{ij} = x_{ji}$). As $R_X \subset \Lambda^2(V_X)_{\mr{br}}$, we have $R^\perp_X \supset (\Lambda^2(V_X)_\mr{disj})^\vee$. This gives \ref{enum:d-k-e-v-braid-disj}. To compute the annihilator of $R_X$ in $(\Lambda^2(V_X)_\mr{br})^\vee$, we use that $R_X$ is spanned by terms $x_{ij} \wedge x_{jk}+ x_{jk} \wedge x_{ki} + x_{ki} \wedge x_{ij}$ corresponding to \ref{enum:k-t-arnold}. The span $R_{X,\mr{br}}^\perp$ of the elements $t_{ij} \wedge t_{ik}+t_{ij} \wedge t_{jk}$ thus annihilates $R_X$, and a dimension count shows it is equal to the annihilator of $R_X$ in $(\Lambda^2(V_X)_\mr{br})^\vee$. This gives \ref{enum:d-k-e-v-braid-rel}.
	
	We claim that $R_{XA}^\perp$ gives \ref{enum:d-k-e-v-ext-disj} and \ref{enum:d-k-e-v-ext-braid}: $V_X \otimes V_A$ is a direct sum of two subspaces
	\[V_X \otimes V_A = (V_X \otimes V_A)_\mr{disj} \oplus (V_X \otimes V_A)_\mr{br},\]
	the former spanned by terms $x_{ij} \otimes \alpha^{(r)}_k$ with $i,j,r$ distinct, and latter by terms $x_{ij} \otimes \alpha^{(j)}_k$. As $R_{XA} \subset (V_X \otimes V_A)_\mr{br}$, we have $R_{XA}^\perp \supset (V_X \otimes V_A)_\mr{disj}^\vee$. This gives  \ref{enum:d-k-e-ext-disj}. To compute the annihilator of $R_{XA}$ in $(V_X \otimes V_A)_\mr{br}$, we use that $R_{XA}$ is spanned by $x_{ij} \otimes \smash{\alpha^{(i)}_k} - x_{ij} \otimes \smash{\alpha^{(j)}_k}$. The span $R_{XA,\mr{br}}^\perp$ of the elements $t_{ij} \otimes a_k^{(i)}+t_{ij} \otimes a_k^{(j)}$ thus annihilates $R_{XA}$. Again, a dimension count shows that $R_{XA,\mr{br}}^\perp$ is the entire annihilator of $R_{XA}$ in $(V_X \otimes V_A)_\mr{br}$. This gives \ref{enum:d-k-e-v-ext-braid}.
	
	We claim that $R_A^\perp$ gives \ref{enum:d-k-e-gr} and \ref{enum:d-k-e-v-original}: $S^2(V_A)$ decomposes as a direct sum
	\[R_A = \bigoplus_{r \in \ul{k}} R \subset S^2(V_A).\]
	Decomposing $S^2(V_A^\vee[1])$ in a similar manner, the annihilator of $R_A$ consists of the terms $A^{(r)} \otimes A^{(s)}$ for $r<s \in \ul{k}$ as well as $(S^\perp)^{(i)}$ for $i \in \ul{k}$ which indeed gives \ref{enum:d-k-e-gr} and \ref{enum:d-k-e-v-original}. Thus we recover the desired quadratic presentation of $\mr{gr}\,\ft_A(\ul{k})$.
	
	Finally, we observe that the action of $\fS_k$ on $\cT_A(\ul{k})$ arises through the action on the quadratic datum defining $\cT_A(\ul{k})$. Thus we get an induced action on $(\cT_A(\ul{k}))^!$ through the dual action on its quadratic datum; this is the desired $\fS_k$-action on $\ft_L(\ul{k})$.
\end{proof}

As $A = \bQ \oplus H[-n]$ clearly admits a PBW-basis, we see that $\cT_g(\ul{k})$ is Koszul with Koszul dual $(\cT_g(\ul{k}))^! = \mr{gr}\,\ft_g(\ul{k})$.

\subsection{The Kriz--Totaro algebra is free}\label{sec:kt-free} Our goal in this subsection is to compute $\cT_A(\ul{k})$, which by the previous subsection is the Koszul dual to $\mr{gr}\,\ft_L(\ul{k})$. The main result is \cref{prop:kriz-totaro-free}, which says that $\cT_A(-)$ is a free graded-commutative algebra in a suitable sense. Once again, the reader should keep in mind the case $A = \bQ \oplus H[-n]$ (with Koszul dual $L = \mr{Lie}(H[n-1])$).

\medskip

Let us first clarify this: it is certainly not the case that the commutative algebras $\cT_A(S)$ of \cref{defn:KTALg} are free. However, taking their underlying graded vector spaces all together defines a functor
\[\cT_A(-) \colon \cat{FB} \lra \cat{Gr}(\bQ\text{-}\cat{mod}),\]
and this may be given the structure of a commutative algebra object in $\cat{Gr}(\bQ\text{-}\cat{mod})^\cat{FB}$, considered as a symmetric monoidal category via Day convolution, as follows. Take the external product
\[\cT_A(S) \otimes \cT_A(T) \lra \cT_A(S \sqcup T)\]
to be the unique map of algebras which send the elements $x_{ij}$ or $\alpha^{(i)}$ with $i$ and $j$ in either $S$ or $T$ to the elements of the same name with $i$ and $j$ considered as lying in $S \sqcup T$. This defines a lax monoidality on $\cT_A(-)$, and is in fact a lax symmetric monoidality: in other words it is the structure of a commutative algebra object in the category of functors. We shall show that $\cT_A(-)$ is a free commutative algebra in this category.

Recall from Sections \ref{sec:cohomology-products-diagonals} and \ref{sec:kt-algebra} the following notations. Firstly, for $V \in \cat{Gr}(\bQ\text{-}\cat{mod})^\cat{FB}$, $\cat{S}V(\ul{s}) = V(\ul{s})[1-s] \otimes (1^s)$. Secondly, $\otimes_H$ is the Hadamard (i.e.\ objectwise) tensor product. Finally, we write $cA \in \cat{Gr}(\bQ\text{-}\cat{mod})^{\cat{FB}}$ for the constant functor with value the graded vector space $A$.

 \begin{proposition}\label{prop:kriz-totaro-free}
 There is an isomorphism
	\[S^*(cA \otimes_H \cat{S}^{2n-1}\cat{Lie}) \overset{\sim}\lra \cT_A(-),\]
	of commutative algebra objects in $\cat{Gr}(\bQ\text{-}\cat{mod})^{\cat{FB}}$, natural in $A$.
\end{proposition}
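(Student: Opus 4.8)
The plan is to exhibit both sides as the same quotient of a free graded-commutative algebra in $\cat{Gr}(\bQ\text{-}\cat{mod})^{\cat{FB}}$ and to identify the set of generators. First I would rewrite the presentation of $\cT_A(-)$ from Definition~\ref{defn:KTALg} using Remark~\ref{rem:k-t-alt-presentation}: after absorbing $(\cR4)$ and $(\cR5)$, the functor $\cT_A(S)$ is the quotient of $A^{\otimes S}\otimes \bQ[x_{ij}\mid i\neq j\in S]$ by relations $(\cR1)$--$(\cR3)$. The key observation is that the relations $(\cR1)$ (symmetry), $(\cR2)$ (the Arnold/Jacobi relation), and $(\cR3)$ ($x_{ij}$ absorbs the $A$-labels at $i$ or $j$) are exactly those satisfied in the classical Arnold presentation of $H^*_c$ of configuration spaces; concretely, the subalgebra generated by the $x_{ij}$ modulo these relations, viewed with $A$ acting "diagonally through $(\cR3)$", is the value at $cA$ of $cA\otimes_H \cat{S}^{2n-1}\cat{Com}^\excl$. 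Dually, as in the proof of Theorem~\ref{thm:CohConfSpaces} (which treats the case $A = \bQ\oplus H^\vee[-n]$), one has that the cofree-type object built from $\cat{S}^{2n-1}\cat{Lie}$ and the label algebra $cA$ carries exactly the universal such structure.

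The main step is then the following: I would define a map of functors $cA\otimes_H \cat{S}^{2n-1}\cat{Lie}\to \cT_A(-)$ sending, for each finite set $S$ with $|S|=s$ and each generator $\ell\otimes\alpha$ of $(cA\otimes_H\cat{S}^{2n-1}\cat{Lie})(\ul s)$ (with $\ell\in\cat{Lie}(\ul s)[1-s]\otimes(1^s)$ a Lie word in $s$ letters and $\alpha\in A$), the corresponding "decorated bracketed monomial" in the $x_{ij}$ with a single $A$-label $\alpha$ attached. Because $\cat{S}^{2n-1}\cat{Lie}$ is generated (as a symmetric sequence) in arity $2$ and $\cT_A(-)$ is generated as a commutative ring object in $\cat{Gr}(\bQ\text{-}\cat{mod})^\cat{FB}$ by the arity-$1$ pieces $A^{(r)}$ together with the arity-$2$ classes $x_{ij}$, this map is well-defined once one checks compatibility with the Arnold relation $(\cR2)$ — which corresponds precisely to the Jacobi relation in $\cat{Lie}$ — and with $(\cR3)$ — which is what allows the $A$-label to be recorded once per connected Lie word rather than once per vertex. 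By the universal property of the free commutative algebra $S^*(-)$ this extends to a map of commutative ring objects $S^*(cA\otimes_H\cat{S}^{2n-1}\cat{Lie})\to\cT_A(-)$, manifestly natural in $A$.

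To finish I would prove this map is an isomorphism degreewise in the $\cat{FB}$-grading (i.e.\ on each $\ul k$) by a PBW/dimension count. Proposition~\ref{prop:kriz-totaro-koszul-dual} already shows $\cT_A(S)$ admits a PBW basis when $A$ does, and Corollary~2.2 of \cite{Bezrukavnikov} describes that basis explicitly: it is indexed by forests of "broken" monomials in the $x_{ij}$ decorated by the PBW basis of $A$ at the "roots" — which is exactly a monomial basis of $S^*(cA\otimes_H\cat{S}^{2n-1}\cat{Lie})(\ul k)$ after one recalls that a monomial basis of the free symmetric-sequence commutative algebra on $W$ is indexed by unordered tuples of basis elements of $W$ on a partition of $\ul k$, and that a basis of $\cat{Lie}(\ul s)$ is given by the Lyndon/Lie words (the classical PBW basis of the free Lie algebra). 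Matching the two bives up the bijection on each graded piece, so the map is an isomorphism. Alternatively, and perhaps more cleanly, one can argue uniformly in $A$: the isomorphism is already known for $A = \bQ\oplus H^\vee[-n]$ with $\dim H^\vee$ arbitrary (this is exactly the content extracted from \cite{Petersen} in the proof of Theorem~\ref{thm:CohConfSpaces}), both sides are functors of $A$ that preserve filtered colimits and commute with the relevant base change, and every graded-commutative algebra with a PBW basis is a filtered colimit of such "polynomial-type" algebras; a diagram chase then promotes the special case to the general statement.

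\textbf{Expected main obstacle.} The delicate point is the bookkeeping of signs and grading shifts in the identification of the arity-$2$ generators: $x_{ij}$ sits in degree $-(2n-1)$ and is \emph{symmetric}, while the generator of $\cat{S}^{2n-1}\cat{Lie}(\ul2) = \cat{Lie}(\ul2)[-(2n-1)]\otimes(1^2)$ carries the sign representation of $\fS_2$ together with a shift by $1-2 = -1$ inside the operadic suspension plus the cohomological-degree shift — so one must check that the combined Koszul signs make the "symmetric $x_{ij}$" match the "antisymmetric Lie bracket tensored with $(1^2)$", exactly as in the passage from $\cat{Lie}$ to $\cat{SLie}$ in Theorem~\ref{thm:CohConfSpaces}. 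Getting this sign convention consistent across $(\cR1)$, $(\cR2)$, and the Day-convolution symmetric monoidal structure is where the real care is needed; the rest is formal once that is pinned down.
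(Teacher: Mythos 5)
Your overall strategy — identify the ``connected'' part of $\cT_A(\ul k)$ with $A\otimes \cat{S}^{2n-1}\cat{Lie}(\ul k)$, then extend over the free commutative algebra using the ring structure — is the same as the paper's. Where you diverge is the proof that the resulting map is an isomorphism, and this is where there is a gap. You verify bijectivity by a PBW/dimension count, explicitly invoking Proposition~\ref{prop:kriz-totaro-koszul-dual} and Corollary~2.2 of \cite{Bezrukavnikov}, both of which require $A$ to admit a PBW basis (hence in particular that $A$ be Koszul). But Proposition~\ref{prop:kriz-totaro-free} is stated for an \emph{arbitrary} graded-commutative algebra $A$, with no Koszulness or PBW hypothesis, and the two later propositions in the paper deliberately add the PBW hypothesis precisely when it becomes necessary. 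So your argument as written only proves the statement for a restricted class of $A$. The paper instead finishes with an elementary combinatorial observation: after using $(\cR3)$ to push all vertex labels in a connected graph to a single chosen vertex, every monomial in $\cT_A(\ul k)$ decomposes uniquely as a product over the connected components of its underlying graph, giving a basis indexed by $A$-labelled forests with Lie tree-monomial components. That argument has no Koszulness input and works for all $A$; you should replace your dimension count with it (or with any argument that does not leave $A$ general).

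Your ``alternative'' uniform-in-$A$ argument has a separate problem: it is simply not true that every graded-commutative algebra with a PBW basis is a filtered colimit of square-zero extensions of the form $\bQ\oplus V$, so the proposed transfer from the case $A=\bQ\oplus H^\vee[-n]$ does not go through. (For instance a polynomial algebra on one generator is Koszul with an obvious PBW basis, but is in no reasonable sense a filtered colimit of such square-zero algebras.) The sign-and-shift concerns you flag at the end are real and need to be tracked, but they are the same bookkeeping that already appears in the paper's identification of $C(k)$ with $\mr{Lie}(k)\otimes(1^k)[-(2n-1)(k-1)]$, so they are not an additional obstacle beyond what the paper already handles.
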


This is closely related to the appearance of the Lie representations in \cref{thm:CohConfSpaces} via the work of Petersen \cite{Petersen}, and apart from some (linear and Poincar{\'e}) dualisations is implicit in that paper. We will give a proof in the spirit of \cite{LehrerSolomon}.

\begin{proof}
For each $k$, the \emph{Arnold algebra} $G(k)$ is the quotient of the graded-commutative algebra $\Lambda[x_{ij} \, | \, i \neq j \in \ul{k}]$, with $x_{ij}$ in degree $-(2n-1)$, by the relations \ref{enum:k-t-sym} and \ref{enum:k-t-arnold}. (This is the rational cohomology ring of $\Emb(\ul{k}, \bR^{2n})$, cf.\ \cite{Arnold}.) To a monomial in the $x_{ij}$'s we associate a graph with vertices $\ul{k}$, by placing an edge from $i$ to $j$ if the term $x_{ij}$ appears. We say the monomial is \emph{connected} if the corresponding graph is connected, and \emph{disconnected} otherwise. If two monomials are related by \ref{enum:k-t-sym} then their associated graphs are equal; if three monomials are related by \ref{enum:k-t-arnold} then they are either all connected or all disconnected. Thus the linear combinations of monomials with connected graphs gives a well-defined graded subspace $C(k) \leq G(k)$.

If a monomial has an associated graph which is not a tree, we claim it is zero in $G(k)$. To see this, consider a cycle in the associated graph, of length $\ell$. Using \ref{enum:k-t-arnold} we may write this as a sum of two monomials each of whose graphs contain a cycle of length $\ell-1$. Continuing in this way, we express it as a sum of monomials all having cycles of length 3. But these vanish, using
\[x_{12}x_{23}x_{31} \overset{\text{\ref{enum:k-t-arnold}}} = -(x_{23}x_{31} + x_{31}x_{12})x_{31} = 0\]
as $x_{31}^2=0$. Thus $C(k)$ is spanned by connected monomials whose associated graph is a tree. Such trees have precisely $(k-1)$ edges, so $C(k)$ is supported in degree $-(2n-1)(k-1)$. In fact $G(k)$ is supported in the range of degrees $[-2(2n-1)(k-1), 0]$ and $C(k)$ is precisely its homogeneous piece of degree $-(2n-1)(k-1)$: in other words, it is the top degree cohomology of $\Emb(\ul{k}, \bR^{2n})$. It is well-known that this is the representation $\mr{Lie}(k) \otimes (1^k)$, see e.g.\ \cite[Theorem 6.1]{CohenLie}. It follows that we have $C(k) = \mr{Lie}(k) \otimes (1^k)[-(2n-1)(k-1)]$ and so
\[(cA \otimes_H \cat{S}^{2n-1}\cat{Lie})(k) = A \otimes \mr{Lie}(k) \otimes (1^k)^{\otimes 1-2n} [-(k-1)(2n-1)] = A \otimes C(k).\]
There is a well-defined map
\begin{equation}\label{eq:ConnectedGraphs}
\begin{aligned}
A \otimes C(k) &\lra \cT_A(\ul{k})\\
\alpha \otimes x_{i_1, j_1} \cdots x_{i_r, j_r} &\longmapsto \alpha^{(1)} \cdot x_{i_1, j_1} \cdots x_{i_r, j_r}.
\end{aligned}
\end{equation}
A monomial in $x_{ij}$'s and $\alpha^{(i)}$'s, representing an element of $\cT_A(\ul{k})$, yields a graph on the vertices $\ul{k}$ with a labelling of these vertices by elements of $A$. The map \eqref{eq:ConnectedGraphs} is an isomorphism onto the subspace spanned by those monomials whose associated graph is connected: using \ref{enum:k-t-pullback}, as the graph is connected all the labels can be moved to the first vertex, leaving the label $1_A$ at the others.

Using the commutative algebra structure of $\cT_A(-)$, the maps \eqref{eq:ConnectedGraphs} extend to a map of commutative algebra objects $S^*(cA \otimes_H \cat{S}^{2n-1}\cat{Lie}) \to \cT_A(-)$ as in the statement of this proposition. This is easily checked to be an isomorphism, by interpreting labelled graphs as a disjoint union of connected labelled graphs.
\end{proof}

\subsection{The character of the extended Drinfel'd--Kohno Lie algebra}\label{sec:computing-koszul-dual} Our goal in this subsection is to take the description of $\mr{gr}\,\ft_L(-)$ as the Koszul dual to the $\cT_A(-)$ which is free in a suitable sense, and explain how to extract from this first the coefficients $\mr{gr}\,\ft(-,-)$ and then the coefficients $\mr{gr}\,\ff(-,-)$, both defined by \cref{lem:fgk-poly}. The resulting procedure is described in \cref{prop:step-compute} and its implementation yielded \cref{sec:computational-results}, resulting in \cref{thm:outcome-fourth-band} in the next section.

\medskip

We have described two functors: the associated graded of the extended Drinfel'd--Kohno Lie algebra and the Kriz--Totaro algebra
\[\begin{aligned}
\bQ\text{-}\cat{mod}^f &\lra \cat{Alg}_\cat{Lie}(\cat{GrRep}(\fS_k)) \\
H &\longmapsto \mr{gr}\,\ft_{\mr{Lie}(H[n-1])}(\ul{k})\end{aligned}\qquad \begin{aligned}\bQ\text{-}\cat{mod}^f &\lra \cat{Alg}_\cat{Com}(\cat{GrRep}(\fS_k)) \\
H^\vee &\longmapsto \cT_{\bQ\oplus H^\vee[-n]}(\ul{k}).\end{aligned}\]

For $V \in \cat{GrRep}(\cat{FB} \times \fS_k)$, we write $\Sigma V$ for the functor $\ul{s} \mapsto V(\ul{s}) \otimes ((1^s) \boxtimes \bQ)[s]$. The associated graded of the extended Drinfel'd--Kohno Lie algebra is a degreewise polynomial functor of $H$, because by \cref{lem:fgk-poly} it may be expressed in the form
\[\bigoplus_{s \geq 0} \mr{gr}\,\ft(\ul{s},\ul{k}) \otimes_{\fS_s} H[n-1]^{\otimes \ul{s}} = \bigoplus_{s \geq 0} \Sigma^{n-1}\mr{gr}\,\ft(\ul{s},\ul{k}) \otimes_{\fS_s} H^{\otimes \ul{s}}\] 
for certain graded $\fS_s \times \fS_k$-representations $\mr{gr}\,\ft(\ul{s},\ul{k})$. Our goal is then to determine the graded $\fS_s \times \fS_k$-representations $\Sigma^{n-1} \mr{gr}\,\ft(\ul{s},\ul{k})$, and in turn from this $\Sigma^{n-1} \mr{gr}\,\ff(\ul{s},\ul{k})$.

Using Koszul duality, we can do so in terms of the Kriz--Totaro algebra. Similarly, the Kriz--Totaro algebra is degreewise a polynomial functor of $H^\vee$: taking $A = \bQ\oplus H^\vee[-n]$ we read off from \cref{prop:kriz-totaro-free} that $\cT_{\bQ\oplus H^\vee[-n]}(\ul{k})$ may be expressed in the form
\[\bigoplus_{s \geq 0} \cT(\ul{s},\ul{k}) \otimes_{\fS_s} H^\vee[-n]^{\otimes \ul{s}} = \bigoplus_{s \geq 0} \Sigma^{-n} \cT(\ul{s},\ul{k}) \otimes_{\fS_s} (H^\vee)^{\otimes \ul{s}},\]
for certain graded $\fS_s \times \fS_k$-representations $\cT(\ul{s},\ul{k})$. These are given by part \ref{enum:SW-i} of the following proposition, and part \ref{enum:SW-ii} explains how to recover the $\Sigma^{n-1} \mr{gr}\,\ft(\ul{s},\ul{k})$ from this.

\begin{proposition}\label{prop:SW}\,
\begin{enumerate}[(i)]
\item \label{enum:SW-i} There is an isomorphism
	\[\Sigma^{-n} \cT(-,-) \cong S^*\Big(((0) \oplus (1)[-n]) \otimes_H (\bQ\boxtimes \cat{S}^{2n-1}\cat{Lie})\Big),\]
	of objects in $\cat{GrRep}(\cat{FB} \times\cat{FB})$, where the free graded-commutative algebra $S^*(-)$ is formed with respect to Day convolution, and is applied to the functor $((0) \oplus (1)[-n]) \otimes_H (\bQ\boxtimes \cat{S}^{2n-1}\cat{Lie})$ given by
	\begin{align*}\cat{FB} \times \cat{FB} &\lra \cat{Gr}(\bQ\text{-}\cat{mod}^f) \\
	(\ul{s}, \ul{k}) &\longmapsto  \begin{cases} (0) \otimes \cat{S}^{2n-1}\cat{Lie}(k) & \text{if $s=0$,} \\
	(1)[-n] \otimes \cat{S}^{2n-1}\cat{Lie}(k) & \text{if $s=1$,} \\
	0 & \text{otherwise.}\end{cases}\end{align*}
	\item \label{enum:SW-ii} For each $k$ the object $\Sigma^{n-1} \mr{gr}\,\ft(-,\ul{k})  \in  \cat{GrRep}(\cat{FB} \times \fS_k)$ admits the structure of a Lie algebra, the object $\Sigma^{-n} \cT(-,\ul{k}) \in \cat{GrRep}(\cat{FB} \times \fS_k)$ admits the structure of a commutative algebra, and these are Koszul dual.
\end{enumerate}
\end{proposition}

\begin{proof}
To prove \ref{enum:SW-i}, we must show that 
\[\bigoplus_{s \geq 0}S^*\Big(((0) \oplus (1)[-n]) \otimes_H (\bQ\boxtimes \cat{S}^{2n-1}\cat{Lie})\Big)(\ul{s}, -) \otimes_{\fS_s} (H^\vee)^{\otimes \ul{s}} \in \cat{GrRep}(\cat{FB})\]
is isomorphic to $\cT_{\bQ\oplus H^\vee[-n]}(-)$, naturally in $H$. But this is the polynomial expansion of the functor $H \mapsto S^*((\bQ \oplus H^\vee[-n]) \otimes \cat{S}^{2n-1}\cat{Lie})(-) \colon \bQ\text{-}\cat{mod}^f \lra \cat{GrRep}(\cat{FB})$, which is isomorphic to $\cT_{\bQ\oplus H^\vee[-n]}(-)$ by \cref{prop:kriz-totaro-free}.

For \ref{enum:SW-ii} we use Schur--Weyl duality, following Section 2.2 of \cite{SS}. Let us write $\cat{GrRep}^\mr{pol}(\bQ\text{-}\cat{mod}^f)$ for the category of functors $F \colon \bQ\text{-}\cat{mod}^f \to \cat{Gr}(\bQ\text{-}\cat{mod}^f)$ which are degreewise polynomial, and consider the Schur--Weyl duality functor
\begin{align*}
\mr{D}\colon \cat{GrRep}^\mr{pol}(\bQ\text{-}\cat{mod}^f) &\lra \cat{GrRep}(\cat{FB})\\
F &\longmapsto \left(S \mapsto \colim_{\mr{dim}(V) \to \infty}[F(V) \otimes (V^\vee)^{\otimes S}]^{\mr{GL}(V)}\right)
\end{align*}
which is a symmetric monoidal equivalence of categories. Considering objects with $\fS_k$-actions it induces a symmetric monoidal equivalence $\mr{D} \colon \cat{GrRep}^\mr{pol}(\bQ\text{-}\cat{mod}^f)^{\fS_k} \overset{\sim}\to \cat{GrRep}(\cat{FB} \times \fS_k)$ and by construction
\begin{align*}
\mr{D}(H \mapsto \mr{gr}\,\ft_{\mr{Lie}(H[n-1])}(\ul{k})) &= \Sigma^{n-1} \mr{gr}\,\ft(-,\ul{k})\\
\mr{D}(H^\vee \mapsto \cT_{\bQ\oplus H^\vee[-n]}(\ul{k})) &= \Sigma^{-n} \cT(-,\ul{k}).
\end{align*}
As $\mr{D}$ is symmetric monoidal, and $H \mapsto \mr{gr}\,\ft_{\mr{Lie}(H[n-1])}(\ul{k})$ and $H^\vee \mapsto \cT_{\bQ\oplus H^\vee[-n]}(\ul{k})$ are Lie and commutative algebra objects respectively, so are $\Sigma^{n-1} \mr{gr}\,\ft(-,\ul{k})$ and $\Sigma^{-n} \cT(-,\ul{k})$. Furthermore, by \cref{prop:kriz-totaro-koszul-dual} they are Koszul dual.
\end{proof}

\begin{proposition}\label{prop:step-compute}The following algorithm computes the Hilbert--Poincar\'e series 
\[\mr{ch}\big(\Sigma^{n-1} \mr{gr}\,\ff(-,\ul{k})\big) \in \widehat{\Lambda} \otimes \Lambda_k[[t,t^{-1}]],\]
the graded version of \eqref{eqn:ch-fb-sk}, its steps explained in detail in the remainder of this subsection:
\begin{enumerate}[(i)]
	\item \label{enum:step-compute-1} \cref{prop:SW} \ref{enum:SW-i} gives an expression for $\mr{ch}(\Sigma^{-n} \cT(-,-))\in\widehat{\Lambda} \otimes \widehat{\Lambda}[[t]]$, the graded version of \eqref{eqn:ch-fb-fb}. Its $\widehat{\Lambda} \otimes \Lambda_k[[t]]$-component is the Hilbert--Poincar{\'e} series of $\Sigma^{-n} \cT(-,\ul{k})$.
	
	\item \label{enum:step-compute-2} \cref{prop:SW} \ref{enum:SW-ii} gives that $\Sigma^{n-1} \mr{gr}\,\ft(-,\ul{k})$ is the Koszul dual of $\Sigma^{-n} \cT(-,\ul{k})$. This gives a relationship between their Hilbert--Poincar\'e series in $\widehat{\Lambda} \otimes \Lambda_k[[t]]$, phrased in terms of plethysm.
	
	\item \label{enum:step-compute-3} \cref{lem:total-d-t-alternating}, upon taking associated gradeds, gives the Hilbert--Poincar\'e series of $\mr{gr}\,\ff(-,\ul{k})$ as an alternating sum of those $\mr{gr}\,\ft(-,\ul{k'})$ for $k' \leq k$.
\end{enumerate}
\end{proposition}

\noindent We are free to choose $n$ and will eventually, in Step \ref{enum:step-compute-3}, take it to be odd.

\subsubsection{Step \ref{enum:step-compute-1}}
In $\cat{GrRep}(\cat{FB} \times\cat{FB})$, the free graded-commutative algebra $S^*(-)$ can be described in terms of the composition product. Writing $A \in \cat{GrRep}(\cat{FB} \times\cat{FB})$ as a direct sum $A_\mr{odd} \oplus A_\mr{even}$ of terms concentrated in odd and even degrees, we have $S^*(A) = \Lambda^*(A_\mr{odd}) \otimes \mr{Sym}^*(A_\mr{even})$. Taking the character as in \eqref{eqn:ch-fb-fb}, we get
\[\mr{ch}(S^*(A)) = ((1+e_1+e_2+\cdots)\circ \mr{ch}(A_\mr{odd}))\cdot((1+h_1+h_2+\cdots)\circ \mr{ch}(A_\mr{even})).\]

Introducing $L_\mr{odd} \coloneqq 1 \otimes \mr{ch}((\cat{S}^{2n-1}\cat{Lie})_\mr{odd})$ and $L_\mr{even} \coloneqq 1 \otimes \mr{ch}((\cat{S}^{2n-1}\cat{Lie})_\mr{even})$, \cref{prop:SW} \ref{enum:SW-i} gives the computational instantiation of Step \ref{enum:step-compute-1}:

\begin{lemma}The character of $\mr{ch}(\Sigma^{-n} \cT(-,-))$ is given by
		\begin{align*}&(1+e_1+e_2+\cdots)\circ(L_\mr{odd}+L_\mr{even}\cdot (s_1 \otimes 1) t^{-n})\\
		&\qquad \cdot (1+h_1+h_2+\cdots)\circ(L_\mr{even}+L_\mr{odd}\cdot (s_1 \otimes 1) t^{-n})\end{align*}
		if $n$ is odd, and if $n$ is even, by
		\begin{align*}&(1+e_1+e_2+\cdots)\circ(L_\mr{odd} \cdot (1+(s_1 \otimes 1)t^{-n}))\\
		&\qquad \cdot (1+h_1+h_2+\cdots)\circ(L_\mr{even} \cdot (1+(s_1 \otimes 1)t^{-n})).
		\end{align*}
\end{lemma}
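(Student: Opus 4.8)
The plan is to derive the stated formula for $\mathrm{ch}(\Sigma^{-n}\cT(-,-))$ directly from the isomorphism in Proposition \ref{prop:SW}(i), by applying the Frobenius character homomorphism \eqref{eqn:ch-fb-fb} and unwinding how it interacts with $S^*(-)$, the Hadamard product $\otimes_H$, and the grading shifts. First I would record the algebraic identities for the character of a free graded-commutative algebra in $\cat{GrRep}(\cat{FB}\times\cat{FB})$: if $A = A_\mr{odd}\oplus A_\mr{even}$ with the two summands concentrated in odd and even homological degrees respectively, then $S^*(A) = \Lambda^*(A_\mr{odd})\otimes\mathrm{Sym}^*(A_\mr{even})$, so that under $\mathrm{ch}$ this becomes
\[
\mathrm{ch}(S^*(A)) = \big((1+e_1+e_2+\cdots)\circ\mathrm{ch}(A_\mr{odd})\big)\cdot\big((1+h_1+h_2+\cdots)\circ\mathrm{ch}(A_\mr{even})\big),
\]
where $\circ$ is plethysm (composition product under $\mathrm{ch}$) and $\cdot$ is the product corresponding to Day convolution. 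This is the point where the Koszul sign rule enters and where one must be careful that the ``odd/even'' decomposition of the \emph{total} grading (homological degree of $\cat{S}^{2n-1}\cat{Lie}$ plus the shift $[-n]$ from the $A$-factor) is tracked correctly; this is the main bookkeeping obstacle, and it is precisely what forces the split into the two cases for the parity of $n$.

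Next I would identify the argument $A = ((0)\oplus(1)[-n])\otimes_H(\bQ\boxtimes\cat{S}^{2n-1}\cat{Lie})$ of $S^*$ appearing in Proposition \ref{prop:SW}(i) and compute its character. The Hadamard (objectwise) tensor product in the second $\cat{FB}$-variable corresponds to multiplication in the first tensor factor of $\widehat\Lambda\otimes\widehat\Lambda$ by the character of $\cat{S}^{2n-1}\cat{Lie}$ in the $1\otimes(-)$-slot; introducing the notation
\[
L_\mr{odd} = 1\otimes\mathrm{ch}((\cat{S}^{2n-1}\cat{Lie})_\mr{odd}),\qquad L_\mr{even} = 1\otimes\mathrm{ch}((\cat{S}^{2n-1}\cat{Lie})_\mr{even}),
\]
the functor $(0)$-piece contributes $L_\mr{odd}+L_\mr{even}$ in total degree $0$, while the $(1)[-n]$-piece contributes $(L_\mr{odd}+L_\mr{even})\cdot(s_1\otimes 1)t^{-n}$, shifted in homological degree by $-n$ (recorded by the formal variable $t^{-n}$) and carrying the Schur function $s_1$ in the first $\widehat\Lambda$-slot (recording the single $H^\vee$-factor, with $\fS_1$ acting trivially). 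Splitting $A$ into its odd and even total-degree parts then depends on the parity of $n$: when $n$ is odd, the $t^{-n}$-shift swaps odd and even, so $A_\mr{odd} = L_\mr{odd}+L_\mr{even}\cdot(s_1\otimes 1)t^{-n}$ and $A_\mr{even} = L_\mr{even}+L_\mr{odd}\cdot(s_1\otimes 1)t^{-n}$; when $n$ is even the shift preserves parity, so $A_\mr{odd} = L_\mr{odd}\cdot(1+(s_1\otimes 1)t^{-n})$ and $A_\mr{even} = L_\mr{even}\cdot(1+(s_1\otimes 1)t^{-n})$.

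Finally I would substitute these two expressions for $A_\mr{odd}$ and $A_\mr{even}$ into the formula $\mathrm{ch}(S^*(A)) = ((1+e_1+e_2+\cdots)\circ\mathrm{ch}(A_\mr{odd}))\cdot((1+h_1+h_2+\cdots)\circ\mathrm{ch}(A_\mr{even}))$ and observe that this reproduces exactly the two displayed formulas in the statement of the lemma. The one genuine subtlety to check carefully is that $\Sigma^{-n}$ in Proposition \ref{prop:SW}(i) has already been absorbed into the functor written on the right-hand side there (the grading conventions for $\cat{S}^{2n-1}\cat{Lie}$ and for the $[-n]$-shift on $A$ are exactly those under which the isomorphism holds), so no further correction factor of $t$ is needed; everything else is the formal translation of ``free commutative algebra'' and ``Hadamard product'' under the Frobenius character, which was recorded in Section 2.2. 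I would expect the write-up to be short: the only work is the parity case analysis and a sentence confirming the grading normalisation.
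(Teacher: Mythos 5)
Your derivation is correct and is essentially identical to the paper's: the paper precedes the lemma with exactly the formula $\mathrm{ch}(S^*(A)) = ((1+e_1+e_2+\cdots)\circ\mathrm{ch}(A_\mr{odd}))\cdot((1+h_1+h_2+\cdots)\circ\mathrm{ch}(A_\mr{even}))$ and the definitions of $L_\mr{odd}$, $L_\mr{even}$, then states the lemma as an immediate consequence of Proposition~\ref{prop:SW}(i), leaving the parity bookkeeping implicit. You have simply spelled out that bookkeeping (the $t^{-n}$-shift swapping or preserving parity of the two summands of $\mathrm{ch}(\cat{S}^{2n-1}\cat{Lie})$ according as $n$ is odd or even), which is precisely what the paper intends the reader to do.
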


\subsubsection{Step \ref{enum:step-compute-2}}
As the operad $\cat{Ass}$ is Koszul self-dual, there is also a Koszul self-duality of graded-associative algebras. We will need not the details of this theory, but only some of its consequences. Suppose that $A = A(V,S)$ is a Koszul quadratic graded-commutative algebra with $V$ finite-dimensional. Its underlying associative algebra is therefore Koszul, cf.\ \cite[Theorem 4.1.6]{Milles}, so there is also an associative Koszul dual $A^{!,\mr{ass}}$, an associative algebra. The relationship between $A^!$ and $A^{!,\mr{ass}}$ is given in \cite[Theorem 2.3.11]{GinzburgKapranov}: $A^{!,\mr{ass}}$ is the universal enveloping algebra of $A^!$. By the Poincar{\'e}--Birkhoff--Witt theorem, there is an isomorphism $\mr{gr}\, A^{!,\mr{ass}} \cong S^*(A^!)$ for some filtration on $A^{!,\mr{ass}}$. Thus we can compute $\mr{ch}(A^!)$ by first computing $\mr{ch}(A^{!,\mr{ass}})$ and then inverting the operation corresponding to $S^*(-)$.

It is well-known how to compute the Hilbert--Poincar\'e series of $A^{!,\mr{ass}}$. Let us start with a quadratic datum $(V,S)$ with $\cat{FB} \times \fS_k$-action. Then we get a quadratic algebra $A = A(V,S)$ in $\cat{FB} \times \fS_k$-representations with two gradings: a homological grading and a weight grading, both of which we shall assume are non-negative. We shall need the bigraded Hilbert--Poincar\'e series $\mr{ch}(A)(t,r) \in \widehat{\Lambda} \otimes \Lambda_k[[t,t^{-1},r]]$, where $t$ still records the homological grading and $r$ now records the weight grading. Observing that the proof of \cite[Theorem 3.5.1]{LodayVallette} is natural in $A$, we obtain the following generalisation of it, or rather of the inhomogeneously graded version of \cite[Corollary 1.6]{BerglundKoszul}:

\begin{lemma}For $A$ a Koszul graded-associative algebra in $\cat{FB} \times \fS_k$-representations,  we have
	\[\mr{ch}(A^{!,\mr{ass}})(t,r) = \frac{1}{\mr{ch}(A)(t,-rt^{-1})} \in \widehat{\Lambda} \otimes \Lambda_k[[t,t^{-1},r]].\]
\end{lemma}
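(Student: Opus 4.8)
The statement to be proved is the identity
\[
\mr{ch}(A^{!,\mr{ass}})(t,r) = \frac{1}{\mr{ch}(A)(t,-rt^{-1})}
\]
for a Koszul graded-associative algebra $A$ in $\cat{FB}\times\fS_k$-representations, bigraded by homological degree ($t$) and weight ($r$). The plan is to reduce this to the classical Koszulness Euler-characteristic identity by working in the Grothendieck group $gR(\cat{FB}\times\fS_k)$, completed with respect to both gradings, and being careful about signs coming from the Koszul sign rule since the grading is now genuinely homological rather than purely weight-based.

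\emph{First I would set up the bar complex.} For a Koszul quadratic algebra $A = A_\cat{Ass}(V,S)$ with quadratic dual coalgebra $A^\excl$ and associative Koszul dual $A^{!,\mr{ass}} = (A^\excl)^\vee$, the Koszul complex $A \otimes A^\excl$ (with the Koszul differential) is acyclic in positive total weight. This is the defining property of Koszulness, cf.\ \cite[Theorem 3.4.6]{LodayVallette}, and it holds verbatim for algebras internal to any $\bQ$-linear symmetric monoidal abelian category, in particular $\cat{GrRep}(\cat{FB}\times\fS_k)$; the proof of \cite[Theorem 3.5.1]{LodayVallette} is natural in $A$ and so globalises. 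Passing to the completed Grothendieck group and taking bigraded characters, acyclicity of the Koszul complex becomes the identity
\[
\mr{ch}(A) \cdot \mr{ch}(A^\excl)^{\mathrm{sign}} = 1,
\]
where $(-)^{\mathrm{sign}}$ records the effect on characters of the homological desuspension built into $A^\excl$ (each weight-$w$ piece of $A^\excl$ is the linear dual of the weight-$w$ piece of $A$, placed in homological degree shifted by $w$, with a corresponding sign/transpose on the $\fS_k$- and $\cat{FB}$-characters coming from the Koszul sign rule on $V[1]^{\otimes w}$).

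\emph{Next I would unwind the bookkeeping of the two gradings.} The weight grading $r$ of $A^{!,\mr{ass}}$ matches the weight grading of $A$, but the homological grading of $A^\excl$ in weight $w$ is the negative of that of $A$ shifted by $w$; concretely this replaces $t \mapsto t^{-1}$ and $r \mapsto rt^{-1}$ in the substitution, and the sign $(-1)^w$ from the antisymmetrisation $\Lambda$ versus $\mr{Sym}$ under the $\lambda$-ring/plethysm structure (exactly as in the passage between $S^*$ applied to odd and even parts discussed just before the lemma in the main text) turns $r \mapsto rt^{-1}$ into $r \mapsto -rt^{-1}$. Dualising $A^\excl$ to get $A^{!,\mr{ass}}$ corresponds on characters to applying the involution $\omega$ (tensoring with the sign representation) on each $\Lambda_s$-factor, which is already accounted for in how the transpose interacts with $e_k$ versus $h_k$; I would check this matches the stated formula by comparing with the untwisted (weight-only) case of \cite[Corollary 1.6]{BerglundKoszul}, which is the specialisation $t = 1$ of the claim. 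Combining the acyclicity identity with these substitutions yields $\mr{ch}(A^{!,\mr{ass}})(t,r) = 1/\mr{ch}(A)(t,-rt^{-1})$.

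\emph{The main obstacle} is getting the signs and grading shifts exactly right: there are three independent sources of signs — the homological desuspension $V \mapsto V[1]$ in forming $A^\excl$, the Koszul sign rule on tensor powers entering the plethystic operations on $\widehat\Lambda$, and the transpose of partitions induced by linear duality of $\fS_k$- and $\cat{FB}$-representations — and they must conspire to produce precisely the substitution $r \mapsto -rt^{-1}$ and nothing else. The cleanest way to control this is to first verify the identity on the level of ordinary (ungraded) Hilbert series, i.e.\ apply the dimension homomorphism $\widehat\Lambda \otimes \Lambda_k \to \bZ$, where it reduces to the familiar $H_{A^{!,\mr{ass}}}(t,r) = 1/H_A(t,-rt^{-1})$; then upgrade to the full character by naturality of the Koszul complex, using that the $\cat{FB}\times\fS_k$-equivariant structure of the Koszul complex is literally the one induced on $V^{\otimes\bullet}$ by the $\cat{FB}\times\fS_k$-action on $V$, so no additional sign can enter beyond those already visible in the dimension computation. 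I would then simply cite \cite[Theorem 3.5.1]{LodayVallette} for the underlying Koszul-complex acyclicity and remark that its proof is natural in $A$, so the bigraded equivariant statement follows formally.
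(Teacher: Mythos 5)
Your proof follows exactly the paper's route: the paper's entire argument is the observation that the Koszul-complex acyclicity of \cite[Theorem 3.5.1]{LodayVallette}, in its graded form \cite[Corollary 1.6]{BerglundKoszul}, has a proof natural in $A$ and therefore upgrades directly to the $\cat{FB}\times\fS_k$-equivariant bigraded statement, and your elaboration simply spells this out. One small misattribution worth flagging: dualising $A^\excl$ to form $A^{!,\mr{ass}}$ does not by itself apply the involution $\omega$ on characters, since finite-dimensional $\fS_s$- and $\fS_k$-representations over $\bQ$ are self-dual on the Grothendieck group---the sign-representation twist you need comes entirely from the Koszul sign rule on the desuspension $V[1]^{\otimes w}$, which you already list as a separate source, so the final conclusion is unaffected.
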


Next we need to give the inverse to the operation $S^*(-)$ on the level of symmetric functions. As before $S^*(-)$ converts direct sums to tensor products, and $\mr{ch}$ sends $\mr{Sym}^*(-)$ to plethysm with $(1+h_1+h_2+\ldots)$ and $\Lambda^*(-)$ to plethysm with $(1+e_1+e_2+\ldots)$ (here plethysm on $\widehat{\Lambda} \otimes \Lambda[[t]]$ uses the tensor product of $\Lambda$-rings with \emph{inner plethysm} on the second term, see \cref{sec:characters-sym-functions}). Because our objects are concentrated in even degree when $n$ is odd, but in both even and odd degrees when $n$ is even, from now on we take $n$ to be odd.

Then, to obtain $1+x$ with $x$ positively graded from $(1+h_1+h_2+\ldots) \circ (1+x)$, we apply the plethystic inverse of $(1+h_1+h_2+\ldots)$, which we denote $\mr{Log}_\mr{even}(1+-)$. This series can be computed iteratively, and a formula can be found in \cite[Proposition A.4]{GG}. Taking the plethysm of this with $\mr{ch}(A^{!,\mr{ass}})(t,r)$, we get:

\begin{proposition}For $A$ a Koszul graded-commutative algebra in algebraic $\cat{FB} \times \fS_k$-representations with $A^!$ generated in even degrees, we have
	\[\mr{ch}(A^!)(t,r) = -\mr{Log}_\mr{even}(1+-) \circ (\mr{ch}(A)(t,-rt^{-1})-1).\]
\end{proposition}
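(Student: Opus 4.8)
The plan is to combine the two displayed lemmas immediately preceding the statement, together with the Poincar\'e--Birkhoff--Witt observation that opens Step \ref{enum:step-compute-2}. First I would fix the setup: $A$ is a Koszul graded-commutative algebra in algebraic $\cat{FB}\times\fS_k$-representations, bigraded by homological degree (recorded by $t$) and weight (recorded by $r$), both non-negative, and with Koszul dual Lie algebra $A^!$ generated in \emph{even} homological degree (this holds in our application after taking $n$ odd, since $\cat{S}^{2n-1}\cat{Lie}$ is then concentrated in even degrees). The underlying associative algebra of $A$ is Koszul, by \cite[Theorem 4.1.6]{Milles}, so it has an associative Koszul dual $A^{!,\mr{ass}}$, and by \cite[Theorem 2.3.11]{GinzburgKapranov} this is the universal enveloping algebra $U(A^!)$.

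The key steps, in order, are: (1) invoke the first displayed lemma to get
\[\mr{ch}(A^{!,\mr{ass}})(t,r) = \frac{1}{\mr{ch}(A)(t,-rt^{-1})}\in\widehat{\Lambda}\otimes\Lambda_k[[t,t^{-1},r]];\]
(2) apply the PBW theorem to $A^{!,\mr{ass}} = U(A^!)$ to obtain a (weight- and homological-degree-compatible) filtration with $\mr{gr}\,A^{!,\mr{ass}}\cong S^*(A^!)$, hence an equality of bigraded Hilbert--Poincar\'e series $\mr{ch}(A^{!,\mr{ass}}) = \mr{ch}(S^*(A^!))$; (3) translate $S^*(-)$ into symmetric-function operations: since $A^!$ is concentrated in even homological degree, $S^*(A^!) = \mr{Sym}^*(A^!)$, and under $\mr{ch}$ this corresponds to plethysm with $(1+h_1+h_2+\cdots)$ (using inner plethysm in the $\Lambda_k$-factor, matching the conventions already set up for the character of $\cat{FB}\times\fS_k$-representations); (4) invert: by definition $\mr{Log}_\mr{even}(1+-)$ is the plethystic inverse of $(1+h_1+h_2+\cdots)$, so applying it to the equality from steps (1)--(3) yields
\[\mr{ch}(A^!)(t,r) = \mr{Log}_\mr{even}\!\big(1+-\big)\circ\Big(\mr{ch}(A^{!,\mr{ass}})(t,r)-1\Big) = \mr{Log}_\mr{even}\!\big(1+-\big)\circ\Big(\tfrac{1}{\mr{ch}(A)(t,-rt^{-1})}-1\Big).\]
Finally I would observe that $\tfrac{1}{\mr{ch}(A)(t,-rt^{-1})}-1 = -\big(\mr{ch}(A)(t,-rt^{-1})-1\big)\cdot\big(1+O\big)$, but more cleanly that $\mr{Log}_\mr{even}$ is normalised so that $\mr{Log}_\mr{even}(1+x) = -\mr{Log}_\mr{even}(1 - x + \cdots)$; I would simply check on the level of formal power series in $\widehat{\Lambda}\otimes\Lambda_k[[t,t^{-1},r]]$ that the stated sign and the reciprocal combine to give $-\mr{Log}_\mr{even}(1+-)\circ(\mr{ch}(A)(t,-rt^{-1})-1)$, using that the $\lambda$-ring involution $\omega$ (or equivalently the antipode) relates $\tfrac1{\mr{ch}(A)}-1$ and $\mr{ch}(A)-1$ in precisely the way absorbed by the leading minus sign in $-\mr{Log}_\mr{even}$.

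The main obstacle is bookkeeping rather than conceptual: one must be careful that all three gradings (homological via $t$, weight via $r$, and the two $\cat{FB}$-cardinality/$\fS_k$-symmetric-function gradings) are tracked consistently through the PBW isomorphism and through the passage from $\mr{Sym}^*$ to plethysm, and that convergence of these operations in $\widehat{\Lambda}\otimes\Lambda_k[[t,t^{-1},r]]$ is legitimate — this is why the hypotheses that $A$ is bigraded with non-negative gradings and that $A^!$ is generated in even degrees are essential (the latter is exactly what lets us use $\mr{Log}_\mr{even}$ rather than needing to split into even and odd pieces as in the $n$ even case). I would also double-check the precise sign convention in the substitution $r\mapsto -rt^{-1}$ against the normalisation of $\mr{Log}_\mr{even}$ quoted from \cite[Proposition A.4]{GG}, since an off-by-sign there is the most likely source of error; once that is pinned down the proof is a two-line composition of the cited results.
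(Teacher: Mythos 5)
Your proposal follows exactly the paper's (implicit) argument: pass to the associative Koszul dual $A^{!,\mr{ass}}=U(A^!)$, invoke the reciprocal formula $\mr{ch}(A^{!,\mr{ass}})(t,r)=1/\mr{ch}(A)(t,-rt^{-1})$, apply PBW to identify $\mr{ch}(A^{!,\mr{ass}})=\mr{ch}(S^*(A^!))$, use evenness of $A^!$ to replace $S^*$ with $\mr{Sym}^*$ and hence with plethysm by $(1+h_1+h_2+\cdots)$, and then invert by $\mr{Log}_\mr{even}(1+-)$. That is the whole content.

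The one place you stumble is the final sign-cleanup. You correctly reduce matters to the identity
\[
\mr{Log}_\mr{even}(1+-)\circ\Bigl(\tfrac{1}{f}-1\Bigr) = -\,\mr{Log}_\mr{even}(1+-)\circ(f-1),
\]
but the justification you offer — that the $\lambda$-ring involution $\omega$ or the antipode relates $\tfrac{1}{f}-1$ to $f-1$ — is a red herring: $\omega$ interchanges $h_n$ and $e_n$, which is the passage between the $\mr{Sym}^*$- and $\Lambda^*$-plethystic exponentials, not what is needed here, and $\mr{Log}_\mr{even}$ is not linear so the factorisation $\tfrac{1}{f}-1 = -(f-1)(1+\cdots)$ does not directly help either. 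The correct one-line reason is additivity: writing $\mr{Exp}_\mr{even}(x)\coloneqq(1+h_1+h_2+\cdots)\circ x$, the isomorphism $\mr{Sym}^*(V\oplus W)\cong\mr{Sym}^*(V)\otimes\mr{Sym}^*(W)$ gives $\mr{Exp}_\mr{even}(x+y)=\mr{Exp}_\mr{even}(x)\cdot\mr{Exp}_\mr{even}(y)$, so the plethystic inverse $\mr{Log}_\mr{even}(1+-)$ sends products to sums. Applying this to $f\cdot(1/f)=1$ gives $\mr{Log}_\mr{even}(f)+\mr{Log}_\mr{even}(1/f)=0$, which is exactly the sign needed. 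With this substitution your argument is complete and matches the paper's.
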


When we apply this to the Koszul commutative algebra object $\Sigma^n \cT(-,\ul{k})$ for $n$ odd, in which case its Koszul dual is concentrated in even degrees, we get the Hilbert--Poincar\'e series of $\Sigma^{n-1}\mr{gr}\,\ft(-,\ul{k})$ in $\widehat{\Lambda}\otimes\Lambda[[t]]$. This completes step \ref{enum:step-compute-2}.

\subsubsection{Step \ref{enum:step-compute-3}}
Implementing the alternating sum  is straightforward, as the map $\mr{ch}_k \colon \cat{Rep}(\fS_k) \to \Lambda_k$ is given on inductions by
\[\mr{ch}_k\Big(\mr{Ind}^{\fS_k}_{\fS_{k-j} \times \fS_j} V \boxtimes (1^j)\Big) = \mr{ch}_{k-j}(V) \cdot s_{1^j} \in {\Lambda}_k.\]
Thus from \cref{lem:total-d-t-alternating} we get an equation in $\Lambda_s \otimes \widehat{\Lambda}[[t]]$
\[\mr{ch}\big(\mr{gr}\,\ff(\ul{s},\ul{k})\big) = \sum_{j=0}^k (-1)^j \mr{ch}\big(\mr{gr}\,\ft(\ul{s},\ul{k-j})\big)\cdot (1 \otimes s_{1^j}).\]

\subsection{The proof of \cref{thm:fourth-band}} \label{sec:fourth-band} In \cref{sec:computational-results}, we give the outcome of the computation of the invariants on the rationalised $E^1$-page of the Bousfield--Kan spectral sequence \eqref{eqn:bk-ss} for the embedding calculus Taylor tower. In stating those results, we describe the cohomology of products relative to diagonals parallel to how we described $\ff_g(\ul{k})$ in \cref{lem:fgk-poly}: the expression in \cref{thm:CohConfSpaces} is the value on the graded vector space $H^\vee[-n]$ of the degreewise polynomial functor
\begin{equation}\label{eqn:coh-conf-schur} \begin{aligned} \cD_{-}(\ul{k}) \colon \cat{Gr}(\bQ\text{-}\cat{mod}) &\lra \cat{GrRep}(\fS_k) \\
V &\longmapsto \bigoplus_{s \geq 0} \cD(\ul{s},\ul{k}) \otimes_{\fS_s} V^{\otimes s}\end{aligned}\end{equation}
for graded $\fS_s \times \fS_k$-representations $\cD(\ul{s},\ul{k})$. The formula in \cref{thm:CohConfSpaces} determines these representations, and \cref{prop:cohomology-products-diagonals-qualitative} makes some cases explicit. We use this to prove the following, equivalent to \cref{thm:fourth-band} (c.f.~\cref{thm:outcome-emb-calc} \ref{enum:outcome-iii}):

\begin{theorem}\label{thm:outcome-fourth-band} \,
	\begin{enumerate}[(i)]
		\item \label{enum:outcome-fourth-band-i} In the Bousfield--Kan spectral sequence \eqref{eqn:bk-ss} for the embedding calculus tower \eqref{eqn:emb-tower-fr}, the dimensions of the invariants $\smash{[({}^{BK}\!E^1_{p,q})\oq]^{G^{\fr,[[\ell]]}_g}}$ contributing to degrees ${\sim}4n$, for $n$ and $g$ sufficiently large, are given as follows:
\[
	\begin{tikzpicture}
	\begin{scope}[scale=.8,xshift=6cm]
	
	\def\HH{7} 
	\def\WW{8} 
	\def\HHhalf{4} 
	\def\WWhalf{4} 
	
	\clip (-1,-1.5) rectangle ({\WW+0.5},{\HH+0.5});
	\draw (-.5,0)--({\WW+.5},0);
	\draw (0,-1) -- (0,{\HH+1.5});
	\begin{scope}
	\foreach \s[evaluate={\si=int(\s-1)}] in {1,...,\HH}
	{
		\draw [dotted] (-.5,\s)--(.25,\s);
		\draw [dotted] (.75,\s) -- ({\WW+.5},\s);
		\node [fill=white] at (-.25,\s) [left] {\tiny $\si$};
	}

	\foreach \s[evaluate={\si=int(\HH-\s+2)}] in {1,...,\WW}
	{
		\draw [dotted] (\s,-0.5)--(\s,{\HH+.5});
		\node [fill=white,rotate=-80,xshift=1.2ex] at (\s,-.5) {\tiny $4n-\si$};
	}
	\end{scope}
	
	\node [fill=white] at (-.5,-.75) {$\nicefrac{p}{q-p}$};
	\node at (7,1) [fill=white] {$3$};
	\node at (6,2) [fill=white] {$15$};
	\node at (5,3) [fill=white] {$21$};
	\node at (4,4) [fill=white] {$10$};
	\node at (3,5) [fill=white] {$4$};
	\node at (2,6) [fill=white] {$2$};
	
	
	\end{scope}	
	\node at (2.2,.7) [align=center] {\small homotopy \\ \small automorphisms};
	\node at (2.2,{2*.8}) {\small second layer};
	\node at (2.2,{3*.8}) {\small third layer};
	\node at (2.2,{5*.9}) {\small $\vdots$};
	\end{tikzpicture}
\]
Here the indexing is such that the entries contributing to $\pi_i(B\Emb_{\half \partial}^\fr(W_{g,1})_\ell) \oq$ are on the column indexed by $i$, and in this indexing the $d^r$-differentials have bidegree $(-1,r)$. The entries are zero for $p > 7$.
	\item \label{enum:outcome-fourth-band-ii} For $n \geq 4$ the following differential is injective:
	\[\bQ^3 \cong \left[(E^1_{0,4n-3})\oq\right]^{G^{\fr, [[\ell]]}_g} \xrightarrow{\,d^1\,} \left[(E^1_{1,4n-3})\oq\right]^{G^{\fr, [[\ell]]}_g} \cong \bQ^{15}.\]
\end{enumerate}\end{theorem}

\begin{proof}The $\lambda = (0)$ entry of the bottom table in \cref{tab:ss-up-to-6} yields the entries in part \ref{enum:outcome-fourth-band-i}. 
	
In the Bousfield--Kan spectral sequence, there can be no nonzero differentials out of the invariants in the entry $({}^{BK}\!E^1_{0,4n-3})\oq$ when $n \geq 4$ except the $d^1$-differential. To see this, observe that there can not be nonzero differentials into this entry, and a nonzero $d^r$-differential for $r \geq 2$ out of this entry can only hit an invariant in a $(+1)$-eigenspace of the reflection involution from a higher band than the fourth, which are of degree $\geq 8n-17$. This is at least $4n-3$ when $n \geq 4$. 
	
Next, the map to the first layer factors as
	\[B\Emb^\fr_{\half \partial}(W_{g,1})_\ell \lra B\mr{hAut}_\partial(W_{g,1}) \lra  B\mr{hAut}_{\half \partial}(W_{g,1}).\] 
The right map induces a map between $G^{\fr, [[\ell]]}_g$-invariants of rational homotopy groups, which by Computations \ref{comp:Der} and \ref{comp:der-omega} are given by
	\[0=\left[\pi_{4n-3}( B\mr{hAut}_{\partial}(W_{g,1}))\oq\right]^{G^{\fr, [[\ell]]}_g} \lra
	\left[\pi_{4n-3}( B\mr{hAut}_{\half \partial}(W_{g,1}))\oq\right]^{G^{\fr, [[\ell]]}_g} \cong \bQ^3.\]
Thus it must be the case that the following differential is injective, yielding part \ref{enum:outcome-fourth-band-ii}:
	\[\bQ^3 \cong \left[(E^1_{0,4n-3})\oq\right]^{G^{\fr, [[\ell]]}_g} \xrightarrow{\,d^1\,} \left[(E^1_{1,4n-3})\oq\right]^{G^{\fr, [[\ell]]}_g} \cong \bQ^{15}.\qedhere\]
\end{proof}

\begin{corollary}\label{cor:pos-eigenspace-disc} Suppose $n \geq 4$.
	\begin{enumerate}[(i)]
		\item \label{enum:pos-eigenspace-disc-i} For $n \geq 4$, the Euler characteristic of the $(+1)$-eigenspaces $\pi_*\big(\Omega^{2n}_0 \tfrac{\rm{Top}}{\rm{Top}(2n)}\big)_\bQ^{(+1)}$ over the range of degrees $\ast \in [4n-9,4n-5]$ is $1$.
		\item \label{enum:pos-eigenspace-disc-ii} For $n \geq 4$, the Euler characteristic of
		$\pi_*(B\Diff_\partial(D^{2n}))_\bQ^{(+1)}$ over the range of degrees $\ast \in [4n-9,4n-5]$ is $1$.
	\end{enumerate}
\end{corollary}

\begin{proof}
	Using \cref{thm:outcome-fourth-band} \ref{enum:outcome-fourth-band-ii}, $[\pi_{4n-3}(B\mr{TorEmb}^{\fr,\cong}_{\half \partial}(W_{g,1})_\ell)_\bQ]^{G^{\fr,[[\ell]]}_g} = 0$, and in the remaining degrees in the range $[4n-8,4n-4]$ the Euler characteristic is 1 by \cref{thm:outcome-fourth-band} \ref{enum:outcome-fourth-band-i}. Part \ref{enum:pos-eigenspace-disc-i} then follows by arguing as in \cref{thm:Reflection} using the fact that $\smash{\pi_*(X_1(g))\oq^{(+1)}}$ is concentrated in degrees $\geq 4n-3$. Part \ref{enum:pos-eigenspace-disc-ii} then follows since the homotopy groups of $X_0$ consist of $(-1)$-eigenspaces for the reflection involution.
\end{proof}

\begin{remark}This is compatible with the suggestion in \cref{rem:watanabe} that the cohomology classes arising from configuration space integrals may pair non-trivially against the classes of \cref{cor:pos-eigenspace-disc} \ref{enum:pos-eigenspace-disc-ii}: the reflection involution acts on a cohomology class associated to an element of $\cA_k$ by $(-1)^k$.
\end{remark}

Apart from a grading shift, the parts of the Bousfield--Kan spectral sequence \eqref{eqn:bk-ss} given in \cref{thm:outcome-emb-calc} \ref{enum:outcome-iii} and \cref{thm:outcome-fourth-band} \ref{enum:outcome-fourth-band-i} do not depend on whether $n$ is odd or even. This is a general phenomenon, similar to Remark \ref{rem:PoincSeriesX1}. 

\begin{proposition}\label{rem:PoincSeriesEmb} For $n$ even and odd, the Hilbert--Poincar{\'e} series of the $G^{\fr, [\ell]}_g$-representations $\pi_{*}(BL_k \Emb_{\half \partial}(W_{g,1})_\mr{id}^\times)$ differ by an application of $\omega$ and grading shift only. In particular, the dimensions of the $G^{\fr, [\ell]}_g$-invariants do not depend on $n$.
\end{proposition}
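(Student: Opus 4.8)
\textbf{Proof plan for Proposition \ref{rem:PoincSeriesEmb}.} The plan is to trace the parity-dependence through each of the three inputs to the Federer spectral sequence computing $\pi_*(BL_k\mr{Emb}_{\half\partial}(W_{g,1})_\mr{id}^\times)$ and show that in every case changing the parity of $n$ amounts to transposing partitions together with a grading shift, and that these two operations are compatible with the spectral sequence. Concretely, recall from Section \ref{sec:higher-layer-results} that $\pi_{*}(L_k\mr{Emb}_{\half\partial}(W_{g,1})_\mr{id})\oq$ has a finite filtration (the rational Federer spectral sequence collapses) whose associated graded is built from the $\fS_k$-invariants of $H^p(W_{g,1}^k,\Delta_{\half\partial};\bQ) \otimes \pi_q(\tohofib_{I\subset\ul{k}}\mr{Emb}(\ul k\setminus I,W_{g,1}))\oq$. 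So it suffices to compare, for $n$ even and $n$ odd:
\begin{enumerate}[(i)]
\item the Schur-functor description of $H^*(W_{g,1}^k,\Delta_{\half\partial};\bQ)$ in \eqref{eqn:coh-conf-schur}, i.e.\ the coefficients $\cD(\ul s,\ul k)$;
\item the Schur-functor description of $\pi_{*+1}(\tohofib_{I\subset\ul k}\mr{Emb}(\ul k\setminus I,W_{g,1}))\oq \cong \ff_g(\ul k)$, i.e.\ the coefficients $\mr{gr}\,\ff(\ul s,\ul k)$ of \eqref{eqn:dke-schur};
\item the operation of taking $\olG$-invariants of a tensor product of Schur functors of $H$, and how it interacts with transposing partitions.
\end{enumerate}

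First I would dispose of (i). Inspecting Theorem \ref{thm:CohConfSpaces}, the only place $n$ enters the right-hand side is through the exponents $(1^s)^{\otimes n+s-1}$ and $(1^{\mu_s})^{\otimes n+s-1}$ on sign representations of $\fS_s$ resp.\ $\fS_{\mu_s}$; changing $n$ by $1$ multiplies the relevant $\fS_s$- and $\fS_{\mu_s}$-actions by the sign representation, which under $\mr{ch}$ is exactly the involution $\omega$ (and tensoring with $(1^s)^{\otimes(s-1)}$ in the Schur-functor convention of Section \ref{sec:schur-functors} converts the $\fS_s$-action on $\mr{Lie}(s)\otimes(1^s)$ into the polynomial-functor input, so the net effect on the associated degreewise polynomial functor $V\mapsto \cD(\ul s,\ul k)\otimes_{\fS_s}V^{\otimes s}$ is precisely precomposition with $\omega$ on the $\fS_s$-variable). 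The degree shift comes from the overall degree $\sum_s \mu_s(n+s-1)$, which changes by $\sum_s\mu_s$ as $n$ changes by $1$. For (ii), the cleanest route is Proposition \ref{prop:SW}: $\Sigma^{n-1}\mr{gr}\,\ft(-,\ul k)$ is Koszul dual to $\Sigma^{-n}\cT(-,\ul k)$, and the formula for $\mr{ch}(\Sigma^{-n}\cT(-,-))$ in Step \ref{enum:step-compute-1} shows the same phenomenon: for $n$ odd the odd/even pieces $L_\mr{odd},L_\mr{even}$ of $\cat{S}^{2n-1}\cat{Lie}$ get multiplied by $s_1\otimes 1$ (shifting by $t^{-n}$) in the "opposite" slot, whereas for $n$ even they get multiplied into the "same" slot; and $\omega$ applied to $\cat{S}^{2n-1}\cat{Lie}$ interchanges these, exactly as in the argument of Remark \ref{rem:PoincSeriesX1} with $\omega(h_n\circ s_\lambda)=(\omega^{|\lambda|}h_n)\circ s_{\lambda'}$. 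Taking Koszul duals commutes with $\omega$ up to the predictable grading bookkeeping (the Koszul dual of a commutative algebra is a Lie algebra, whose construction is natural and sign-sensitive in the expected way), so $\mr{gr}\,\ft(-,\ul k)$ for $n$ even is obtained from that for $n$ odd by $\omega$ plus a shift; then Lemma \ref{lem:total-d-t-alternating} transfers this to $\mr{gr}\,\ff(-,\ul k)=\ff_g(\ul k)$ since the alternating-sum formula is parity-independent.

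Having established that both factors $H^*(W_{g,1}^k,\Delta_{\half\partial};\bQ)$ and $\ff_g(\ul k)$ transform by $\omega$-plus-shift, the final step (iii) is to check that $\olG$-invariants are insensitive to the transposition. This is where I expect the main subtlety, but it is already essentially handled in the paper: for $\mathbf G\in\{\mathbf O_{g,g},\mathbf{Sp}_{2g}\}$ the irreducibles $V_\lambda$ are built as $[S^\lambda\otimes H^{[k]}]^{\fS_k}$, and by Section \ref{sec:char-arithmetic} the operator $D$ (sending $s_\lambda$ to the orthogonal resp.\ symplectic Schur polynomial) satisfies $\omega(sp_\lambda)=o_{\lambda'}$; this is exactly the statement that transposing partitions interchanges the orthogonal and symplectic branching, i.e.\ swaps the $n$-even and $n$-odd cases, so $\dim[S_\lambda(H)]^{\olG}_{n\ \mathrm{even}}=\dim[S_{\lambda'}(H)]^{\olG}_{n\ \mathrm{odd}}$ for $g$ large. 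Combining: the Hilbert--Poincar\'e series of $\pi_*(BL_k\mr{Emb}_{\half\partial}(W_{g,1})_\mr{id}^\times)$ for $n$ even is $\omega$ applied to that for $n$ odd, up to regrading; and taking $\olG$-invariants (an exact operation for $g\geq 2$) of a sequence related by $\omega$ yields the same dimensions in each degree. I would also note the parallel application to the first layer $B\mr{hAut}_{\half\partial}(W_{g,1})$, whose parity-independence (up to transposition) is visible directly from \eqref{eq:DerAlgSchur}, since $\mr{Der}^n(s+2)$ differs for $n$ even vs.\ odd only by a sign twist; this is what makes Figures \ref{fig:bb-ss-emb-2n} and \ref{fig:bk-ss-emb-4n} parity-independent as claimed. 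The hardest part of the write-up will be making the "$\omega$-plus-shift" bookkeeping precise on the level of functors rather than just Hilbert--Poincar\'e series, and checking that Koszul duality (Proposition \ref{prop:SW}) genuinely intertwines $\omega$ on the commutative side with $\omega$ on the Lie side; but since the proposition only asserts equality of Hilbert--Poincar\'e series up to $\omega$ and of invariant-dimensions, it is legitimate to carry out the entire argument at the level of symmetric functions using the explicit recipe of Section \ref{sec:computing-koszul-dual}, where each step (plethysm with $1+h_i+\cdots$ vs.\ $1+e_i+\cdots$, the operator $D$, the alternating sum) manifestly commutes with $\omega$ in the appropriate sense.
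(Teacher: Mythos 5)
Your proposal arrives at the right conclusion and shares the two essential facts with the paper's proof: that the parity-dependence lives entirely in sign twists that become visible on applying Schur functors to a graded vector space, and that $\omega(sp_\lambda) = o_{\lambda'}$ reconciles the symplectic ($n$ odd) and orthogonal ($n$ even) invariant counts. However, your route for step (ii) is considerably longer than the paper's, and the paper's proof in fact never invokes Koszul duality. Having written the rationalised Federer $E^2$-entries as homogeneous pieces of
\[\left[\mr{gr}\,\ff(\ul{s},\ul{k}) \otimes \cD(\ul{t},\ul{k})\right]^{\fS_k} \otimes_{\fS_s \times \fS_{t}}  \bigl(H^{\otimes \ul{s}} \otimes (1^s)^{\otimes n-1} \bigr) \otimes \bigl(H^{\otimes \ul{t}} \otimes (1^{t})^{\otimes n}\bigr)[s(n-1)-tn],\]
the paper simply observes that the $\fS_s\times\fS_t\times\fS_k$-representations $\left[\mr{gr}\,\ff(\ul{s},\ul{k}) \otimes \cD(\ul{t},\ul{k})\right]^{\fS_k}$ are themselves independent of the parity of $n$---the generators $t_{ij}$ always sit in even degree $2n-2$, so no Koszul signs shift when $n$ changes, and the degree grading is a uniform rescaling by $(n-1)$---so the entire parity-dependence is isolated in the explicit twists $(1^s)^{\otimes n-1}$ and $(1^t)^{\otimes n}$. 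An irreducible $S^\mu\boxtimes S^\nu$ then contributes $S_\mu(H)\otimes S_{\nu'}(H)$ for $n$ odd and $S_{\mu'}(H)\otimes S_\nu(H)$ for $n$ even, with characters $s_\mu s_{\nu'}$ and $s_{\mu'}s_\nu = \omega(s_\mu s_{\nu'})$, and $\omega(sp_\lambda)=o_{\lambda'}$ finishes. Your step (i) is essentially this observation applied to one of the two factors; the paper applies it uniformly to both and is done.

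Your step (ii)---routing $\omega$ through the Kriz--Totaro side, across Proposition \ref{prop:SW}, and then through Lemma \ref{lem:total-d-t-alternating}---is where your approach runs into genuine friction, and you flag this yourself. Two points to be careful about if you were to fill it in. First, $\omega$ does not commute naively with plethysm: as you note, $\omega(h_m\circ f)$ swaps $h$'s and $e$'s according to the degree-parity of $f$, and the recipe of Section \ref{sec:computing-koszul-dual} is explicitly set up only for $n$ odd (Step \ref{enum:step-compute-2} assumes the Koszul dual is concentrated in even degrees so that a single plethystic inverse $\mr{Log}_\mr{even}$ suffices); for $n$ even one would need a parallel recipe using a mixed even/odd inverse and then compare the two. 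Second, the decomposition $L_\mr{odd}, L_\mr{even}$ of $\cat{S}^{2n-1}\cat{Lie}$ is actually $n$-independent (the operadic suspension by $2n-1$ changes by an even shift as $n$ changes by one), so it is not quite accurate to say that ``$\omega$ applied to $\cat{S}^{2n-1}\cat{Lie}$ interchanges these''; the interchange you see in the two cases of Step \ref{enum:step-compute-1} happens in the $s_1\otimes 1$ factor, which is the polynomial-functor slot, not the operadic slot, and keeping these two symmetric-sequence variables straight is exactly the bookkeeping the paper sidesteps. None of this is fatal---carried out with care your plan would work---but the direct observation that the Schur coefficients themselves do not depend on $n$ makes the Koszul-duality detour unnecessary.
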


\begin{proof} We use the rational collapse of the Federer spectral sequence at the $E^2$-page, as in \cref{prop:higher-layers-qualitative}, and observe that the entries $({}^F\!E^2_{p,q})\oq$ are homogeneous pieces of
\[
\left[\cD_{H[-n]}(\ul{k}) \otimes \ff_g(\ul{k}) \right]^{\fS_k},
\]
using the notation of \cref{def:total-extended-dk}. As explained at the beginning of \cref{sec:edk-ass-gr}, we can replace the right term by its associated graded $\mr{gr}\,\ff_{\mr{Lie}(H[n-1])}(\ul{k})$ as in \eqref{eqn:grff-functor}. We then expresses the left-hand term as $\bigoplus_{s \geq 0} \cD(\ul{s},\ul{k}) \otimes_{\fS_s} (H^\vee[-n])^{\otimes \ul{s}}$ and the right-hand term as $\bigoplus_{t \geq 0} \mr{gr}\,\ff(\ul{t},\ul{k}) \otimes_{\fS_t} (H[n-1])^{\otimes \ul{t}}$. Thus, these entries are homogeneous pieces of
\[
\left[\mr{gr}\,\ff(\ul{s},\ul{k}) \otimes \cD(\ul{t},\ul{k})\right]^{\fS_k} \otimes_{\fS_s \times \fS_{t}}  (H^{\otimes \ul{s}} \otimes (1^s)^{\otimes n-1} ) \otimes (H^{\otimes \ul{t}} \otimes (1^{t})^{\otimes n})[s(n-1)-tn].
\]
Each irreducible $\fS_s \times \fS_t$-representation $S^\mu \boxtimes S^\nu$ which appears in $\left[\mr{gr}\,\ff(\ul{s},\ul{k}) \otimes \cD(\ul{t},\ul{k})\right]^{\fS_k}$ contributes a $G^{\fr, [\ell]}_g$-representation $S_\mu \otimes S_{\nu'}$ if $n$ is odd and $S_{\mu'} \otimes S_{\nu}$ if $n$ is even. The character sends these to products $s_\mu s_{\mu'}$ or $s_{\mu'}s_\nu$, so the claim follows by recalling that the involution $\omega \colon \widehat{\Lambda} \to \widehat{\Lambda}$ is a ring homomorphism and satisfies $\omega(sp_\lambda) = o_{\lambda'}$.
\end{proof}

\appendix

\addtocontents{toc}{\protect\setcounter{tocdepth}{1}}

\section{Computational results}\label{sec:computational-results}

In this section we give the results of a \texttt{SageMath} \cite{sagemath} implemention of the computational procedure described in \cref{sec:explicit-computations}. Our goal is to compute the entries in the chain complexes which compute first four bands, and we include only those intermediate computations necessary to achieve this. The obstruction to further bands is merely computing power and efficiency of the code.

\subsection{Extended Drinfel'd--Kohno Lie algebras} \cref{sec:explicit-computations} describes a procedure to compute the graded $\fS_s \times \fS_k$-representations $\mr{gr}\,\ff(\ul{s},\ul{k})$, from which we may easily deduce $\ff_g(\ul{k})$. This uses the following objects and operations, which available in \texttt{SageMath} through its functionality in the package \texttt{SymmetricFunctions}. Firstly, it provides the particular symmetric functions used ($e_k$, $h_k$, $p_k$, $s_\lambda$, $sp_\lambda$, and $o_\lambda$), and readily converts between them. Day convolution of representations is given by the product, plethysm by the command \texttt{plethysm}, inner tensor product by the command \texttt{kronecker\textunderscore product}, and inner plethysm by the command \texttt{inner\textunderscore plethysm}. Using these commands, one can perform all the constructions referred to in \cref{sec:explicit-computations}, though some work is required to handle pairs of symmetric functions.

We computed $\mr{gr}\,\ft(\ul{s},\ul{k})$ and $\mr{gr}\,\ff(\ul{s},\ul{k})$ for $g$ large ($g \geq 7$ will suffice) in the following range: $k \leq 6$, $s \leq 5$, and degrees $\leq (r+s)(n-1)$ with $r+s \leq 10$. This gives enough information to determine $\ff_g(\ul{k})$ completely for the first four bands.

We record these by fixing an $\fS_s$-representation $S^\lambda$ and taking the power series
\begin{equation}\label{eqn:power-series-ff} 
	\sum_{r=0}^\infty \mr{ch}_k\Big(\mr{Hom}_{\fS_s}(S^\lambda,\mr{gr}\,\ff(\ul{k},\ul{s})_{r(n-1)})\Big)\cdot T^{r+s} \in \Lambda_k[[T]].
\end{equation}
The exponent of $T$ is chosen to make it straightforward to read off the degree of the corresponding contribution to $\ff_g(\ul{k})$: the coefficient of $T^{r+s}$ contributes in degree $(r+s)(n-1)$.

We give this power series only for $r+s-k \leq 4$; this amounts to restricting our attention to those degrees relevant for computations up to and including the fourth band. Furthermore, in this range it suffices to only record this data for partitions $\lambda$ of numbers $\leq 6-k$, because $\mr{gr}\,\ff(\ul{s},\ul{k})$ is a summand of $\mr{gr}\,\ft(\ul{s},\ul{k})$ and it is evident from the presentation of $\ff_g(\ul{k})$ that only such irreducibles can appear in degrees $\leq (k+4)(n-1)$. The results are displayed in Table \ref{tab:ff}.

\subsection{Products relative to diagonals} \cref{thm:CohConfSpaces} provides a formula for the cohomology groups $H^*(W_{g,1}^k,\Delta_{\half \partial};\bQ)$ as $G^{\fr, [[\ell]]}_g \times \fS_k$-representation, or equivalently the graded $\fS_s \times \fS_k$-representations $\cD(\ul{s},\ul{k})$ as in \eqref{eqn:coh-conf-schur}. We record these by fixing an $\fS_s$-representation $S^\lambda$ and taking the power series
\begin{equation}\label{eqn:power-series-conf} \sum_{r=0}^\infty \mr{ch}_k\Big(\mr{Hom}_{\fS_s}(S^\lambda,\cD(\ul{s},\ul{k})_r)\Big)\cdot T^r \in \Lambda_k[[T]].\end{equation}
The choice of exponent of $T$ is to make it straightforward to read off the degree of the corresponding contribution to $H^*(W_{g,1}^k;\Delta_{\half \partial};\bQ)$: the coefficient of $T^r$ is nonzero if and only if $r=s$ and contributes in degree $k+r(n-1)$.

We only give these for $k \leq 4$. The relevant cases for computations up to and including the fourth band are $k \leq 6$ and $r \geq k-6$, so we can use \cref{prop:cohomology-products-diagonals-qualitative} \ref{enum:cohomology-products-diagonals-qualitative-iv} and \ref{enum:cohomology-products-diagonals-qualitative-v} to deal with the cases $k=5,6$. Furthermore, in this range it suffices to only record this data for partitions $\lambda$ of numbers $\leq k$, as it is evident from the formula in \cref{thm:CohConfSpaces} that only representations corresponding to such representations can occur. The results are displayed in Table \ref{tab:hdelta}.

\subsection{The first four layers} The results in the previous two subsections give $\ff_g(\ul{k})$ and $H^*(W_{g,1}^k;\Delta_{\half \partial};\bQ)$ for $k \leq 6$ in the relevant degrees as algebraic $\smash{G^{\fr, [[\ell]]}_g} \times \fS_k$-representations. So we can obtain the rationalised entries
\[({}^F\!E^2_{p,q})\oq = \left[H^p(W_{g,1}^k;\Delta_{\half \partial;\bQ}) \otimes \ff_g(\ul{k})_{q-1}\right]^{\fS_k} \]
for the Federer spectral sequence \eqref{eqn:e2-higher-layers} as $G^{\fr, [[\ell]]}_g$-representations by taking their tensor product and passing to $\fS_k$-invariants. The Federer spectral sequence collapses rationally at the $E^2$-page, and provides the input for computing the rationalised entries on the $E^1$-page $({}^{BK}\!E^1_{*,*})\oq$ of Bousfield--Kan spectral sequence.

By \cref{thm:outcome-emb-calc}, these are supported in bidegrees $(p,q)$ in the intervals $[0,r+1] \times \{r(n-1)+1\}$ for $r \geq 1$. In Table \ref{tab:ss-up-to-6} we record the entries in these intervals separately for each irreducible $G^{\fr, [[\ell]]}_g$-representation $V_\lambda$. (The result shown is for $n$ odd; by Remark \ref{rem:PoincSeriesEmb} the result for $n$ even is obtained by transposing the partitions.) The entry in row $\lambda$ and column labelled ``$\sim r(n-1)$'' is given by the Laurent series
\[\sum_{s=0}^{r+1} \dim \mr{Hom}_{G^{\fr, [[\ell]]}_g}\Big(V_\lambda,({}^{BK}\!E^1_{s,r(n-1)+1})\oq\Big)\cdot t^{-s+1} \ell^{s+1} \in \hat{\Lambda}[[t,t^{-1},\ell]].\]
That is, the variable $\ell$ records the layer and the variable $t$ the amount that its degree deviates from $r(n-1)$. We have once more restricted ourselves to those representations $V_\lambda$ that can actually occur, \emph{except that for the sake of brevity we do not display the $V_\lambda$ with $|\lambda| = 6$ for $r=4$}. In particular, the $r$th band consists of odd (resp.~even) representations if and only if $r$ is odd (resp.~even), and only such partitions are listed. 

\subsection{Verifications}

As with any computer calculation, it is important to verify the results by independent means. Here the checks we have performed:
	\begin{enumerate}[(i)]
		\item Using a formula for the dimensions of the homogeneous pieces of free graded Lie algebras \cite{Petrogradsky}, we verified the dimensions of the homogeneous pieces of $\ff_g(\ul{k})$.
		\item We computed the cases $k \leq 3$ of Table \ref{tab:ff} by hand in Computations \ref{comp:total-d-k-e-2}, \ref{comp:total-d-k-e-3} and \ref{comp:total-d-k-e-4}, and obtained the same answers.
		\item The \texttt{Macaulay2} package \texttt{GradedLieAlgebras} can compute the characters of the homogeneous pieces of a quadratically presented graded Lie algebra with $\fS_k$-action. We used this to verify the computation of $\mr{gr}\,\ff(\ul{0},\ul{k})_{(k-1)(2n-2)}$ for $k \leq 5$.
		\item \cref{lem:tohofib-restr-free-lie-alg} says that upon restricting from $\fS_k$ to $\fS_{k-1}$, $\mr{gr}\,\ff_g(\ul{k})$ can be computed in terms of free graded Lie algebras. These are obtained by applying Schur functors, so their characters are easy to compute and we verified our answers restrict correctly. For example, from Table \ref{tab:ff} it follows that $\mr{gr}\,\ff(\ul{0},\ul{6})_{10(n-1)} = (2^3)+(3,1^3)+(4,2)$ as a $\fS_6$-representation, which restricts to the $\fS_5$-representation $(2^2,1)+(2,1^3)+(3,1^2)+(3,2)+(4,1)$. This is indeed $\mr{Lie}(5)$ as in Table \ref{tab:lie-reps}. 
		\item The strongest verification of our calculations is that the answers in Table \ref{tab:ss-up-to-6} have to satisfy the miraculous cancellation property described in Remark \ref{rem:miracle}. In particular, the data in Table \ref{tab:ss-up-to-6} must admit, for each non-trivial irreducible representation, a pattern of differentials killing everything below degree $r(n-1)+1$ (i.e.~for non-positive powers of $t$). For example, when we consider $V_{3,1}$ for $r=4$ the pattern is
	\[9 \to 24 \to 37 \to 30 \to 11 \to 1,\]
	for which this is indeed the case. We invite the reader to try other values of $r$ and $\lambda$. Furthermore, the Euler characteristic must match the results of \cref{prop:HtyX1LowDeg} for $r \leq 3$. For example, if we consider $V_{3,1^2}$ for $r=3$, it is 
	\[2-2+2-2+1 = 1,\]
	as desired.
	\item By \cref{prop:HtyF}, in degrees below $4n-10$, the data in Table \ref{tab:ss-up-to-6} must admit a pattern of differentials which leaves a copy of $\pi_*(X_1(g))\oq$ as described in \cref{prop:HtyX1LowDeg}.
\end{enumerate}

\begin{table}[p!]
	\begin{tabular}{p{1.5cm} p{7cm}} 
		\toprule
		$\lambda$ & $S^\lambda$-component of $\mr{gr}\,\ff(\ul{s},\ul{2})_\ast$ \\ \midrule 
		$(0)$ & $T^2 s_2$  \\
		$(1)$ & $(T^5+T^3)s_{1^2}$ \\
		$(2)$ & $(T^6+T^4)s_{2}$\\
		$(1, 1)$ & $(T^6+T^4)s_{1^2}+T^6 s_2$ \\
		$(3)$ &  $T^5s_{1^2}$ \\
		$(2, 1)$ & $T^5s_{1^2}+T^5s_2$ \\
		$(1, 1, 1)$ & $T^5s_{1^2}$ \\
		$(4)$& $T^6s_2$ \\
		$(3, 1)$&  $2T^6s_{1^2}+T^6s_2$\\
		$(2^2)$&  $T^6s_{2}$ \\
		$(2, 1^2)$& $2T^6s_{1^2}+T^6s_2$ \\
		$(1^4)$& $T^6s_2$ \\
		\bottomrule
	\end{tabular}
	
	\vspace{.5cm}
	\begin{tabular}{p{1.5cm} p{7cm}} 
		\toprule
		$\lambda$ & $S^\lambda$-component of  $\mr{gr}\,\ff(\ul{s},\ul{3})_\ast$ \\ \midrule 
		$(0)$ & $T^4s_{1^3}+T^6s_{2,1}$  \\
		$(1)$ & $T^7s_{1^3}+(2T^7+T^5)s_{2,1}+T^7s_3$ \\
		$(2)$ & $T^6s_{1^3}+T^6s_{2,1}$\\
		$(1, 1)$ & $T^6s_{2,1}+T^6s_3$ \\
		$(3)$ &  $T^7s_{1^3}+T^7s_{2,1}+T^7s_3$ \\
		$(2, 1)$ & $T^7s_{1^3}+3T^7s_{2,1}+T^7s_3$ \\
		$(1, 1, 1)$ & $T^7s_{1,1,1}+T^7s_{2,1}+T^7s_3$ \\
		\bottomrule
	\end{tabular}
	
	\vspace{.5cm}
	\begin{tabular}{p{1.5cm} p{7cm}} 
		\toprule
		$\lambda$ & $S^\lambda$-component of $\mr{gr}\,\ff(\ul{s},\ul{4})_\ast$ \\ \midrule 
		$(0)$ & $T^8s_{1^4}+T^8s_{2,1^2}+(T^8+T^6)s_{2^2}+T^8s_{3,1}$  \\
		$(1)$ & $T^7s_{2,1^2}+T^7s_{3,1}$ \\
		$(2)$ & $T^8s_{1^4}+T^8s_{2,1^2}+2T^8s_{2,2}+T^8s_{3,1}+T^8s_4$\\
		$(1, 1)$ & $2T^8s_{2,1^2}+2T^8s_{3,1}$ \\
		\bottomrule
	\end{tabular}
	
	\vspace{.5cm}
	\begin{tabular}{p{1.5cm} p{7cm}} 
		\toprule
		$\lambda$ & $S^\lambda$-component of $\mr{gr}\,\ff(\ul{s},\ul{5})_\ast$ \\ \midrule 
		$(0)$ & $T^8s_{3,1^2}$  \\
		$(1)$ & $T^9s_{5}+T^9s_{3,2}+2T^9s_{3,1^2}+T^9s_{2^2,1}+T^9s_{1^5}$ \\
		\bottomrule
	\end{tabular}
	
	\vspace{.5cm}
	\begin{tabular}{p{1.5cm} p{7cm}} 
		\toprule
		$\lambda$ & $S^\lambda$-component of $\mr{gr}\,\ff_g(\ul{s},\ul{6})_\ast$ \\ \midrule 
		$(0)$ & $T^{10}s_{2^3}+T^{10}s_{3,1^3}+T^{10}s_{4,2}$  \\
		\bottomrule
	\end{tabular}
	
	\vspace{.5cm}
	\caption{The power series \eqref{eqn:power-series-ff} truncated to powers $T^{r+s}$ with $r+s-k \leq 4$. The coefficient of $T^{r+s} s_\mu$ in row $(\lambda)$ is the multiplicity of $S^\lambda \boxtimes S^\mu$ in the $\fS_{s} \times \fS_k$-representation $\mr{gr}\,\ff(\ul{s},\ul{k})_{r(n-1)}$, where $s = |\lambda|$. Equivalently, it is the multiplicity of $S_\lambda \boxtimes S^\mu$ in the $G^{\fr, [[\ell]]}_g \times \fS_k$-representation $\mr{gr}\,\ff(\ul{s},\ul{k})_{(r+s)(n-1)}$ when $n$ is odd. If $n$ is even, we need to replace $\lambda$ with its transposition $\lambda'$.}
	\label{tab:ff}
\end{table}

\begin{table}[p!]
	\begin{tabular}{p{1.5cm} p{7cm}} 
		\toprule
		$\lambda$ & $S^\lambda$-component of $\cD(\ul{s},\ul{2})$ \\ \midrule 
		$(0)$ & $T^2 s_2$  \\
		$(1)$ & $(T^5+T^3)s_{1,1}$ \\
		$(2)$ & $(T^6+T^4)s_{2}$\\
		$(1, 1)$ & $(T^6+T^4)s_{1,1}+T^6 s_2$ \\
		$(3)$ &  $T^5s_{1,1}$ \\
		$(2, 1)$ & $T^5s_{1,1}+T^5s_2$ \\
		$(1, 1, 1)$ & $T^5s_{1,1}$ \\
		$(4)$& $T^6s_2$ \\
		$(3, 1)$&  $2T^6s_{1,1}+T^6s_2$\\
		$(2^2)$&  $T^6s_{2}$ \\
		$(2, 1^2)$& $2T^6s_{1,1}+T^6s_2$ \\
		$(1^4)$& $T^6s_2$ \\
		\bottomrule
	\end{tabular}
	
	\vspace{.5cm}
	\begin{tabular}{p{1.5cm} p{7cm}}
		\toprule
		$\lambda$ & $S^\lambda$-component of $\cD(\ul{s},\ul{3})$ \\ \midrule 
		$(0)$ & $0$  \\
		$(1)$ & $Ts_{2,1}$ \\
		$(2)$ & $T^2s_{2,1}+T^2s_3$\\
		$(1^2)$ & $T^2s_{2,1}+T^2s_3$ \\
		$(3)$ &  $T^3s_{1^3}$ \\
		$(2, 1)$ & $T^3s_{2,1}$ \\
		$(1^3)$ & $T^3s_3$ \\
		\bottomrule
	\end{tabular}
	
	\vspace{.5cm}
	\begin{tabular}{p{1.5cm} p{7cm}}
		\toprule
		$\lambda$ & $S^\lambda$-component of $\cD(\ul{s},\ul{4})$ \\ \midrule 
		$(0)$ & $0$  \\
		$(1)$ & $Ts_{2,1^2}+Ts_{3,1}$ \\
		$(2)$ & $T^2s_{2,1^2}+T^2s_{2^2}+2T^2s_{3,1}$ \\
		$(1^2)$ & $T^2s_{2,1^2}+2T^2s_{2^2}+T^2s_{3,1}+T^2s_4$\\
		$(3)$ & $T^3s_{2^2}+T^3s_{3,1}+T^3s_4$ \\
		$(2, 1)$ & $T^3s_{2,1^2}+T^3s_{2^2}+2T^3s_{3,1}+T^4s_4$ \\
		$(1^3)$ &  $T^3s_{2,1^2}+T^3s_{3,1}$ \\
		$(4)$& $T^4s_4$ \\
		$(3, 1)$& $T^4s_{3,1}$ \\
		$(2^2)$&  $T^4s_{2^2}$ \\
		$(2, 1^2)$&  $T^4s_{2,1^2}$\\
		$(1^4)$& $T^4s_{1^4}$ \\
		\bottomrule
	\end{tabular}
	
	\vspace{.5cm}
	\caption{The power series \eqref{eqn:power-series-conf}. The coefficient of $T^r s_\mu$ in row $\lambda$ is the multiplicity of $S^\mu \otimes S^\lambda$ in the $\fS_s \times \fS_k$-representation $\cD(\ul{s},\ul{k})_r$. Equivalently, it is the multiplicity of $S_\lambda \boxtimes S^\mu$ in the $G^{\fr, [[\ell]]}_g \times \fS_k$ representation $H^{k+r(n-1)}(W_{g,1}^k;\Delta_{\half \partial};\bQ)$ when $n$ is even. If $n$ is odd, we need to replace $\lambda$ with its transposition $\lambda'$.}
	\label{tab:hdelta}
\end{table}

\begin{table}[p!]
	\begin{tabular}{p{1.5cm} p{2.7cm} p{4cm}}   
		\toprule
		$\lambda$ & ${\sim}(n-1)$ & ${\sim}(2n-2)$ \\ \midrule 
		$(0)$ & & $t\ell + t^{-1}\ell^3 + t^{-2}\ell^4$ \\
		$(1)$ & $2t\ell + 2\ell^2$ & \\
		$(2)$ & & $2t\ell + 4\ell^2 + 2t^{-1}\ell^3$\\
		$(1^2)$ & & $2t\ell + 2\ell^2 + 2t^{-1}\ell^3 + t^{-2}\ell^4$\\
		$(3)$ & $\ell^2 + t^{-1}\ell^3$ & \\
		$(2, 1)$ & $t\ell + \ell^2$ & \\
		$(1^3)$ & $t\ell$ & \\
		$(4)$& \\
		$(3, 1)$& & $t\ell + 2\ell^2 + t^{-1}\ell^3$\\
		$(2^2)$& & $t\ell + t^{-1}\ell^3 + t^{-2}\ell^4$ \\
		$(2, 1^2)$& & $t\ell + 2\ell^2 + t^{-1}\ell^3$ \\
		$(1^4)$& \\
		\bottomrule
	\end{tabular}

\vspace{.5cm}
\begin{tabular}{p{1.5cm} p{7cm}}  
		\toprule
		$\lambda$ & ${\sim}(3n-3)$  \\ \midrule 
		$(1)$ & $3t\ell + 11\ell^2 + 13t^{-1}\ell^3 + 5t^{-2}\ell^4$\\
		$(2, 1)$ & $5t\ell + 12\ell^2 + 13t^{-1}\ell^3 + 6t^{-2}\ell^4 + t^{-3}\ell^5$\\
		$(1^3)$  & $2t\ell + 6\ell^2 + 7t^{-1}\ell^3 + 3t^{-2}\ell^4$ \\
		$(5)$ & $\ell^2 + t^{-1}\ell^3$ \\
		$(4, 1)$ & $t\ell + 2\ell^2 + 2t^{-1}\ell^3 + t^{-2}\ell^4$ \\
		$(3, 2)$ & $t\ell + 3\ell^2 + 3t^{-1}\ell^3 + t^{-2}\ell^4$ \\
		$(3, 1^2)$ & $2t\ell + 2\ell^2 + 2t^{-1}\ell^3 + 2t^{-2}\ell^4 + t^{-3}\ell^5$ \\
		$(2^2, 1)$ & $t\ell + 3\ell^2 + 3t^{-1}\ell^3 + t^{-2}\ell^4$ \\
		$(2, 1^3)$ & $t\ell + 2\ell^2 + 2t^{-1}\ell^3 + t^{-2}\ell^4$ \\
		$(1^5)$ & $\ell^2 + t^{-1}\ell^3$ \\
		\bottomrule
	\end{tabular}

\vspace{.5cm}
\begin{tabular}{p{1.5cm} p{7cm}}  
		\toprule
		$\lambda$ &  ${\sim}(4n-4)$ \\ \midrule 
		$(0)$ & $3t\ell + 15\ell^2 + 21t^{-1}\ell^3 + 10t^{-2}\ell^4 + 4t^{-3}\ell^5 + 2t^{-4}\ell^6$ \\
		$(2)$ & $9t\ell + 26\ell^2 + 47t^{-1}\ell^3 + 40t^{-2}\ell^4 + 12t^{-3}\ell^5$ \\
		$(1, 1)$ & $9t\ell + 36\ell^2 + 48t^{-1}\ell^3 + 29t^{-2}\ell^4 + 11t^{-3}\ell^5 + 3t^{-4}\ell^6$\\
		$(4)$ & $3t\ell + 9\ell^2 + 12t^{-1}\ell^3 + 9t^{-2}\ell^4 + 4t^{-3}\ell^5 + t^{-4}\ell^6$\\
		$(3, 1)$ & $9t\ell + 24\ell^2 + 37t^{-1}\ell^3 + 30t^{-2}\ell^4 + 11t^{-3}\ell^5 + t^{-4}\ell^6$\\
		$(2^2)$ & $6t\ell + 20\ell^2 + 25t^{-1}\ell^3 + 16t^{-2}\ell^4 + 7t^{-3}\ell^5 + 2t^{-4}\ell^6$\\
		$(2, 1^2)$& $9t\ell + 26\ell^2 + 38t^{-1}\ell^3 + 29t^{-2}\ell^4 + 10t^{-3}\ell^5 + t^{-4}\ell^6$\\
		$(1^4)$& $3t\ell + 11\ell^2 + 13t^{-1}\ell^3 + 7t^{-2}\ell^4 + 3t^{-3}\ell^5 + t^{-4}\ell^6$\\
		\bottomrule
	\end{tabular}
\vspace{.5cm}	
\caption{The decomposition of the entries \eqref{eqn:e2-higher-layers} on the $E^1$-page of the Bousfield--Kan spectral sequence \eqref{eqn:bk-ss} for the embedding calculus Taylor tower, as a $G^{\fr, [[\ell]]}_g$-representation in each of the bands, for $n$ odd. If $n$ is even then each $\lambda$ should be replaced by its transposition $\lambda'$.}\label{tab:ss-up-to-6}
\end{table}

 \clearpage

 \addtocontents{toc}{\protect\setcounter{tocdepth}{-1}}

\bibliographystyle{amsalpha}
\bibliography{../../cell}

\vspace{.5cm}

\end{document}